\newtheorem{theorem}{Theorem}
\newtheorem{lemma}{Lemma}
\newtheorem{definition}{Definition}
\newtheorem{proposition}{Proposition}
\newtheorem{corollary}{Corollary}
\newtheorem{assumption}{Assumption}
\newtheorem{remark}{Remark}
\newtheorem{example}{Example}
\def\EE{{\mathbb E}}
\def\II{{\mathbb I}}
\def\PP{{\mathbb P}}
\def\RR{{\mathbb R}}
\def\SS{{\mathbb S}}
\def\g{{\mathbf g}}
\def\u{{\mathbf u}}
\def\v{{\mathbf v}}
\def\x{{\mathbf x}}
\def\z{{\mathbf z}}
\def\A{{\mathbf A}}
\def\B{{\mathbf B}}
\def\D{{\mathbf D}}
\def\E{{\mathbf E}}
\def\G{{\mathbf G}}
\def\I{{\mathbf I}}
\def\R{{\mathbf R}}
\def\U{{\mathbf U}}
\def\V{{\mathbf V}}
\def\X{{\mathbf X}}
\def\Y{{\mathbf Y}}
\def\Z{{\mathbf Z}}
\def\calA{{\mathcal A}}
\def\calC{{\mathcal C}}
\def\calE{{\mathcal E}}
\def\calF{{\mathcal F}}
\def\calH{{\mathcal H}}
\def\calN{{\mathcal N}}
\def\calS{{\mathcal S}}
\def\calT{{\mathcal T}}
\def\calU{{\mathcal U}}
\def\calX{{\mathcal X}}
\def\Bbeta{{\boldsymbol{\beta}}}
\def\bSigma{{\boldsymbol{\Sigma}}}
\def\eps{\varepsilon}
\def\Ca{C_{a}}
\def\Cb{C_{b}}
\def\ComSQ{\textsf{CoM}_{2}}
\def\ComOne{\textsf{CoM}_{1}}
\def\ComMax{\textsf{CoM}_{\infty}}
\def\OffDiagS{\lambda_{\max,s}^{\textsf{Off}}}
\def\OffDiagO{\lambda_{\max,1}^{\textsf{Off}}}
\def\l{\left}
\def\r{\right}
\begin{document}
	\title{SGD with Dependent Data: \\Optimal Estimation, Regret, and Inference}
    \author{Yinan Shen$^1$,~Yichen Zhang$^2$,~Wen-Xin Zhou$^3$ \\ 
    ~\\
    $^1$Department of Mathematics, University of Southern California  \\
    $^2$Daniels School of Business, Purdue University  \\
    $^3$Department of Information and Decision Sciences, University of Illinois Chicago  \\
    }
    \date{}
	\maketitle
	\begin{abstract}
		This work investigates the performance of the final iterate produced by stochastic gradient descent (SGD) under temporally dependent data. We consider two complementary sources of dependence: $(i)$ martingale-type dependence in both the covariate and noise processes, which accommodates non-stationary and non-mixing time series data, and $(ii)$ dependence induced by sequential decision making. Our formulation runs in parallel with classical notions of (local) stationarity and strong mixing, while neither framework fully subsumes the other. Remarkably, SGD is shown to automatically accommodate both independent and dependent information under a broad class of stepsize schedules and exploration rate schemes.  
	
	Non-asymptotically, we show that SGD simultaneously achieves statistically optimal estimation error and regret, extending and improving existing results. In particular, our tail bounds remain sharp even for potentially infinite horizon $T=+\infty$. Asymptotically, the SGD iterates converge to a Gaussian distribution with only an $O_{\PP}(1/\sqrt{t})$ remainder, demonstrating that the supposed estimation-regret trade-off claimed in prior work can in fact be avoided. We further propose a new ``conic'' approximation of the decision region that allows the covariates to have unbounded support. For online sparse regression, we develop a new SGD-based algorithm that uses only $d$ units of storage and requires $O(d)$ flops per iteration, achieving the long term statistical optimality. Intuitively, each incoming observation contributes to estimation accuracy, while aggregated summary statistics guide support recovery.
	\end{abstract}

\textit{Keywords}: asymptotic distribution, conditional Orlicz norm, decision making, martingale, non-asymptotic analysis, stochastic gradient descent, sparse regression

\section{Introduction}
\label{sec:intro}

Stochastic gradient descent (SGD) is a fundamental algorithm for online learning and sequential decision making. Despite its simplicity, understanding its statistical behavior becomes considerably more challenging when the incoming data exhibit dependence, either because the covariates or noise form a time series, or because the data are collected adaptively through the learner's decision. Such dependence is pervasive in modern applications, including contextual bandits, financial and economic forecasting \citep{black1986noise,ubukata2009estimation,jacod2017statistical}, and streaming prediction problems \citep{robbins1951stochastic,lai1987adaptive,wu2006robust,wu2007strong,wu2007inference}. Consequently, a central methodological and theoretical question is how to design and analyze SGD procedures that automatically adapt to potentially mixed sources of independent and dependent observations, without requiring explicit knowledge of their underlying dependence structure.

A substantial literature has developed tools for analyzing dependent processes, spanning (local) stationarity and physical or functional dependence measures \citep{dedecker2000functional, wu2007strong, xiao2012covariance, nagaraj2020least, srikant2024rates}, $\rho$-mixing and other related weak dependence conditions \citep{masry1997local, doukhan1994functional, merlevede2009bernstein, lu2022almost}, and density-based characterizations of conditional distributions \citep{ tartakovsky2024nearly}. While these frameworks provide powerful theoretical guarantees, they typically rely on precise characterizations of temporal dependence, such as stationarity, mixing rates, or conditional density. However, modern online environments frequently violate these assumptions: covariates may be neither stationary nor mixing, noise distributions may evolve over time, and adaptive data collection introduces additional dependence channels that are not adequately captured by classical stochastic process theory \citep{azuma1967weighted,koltchinskii1994komlos, van2002hoeffding, lauer2023uniform}.

This paper develops an estimation and inference framework for SGD that accommodates the simultaneous presence of independent and dependent observations, without attempting to classify or disentangle their sources. Our formulation parallels or extends classical dependence concepts but is more flexible. In particular, covariates may evolve according to processes that are neither independent across time nor locally stationary nor strongly mixing, yet still fall within the scope of our theory. Moreover, adaptively collected rewards in contextual bandit settings introduce a second source of dependence: the evolving decision rule interacts with the data-generating mechanism, jointly shaping the observed sequence. Our framework incorporates both types of dependence within a unified analysis. To illustrate the wide range of dependence structures encompassed by our analysis, consider a sequence $d$-dimensional covariates $\{\X_t\}$ evolving according to either of the following dynamics: (i) $\X_t=\nu_{t} \X_{t-1} / \|\X_{t-1}\| +\E_t$, or (ii) $\X_0\sim\mathrm{Unif}(\SS^{d-1})$ and $\X_t =\nu_{t,0}\X_0+\cdots+\nu_{t,t-1}\X_{t-1}+\E_t$ for $t\geq 1$, where $\{\nu_t\}$, $\{\nu_{t,i}\}$ and $\{\E_t\}$ are random but not necessarily stationary or independent, with $\sum_{i=0}^{t}|\nu_{t,i}|\leq 1/2$.  In such settings, classical stationary or mixing analyses are generally inapplicable. Nevertheless, we show that under suitable regularity conditions on the covariates and noise, the non-asymptotic performance of SGD with these dependent covariates can be established at a level comparable to the idealized i.i.d. Gaussian setting. Furthermore, under mild convergence and finite-moment conditions, we derive the asymptotic distribution of the SGD iterates.

 The same type of dependence may also arise in the noise process, which is likewise accommodated by our framework. More broadly, the dependence considered in this work is two-fold: in addition to the intrinsic temporal dependence of covariates and noise, we incorporate the dependence induced by decision making. This latter form of dependence is intrinsic to contextual bandit problems \citep{goldenshluger2013linear, hao2020high, bastani2020online,  bastani2021mostly, chen2020statistical, chen2021statisticalb, chen2021statistical, han2022online, duan2024online, han2024ucb}, where the learner observes covariates, selects an action, and receives a reward accordingly.  The balance between exploration (random pulls) and exploitation (pulling the empirically best arm) is crucial. Existing exploration strategies fall into two categories. The first relies on a prespecified grid of exploration times \citep{goldenshluger2013linear, hao2020high, bastani2020online, bastani2021mostly, ren2024dynamic}, which enables elegant offline analysis but requires storing and repeatedly using historical observations, as well as prior knowledge of the total horizon. The second category consists of $\eps$-greedy-type schemes \citep{chen2020statistical, chen2021statistical, chen2021statisticalb, han2022online, chen2022online,duan2024online}, which specify the exploration probability $\pi_t$ without knowledge of the horizon and do not store the full data stream. While this leads to practically appealing fully online algorithms, it also introduces substantial theoretical challenges. In particular, existing online analyses under independent data typically require relatively high exploration rates, often assuming either $\lim_{t\to\infty}\pi_t > 0$ or $\pi_t=t^{-\alpha}$ with $\alpha\in(0,1)$, resulting in regret rates of order $O(T)$ or $O(T^{2/3})$, which are strictly worse than offline guarantees.

This work aims to close this gap. We allow a broad class of exploration schedules, including choices of the form $\pi_t=f(t)$ for functions $f$ specified in Definition~\ref{def:exploration rate}. Despite this flexibility, our algorithm attains simultaneously optimal estimation error and regret. To incorporate dependent streaming data without storing past observations, we update parameters via SGD and introduce a new family of stepsize schedules. As shown in Section~\ref{sec:bandit}, when $f(t)$ satisfies Definition~\ref{def:exploration rate} and the growth condition $\int_0^T f(t) {\rm d}t \le O(\sqrt{T})$, both the regret and estimation error achieve minimax-optimal rates, even for discrete covariates. Although the covariates and noise may be dependent, our non-asymptotic guarantees match the optimal offline rates that assume i.i.d. data.

Another challenge in fully online analysis, without retaining historical data, is the accumulation of tail probabilities \citep{jin2016provable, chen2020statistical, chen2021statistical, li2023online, duan2024online, shen2024online}. This issue forces prior work to restrict the horizon to $T = O(d^C)$ or $T\le \exp(cd)$. Our analysis avoids this limitation: under our SGD algorithm, all non-asymptotic guarantees remain valid for $T=+\infty$. In Section~\ref{sec:LR}, by leveraging the lower bound results of \cite{ma2024high}, we further show that our result is statistically optimal among algorithms that allow $t=1,\ldots,+\infty$. Asymptotically, we obtain a Bahadur-type representation
$$
    \sqrt{t}\l(\Bbeta_t-\Bbeta^*\r) = \text{a term that converges to Gaussian in distribution}+O_{\PP}\bigg( \frac{1}{\sqrt t} \bigg) ,
$$
which is sharp for an online procedure. Existing online methods, even under i.i.d. data and with weaker regret guarantees, incur residuals of order $t^{1/5}$ due to dependence effects. The multifold optimality achieved here stems from the coordinated design of both the algorithms and the analysis.

Online sparse linear regression has long remained a challenging problem. How to estimate a sparse parameter in an online manner is largely open. Conceptually, online sparse linear regression integrates estimation in Euclidean norm \citep{blumensath2009iterative,bogdan2015slope,bellec2018slope,sun2020adaptive} with support recovery \citep{fan2008sure,candes2009near,fan2014adaptive,zhang2010nearly}. As streaming data arrive, the goal is to achieve a monotonically decreasing estimation error while eventually recovering the entire support. However, existing offline theories and assumptions do not directly transfer to the online setting. For example, offline analyses often require a lower bound on the minimal signal strength \citep{zhang2010nearly}, such as $\min_{j: \beta_j \neq 0} |\beta_j| \geq O(1/\sqrt{n})$ for a fixed sample size $n$. In contrast, online learning may involve an unknown or unbounded horizon. To circumvent this difficulty, \cite{fan2018statistical} assume that the initialization step already identifies the true support, though they still require a minimal signal strength condition. \cite{han2024online} and \cite{yang2023online} develop online LASSO algorithms, but they recompute a full LASSO estimator each time a new observation arrives. All three studies focus on sparse estimation under i.i.d. observations.

The algorithm of SGD, or stochastic approximation, dates back to \cite{robbins1951stochastic,kiefer1952stochastic,polyak1992acceleration}. Although the final iterate of SGD is widely used in practice, its statistical performance, especially whether it can match that of offline estimators in terms of estimation error and non-asymptotic guarantees, remains largely unclear. This work investigates the behavior of the last iterate of SGD without storing historical data and without averaging over past iterates. Below, we summarize our main contributions and Section~\ref{sec:numeric} numerically verifies our theoretical results.
\begin{enumerate}
    \item {\it Dependent and Independent Data:} Our dependence structure runs in parallel to classical stationary or mixing frameworks, and neither class contains the other. Specifically, we generalize the i.i.d. Gaussian setting to random variables with bounded conditional Orlicz norms. Interestingly, even when the covariates and noise exhibit dependence, the non-asymptotic performance of SGD remains essentially the same as in the i.i.d. Gaussian case. Asymptotic results are established under mild convergence and moment conditions, and the Markov structure does not degrade the asymptotic performance of SGD. The resulting Bahadur-type remainder is as small as $1/\sqrt{t}$.

    \item {\it Performance of SGD in Linear Regression:} Whether online SGD can yield estimators that are statistically comparable to offline approaches has remained unknown even for linear regression. In Section~\ref{sec:LR}, we investigate its non-asymptotic and asymptotic performance in the presence of potentially dependent covariates and noise, without involving decision making. From a non-asymptotic perspective, we show that the tail probability 
    $$
       \max_{\Bbeta_t(\{\X_l,Y_l\}_{l=1}^t)}\PP\bigg(\cup_{t=0}^T\bigg\{\|\Bbeta_t-\Bbeta^*\|^2\leq \frac{C d}{t}\bigg\}\bigg)
    $$ 
    is at least $1-T\exp(-cd)$, matching the best offline results. Moreover, we show that the trajectory
    $$
        \cup_{t=0}^{+\infty}\bigg\{\|\Bbeta_t-\Bbeta^*\|^2\leq C\frac{d+\log t}{t}\bigg\}
    $$
    holds with probability at least $1-\exp(-cd)$. Borrowing results from \cite{ma2024high}, we further illustrate that our estimation error and tail probability are optimal, up to constants. Existing studies focus on the i.i.d. observations and require $T\leq d^C$ or $T\leq \exp(cd)$ \citep{jin2016provable, chen2020statistical, chen2021statistical, li2023online, duan2024online,shen2024online}.
    
    \item {\it Decision Making and Linear Bandit:} Whether SGD can simultaneously achieve statistical optimality in estimation and regret in an online manner has been unknown, even under i.i.d. covariates and noise. This work closes this gap and establishes such results under dependent covariates and noise. Our guarantees hold under minimal assumptions for a broad class of exploration rates and stepsizes, including rates generated by general functions in Definition~\ref{def:exploration rate}. We further show that the SGD iterates under our stepsize scheme converge to a multivariate Gaussian. In addition, we establish the first inference results for both decaying and zero exploration rates by fully incorporating dependent data. The proposed stepsize scheme is also new.
    
    \item {\it Online Sparse Learning:} Designing an online algorithm for sparse linear regression that neither stores the full data nor solves a new optimization problem whenever a new sample arrives has remained open. This work advances the state of the art by proposing a new SGD-based algorithm for sparse linear regression with storage cost $d$ and per-iteration computation $O(d)$. Intuitively, each individual observation contributes to improving estimation accuracy, while summary statistics are used for variable selection. Our algorithm achieves statistically optimal iterates in the long run. 
    
    \item {\it Conic Decision Region:} This work introduces a new conic-shaped approximation of the true decision making region, which allows the covariates to have possibly unbounded support and to substantially cover a neighborhood around the origin. Additionally, our asymptotic results are based on the conic measurement, which removes the continuous conditions of the distribution in existing works \citep{chen2022online,chen2020statistical,chen2021statistical}. When covariates are discretely distributed, our characterization coincides with those used in \cite{bastani2020online} and \cite{bastani2021mostly}.
\end{enumerate}

In addition, the statistical theory developed in this work may be of independent interest. We conclude this section by introducing the notation used throughout the paper.

\paragraph*{Notation} We use bold symbols to denote vectors and matrices (e.g., $\g,\X,\Y$), and calligraphic font to denote sets, operators, or $\sigma$-algebras (e.g., $\calA,\calH,\calF$). Throughout, $[K]$ denotes the set $\{1,2,\ldots,K\}$. For two symmetric matrices $\A$ and $\B$, $\A\succeq\B$ means that $\A-\B$ has non-negative eigenvalues. The operator $\|\cdot\|$ denotes the $L_2$ norm for vectors and the operator norm for matrices, while $\|\cdot\|_{\infty}$ denotes the maximum absolute entry of a vector. For a symmetric matrix $\A=\U\boldsymbol{\Lambda}\U^\top\in\RR^{d\times d}$ with $\boldsymbol{\Lambda}=\operatorname{diag}(\lambda_1,\ldots,\lambda_d)$ and a function $f(\cdot)$, we define $f(\A)=\U f(\boldsymbol{\Lambda})\U^{\top}$ and $f(\boldsymbol{\Lambda})=\operatorname{diag}(f(\lambda_1),\ldots,f(\lambda_d))$. We write $\rightsquigarrow$ for convergence in distribution.
	
For a vector $\X\in\RR^d$, $[\X]_i$ denotes its $i$-th entry, and for a set $\calS\subseteq[d]$, $[\X]_{\calS}$ denotes the subvector of $\X$ with indices in $\calS$. In the context of sparse linear regression, the operator $\calH_{\calS}(\cdot):\RR^d\to \RR^d$ retains only the entries in $\calS$ and sets the entries in $\calS^c$ to zero; that is, $[\calH_{\calS}(\X)]_{i} = [\X]_i$ for $i\in \calS$ and $0$ for $i\in\calS^c$.

\section{Linear Regression with Dependent Data}
\label{sec:LR}

We begin by introducing the conditional Orlicz norm, which plays a central role in our analysis. This concept is not new; rather, it generalizes several cases. Foundational work such as \citet{koltchinskii1994komlos}, \citet{van1995exponential}, \citet{van2002hoeffding}, \citet{shamir2011variant}, and \citet{lauer2023uniform}, along with the classical Azuma's inequality \citep{azuma1967weighted}, has explored concentration inequalities for dependent random variables that are either bounded or satisfy bounded conditional sub-Gaussian tails. More recently, as non-asymptotic bounds for the estimation error of stochastic gradient descent (SGD) have attracted increasing attention, conditional Orlicz norm-based techniques have been employed in works such as \citet{han2022online} and \citet{shen2024online}. However, these studies focus on independent covariates and noise. Our work substantially generalizes the framework by allowing for more general dependence structures. In particular, we show that under suitable conditions, SGD under dependent data performs comparably to the idealized i.i.d. Gaussian setting.

\subsection{Preliminaries: Orlicz norm and Conditional Orlicz Norm}
    
Recall that the Orlicz norm of a univariate random variable is defined as 
$$
\|X \|_{\Psi_\alpha}:=\inf\big\{u>0: \, \EE\exp( |X/u|^\alpha )\leq 2\big\}, \quad \alpha\geq 1.
$$ 
Let $\calF$ be a $\sigma$-field. A random variable $K_{\calF}$ is said to be $\calF$-measurable if $K_{\calF}\in\calF$.
    
\begin{definition}[Conditional Orlicz Norm]
Suppose $\calF$ is a $\sigma$-field and $K_{\calF} >0$ is $\calF$-measurable. A random variable $X$ satisfies $\|X |\calF\|_{\Psi_\alpha}\leq K_{\calF}$ if and only if 
\begin{equation}
\EE\big\{ \exp( |X/K_{\calF}|^\alpha)|\calF \big\} \leq 2\quad a.s.
\end{equation}  
\end{definition}
       
Note that the conditional expectation above is itself $\calF$-measurable random variable. We illustrate the concept of the conditional Orlicz norm with the following examples.
       
\begin{example}
If $X$ is $\calF$-measurable, then $\|X|\calF\|_{\Psi_\alpha}\leq  (\log 2)^{-1/\alpha }\cdot|X|$. If $X$ is independent of $\calF$, then $\|X|\calF\|_{\Psi_\alpha}\leq \|X\|_{\Psi_2}$.
\end{example}
        
For a random vector $\X\in\RR^d$, define 
$$
\|\X\|_{\Psi_\alpha}:=\sup_{\u\in\SS^{d-1}} \|\u^{\top}\X\|_{\Psi_\alpha}.
$$ 
The conditional version is defined analogously as follows.

\begin{definition}[Conditional Orlicz Norm of Random Vectors]
Suppose $\calF$ is a $\sigma$-field and $K_{\calF} >0$ is $\calF$-measurable. A random vector $\X$ satisfies $\|\X |\calF\|_{\Psi_\alpha}\leq K_{\calF}$ if and only if 
\begin{equation}
\sup_{\u\in\SS^{d-1},\u\in\calF}  \EE\big\{ \exp( |\u^{\top}\X /K_{\calF} |^\alpha)|\calF \big\} \leq 2\quad \text{a.s.}
\end{equation}
\end{definition}

Additionally, for a random matrix $\X\in\RR^{d_1\times d_2}$, we define $$\|\X\|_{\Psi_\alpha}:=\sup_{\u\in\SS^{d_1-1}}\sup_{\v\in\SS^{d_2-1}}\|\u^{\top}\X\v\|_{\Psi_\alpha},$$ with conditional norms defined similarly. These norms are well defined and satisfy the usual norm properties; details are deferred to Appendix~\ref{sec:proofLR}. Finally, we present another illustrative example of the conditional Orlicz norm, which can be verified directly.
       
\begin{example}
Let $\calF=\sigma(X)$. Define $Y_1=X+\xi$ and $Y_2=\xi X$, where $\xi$ is independent of $X$ and $\|\xi\|_{\Psi_\alpha} < \infty$. Then 
$$
\|Y_1|\calF\|_{\Psi_\alpha}\leq  |X| / \log 2 +\|\xi\|_{\Psi_{\alpha}},\quad \|Y_2|\calF\|_{\Psi_\alpha}\leq |X|\cdot\|\xi\|_{\Psi_\alpha}.
$$
If $|X|\leq K$ almost surely,  $\|Y_1|\calF\|_{\Psi_\alpha}\leq K/\log 2 +\|\xi\|_{\Psi_{\alpha}} $ and $\|Y_2|\calF\|_{\Psi_\alpha}\leq K\cdot\|\xi\|_{\Psi_\alpha}$.
\end{example}

\subsection{Linear Regression with Dependent Covariates and Noise}

    To isolate the effects of dependence from those arising from decision making, we first study dependent-data linear regression via SGD. At each time $t$, we observe $(\X_t, Y_t) \in \RR^d \times \RR$ generated from the linear model
    \begin{align*}
		Y_t=\X_t^{\top}\Bbeta^*+\xi_t,
	\end{align*}
	where $\X_t$ and $\xi_t$ may depend on past observations. The goal is to estimate $\Bbeta^* \in \RR^d$ in an online manner, processing each data point as it arrives without storing the entire history. We allow an arbitrary initialization $\Bbeta_0\in\mathbb{R}^{d}$. At time $t$, suppose the current iterate is $\Bbeta_{t-1}$ and a new data pair $(\X_t,Y_t)$ becomes available. We then update the parameter using the standard SGD rule:
    \begin{align}
	\Bbeta_{t+1}=\Bbeta_{t}-\eta_{t}\cdot(\X_t^{\top}\Bbeta_{t}-Y_t)\X_t,
	\label{alg:regression}
    \end{align}
    where $\eta_{t}$ is the stepsize to be specified later. Let $\calF_t = \sigma(Y_{t-1}, \X_{t-1}, \ldots, Y_0, \X_0)$ denote the $\sigma$-field generated by all randomness up to time $t-1$, and define $\calF_t^+ = \sigma(\X_t, Y_{t-1}, \X_{t-1}, \ldots, Y_0, \X_0)$. The update in \eqref{alg:regression} ensures that $\Bbeta_t$ is $\calF_t$-measurable, and that $\calF_t \subseteq \calF_t^+ \subseteq \calF_{t+1}$. We next establish the convergence of \eqref{alg:regression} under the following assumptions.

\begin{assumption}[Covariates: Martingale Difference with Conditional Sub-Gaussian Norm]
\label{assm:cov}
	The covariate sequence $\X_t\in\RR^d$ satisfies $\EE \{\X_t|\calF_t \}=0$ a.s., and $\|\X_t|\calF_t\|_{\Psi_2}^2\leq \lambda_{\max}$. Moreover, there exists $\lambda_{\min}>0$ such that $\EE \{\X_t\X_t^{\top}|\calF_t \}\succeq \lambda_{\min}\cdot\I_{d}$ a.s. Here, $\lambda_{\min}$ and $\lambda_{\max}$ are constants. 
\end{assumption}

\begin{assumption}[Noise: Martingale Difference with Conditional Sub-Gaussian Norm]
 \label{assm:noise}
	The noise variable $\xi_t\in\RR$ satisfies $\EE \{\xi_t|\calF_t^+ \}=0$ a.s., and $\|\xi_t|\calF_t^+\|_{\Psi_2}\leq \sigma$, where $\sigma$ is a constant. 
\end{assumption}

\noindent 
Assumptions \ref{assm:cov} and \ref{assm:noise} allow both $\X_t$ and $\xi_t$ to depend on the historical data. This generalizes the commonly imposed assumptions of i.i.d. or uniformly bounded covariates, as seen in \cite{perchet2013multi}, \cite{fang2018online}, \cite{chen2020statistical}, \cite{han2022online}, \cite{shao2022berry}, \cite{li2023online}, \cite{agrawalla2025statistical}, \cite{wei2025online}, \cite{han2024online} and \cite{shen2024online}. We emphasize that our framework is neither contained in nor contains the classical stationary or strong-mixing settings; the two regimes are fundamentally incomparable \citep{wu2007strong,xiao2012covariance}.

	\begin{example}
	    Let $\X_0$ be any centered random vector satisfying $\|\X_0\|_{\Psi_2}< \infty$ and $\lambda_{\min}(\EE\{\X_0\X_0^{\top}\})>0$. For $t\geq 1$, consider the process $\X_t=a_t\cdot\frac{\X_{t-1}}{\|\X_{t-1}\|}+\E_t$, where $\EE\{a_t|\calF_t\}=0$, $|a_t|\leq 1$, $\|\E_t|\calF_t\|_{\Psi_2}\leq \lambda$, and $\E_t\perp\X_{t-1}|\calF_{t}$, with $\lambda_{\min}(\EE  \{\E_t\E_t^{\top} \})>0$. In this construction, $\{\X_t\}$ is dependent and may fail to be stationary or locally stationary. Nevertheless, it satisfies Assumptions \ref{assm:cov} and \ref{assm:noise}.
	\end{example}

    The next theorem establishes the non-asymptotic performance of SGD under two stepsize schemes. Interestingly, SGD can well incorporate the dependent data as if they were i.i.d. Gaussian, which is also verified with numeric experiments (Section~\ref{sec:numeric}).

    \begin{theorem}[Non-asymptotic Performance]
    \label{thm:nonsymLR}
	Suppose Assumptions \ref{assm:cov} and \ref{assm:noise} hold. The iterates generated by update \eqref{alg:regression} satisfy:
	\begin{enumerate}
		\item \textbf{Constant Stepsize.} With a constant stepsize $\eta_{t}:=\eta \leq \lambda_{\min}/(d\lambda_{\max}^2)$, the following holds with probability at least $1-(t+1)\exp(-c_1d)$:
		\begin{align*}
			\|\Bbeta_{t+1}-\Bbeta^*\|^2\leq 5(1-\eta \lambda_{\min}/2)^{t+1}\|\Bbeta_{0}-\Bbeta^*\|^2 +5\frac{\lambda_{\max}}{\lambda_{\min}}\eta\sigma^2d.
		\end{align*}
	In particular, after $t_1 \ge \log(\|\Bbeta_{0}-\Bbeta^*\|/(\sqrt{\eta d}\sigma))$, we have $\|\Bbeta_{t_1}-\Bbeta^*\|^2 \le C\frac{\lambda_{\max}}{\lambda_{\min}}\eta\sigma^2 d \le C\frac{\sigma^2}{\lambda_{\max}}$.
	\item  \textbf{Decaying Stepsize.} Suppose some iteration $t_1$ satisfies $\|\Bbeta_{t_1}-\Bbeta^*\|^2 \leq\sigma^2/\lambda_{\max}$.\footnote{This condition is imposed for clarity of presentation. The proof shows a more general dynamic, 
    $$
        \|\Bbeta_{t+1}-\Bbeta^*\|^2 \leq C\l(\frac{d}{t+1+\Cb d}\r)^{\Ca-2}\|\Bbeta_0-\Bbeta^*\|^2+  \frac{C}{\lambda_{\min}}  \frac{\max\{d,\delta_{t+1}\}}{t+1+\Cb d}  \sigma^2,
    $$
    where the effect of the initial error shrinks rapidly when $\Ca$ is large. Since a constant stepsize yields linear decay in the initial error, we assume the simplified condition $\|\Bbeta_{t_1}-\Bbeta^*\|^2 \leq\sigma^2/\lambda_{\max}$ for exposition.\label{note1}} Take $\eta_{t}=\frac{\Ca/\lambda_{\min}}{t-t_1+\Cb d}$ with $\Ca\geq 2$ and $\Cb\geq 3\Ca^2(\lambda_{\max}^2/\lambda_{\min}^2)$. For any non-decreasing sequence $\{\delta_t\}$ satisfying $(i)$ $0\leq\delta_t\leq\delta_{t+1}$, $(ii)$ $\delta_{t}\leq C t/\Cb$, we have, with probability at least $1-C\exp(-cd)-\sum_{l=t_1}^t\exp(-c(d+\delta_l))$, that for all $t\in[t_1,+\infty]$,
	\begin{align*}
			\|\Bbeta_{t+1}-\Bbeta^*\|^2\leq C^{*}\ \frac{1}{\lambda_{\min}} \frac{\max\{d,\delta_{t+1}\}}{t+1+\Cb d}  \sigma^2, \quad \text{where } C^{*}=2C\Ca\frac{\lambda_{\max}}{\lambda_{\min}}.
	\end{align*}
	\end{enumerate}
\end{theorem}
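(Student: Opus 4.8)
The plan is to track the error vector $\boldsymbol{\Delta}_t := \Bbeta_t - \Bbeta^*$. Subtracting $\Bbeta^*$ from \eqref{alg:regression} and using $Y_t = \X_t^\top\Bbeta^* + \xi_t$ gives the affine recursion $\boldsymbol{\Delta}_{t+1} = (\I - \eta_t\X_t\X_t^\top)\boldsymbol{\Delta}_t + \eta_t\xi_t\X_t$. Expanding the squared norm yields the exact one-step identity
$$\|\boldsymbol{\Delta}_{t+1}\|^2 = \|\boldsymbol{\Delta}_t\|^2 - \eta_t(2-\eta_t\|\X_t\|^2)(\X_t^\top\boldsymbol{\Delta}_t)^2 + 2\eta_t\xi_t(1-\eta_t\|\X_t\|^2)(\X_t^\top\boldsymbol{\Delta}_t) + \eta_t^2\xi_t^2\|\X_t\|^2,$$
which separates a signed drift term, a cross term, and a pure-noise term. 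First I would establish the basic concentration facts that hold conditionally on $\calF_t$ (so $\boldsymbol{\Delta}_t$ is frozen): via Assumption~\ref{assm:cov} and the vector conditional Orlicz norm, $\|\X_t\|^2 \le C d\lambda_{\max}$; the quadratic form $(\X_t^\top\boldsymbol{\Delta}_t)^2$ is conditionally sub-exponential with conditional mean at least $\lambda_{\min}\|\boldsymbol{\Delta}_t\|^2$; and, via Assumption~\ref{assm:noise} with $\EE\{\xi_t|\calF_t^+\}=0$, the cross term is a martingale difference whose conditional sub-exponential norm is controlled by $\sigma\sqrt{\lambda_{\max}}\|\boldsymbol{\Delta}_t\|$. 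Each such deviation carries a tail of the form $\exp(-c(d+\delta))$ at scale $\delta$.

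Taking conditional expectations, the choice $\eta_t \le \lambda_{\min}/(d\lambda_{\max}^2)$ guarantees $\eta_t\|\X_t\|^2 \le 1$ on the good event, so $2-\eta_t\|\X_t\|^2 \ge 1$ and the drift contracts: $\EE\{\|\boldsymbol{\Delta}_{t+1}\|^2|\calF_t\} \le (1-\eta_t\lambda_{\min})\|\boldsymbol{\Delta}_t\|^2 + \eta_t^2\sigma^2 d\lambda_{\max}$, up to the second-order term $\eta_t^2\EE\{\|\X_t\|^2(\X_t^\top\boldsymbol{\Delta}_t)^2|\calF_t\}$, which the stepsize bound forces to the same order as a single factor $\eta_t\lambda_{\min}\|\boldsymbol{\Delta}_t\|^2$ and is hence absorbable. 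For the constant-stepsize claim I would iterate this with contraction factor $(1-\eta\lambda_{\min}/2)$ (the extra factor of $2$ buys room for fluctuations), controlling per-step deviations on the intersection of good events over $t=0,\dots,T$; the union bound over these $t+1$ steps produces exactly the stated failure probability $(t+1)\exp(-c_1 d)$, and summing the geometric noise contributions $\sum_s(1-\eta\lambda_{\min}/2)^{\,t-s}\eta^2\sigma^2 d\lambda_{\max}$ gives the $\tfrac{\lambda_{\max}}{\lambda_{\min}}\eta\sigma^2 d$ floor, below which the geometric initial-error term eventually decays to yield $\|\boldsymbol{\Delta}_{t_1}\|^2\le\sigma^2/\lambda_{\max}$.

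For the decaying-stepsize claim, with $\eta_t = \tfrac{\Ca/\lambda_{\min}}{t-t_1+\Cb d}$ the one-step factor becomes $(1-\eta_t\lambda_{\min}) = 1-\tfrac{\Ca}{t-t_1+\Cb d}$, and the condition $\Cb \ge 3\Ca^2\lambda_{\max}^2/\lambda_{\min}^2$ is precisely what keeps $\eta_t\|\X_t\|^2\le 1$ and the second-order term dominated throughout. I would then solve the recursion by telescoping the products $\prod_{s}(1-\Ca/(s-t_1+\Cb d)) \asymp \big((t_1+\Cb d)/(t+\Cb d)\big)^{\Ca}$; the initial error contributes the $(d/(t+1+\Cb d))^{\Ca-2}\|\boldsymbol{\Delta}_0\|^2$ term of footnote~\ref{note1}, while the accumulated noise $\sum_s \big((s+\Cb d)/(t+\Cb d)\big)^{\Ca}\tfrac{\Ca^2\sigma^2 d}{\lambda_{\min}(s+\Cb d)^2}$ telescopes (using $\Ca\ge 2$) to the advertised $\tfrac{1}{\lambda_{\min}}\tfrac{d}{t+1+\Cb d}\sigma^2$ rate. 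The $\max\{d,\delta_{t+1}\}$ in the numerator enters because the per-step fluctuation at time $l$ is only controlled at scale $\delta_l$, inflating the effective variance proxy from $d$ to $d+\delta_l$.

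The main obstacle is twofold and concentrated in the passage from the conditional-expectation contraction to a high-probability bound holding simultaneously for all $t\in[t_1,+\infty]$. First, a single sample gives no contraction with high probability, since $\X_t$ can be nearly orthogonal to $\boldsymbol{\Delta}_t$; the drift is genuinely only an expectation statement and must be converted into a trajectory bound on $V_t=\|\boldsymbol{\Delta}_t\|^2$ through a supermartingale/induction argument rather than a step-by-step deterministic contraction. Second, and more seriously for $T=+\infty$, a crude union bound diverges: the non-decreasing sequence $\{\delta_t\}$ is engineered so that $\sum_{l\ge t_1}\exp(-c(d+\delta_l))<\infty$, and the delicate point is to thread $\delta_t$ through the solved recursion so that it appears only as $\max\{d,\delta_{t+1}\}$ in the final numerator rather than accumulating multiplicatively. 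Verifying that $\eta_t^2\|\X_t\|^2(\X_t^\top\boldsymbol{\Delta}_t)^2$ stays subordinate to the first-order drift under the stated constants $\Ca,\Cb$ is the routine-but-load-bearing computation underpinning both claims.
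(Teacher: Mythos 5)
Your proposal follows essentially the same route as the paper's proof: the same one-step expansion of $\|\Bbeta_{t+1}-\Bbeta^*\|^2$, the same conditional-expectation contraction (with the second-order term $\eta_t^2\EE\{\|\X_t\|^2(\X_t^\top\boldsymbol{\Delta}_t)^2|\calF_t\}$ absorbed by the stepsize condition), the same decomposition of the residual into conditionally sub-exponential martingale differences, and the same induction-on-good-events scheme in which the unrolled weighted martingale sums are controlled by a Bernstein-type inequality for conditional Orlicz norms, yielding the $(t+1)\exp(-c_1d)$ union bound for the constant stepsize and the $\sum_l \exp(-c(d+\delta_l))$ accumulation (with the telescoped products $\prod_l(1-\Ca/(l+\Cb d))$ and the $(\cdot)^{\Ca-2}$ initial-error decay) for the decaying stepsize. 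The only blemish is cosmetic: on the good event $\|\X_t\|^2\le 2d\lambda_{\max}$ the stepsize bound gives $\eta_t\|\X_t\|^2\le 2\lambda_{\min}/\lambda_{\max}$, not necessarily $\le 1$, but this does not affect the argument since, as you note, the second-order term is absorbed into the drift at the level of conditional expectations.
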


The proof of part (1) is deferred to Appendix~\ref{sec:proofLR}. Part (2) is included as a special case of Theorem~\ref{thm:LBD-estimation}, and thus its proof is omitted. Theorem~\ref{thm:nonsymLR} provides a two-phase online estimation strategy. The first phase, using a constant stepsize, ensures rapid linear contraction toward a neighborhood of $\Bbeta^*$; the radius of this neighborhood scales with the specified constant stepsize $\eta$. The second phase adopts a decaying stepsize and achieves statistically optimal performance, with error on the order of $O((d+\delta_t)/t)$. Taking $\delta_t = c \log t$ yields, with probability at least $1 - C \exp(-c d)$, for all $t= t_1,\ldots,+\infty$,
\begin{align}
\|\Bbeta_{t}-\Bbeta^*\|^2\leq C^{*} \frac{1}{\lambda_{\min}}  \frac{\max\{d,\log(t)\}}{t+\Cb d}  \sigma^2 . \label{eq:LR_logt}
\end{align}
Compared with offline linear regression, the additional factor $\log(t)/ t$ in online learning is unavoidable to allow $t=+\infty$. Indeed, \cite{ma2024high} shows that if $[\X_1,\ldots,\X_t]^\top$ satisfies suitable conditions, then
\begin{align*}
    \PP \bigg\{   \|\widehat{\Bbeta}_t(\{\X_l,Y_l\}_{l=1}^t)-\Bbeta^*\|\geq c\sigma^2\cdot\frac{d+\log(\delta)}{t} \bigg\} \geq\frac{1}{\delta}.
\end{align*}
Setting $\delta=t$ gives
    \begin{align*}
    \sum_{t=1}^{+\infty}\PP \bigg\{ \|\widehat{\Bbeta}_t(\{\X_l,Y_l\}_{l=1}^t)-\Bbeta^*\|\geq c\frac{\sigma^2}{\lambda_{\min}}\cdot\frac{d+\log(t)}{t} \bigg\} =+\infty ,
    \end{align*}
so the $\log(t)/t$ term in Theorem~\ref{thm:nonsymLR} is optimal, up to constants. Even when $\widehat{\Bbeta}_t$ are independent, the second Borel-Cantelli lemma implies
\begin{align*}
    \PP \bigg\{ \|\widehat{\Bbeta}_t(\{\X_l,Y_l\}_{l=1}^t)-\Bbeta^*\|\geq c\frac{\sigma^2}{\lambda_{\min}}\cdot\frac{d+\log(t)}{t}, \, {\rm i.o.} \bigg\} =1 .
\end{align*}

\begin{remark}[Tail Probability Accumulation]
    Non-asymptotic analyses for online estimation and SGD typically apply concentration bounds at each time step, since data are processed sequentially without storage. This inherently leads to accumulation of tail probabilities \citep{jin2016provable,han2022online,li2023online,shen2024online,liurevisiting}. Theorem~\ref{thm:nonsymLR} advances the state of the art by accommodating an infinite horizon $T\leq+\infty$, whereas earlier results were restricted to horizons of size at most $T \leq C\exp(d)$ (e.g., \citealp{shen2024online}). Enabling a non-asymptotic analysis for potentially unbounded horizons is a substantial and nontrivial improvement.
\end{remark}

\begin{remark}[Regret]
    Various definitions of regret exist for online linear regression. Typically, regret measures cumulative predictive performance rather than estimation quality at a single time point. Since Theorem~\ref{thm:nonsymLR} ensures optimal estimation error uniformly for all sufficiently large $t$, the corresponding regret is also statistically optimal. To avoid confusion with bandit-style regret definitions, we omit further discussion here.
\end{remark}

\noindent We now turn to the asymptotic behavior of $\Bbeta_t$ under Markovian covariates and noise with convergent second moments. The result shows that SGD iterates converge in distribution to a multivariate Gaussian. We use $O_{\PP}(\cdot)$ to denote stochastic boundedness.

\begin{theorem}[Asymptotic Performance]
\label{thm:asympLR}
    Assume $\EE\{\X_t|\calF_t\}=0$, $\EE\{\xi_t|\calF_t^+\}=0$, and $\xi_t \perp \X_t|\calF_t$. Suppose there exist $\bSigma^*$ and $\sigma^*$ such that 
	$$
     \displaystyle\lim_{t\to+\infty}\EE\big\{ \big\|\EE\{\X_t^{\top}\X_t|\calF_t\}-\bSigma^* \big\| \big\}=0,\,\text{ and } \lim_{t\to+\infty}\EE\big\{ \big|\EE\{\xi_t^2|\calF_t\}-\sigma_*^2 \big| \big\}=0.$$ 
     Additionally, suppose there exist $\lambda_{\max},\lambda_{\min},\sigma>0$ such that 
     $$
     \lambda_{\min}\I\preceq\EE\{\X_t\X_t^{\top}|\calF_t\},\quad \displaystyle \sup_{\V\in\SS^{d-1}:\,\V\in\calF_t}\EE\{(\X_t^{\top}\V)^4|\calF_t\}\leq\lambda_{\max}^2,\quad \EE\{\xi_t^4|\calF_t^+ \}\leq \sigma^4\quad\text{a.s.}
     $$ 
     With stepsize $\eta_{t}=\frac{\Ca/\lambda_{\min}}{t-t_1+\Cb d}$, where $\Ca\geq 2$ and $\Cb\geq 3\Ca^2(\lambda_{\max}^2/\lambda_{\min}^2)$, we have the decomposition
    \begin{align*}
        \Bbeta_t-\Bbeta^*=\sum_{j=0}^{t}\prod_{l=j+1}^t\l(\I-\eta_l\bSigma^*\r)\eta_j\cdot\xi_j\cdot\X_j+\R_t ,
    \end{align*}
    where $R_t=O_{\PP}(1/t)$. Moreover, 
    $$
        \sqrt{t}\sum_{j=0}^{t}\prod_{l=j+1}^t\l(\I-\eta_l\bSigma^*\r)\eta_j\cdot\xi_j\cdot\X_j \rightsquigarrow N(\boldsymbol{0},\U^*\boldsymbol{\Lambda}_{\Ca}^*\U^{*\top}),
    $$
    where $\bSigma^*=\U^*\boldsymbol{\Lambda}^*\U^{*\top}$ is the eigen-decomposition, $\boldsymbol{\Lambda}^*=\operatorname{diag}\{\lambda_1,\ldots,\lambda_d\}$, and $$\boldsymbol{\Lambda}_{\Ca}^*=\operatorname{diag}\l\{\frac{\sigma_*^2}{\lambda_i}\cdot\frac{(\Ca\lambda_i/\lambda_{\min})^2}{2\Ca\lambda_i/\lambda_{\min}-1}\r\}.$$
\end{theorem}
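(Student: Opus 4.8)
The plan is to linearize the SGD error recursion around the deterministic curvature $\bSigma^*$ and then show the linearization error is negligible after scaling. Writing $\boldsymbol{\Delta}_t:=\Bbeta_t-\Bbeta^*$ and substituting $Y_t=\X_t^\top\Bbeta^*+\xi_t$ into \eqref{alg:regression} gives the exact linear recursion $\boldsymbol{\Delta}_{t+1}=(\I-\eta_t\X_t\X_t^\top)\boldsymbol{\Delta}_t+\eta_t\xi_t\X_t$. I would define the leading term $\M_t:=\sum_{j}\prod_{l=j+1}^{t}(\I-\eta_l\bSigma^*)\,\eta_j\xi_j\X_j$ as the solution of the idealized recursion $\M_{t+1}=(\I-\eta_t\bSigma^*)\M_t+\eta_t\xi_t\X_t$ with $\M_0=\boldsymbol 0$ (this matches the stated leading term up to the indexing convention), so that $\R_t:=\boldsymbol{\Delta}_t-\M_t$ obeys $\R_{t+1}=(\I-\eta_t\bSigma^*)\R_t-\eta_t(\X_t\X_t^\top-\bSigma^*)\boldsymbol{\Delta}_t$. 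Unrolling this identity writes $\R_t$ as the initial error $\prod_{l}(\I-\eta_l\bSigma^*)\boldsymbol{\Delta}_0$ propagated through the contraction, minus the accumulated perturbation $\sum_j\prod_{l=j+1}^{t}(\I-\eta_l\bSigma^*)\,\eta_j(\X_j\X_j^\top-\bSigma^*)\boldsymbol{\Delta}_j$.

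The first main task is to bound $\R_t$. Working in the eigenbasis of $\bSigma^*$, the partial products $\prod_{l=j+1}^{t}(1-\eta_l\lambda_i)$ behave like $(j/t)^{\Ca\lambda_i/\lambda_{\min}}$, so the initial-error term decays like $t^{-\Ca}$ with $\Ca\ge 2$ and is $o_{\PP}(1/t)$. For the accumulated perturbation I would split $\X_j\X_j^\top-\bSigma^*=(\X_j\X_j^\top-\bSigma_j)+(\bSigma_j-\bSigma^*)$ with $\bSigma_j:=\EE\{\X_j\X_j^\top|\calF_j\}$. The fluctuation piece is a martingale transform since $\boldsymbol{\Delta}_j\in\calF_j$, and I would bound its conditional second moment via the fourth-moment condition $\EE\{(\X_j^\top\V)^4|\calF_j\}\le\lambda_{\max}^2$ together with the $L^2$ consequence of Theorem~\ref{thm:nonsymLR}, namely $\EE\|\boldsymbol{\Delta}_j\|^2\lesssim d/j$; summing the weighted variances $\sum_j\eta_j^2(j/t)^{2\Ca}\,\EE\|\boldsymbol{\Delta}_j\|^2\lesssim t^{-2\Ca}\sum_j j^{2\Ca-3}\sim t^{-2}$ yields the sharp $O_{\PP}(1/t)$ rate. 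The bias piece uses the convergence $\EE\|\bSigma_j-\bSigma^*\|\to 0$: since the weights concentrate on indices $j=\Theta(t)$, a Cesaro argument already gives $o_{\PP}(1/\sqrt t)$ (enough for the limit law), while under the geometric convergence typical of the Markov structure this contribution is $O_{\PP}(t^{-\Ca})$ and is dominated by the fluctuation term, delivering the stated $\R_t=O_{\PP}(1/t)$.

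For the distributional limit I would treat $\sqrt t\,\M_t=\sum_j \sqrt t\,\prod_{l=j+1}^{t}(\I-\eta_l\bSigma^*)\,\eta_j\xi_j\X_j$ as a triangular array of martingale differences with respect to $\{\calF_{j+1}\}$, noting $\EE\{\xi_j\X_j|\calF_j\}=\EE\{\X_j\,\EE\{\xi_j|\calF_j^+\}|\calF_j\}=\boldsymbol 0$, and apply a martingale central limit theorem in conditional-variance-plus-Lindeberg form. The conditional-covariance requirement reduces to controlling $t\sum_j\prod_{l=j+1}^{t}(\I-\eta_l\bSigma^*)\,\eta_j^2\,\EE\{\xi_j^2|\calF_j\}\bSigma_j\,\prod_{l=j+1}^{t}(\I-\eta_l\bSigma^*)$; replacing $\EE\{\xi_j^2|\calF_j\}\bSigma_j$ by its limit $\sigma_*^2\bSigma^*$ (again justified by the weights concentrating on large $j$ plus the $L^1$ convergence hypotheses, using $\xi_j\perp\X_j\mid\calF_j$), everything commutes with $\bSigma^*$, so I diagonalize and evaluate each eigen-coordinate. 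The conditional Lindeberg condition I would verify through a conditional Lyapunov bound on $\sum_j\EE\{\|\cdot\|^4|\calF_j\}\lesssim t^2 t^{-4\Ca}\sum_j j^{4\Ca-4}\sim t^{-1}\to 0$, invoking $\EE\{\xi_j^4|\calF_j^+\}\le\sigma^4$ and the fourth-moment bound on $\X_j$.

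Finally, the covariance is a Riemann-sum evaluation: with $a_i:=\Ca\lambda_i/\lambda_{\min}$, $\eta_j\approx(a_i/\lambda_i)/j$, and $\prod_{l=j+1}^{t}(1-\eta_l\lambda_i)\approx (j/t)^{a_i}$, the $i$-th diagonal entry of the limiting conditional covariance of $\sqrt t\,\M_t$ equals $\lim_t t\sum_j\big((j/t)^{a_i}\big)^2\eta_j^2\,\sigma_*^2\lambda_i$, which via $\sum_{j\le t}j^{2a_i-2}\sim t^{2a_i-1}/(2a_i-1)$ (finite since $a_i\ge\Ca\ge 2$) reduces to $\frac{\sigma_*^2}{\lambda_i}\cdot\frac{a_i^2}{2a_i-1}$, exactly $[\boldsymbol{\Lambda}_\Ca^*]_{ii}$; off-diagonal entries vanish because the limiting conditional covariance is diagonal in the eigenbasis of $\bSigma^*$, giving $\U^*\boldsymbol{\Lambda}_\Ca^*\U^{*\top}$. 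I expect the main obstacle to be the sharp remainder bound rather than the CLT: extracting the optimal $O_{\PP}(1/t)$ from the accumulated fluctuation hinges on feeding the statistically optimal $L^2$ estimate $\EE\|\boldsymbol{\Delta}_j\|^2\lesssim d/j$ of Theorem~\ref{thm:nonsymLR} into a delicately balanced weighted sum, and on confirming that the accumulated bias from $\bSigma_j-\bSigma^*$ is genuinely lower order, which is precisely where the decay of the contraction weights, the $d/j$ decay of $\EE\|\boldsymbol{\Delta}_j\|^2$, and the convergence $\bSigma_j\to\bSigma^*$ must be reconciled.
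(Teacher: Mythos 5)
Your overall architecture is the same as the paper's (Theorem~\ref{thm:asympLR} is proved there as the special case of Proposition~\ref{prop:Bahadur}): an exact error recursion, a four-way split into initial error, noise martingale, covariance bias, and covariance fluctuation, a martingale CLT via Cram\'er--Wold with a Lyapunov-type Lindeberg check, and a Riemann-sum evaluation of the limiting covariance, which you compute correctly as $\boldsymbol{\Lambda}_{\Ca}^*$. Your treatment of the fluctuation martingale (second moments fed by $\EE\|\Bbeta_j-\Bbeta^*\|^2\lesssim d/j$, giving $O_{\PP}(1/t)$) matches the paper's term $\B_4$, and your variance and Lyapunov computations match its analysis of $\B_2$.

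The genuine gap is in the bias term, and you have in fact diagnosed it yourself. Because you place the limit matrix $\bSigma^*$ inside the products from the start, your remainder contains
\begin{align*}
\sum_{j=0}^{t}\prod_{l=j+1}^{t}\l(\I-\eta_l\bSigma^*\r)\eta_j\l(\bSigma_j-\bSigma^*\r)\l(\Bbeta_j-\Bbeta^*\r),\qquad \bSigma_j:=\EE\{\X_j\X_j^{\top}|\calF_j\}.
\end{align*}
The theorem assumes only $\EE\|\bSigma_j-\bSigma^*\|\to 0$ with \emph{no rate}; the weights $\eta_j\|\prod_{l>j}(\I-\eta_l\bSigma^*)\|\asymp j^{\Ca-1}/t^{\Ca}$ sum to $\Theta(1)$ and concentrate at $j=\Theta(t)$, where the summand is of order $\epsilon_j\sqrt{d/j}$ with $\epsilon_j\to 0$ arbitrarily slowly, so the best available bound is exactly your $o_{\PP}(1/\sqrt t)$ --- enough for the CLT, but not the stated $\R_t=O_{\PP}(1/t)$. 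Your patch (``geometric convergence typical of the Markov structure'') imports an assumption the theorem does not make, so as written your proposal proves the limit law but not the remainder rate. The paper avoids this trap by a different placement of the approximation: its leading term is built with the \emph{conditional} matrices inside the products, $\prod_{l=j+1}^{t}(\I-\eta_l\bSigma_l)$ (in the bandit proof, $\bSigma_{i,l}^*$), so the bias never enters the remainder at all; the remainder consists only of the initial-error term and the covariance-fluctuation martingale ($\B_1$, $\B_3$, $\B_4$ there), each genuinely $O_{\PP}(1/t)$ under the stated assumptions, and the replacement of $\bSigma_l$ and $\EE\{\xi_l^2|\calF_l\}$ by $\bSigma^*$ and $\sigma_*^2$ is deferred to the conditional-variance computation of the martingale CLT, where convergence in probability --- with no rate --- suffices. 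Two smaller points: (i) $\EE\|\Bbeta_j-\Bbeta^*\|^2\lesssim d/j$ is not an immediate ``$L^2$ consequence'' of the high-probability Theorem~\ref{thm:nonsymLR}; the paper derives this bound (and the fourth-moment analogue your Lyapunov step needs) by a separate in-expectation recursion (Propositions~\ref{prop:LBD_expectation_square} and \ref{prop:LBD_expectation_four}); (ii) in fairness, the displayed statement of the theorem uses the deterministic products, so reconciling it literally with the $O_{\PP}(1/t)$ remainder runs into the same issue you hit --- the paper's proof establishes the decomposition relative to the conditional-product leading term.
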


 Theorem~\ref{thm:asympLR} characterizes the asymptotic distribution of SGD iterates under dependent covariates and noise, encompassing the i.i.d. Gaussian case. The limiting covariance reflects an interplay between $\lambda_i$ and $\lambda_{\min}$. Note also that the limit law depends on $\Ca$ but not on $\Cb$: asymptotically, $t+ \Cb d = O(t)$, whereas Theorem~\ref{thm:nonsymLR} requires a sufficiently large $\Cb$ to ensure sharp non-asymptotic bounds and tail probabilities. The bandit setting (Section~\ref{sec:asymptotic}) extends Theorem~\ref{thm:asympLR}; its proof is therefore omitted and we defer our comparisons with existing literature to Section~\ref{sec:literature-bandit}. For related high-dimensional inference results under i.i.d. data and stepsize decaying as $1/t^\alpha$ with $\alpha \in (1/2, 1)$, we refer to \cite{agrawalla2025statistical} and \cite{shao2022berry}.

\section{Sparse Linear Regression with Dependent Data}
\label{sec:sparse}

In this section, we study sparse linear regression in the presence of dependent covariates and noise. The covariates and responses satisfy the same linear model as in Section~\ref{sec:LR}. In contrast to that setting, the unknown parameter vector $\Bbeta^*$ is now assumed to lie in a low-dimensional subspace induced by a sparsity constraint; specifically,
\begin{align*}
Y_t = \X_t^{\top}\Bbeta^* + \xi_t, \quad \text{with} \quad |\operatorname{supp}(\Bbeta^*)| \leq s .
\end{align*}
We impose the following assumption on the covariates, which is weaker than Assumption~\ref{assm:noise} due to the sparsity of $\Bbeta^*$.

\begin{assumption}
	The covariate $\X_t$ satisfies $\EE\{\X_t|\calF_t\}=0$ almost surely. For some $s\leq d$, there exists $\lambda_{\min} >0$ such that 
    $$
    \min_{\calS\subseteq[d] : \, 1\leq|\calS|\leq s}\lambda_{\min}\Big( \EE\big\{ [\X_t]_{\calS}[\X_t]_{\calS}^{\top}\big| \calF_t \big\}\Big)\geq \lambda_{\min},\quad {\rm a.s.},
    $$
	and there exists $\lambda_{\max}>0$ such that
	\begin{align*}
		\max_{\calS\subseteq[d] :\, |\calS|\leq s}\big\|[\X_t]_{\calS}\big|\calF_t \big\|_{\Psi_2}^2\leq\lambda_{\max},\quad {\rm a.s.}.
	\end{align*}
    where $\lambda_{\min},\lambda_{\max}$ are constants. \label{assm:cov-sparse}
\end{assumption}
This assumption differs from the dense setting in Assumption~\ref{assm:cov}: here the restricted eigenvalue condition is imposed only on $s$-dimensional subvectors, making it strictly weaker. Furthermore, Assumption~\ref{assm:cov-sparse} guarantees the existence of finite constants $\OffDiagS,\OffDiagO\geq 0$ such that
\begin{align*}
	\max_{\calS'\subseteq[d]: |\calS'|\leq s}\;\max_{\calS\subseteq[d]: |\calS|\leq s}\|[\X_t]_{\calS'}[\X_t]_{\calS\setminus\calS'}^{\top}|\calF_t\|_{\Psi_1}\leq \OffDiagS,\quad \max_{i\notin \calS^*}\;\max_{\calS\subseteq[d]: |\calS|\leq s}\|[\X_t]_i[\X_t]_{\calS\setminus i}^\top |\calF_t\|_{\Psi_1}\leq\OffDiagO.
\end{align*}
These quantities measure correlations among subsets of covariates. They vanish, for example, when any two entries of the vector $\X_t$ have disjoint supports. For simplicity, we assume $$\|\Bbeta_{0}-\Bbeta^*\|^2\leq C \frac{\sigma^2}{\lambda_{\max}},$$ which can be ensured by an offline initialization or by running a short warm-up SGD phase with either a constant stepsize (as in Theorem~\ref{thm:nonsymLR}) or the $\frac{\Ca}{t+\Cb d}$ stepsize scheme described in footnote~\ref{note1}. 

Our analysis proceeds in three steps. Section~\ref{subsec:FixedSupport} studies the behavior of SGD under a fixed support, highlighting its dependence on $\OffDiagS$ even if the initial support does not include the true support. Section~\ref{subsec:SupportRecovery} introduces a statistic for support recovery and establishes conditions under which missing support elements can be detected. Section~\ref{subsec:SparseSGD} integrates the estimation and support-recovery procedures into a unified algorithm. The key observation is that while  a single data point suffices for estimation updates, accurate support recovery fundamentally requires aggregating historical information.

\subsection{Sparse SGD: Fixed Support}
\label{subsec:FixedSupport}

In this section, we analyze the behavior of sparse SGD when the support is fixed throughout the iterations. Let $\calS_0$ denote the initial support set and $\calS^* := \operatorname{supp}(\Bbeta^*)$ be the true support. Importantly, we impose no structural relationship between $\calS_0$ and $\calS^*$. In particular, neither $|\calS^*| = |\calS_0|$ nor in the inclusion $\calS^* \subseteq \calS_0$ is required. This stands in sharp contrast to settings such as \cite{fan2018statistical}, where the initial support is assumed to contain the true one.

The update rule operates as follows. At each iteration, the algorithm first computes a stochastic gradient using the newly received data point, updates the current estimate accordingly, and then projects the iterate back on the fixed support $\calS_0$ by zeroing out all other coordinates. Formally, the update is given by
\begin{align}
	\Bbeta_{t+1}=\calH_{\calS_0}\big( \Bbeta_{t} - \eta_{t}\cdot \g_t \big),
	\label{alg:sparse}
\end{align}
where $\g_t:=(\X_t^{\top}\Bbeta_{t}-Y_t)\X_t$ is the instantaneous gradient and $\calH_{\calS_0}$ denotes the hard-thresholding operator that retains only the coordinates in $\calS_0$. Although the support remains fixed, this scheme differs substantially from the dense setting considered in Section~\ref{sec:LR}. Even when $\calS_0$ is misspecified, the estimation error on $\calS_0$ can be affected by components in $\calS_0\setminus\calS^*$ due to correlations among the subvectors $[\X_t]_{\calS_0}$, $[\X_t]_{\calS^*\setminus\calS_0}$, and $[\X_t]_{\calS^*}$. The next lemma makes this dependence explicit and illustrates how misspecification of the support influences convergence.

\begin{lemma}
	Suppose Assumptions~\ref{assm:noise} and \ref{assm:cov-sparse} hold with $s$ where $|\calS_0|\leq s$ and $|\calS^*|\leq s$. Consider the stepsize $\eta_{t}=\frac{1}{\lambda_{\min}}\frac{\Ca}{t+\Cb s\log(2d/s)}$ with $\Ca\geq 2$ and $\Cb\geq C (\Ca \lambda_{\max} / \lambda_{\min})^2$. Then, 
	\begin{align*}
		\|[\Bbeta_{t}-\Bbeta^*]_{\calS_0^c}\|^2=\|[\Bbeta^*]_{\calS^*\setminus\calS_0}\|^2,
	\end{align*}
	and for any tail-probability sequence $\{\delta_{t}\}$ satisfying the requirements in Theorem~\ref{thm:nonsymLR}, with probability at least $1-\sum_{l=0}^{t}\exp(-c\{ s\log(2d/s)+l/\Ca \} )-\sum_{l=0}^{t}\exp(-c\{ s\log(2d/s)+\delta_{l} \})$, we have
	\begin{align*}
		\|[\Bbeta_{t+1}-\Bbeta^*]_{\calS_0}\|^2\leq \frac{C^*}{\lambda_{\min}}\frac{s\log(2d/s)+\delta_{t+1}}{t+1+\Cb s\log(2d/s)} \sigma^2+C\bigg(\frac{\OffDiagS}{\lambda_{\min}}\bigg)^2\cdot\|[\Bbeta^*]_{\calS^*\setminus\calS_0}\|^2.
	\end{align*}
where $C^*=C\Ca^2 \lambda_{\max} /\lambda_{\min} +\Cb \lambda_{\min}/ \lambda_{\max}$.
	\label{lem:FixedSupport}
\end{lemma}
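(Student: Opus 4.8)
\emph{Proof plan.} The first identity needs no probabilistic argument. Since every iterate of \eqref{alg:sparse} is passed through $\calH_{\calS_0}$, we have $[\Bbeta_t]_{\calS_0^c}=0$ for all $t$, hence $[\Bbeta_t-\Bbeta^*]_{\calS_0^c}=-[\Bbeta^*]_{\calS_0^c}$; because $\Bbeta^*$ is supported on $\calS^*$, its only nonzero coordinates outside $\calS_0$ are those indexed by $\calS^*\setminus\calS_0$, giving $\|[\Bbeta_t-\Bbeta^*]_{\calS_0^c}\|^2=\|[\Bbeta^*]_{\calS^*\setminus\calS_0}\|^2$. For the main bound, the first real step is to reduce the projected update to a self-contained recursion on $\calS_0$. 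Writing $\w_t:=[\Bbeta_t-\Bbeta^*]_{\calS_0}$, $\Z_t:=[\X_t]_{\calS_0}$, and $\b:=[\Bbeta^*]_{\calS^*\setminus\calS_0}$, the identity just proved shows that $[\Bbeta_t-\Bbeta^*]_{\calS_0^c}$ equals $-\b$ on $\calS^*\setminus\calS_0$ and vanishes elsewhere, so $\X_t^\top(\Bbeta_t-\Bbeta^*)=\Z_t^\top\w_t-[\X_t]_{\calS^*\setminus\calS_0}^\top\b$. Substituting into $\g_t=(\X_t^\top(\Bbeta_t-\Bbeta^*)-\xi_t)\X_t$ and retaining only the $\calS_0$ coordinates yields
\begin{align*}
\w_{t+1}=(\I-\eta_t\Z_t\Z_t^\top)\w_t+\eta_t\xi_t\Z_t+\eta_t\big([\X_t]_{\calS^*\setminus\calS_0}^\top\b\big)\Z_t ,
\end{align*}
which is exactly the decaying-stepsize recursion of Theorem~\ref{thm:nonsymLR}(2) and Theorem~\ref{thm:LBD-estimation}, except that the driving term now carries an extra misspecification perturbation $\m_t:=([\X_t]_{\calS^*\setminus\calS_0}^\top\b)\Z_t$ beside the usual martingale noise $\xi_t\Z_t$.

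The second step separates $\m_t$ into a drift and a martingale increment. Since $\EE\{\X_t|\calF_t\}=0$ and $\EE\{\xi_t|\calF_t^+\}=0$, the term $\xi_t\Z_t$ is a martingale difference with respect to $\calF_t$ (by the tower property, as $\Z_t$ is $\calF_t^+$-measurable), whereas $\m_t$ is not, since both of its factors are $\calF_t^+$-measurable. I would write $\m_t=\EE\{\m_t|\calF_t\}+(\m_t-\EE\{\m_t|\calF_t\})$, where the conditional mean $\EE\{\m_t|\calF_t\}=\EE\{[\X_t]_{\calS_0}[\X_t]_{\calS^*\setminus\calS_0}^\top|\calF_t\}\b$ is an $\calF_t$-measurable drift of norm at most $C\OffDiagS\|\b\|$ (the conditional $\Psi_1$ bound defining $\OffDiagS$ dominates the conditional expectation), while the residual is a martingale increment absorbed into the noise, with conditional Orlicz norm controlled by $\lambda_{\max}$, $\sigma$, and $\|\b\|$.

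The third step applies the apparatus of Theorem~\ref{thm:LBD-estimation} to this recursion. The restricted-eigenvalue condition $\EE\{\Z_t\Z_t^\top|\calF_t\}\succeq\lambda_{\min}\I$ from Assumption~\ref{assm:cov-sparse} drives the contraction of the random products $\prod_{l=j+1}^t(\I-\eta_l\Z_l\Z_l^\top)$ at the rate set by $\Ca$, uniformly over $t\in[0,+\infty]$, and the conditional sub-Gaussian bound $\|[\X_t]_{\calS}|\calF_t\|_{\Psi_2}^2\le\lambda_{\max}$ supplies the tail control. Summing the contracted squared martingale increments and concentrating them via the conditional Orlicz norms produces the optimal estimation term $\frac{C^*}{\lambda_{\min}}\frac{s\log(2d/s)+\delta_{t+1}}{t+1+\Cb s\log(2d/s)}\sigma^2$, where the effective dimension $s\log(2d/s)$ comes from uniform control over $s$-sparse directions, precisely the budget needed when the support is allowed to vary in Section~\ref{subsec:SparseSGD}. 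The drift, bounded by $C\OffDiagS\|\b\|$ at every step, propagates through the same contractions to a steady-state offset of size $\lesssim(\OffDiagS/\lambda_{\min})\|\b\|$, contributing the irreducible bias $C(\OffDiagS/\lambda_{\min})^2\|[\Bbeta^*]_{\calS^*\setminus\calS_0}\|^2$; combining the two and tracking constants yields the stated $C^*$.

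The main obstacle I anticipate is the uniform-in-$t$ control of the random contraction products together with the per-step tail probabilities, which must be summed over $l$ without the accumulation exploding, exactly the infinite-horizon difficulty flagged in the Tail Probability Accumulation remark and the reason the offset $\Cb s\log(2d/s)$ enters the stepsize. This part is inherited from Theorem~\ref{thm:LBD-estimation}. The genuinely new point is verifying that the drift term yields exactly the $(\OffDiagS/\lambda_{\min})^2$ scaling and does not couple with the martingale noise in a way that degrades the $(s\log(2d/s)+\delta_{t+1})/t$ rate.
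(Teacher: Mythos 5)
Your first identity, the reduction to the recursion
$\w_{t+1}=(\I-\eta_t\Z_t\Z_t^\top)\w_t+\eta_t\xi_t\Z_t+\eta_t([\X_t]_{\calS^*\setminus\calS_0}^\top\b)\Z_t$,
the drift/martingale split of the misspecification term, and the bound $\|\EE\{\m_t|\calF_t\}\|\leq C\OffDiagS\|\b\|$ are all correct, and the scalings you predict ($(\OffDiagS/\lambda_{\min})^2\|\b\|^2$ bias plus the $(s\log(2d/s)+\delta_{t+1})/t$ variance term) match the lemma. The genuine gap is the step you lean on hardest: you assert that the restricted-eigenvalue condition $\EE\{\Z_t\Z_t^\top|\calF_t\}\succeq\lambda_{\min}\I$ ``drives the contraction of the random products $\prod_{l=j+1}^t(\I-\eta_l\Z_l\Z_l^\top)$,'' and that this is inherited from Theorem~\ref{thm:LBD-estimation}. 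Neither claim holds. Each factor $\I-\eta_l\Z_l\Z_l^\top$ is the identity minus a \emph{rank-one} matrix: it has eigenvalue $1$ on the orthogonal complement of $\Z_l$, so its operator norm equals $1$ whenever $\eta_l\|\Z_l\|^2\leq 2$, and no single factor contracts pathwise. Contraction of the product is a nontrivial statement about products of random matrices; the restricted-eigenvalue hypothesis only lower-bounds conditional expectations and gives no pathwise (or uniform-in-$t$, high-probability) control of these products. Moreover, the paper's proof of Theorem~\ref{thm:LBD-estimation} (and of this lemma) never controls such products: it runs the recursion on the scalar $\|[\Bbeta_t-\Bbeta^*]_{\calS_0}\|^2$, bounds its conditional expectation --- where the restricted eigenvalue yields the factor $(1-2\eta_t\lambda_{\min})$ and the $\OffDiagS$ cross term, absorbed by Young's inequality, produces both the effective contraction $(1-\eta_t\lambda_{\min})$ and the bias $\eta_t(\OffDiagS)^2\|\b\|^2/\lambda_{\min}$ --- and then concentrates the deviation as three scalar martingale differences with \emph{deterministic} weights $\prod_j\bigl(1-\Ca/(j+\Cb s\log(2d/s))\bigr)$, under an induction over high-probability events $\calE_l$. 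Random matrix products appear only in the asymptotic Bahadur analysis (Proposition~\ref{prop:Bahadur}), and the remainder bounds there already presuppose the non-asymptotic rates, so invoking that machinery here would be circular.

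The natural repair of your plan --- taking conditional expectations of $\|\w_{t+1}\|^2$ from your vector recursion and treating the fluctuations as martingale differences --- is exactly the paper's argument: the drift you isolated reappears as the cross term $2\eta_t\OffDiagS\|\w_t\|\,\|\b\|$ in the conditional expectation rather than as a separately propagated vector. So your decomposition is a legitimate alternative way to see where the bias comes from, but as a proof strategy it only becomes rigorous once it collapses back into the squared-norm/conditional-expectation route; as written, the uniform control of $\prod_l(\I-\eta_l\Z_l\Z_l^\top)$ is an unproved (and, by the stated mechanism, unprovable) step.
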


This result highlights two key phenomena. First, the estimation error on $\calS_0$ decays at the minimax rate as in the dense case, if $\calS^*\subseteq\calS_0$. Second, and more importantly, the second term reveals the persistent effect of omitted relevant covariates. When $\calS_0$ fails to contain the full true support, correlations captured by $\OffDiagS$ introduce a nonvanishing bias term that cannot be eliminated through additional iterations. This underscores the necessity of a support-recovery mechanism, developed in Section~\ref{subsec:SupportRecovery}.

\subsection{Sparse SGD: Support Recovery}
\label{subsec:SupportRecovery}

As shown in Lemma~\ref{lem:FixedSupport}, a fixed but misspecified support inevitably induces a persistent bias. It is therefore essential to adaptively recover the support of $\Bbeta^*$. In this section, we develop a support-update mechanism based on a statistic that aggregates the interaction between the covariates and the residuals over a given index set. For any index set $\calT$ (e.g., $\calT=\{1,\ldots,T\}$), define
\begin{align*}
	\G(\{\Bbeta_t,\X_t,Y_t\}_{t\in\calT}):=\sum_{t\in\calT}(\X_t\Bbeta_{t}-Y_t)\X_t .
\end{align*}
The vector $\G(\cdot)$ encodes the alignment between the residuals and the covariates and coincides with the sum of stochastic gradients under squared loss. Importantly, even when other loss functions are used, this quantity remains informative. We will show that $\G$ is particularly effective in identifying coordinates in $\calS^* \setminus \calS_0$, thereby revealing missing components of the true support. To make this role explicit, we expand $\G$ using the model structure:
\begin{align*}
	\G(\{\Bbeta_t,\X_t,Y_t\}_{t\in\calT})= \sum_{t\in\calT} \X_t\X_t^{\top} (\Bbeta_t-\Bbeta^* )-\sum_{t\in\calT} \xi_t\cdot\X_t.
\end{align*}
We say that the sequence $\{\Bbeta_{t}\}_{t\in\calT}$ has a common support $\calS_{\calT}$ if $[\Bbeta_{t}]_{\calS_{\calT}^c}=\boldsymbol{0}$ for all $t \in \calT$. For such a sequence, we introduce the event
$$
\calE_{\{\calS_{\calT},V_{\calT} , W_{\calT} \}}:=\big\{ \|\Bbeta_{t}-\Bbeta^*\|\leq V_t, \, \|[\Bbeta_{t}-\Bbeta^*]_{\calS_{\calT}} \| \leq W_t, ~{\rm for~all}~ t \in \calT \big\} ,
$$
where $V_t$ and $W_t$ are deterministic bounds (specified later) controlling the global and restricted estimation errors along the trajectory. The next lemma establishes lower and upper bounds on the magnitudes of entries of $\G$ when restricted to different subsets of coordinates. These bounds create a separation that enables reliable support detection.

\begin{lemma} 
\label{lem:sparse-gradient}
	Suppose Assumptions~\ref{assm:noise} and \ref{assm:cov-sparse} hold. Let $\{\Bbeta_{t}\}_{t\in\calT}$ be a sequence with common support $\calS_{\calT}$, where $|\calT|\geq Cs$ and $|\calS_{\calT}|\leq s$. Under the event $ \calE_{\{\calS_{\calT},V_\calT,W_\calT\}}$, for any $u,\delta>0$, the following statements hold:
    \begin{itemize}
        \item (Lower bound on uncovered support coordinates) For the coordinates in $\calS^*\setminus\calS_{\calT}$,
        \begin{multline*}
		\|[	\G(\{\Bbeta_t,\X_t,Y_t\}_{t\in\calT})]_{\calS^*\setminus\calS_{\calT}}\|\geq |\calT|\cdot\lambda_{\min}\| [\Bbeta^*]_{\calS^*\setminus\calS_{\calT}}\|-\OffDiagS\sum_{t\in\calT} W_t\\-u-C\sigma\sqrt{|\calT|}\sqrt{\lambda_{\max}}\sqrt{|\calS^*\setminus\calS_{\calT}|\log(2d/|\calS^*\setminus\calS_{\calT}|)+\delta} 
        \end{multline*}
        with probability at least 
        \begin{align*} 
            1 & -\exp\l(Cs-\min\l\{\frac{u^2}{(\OffDiagS)^2\sum_{t\in\calT} W_t^2+|\calT|\lambda_{\max}^2\|[\Bbeta^*]_{\calS^*\setminus\calS_\calT}\|^2},\r.\r.\\
            &~~~~~~~~~~~~~~~~~~~~~~~~~~~~~~~~\l.\l.\frac{u}{\OffDiagS\max_{t\in\calT} W_t+\lambda_{\max}\|[\Bbeta^*]_{\calS^*\setminus\calS_\calT}\|}\r\}\r) \\
            & -\exp\l(-c\min\l\{\delta, \sqrt{\frac{\delta|\calT|}{|\calS^*\setminus\calS_{\calT}|\log(2d/|\calS^*\setminus\calS_{\calT}|)}}\r\}\r).
        \end{align*}

        \item (Upper bound on irrelevant coordinates) For the coordinates in $\calS^{*c}\cap\calS_{\calT}^c$, 
        \begin{align*}
		      \|[	\G(\{\Bbeta_t,\X_t,Y_t\}_{t\in\calT})]_{\calS^{*c}\cap\calS_{\calT}^c}\|_{\infty}\leq  \OffDiagO\sum_{t\in\calT} V_t+u+C\sigma\sqrt{\lambda_{\max}}\sqrt{\delta|\calT|}
	    \end{align*}
        with probability at least 
        $$
            1-d\exp\l(-\min\l\{\frac{u^2}{(\OffDiagO)^2\sum_{t\in\calT}V_t^2},\frac{u}{2\lambda_{\max,1}\max_t V_t}\r\}\r)-d\exp\Big(-\min\big\{\delta, \sqrt{|\calT|\delta} \big\} \Big).
        $$
    \end{itemize}
\end{lemma}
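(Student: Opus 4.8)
The plan is to decompose $\G$ into its three model-driven pieces and bound each separately on the good event $\calE_{\{\calS_\calT, V_\calT, W_\calT\}}$. Recall
$$
\G(\{\Bbeta_t,\X_t,Y_t\}_{t\in\calT})= \underbrace{\sum_{t\in\calT} \X_t\X_t^{\top} (\Bbeta_t-\Bbeta^* )}_{=:\A}-\underbrace{\sum_{t\in\calT} \xi_t\cdot\X_t}_{=:\B}.
$$
The signal lives in $\A$, so for the lower bound I would restrict $\A$ to coordinates in $\calS^*\setminus\calS_\calT$ and split $\X_t\X_t^\top(\Bbeta_t-\Bbeta^*)$ according to whether the inner index lies in $\calS_\calT$ or in $\calS^*\setminus\calS_\calT$ (the common support guarantees $[\Bbeta_t-\Bbeta^*]_i=-[\Bbeta^*]_i$ off $\calS_\calT$). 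The diagonal-in-$(\calS^*\setminus\calS_\calT)$ block contributes the main term: summing $\EE\{[\X_t]_{\calS^*\setminus\calS_\calT}[\X_t]_{\calS^*\setminus\calS_\calT}^\top \mid \calF_t\}\succeq\lambda_{\min}\I$ over $t\in\calT$ gives the leading $|\calT|\lambda_{\min}\|[\Bbeta^*]_{\calS^*\setminus\calS_\calT}\|$, and the off-diagonal cross terms against $\calS_\calT$ are controlled by $\OffDiagS$ and the restricted error bound $W_t$, producing the $\OffDiagS\sum_{t\in\calT}W_t$ penalty.

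The core technical engine is a \emph{martingale Bernstein/Freedman-type inequality under conditional Orlicz control}, which I would invoke for the centered parts. Writing $\X_t\X_t^\top-\EE\{\X_t\X_t^\top\mid\calF_t\}$ as a martingale-difference sequence and recalling $\|[\X_t]_{\calS'}[\X_t]_{\calS}^\top\mid\calF_t\|_{\Psi_1}\le\OffDiagS$, the fluctuation of $\A$ around its conditional mean has predictable quadratic variation of order $(\OffDiagS)^2\sum_t W_t^2 + |\calT|\lambda_{\max}^2\|[\Bbeta^*]_{\calS^*\setminus\calS_\calT}\|^2$ and jump size of order $\OffDiagS\max_t W_t+\lambda_{\max}\|[\Bbeta^*]_{\calS^*\setminus\calS_\calT}\|$; a vector Bernstein bound then yields the $\exp(Cs-\min\{u^2/(\cdots),\,u/(\cdots)\})$ factor, with the ambient $Cs$ coming from a union bound / covering-number argument over the $\binom{d}{\le s}$ choices of $\calS_\calT$ realized via an $\eps$-net on the sphere in the $s$-dimensional range. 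For the noise term $\B$, I would apply an adaptive sub-Gaussian/sub-exponential martingale bound using $\|\xi_t\mid\calF_t^+\|_{\Psi_2}\le\sigma$ together with $\|[\X_t]_{\calS^*\setminus\calS_\calT}\mid\calF_t\|_{\Psi_2}^2\le\lambda_{\max}$; the product $\xi_t\X_t$ is conditionally sub-exponential, and the $\ell_2$-norm over an $s'$-dimensional block contributes the $\sqrt{\lambda_{\max}}\sqrt{|\calS^*\setminus\calS_\calT|\log(2d/|\calS^*\setminus\calS_\calT|)+\delta}$ dimensional factor scaled by $\sigma\sqrt{|\calT|}$, giving the last error term and the second $\exp(-c\min\{\delta,\sqrt{\delta|\calT|/(\cdots)}\})$ probability.

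For the \textbf{upper bound on irrelevant coordinates}, the strategy is parallel but coordinate-wise in $\ell_\infty$, so I would bound a single entry $i\in\calS^{*c}\cap\calS_\calT^c$ and union over at most $d$ indices (hence the leading $d$ factors). Since $i\notin\calS^*$, the signal block vanishes and only the $\calS_\calT$-correlation survives in $\A$, bounded by $\OffDiagO\sum_t V_t$ via the global error $V_t$; the centered fluctuation contributes $u$ with the $\exp(-\min\{u^2/((\OffDiagO)^2\sum V_t^2),\,u/(2\OffDiagO\max V_t)\})$ tail, and the noise entry $\sum_t\xi_t[\X_t]_i$ gives $C\sigma\sqrt{\lambda_{\max}}\sqrt{\delta|\calT|}$.

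\textbf{The main obstacle} is the adaptivity: because $\Bbeta_t$ is $\calF_t$-measurable but built from the same stream as $\X_t,\xi_t$, the summands in $\A$ and $\B$ are \emph{not} independent, and the good event $\calE_{\{\calS_\calT,V_\calT,W_\calT\}}$ is itself data-dependent. I would handle this by working entirely with the martingale-difference structure relative to the filtration $\{\calF_t\}$ (so conditional means are the correct centering and the Orlicz bounds hold conditionally), and by making all deterministic bounds $V_t,W_t$ so that the event $\calE$ does not enter the probabilistic concentration step — it only replaces the random errors by their deterministic envelopes inside the leading terms. The delicate point is ensuring the stopping-time/optional-localization argument is valid so that the conditional-Orlicz Bernstein inequality applies to the stopped process, and that the covering argument over unknown $\calS_\calT$ inflates the failure probability by only $\exp(Cs)$ rather than a full $\binom{d}{s}$ factor — achieved by absorbing $\log\binom{d}{s}\lesssim s\log(2d/s)$ into the exponent via the choice of $\delta$ and $u$ scales.
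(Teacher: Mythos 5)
Your proposal matches the paper's proof essentially step for step: the same signal/noise decomposition of $\G$, the same split of the signal term into a conditional-mean part (lower bounded via $\lambda_{\min}$, with cross terms against $\calS_{\calT}$ controlled by $\OffDiagS\sum_{t\in\calT}W_t$, and by $\OffDiagO\sum_{t\in\calT}V_t$ for coordinates outside $\calS^*\cup\calS_{\calT}$) plus a martingale-difference fluctuation handled by conditional-Orlicz Bernstein inequalities with exactly the variance and jump parameters you identify, and the same treatment of $\sum_{t\in\calT}\xi_t\X_t$ via a sparse-restricted vector bound. One correction to your narrative: since $\calS_{\calT}$ is given in the lemma's hypotheses, the $\exp(Cs)$ factor comes solely from the $\eps$-net of the unit sphere in the at-most-$s$-dimensional block (the paper's vector martingale Bernstein lemma), not from a union over the $\binom{d}{\le s}$ candidate supports; that union appears only in the noise term, where it guards against dependence between $\X_t$ and the realized support and is precisely what produces the $\log(2d/|\calS^*\setminus\calS_{\calT}|)$ factor there, so no scheme for ``absorbing $\log\binom{d}{s}$ via the choice of $u$ and $\delta$'' is needed (nor would it recover the stated $\exp(Cs-\cdots)$ probability).
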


Lemma~\ref{lem:sparse-gradient} shows that the entries of $\G$ corresponding to missing true-support coordinates ($\calS^*\setminus\calS_{\calT}$) grow at order $\Theta(|\calT|)$, whereas those on inactive coordinates ($\calS^{*c}\cap\calS_{\calT}^c$) remain much smaller. This separation is crucial for correctly identifying missing support elements. To guarantee a nontrivial gap between these bounds, and hence ensure detectable separation, we require the following condition on the covariates.

\begin{assumption} 
\label{assm:cov-support}
The covariates $\X_t$ satisfy
$$
    \lambda_{\min}\geq C\sqrt{s}\OffDiagO ~~\mbox{ and }~~ \lambda_{\min}\geq C\OffDiagS ,
$$
where the constants $(\lambda_{\min}, \OffDiagO, \OffDiagS)$ are such that $\displaystyle \min_{\calS\subseteq [d],\,1 \leq |\calS|\leq s}\lambda_{\min} (\EE \{[\X_t]_{\calS}[\X_t]_{\calS}^{\top}|\calF_t \}  )\geq \lambda_{\min}$, $\displaystyle \max_{\calS'\subseteq[d]: |\calS'|\leq s}\;\max_{\calS\subseteq[d]: |\calS|\leq s} \|[\X_t]_{\calS'}[\X_t]_{\calS\setminus\calS'}^{\top}|\calF_t\|_{\Psi_1}\leq \OffDiagS$ and $\displaystyle \max_{i\notin \calS^*}\;\max_{\calS\subseteq[d]: |\calS|\leq s}\|[\X_t]_i[\X_t]_{\calS\setminus i}^\top |\calF_t\|_{\Psi_1}\leq\OffDiagO $.
\end{assumption}

Assumption~\ref{assm:cov-support} requires different entries of $\X_t$ to have correlation bounded by $\lambda_{\min}$, which in fact is equivalent to the conditions in classic offline sparse linear regression \citep{zhang2010nearly}. Lemma~\ref{lem:FixedSupport} provides valid choices for the sequences $\{ V_t \}$ and $\{ W_t \}$ under the update rule \eqref{alg:sparse}. Specifically, 
\begin{align*}
	W_t :=\frac{\sqrt{C^*} }{\sqrt{\lambda_{\min}}}\sqrt{\frac{s\log(2d/s)+\delta_{t}}{t+\Cb s\log(2d/s)}}\sigma+C\bigg(\frac{\OffDiagS}{\lambda_{\min}}\bigg)\cdot\|[\Bbeta^*]_{\calS^*\setminus\calS_{0}}\|
\end{align*}
and 
\begin{align*}
	V_t :=\frac{\sqrt{C^*}}{\sqrt{\lambda_{\min}}}\sqrt{\frac{s\log(2d/s)+\delta_{t}}{t+\Cb s\log(2d/s)}}\sigma+C\bigg(\frac{\OffDiagS}{\lambda_{\min}}\bigg)\cdot\|[\Bbeta^*]_{\calS^*\setminus\calS_{0}}\|+\|[\Bbeta^*]_{\calS^*\setminus\calS_{0}} \|.
\end{align*} 
Substituting $\calS_{\calT} =\calS_{0}$, $\calT=\{0,1,\ldots,t\}$, and the bounds above into Lemma~\ref{lem:sparse-gradient} yields the following corollary.

\begin{corollary} 
Suppose Assumptions \ref{assm:noise}, \ref{assm:cov-sparse}, and \ref{assm:cov-support} hold. Take $\delta_{l}=C\log(l)+\log(d/s)$ and suppose that the estimation error dynamics in Theorem~\ref{thm:nonsymLR} holds. Then, for any $t$ satisfying 
$$
    \frac{t}{\log(t)}\geq C|\calS^*\setminus\calS_0|\log\l(\frac{2d}{|\calS^*\setminus\calS_0|}\r)\cdot\frac{\sigma^2}{\lambda_{\min}\|[\Bbeta^*]_{\calS^*\setminus\calS_0}\|^2},
$$ 
with probability at least $1-C (t d/s)^{-100}$,
	\begin{align*}
		\l\|[	\G(\{\Bbeta_t,\X_t,Y_t\}_{t\in\calT})]_{\calS^*\setminus\calS_{0}}\r\|> \sqrt{|\calS^*\setminus\calS_0|}\cdot\l\|[	\G(\{\Bbeta_t,\X_t,Y_t\}_{t\in\calT})]_{\calS^{*c}\cap\calS_{0}^c}\r\|_{\infty},
	\end{align*}
	which in turn implies that ${\rm argmax}_{i\in\calS_{\calT}^c}\big| [\G(\{\Bbeta_t,\X_t,Y_t\}_{t\in\calT}) ]_{i}\big|\in\calS^*\setminus\calS_{0}$.	\label{cor:variable-selection}
\end{corollary}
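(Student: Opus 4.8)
The plan is to obtain the displayed norm inequality directly from Lemma~\ref{lem:sparse-gradient} and then dispatch the $\operatorname{argmax}$ claim by an elementary $\ell_2$--$\ell_\infty$ comparison. Write $m:=|\calS^*\setminus\calS_0|$ and abbreviate $\G:=\G(\{\Bbeta_t,\X_t,Y_t\}_{t\in\calT})$. Since $\calS_0^c$ decomposes as the disjoint union $(\calS^*\setminus\calS_0)\sqcup(\calS^{*c}\cap\calS_0^c)$, once $\|[\G]_{\calS^*\setminus\calS_0}\|>\sqrt m\,\|[\G]_{\calS^{*c}\cap\calS_0^c}\|_\infty$ is known we have $\max_{i\in\calS^*\setminus\calS_0}|[\G]_i|\ge m^{-1/2}\|[\G]_{\calS^*\setminus\calS_0}\|>\|[\G]_{\calS^{*c}\cap\calS_0^c}\|_\infty\ge\max_{i\in\calS^{*c}\cap\calS_0^c}|[\G]_i|$, so the maximizer over $\calS_0^c$ must lie in $\calS^*\setminus\calS_0$. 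It therefore suffices to prove the norm inequality on the intersection of (i) the trajectory event $\calE_{\{\calS_0,V_\calT,W_\calT\}}$ furnished by Lemma~\ref{lem:FixedSupport} with $\delta_l=C\log l+\log(d/s)$, and (ii) the two high-probability events of Lemma~\ref{lem:sparse-gradient}, applied with $\calS_\calT=\calS_0$, $\calT=\{0,1,\ldots,t\}$ (so $|\calT|=t+1$), and the explicit $V_t,W_t$ supplied by Lemma~\ref{lem:FixedSupport}.

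The core of the argument is a comparison of three scales. Substituting the lower bound for $\|[\G]_{\calS^*\setminus\calS_0}\|$ and the upper bound for $\|[\G]_{\calS^{*c}\cap\calS_0^c}\|_\infty$, I would split each $V_t$ and $W_t$ into a \emph{bias} part (constant in $t$, proportional to $\|[\Bbeta^*]_{\calS^*\setminus\calS_0}\|$) and a \emph{variance} part (decaying like $\sqrt{(s\log(2d/s)+\delta_t)/t}$). Summing over $\calT$, the bias contributions give $\sum_t W_t,\sum_t V_t=\Theta(t)$ while the variance contributions are $O(\sigma\sqrt{t\,(s\log(2d/s)+\log t)}/\sqrt{\lambda_{\min}})=o(t)$. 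The dominant, linearly growing signal on the lower-bound side is $(t+1)\lambda_{\min}\|[\Bbeta^*]_{\calS^*\setminus\calS_0}\|$. The point where Assumption~\ref{assm:cov-support} enters is in controlling the two bias-leakage terms: the subtractive $\OffDiagS\sum_t W_t$ on the lower-bound side and the multiplicative $\sqrt m\,\OffDiagO\sum_t V_t$ on the upper-bound side both scale like $t\cdot(\text{off-diagonal})\cdot\|[\Bbeta^*]_{\calS^*\setminus\calS_0}\|$. Using $\lambda_{\min}\ge C\OffDiagS$ and $\lambda_{\min}\ge C\sqrt s\,\OffDiagO$ (and $m\le s$), each is at most a small fraction of the linear signal, so after absorbing them a definite fraction, say $\tfrac14(t+1)\lambda_{\min}\|[\Bbeta^*]_{\calS^*\setminus\calS_0}\|$, of the signal survives.

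It then remains to dominate the stochastic fluctuations. I would choose the free parameter $\delta$ in Lemma~\ref{lem:sparse-gradient} as $\delta\asymp\log(td/s)$ and the two slack parameters $u$ (one per bound) at the boundary of their admissible range, namely $u\asymp\sqrt{t}\,\lambda_{\max}\|[\Bbeta^*]_{\calS^*\setminus\calS_0}\|\sqrt{s+\log(td/s)}$ for the lower bound and $u\asymp\OffDiagO\sqrt{\sum_t V_t^2}\,\sqrt{\log(td/s)}$ for the upper bound; a union bound over the three events then drives the total failure probability below $C(td/s)^{-100}$, the factor-$d$ and factor-$\exp(Cs)$ prefactors being swallowed by the chosen $\delta,u$. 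With these choices every remaining error term—the residual variance sums, the $u$'s, and the additive $C\sigma\sqrt{\lambda_{\max}}\sqrt{\delta(t+1)}$ and $C\sigma\sqrt{t+1}\sqrt{\lambda_{\max}}\sqrt{m\log(2d/m)+\delta}$ pieces—is of order $\sigma\sqrt{\lambda_{\max}}\sqrt{t\,(m\log(2d/m)+\log t)}$, up to the factor $\sqrt m$ on the upper-bound side. The target inequality is therefore implied by $\tfrac14(t+1)\lambda_{\min}\|[\Bbeta^*]_{\calS^*\setminus\calS_0}\|\gtrsim\sigma\sqrt{\lambda_{\max}}\sqrt{t}\,\sqrt{m\log(2d/m)+\log t}$; squaring and solving for $t$ (absorbing $\lambda_{\max}/\lambda_{\min}$ into the universal constant) yields exactly the stated threshold $t/\log t\ge C\,m\log(2d/m)\,\sigma^2/(\lambda_{\min}\|[\Bbeta^*]_{\calS^*\setminus\calS_0}\|^2)$. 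The main obstacle I anticipate is precisely this three-way bookkeeping: one must keep the linear signal, the linearly growing off-diagonal bias, and the $\sqrt{t\,\mathrm{polylog}}$ stochastic term cleanly separated, choose $u$ and $\delta$ simultaneously so that the tails hit $(td/s)^{-100}$ while $u$ stays in the regime where it is dominated by the signal, and verify that the two different log factors ($m\log(2d/m)$ from the missing-support coordinates and $\log(td/s)$ from the inactive coordinates) collapse into the single clean condition stated.
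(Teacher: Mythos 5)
Your proposal is correct and takes essentially the same route as the paper: the paper establishes this corollary precisely by substituting $\calS_{\calT}=\calS_0$, $\calT=\{0,1,\ldots,t\}$, and the $V_t,W_t$ furnished by Lemma~\ref{lem:FixedSupport} into the two bounds of Lemma~\ref{lem:sparse-gradient}, absorbing the off-diagonal bias terms via Assumption~\ref{assm:cov-support} and choosing $\delta\asymp\log(td/s)$ and the slack parameters $u$ so that the linear signal $t\lambda_{\min}\|[\Bbeta^*]_{\calS^*\setminus\calS_0}\|$ dominates and the tails fall below $C(td/s)^{-100}$, exactly as you describe. Your elementary $\ell_2$--$\ell_\infty$ comparison deducing the $\operatorname{argmax}$ claim from the norm inequality is also the intended final step.
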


Corollary~\ref{cor:variable-selection} shows that once $t$ is sufficiently large, the statistic $\G$ reliably distinguishes the missing coordinates of the true support, enabling accurate support recovery. We now proceed to integrate this selection mechanism with the estimation updates.

\subsection{Sparse SGD: Estimation and Support Recovery}
\label{subsec:SparseSGD}

Intuitively, each incoming observation contributes to reducing the estimation error, but a single observations is insufficient for reliable variable selection. In contrast, aggregating information across historical data, via the summary statistic $\G$, enables us to identify missing support elements once the sample size is large enough. This motivates an iterative procedure that alternates between (i) estimation on a fixed support and (ii) support expansion using $\G$. We present below an algorithm that integrates estimation and support recovery in an online fashion.

\vspace{0.2cm}

\noindent
\textbf{Algorithm.} Over times $t\in\{0,1,\ldots,T\}$, where $T$ may be $+\infty$, choose update times $\tau_0=0$, $0<\tau_1<\cdots<\tau_{\alpha}$. During each interval $[\tau_{l-1},\tau_{l}-1]$, the iterates $\{\Bbeta_{t}\}$ share a common support $\calS_l$ and evolve according to the sparse update rule 
$$
    \Bbeta_{t+1}=\calH_{\calS_l} (\Bbeta_{t}-\eta_t\cdot\g_t ),
$$
where $\g_t=(\X_t^{\top}\Bbeta_{t}-Y_t)\X_t$ denotes the stochastic gradient and $\calH_{\calS_l}$ is the hard-thresholding operator projecting onto support $\calS_l$. The sequence of supports satisfies $\calS_{0}\subseteq \calS_1\subseteq \cdots\subseteq \calS_\alpha$, and the update at stage $l$ is performed via 
$$
    \calS_l=\calS_{l-1}\cup \big\{ {\rm argmax}_{i\in\calS_{l-1}^c}\big| [\G(\{\Bbeta_t,\X_t,Y_t\}_{t\in\calT_l}) ]_{i}\big| \big\}, 
$$
where $\calT_l \subseteq [t]$ denotes the set of times indices used to compute the statistic $\G$ at step $l$. The choice of $\{ \tau_l \}$ and $\{ \calT_l \}$ is not unique and will be discussed in detail below. We consider two natural choices for the index set $\calT_l$: (1) a local window, $\calT_l:=\{\tau_{l-1},\ldots,\tau_{l}-1\}$, and (2) a cumulative window, $\calT_l:=\{0,1,\ldots,\tau_l-1\}$. From a different perspective, if each update enlarges the support by exactly one element in $\calS^*\setminus\calS_{0}$, then full recovery of the true support requires at most $|\calS^* \setminus \calS_{0}|$ times. The following proposition characterizes the estimation error dynamics under this integrated update scheme.

\begin{proposition}
	Suppose Assumptions~\ref{assm:noise}, \ref{assm:cov-sparse}, and \ref{assm:cov-support} hold. Let the stepsize scheme be $\eta_{t}=\frac{\Ca/\lambda_{\min}}{t+\Cb s\log(2d/s)}$, with constants satisfying $\Ca\geq 5$ and $\Cb\geq C (\Ca \lambda_{\max} / \lambda_{\min} )^2$. Assume also that $\calS_l\setminus\calS_{l-1} \subseteq  \calS^*$ for all $l\in\{1,2,\ldots,\alpha\}$ and that the tail probability sequence $\{\delta_l\}$ satisfies the conditions in Theorem~\ref{thm:nonsymLR}. Then, for any $t\in[\tau_{l},\tau_{l+1}-1]$, with probability at least $1-\sum_{l=0}^{t}\exp(-c \{ s\log(2d/s)+l/\Ca \} )- \sum_{l=0}^{t}\exp(-c \{ s\log(2d/s)+\delta_l \} )$, the following bound holds:
	\begin{align*}
		\|[\Bbeta_{t+1}-\Bbeta^*]_{\calS_l}\|^2 
        & \leq C\frac{s\log(2d/s) +\delta_{t+1}}{t+1+\Cb s\log(2d/s)}\frac{\sigma^2}{\lambda_{\min}}+C\bigg(\frac{\OffDiagS}{\lambda_{\min}}\bigg)^2\|[\Bbeta^*]_{\calS^*\setminus\calS_l}\|^2 \\
		& ~~~~~~~~~~~~~~~~~~~+C\sum_{i=1}^{l}\bigg(\frac{\tau_i+\Cb s\log(2d/s)}{t+1+\Cb s\log(2d/s)}\bigg)^{ \Ca- 2}\cdot\|[\Bbeta^*]_{\calS_i\setminus\calS_{i-1}} \|^2.
	\end{align*}\label{prop:sparse-select}
\end{proposition}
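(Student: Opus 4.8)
The plan is to argue by induction on the stage index $l$, using the fixed-support analysis of Lemma~\ref{lem:FixedSupport} as the per-stage building block. The quantity to track is $\|[\Bbeta_{t}-\Bbeta^*]_{\calS_l}\|^2$ as $t$ ranges over the stage-$l$ interval $[\tau_l,\tau_{l+1}-1]$, on which the support is frozen at $\calS_l$ (an admissible support for the lemma, since every admitted coordinate lies in $\calS^*$) and the recursion is genuinely a restricted dense SGD in the coordinates of $\calS_l$, perturbed by a bias coming from the still-omitted true coordinates $\calS^*\setminus\calS_l$. Rather than the clean end-state bound of Lemma~\ref{lem:FixedSupport} (which presumes a small starting error), I would invoke the more general single-stage dynamic established inside its proof --- the sparse analog of footnote~\ref{note1} --- namely
\begin{align*}
\|[\Bbeta_{t+1}-\Bbeta^*]_{\calS_l}\|^2 \leq C\Big(\tfrac{\tau_l+\Cb s\log(2d/s)}{t+1+\Cb s\log(2d/s)}\Big)^{\Ca-2}\|[\Bbeta_{\tau_l}-\Bbeta^*]_{\calS_l}\|^2 + C\tfrac{s\log(2d/s)+\delta_{t+1}}{t+1+\Cb s\log(2d/s)}\tfrac{\sigma^2}{\lambda_{\min}} + C\Big(\tfrac{\OffDiagS}{\lambda_{\min}}\Big)^2\|[\Bbeta^*]_{\calS^*\setminus\calS_l}\|^2,
\end{align*}
which retains the decaying influence of the starting error. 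Crucially, the stepsize clock $t+\Cb s\log(2d/s)$ is never reset across stages, which is what produces a power-law contraction factor with exponent $\Ca-2$.

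The key bookkeeping is the error inherited at each support expansion. Because the hard-thresholding operator kept the iterates zero off $\calS_{l-1}$, at the transition time $\tau_l$ the newly admitted coordinates $\calS_l\setminus\calS_{l-1}\subseteq\calS^*$ carry the full true signal as error, so the starting error of stage $l$ splits orthogonally as
\begin{align*}
\|[\Bbeta_{\tau_l}-\Bbeta^*]_{\calS_l}\|^2 = \|[\Bbeta_{\tau_l}-\Bbeta^*]_{\calS_{l-1}}\|^2 + \|[\Bbeta^*]_{\calS_l\setminus\calS_{l-1}}\|^2.
\end{align*}
The second piece, propagated by the contraction factor, is exactly the $i=l$ summand of the target's third term. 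For the first piece I would substitute the inductive bound valid at $t+1=\tau_l$ (the end of stage $l-1$) and push it through the contraction factor. The heart of the argument is that the contraction factors telescope along the continuous clock,
\begin{align*}
\Big(\tfrac{\tau_l+\Cb s\log(2d/s)}{t+1+\Cb s\log(2d/s)}\Big)^{\Ca-2}\Big(\tfrac{\tau_i+\Cb s\log(2d/s)}{\tau_l+\Cb s\log(2d/s)}\Big)^{\Ca-2} = \Big(\tfrac{\tau_i+\Cb s\log(2d/s)}{t+1+\Cb s\log(2d/s)}\Big)^{\Ca-2},
\end{align*}
so that the stage-$(l-1)$ third-term summands $i=1,\dots,l-1$ reappear with their exponent correctly referenced to the current time $t+1$, completing the sum $\sum_{i=1}^{l}$.

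It remains to absorb the leftover cross-terms into the first two terms, and here the hypotheses do the work. The decayed noise term $(\tfrac{\tau_l+\cdots}{t+1+\cdots})^{\Ca-2}\tfrac{s\log(2d/s)+\delta_{\tau_l}}{\tau_l+\cdots}$ is dominated by the fresh statistical term $\tfrac{s\log(2d/s)+\delta_{t+1}}{t+1+\cdots}$ precisely because $\Ca-2\geq 1$ (the requirement $\Ca\geq5$ leaves ample room) together with monotonicity of $\{\delta_t\}$; likewise the decayed bias term, after splitting $\calS^*\setminus\calS_{l-1}=(\calS^*\setminus\calS_l)\sqcup(\calS_l\setminus\calS_{l-1})$, contributes a part that folds into the $\calS^*\setminus\calS_l$ bias term (contraction factor $\leq1$) and a part $\leq(\OffDiagS/\lambda_{\min})^2\|[\Bbeta^*]_{\calS_l\setminus\calS_{l-1}}\|^2$ times the contraction factor, subsumed by the $i=l$ third-term summand since Assumption~\ref{assm:cov-support} gives $\OffDiagS/\lambda_{\min}\leq 1$. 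Finally the probability statement follows by union-bounding the per-stage events of the generalized Lemma~\ref{lem:FixedSupport} over all $t$, yielding exactly $1-\sum_{l=0}^{t}\exp(-c\{s\log(2d/s)+l/\Ca\})-\sum_{l=0}^{t}\exp(-c\{s\log(2d/s)+\delta_l\})$, whose summability keeps the bound nontrivial as $T\to+\infty$. I expect the main obstacle to be not the telescoping itself but the uniform-in-stage control of these two cross-terms: verifying that neither the accumulated noise nor the accumulated off-support bias from all earlier stages, after repeated contraction, ever overtakes the single-stage statistical and bias levels --- this is where the margin in $\Ca$ and the correlation budget $\lambda_{\min}\geq C\OffDiagS$ are genuinely used.
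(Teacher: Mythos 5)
Your skeleton --- the per-stage fixed-support dynamic with the never-reset clock, the orthogonal splitting $\|[\Bbeta_{\tau_l}-\Bbeta^*]_{\calS_l}\|^2=\|[\Bbeta_{\tau_l}-\Bbeta^*]_{\calS_{l-1}}\|^2+\|[\Bbeta^*]_{\calS_l\setminus\calS_{l-1}}\|^2$ at each transition, the exact telescoping of the power-law ratios, and the three absorption steps (via $\Ca-2\geq 1$, monotonicity of $\{\delta_t\}$, and $\OffDiagS/\lambda_{\min}\leq 1$) --- is exactly the skeleton of the paper's proof. The genuine gap is in how you compose stages. You treat the completed stage-$(l-1)$ bound as a black box inside the stage-$l$ bound, and both bounds carry a multiplicative constant $C>1$ in front of the inherited error. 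That constant cannot be pushed to $1$: inside any single stage, the within-stage martingale fluctuations are controlled by concentration against a within-stage induction hypothesis, and they contribute a $c_1$-fraction of whatever that hypothesis contains, including the inherited error; closing that induction forces a pass-through coefficient of at least $1/(1-c_1)$, and $c_1$ cannot be shrunk to order $1/s$ without the factors $c_1^2$ ruining the exponential tail probabilities. Consequently your coefficient recursions have the form $A_l\le C\,A_{l-1}+C$ for the statistical term and $D^{(i)}_l\le C\,D^{(i)}_{l-1}$ for each transition term, so after $\alpha$ support updates every constant in your final bound is of order $C^{\alpha}$. Since $\alpha$ can be as large as $s_+\le s$, your argument proves the proposition only with constants exponential in the sparsity --- strictly weaker than the claim, and enough to destroy the minimax-optimality asserted in Theorem~\ref{thm:sparse-est-supprecv}.

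The paper avoids this by never collapsing the history into a single inherited-error scalar. Its induction is over time, not over stages: the hypothesis at every time $t$, in whatever stage, is the full structural bound --- statistical term, bias term, and the complete sum $\sum_{i\le l}\bigl(\tfrac{\tau_i+\Cb s\log(2d/s)}{t+\Cb s\log(2d/s)}\bigr)^{\Ca-2}\|[\Bbeta^*]_{\calS_i\setminus\calS_{i-1}}\|^2$ --- with one fixed set of constants. The recursion is unrolled all the way back to $t=0$ (inserting the transition decomposition at each $\tau_i$), and martingale concentration is applied once to whole-history sums ($\B_5,\B_6,\B_7$ in the paper's notation). Each transition term then enters the deterministic part with coefficient at most an absolute constant (once from the pure product term $\B_2$, once from the bias split $\B_4$), while the martingale terms contribute at most $c_1$ times the target bound itself; closing the induction only requires the fixed-point inequality $\mathrm{const}+c_1 C\le C$, which is solvable uniformly in the number of stages. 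To repair your proof, replace the stage-wise black-box composition by this single time-indexed induction whose hypothesis already carries the full multi-stage sum with fixed constants; the telescoping and absorption computations you wrote down then go through verbatim.
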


Proposition~\ref{prop:sparse-select} quantifies the interplay between support recovery and estimation accuracy. The second term in the bound disappears if either $\OffDiagS=0$ or the current support already contains the true support, i.e., $\calS^*\subseteq\calS_{t}$. The third term reveals a key design principle: early and accurate inclusion of missing support coordinates is crucial for minimizing overall estimation error. However, Corollary~\ref{cor:variable-selection} shows that accurate support detection requires a sufficiently large sample size within the window $\calT_l$. Thus, the timing of support updates must balance two competing considerations. Updating too late slows convergence because unselected true coordinates remain uncorrected, whereas updating too early risks selecting incorrect variables before the statistic $\G$ has achieved reliable separation. By combining Proposition~\ref{prop:sparse-select} with Lemma~\ref{lem:sparse-gradient}, we now obtain an integrated guarantee for simultaneous estimation and support recovery.

\begin{theorem}
\label{thm:sparse-est-supprecv}
	Suppose Assumptions~\ref{assm:noise}, \ref{assm:cov-sparse}, and \ref{assm:cov-support} hold. Let the stepsize scheme be $\eta_{t}=\frac{\Ca/\lambda_{\min}}{t+\Cb s\log(2d/s)}$ with $\Ca\geq 5$ and $\Cb\geq C(\Ca \lambda_{\max} / \lambda_{\min} )^2$. Assume the initial estimate satisfies $\|\Bbeta_{0}-\Bbeta^*\|^2\leq \sigma^2/\lambda_{\min}$, and let $s_+ := |\calS^*\setminus\calS_0|$. Suppose the support update times $\{\tau_l\}$ and index sets $\{\calT_l\}$ satisfy:
	\begin{enumerate}
		\item $\tau_1/\log(\tau_1)\geq Cs_+\log(d/s_+)\cdot\frac{\sigma^2}{\lambda_{\min}\|[\Bbeta^*]_{\calS^*\setminus\calS_0}\|^2}$;
        
		\item For each $l\in  \{ 2,\ldots,s_+ \}$, let $s_l:=|\calS^*\setminus\calS_l|$.
        \begin{enumerate}
            \item If $\calT_l = \{\tau_{l-1},\ldots,\tau_l-1\}$ (local window), then require  
            $$
                \tau_{l}\geq \max\l\{\l(\frac{C}{\tau_{l-1}^{\Ca-4}\|[\Bbeta^*]_{\calS^*\setminus\calS_{l}}\|^2}\sum_{i=1}^{l-1}\tau_i^{\Ca-2}\|[\Bbeta^*]_{\calS_{i}\setminus\calS_{i-1}}\|^2\r)^{\frac{1}{2}},\tau_{l-1}+\frac{Cs_l\log(2d/s_l)\cdot\sigma^2}{\lambda_{\min}\|[\Bbeta^*]_{\calS^*\setminus\calS_l}\|^2}\r\},
            $$
        
            \item If $\calT_l =\{0,1,\ldots,\tau_l-1\}$ (cumulative window), then require
            $$
                \tau_{l}\geq \max\l\{\frac{C}{\|[\Bbeta^*]_{\calS^*\setminus\calS_{l}}\|}\sum_{i=1}^{l-1}\tau_i\|[\Bbeta^*]_{\calS_{i}\setminus\calS_{i-1}}\|,\frac{Cs_l\log(2d/s_l)\cdot\sigma^2}{\lambda_{\min}\|[\Bbeta^*]_{\calS^*\setminus\calS_l}\|^2}\r\} .
            $$
        \end{enumerate}
	\end{enumerate}
Then, with probability at least $1-C d^{-100}$, for all $t\in[ \tau_{s_0}, +\infty]$, we have $\calS^* \subseteq \calS_t$, and the estimation error satisfies
\begin{align*}
		\|\Bbeta_{t+1}-\Bbeta^*\|^2 & \leq C^*\frac{s\log(2d/s)}{t+1+\Cb s\log(2d/s)}\frac{\sigma^2}{\lambda_{\min}}\\
	& ~~~~ +C\sum_{i=1}^{s_0}\bigg(\frac{\tau_i+\Cb s\log(2d/s)}{t+1+\Cb s\log(2d/s)}\bigg)^{\Ca-2}\|[\Bbeta^*]_{\calS_i\setminus\calS_{i-1}} \|^2.
\end{align*}
\end{theorem}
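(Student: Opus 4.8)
The plan is to argue by induction on the support-update stage $l \in \{1,\ldots,s_+\}$, maintaining two invariants: (i) every update so far is \emph{correct}, i.e.\ $\calS_i \setminus \calS_{i-1} \subseteq \calS^*$ for all $i \le l$, so exactly one new true coordinate is added at each stage and $|\calS^* \setminus \calS_l| = s_+ - l$; and (ii) the error bound of Proposition~\ref{prop:sparse-select} holds on $[\tau_l,\tau_{l+1}-1]$. Invariant (i) is precisely the hypothesis $\calS_l \setminus \calS_{l-1} \subseteq \calS^*$ under which Proposition~\ref{prop:sparse-select} is stated, so the two invariants feed each other: correctness of the first $l-1$ updates licenses the error bound on the window $\calT_l$, and that bound in turn licenses Corollary~\ref{cor:variable-selection} to certify the $l$-th update. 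For the base case, $\|\Bbeta_0-\Bbeta^*\|^2 \le \sigma^2/\lambda_{\min}$ supplies the warm-up of Theorem~\ref{thm:nonsymLR}/Lemma~\ref{lem:FixedSupport} on $[0,\tau_1-1]$ with fixed support $\calS_0$, and Condition~1 on $\tau_1$ is exactly the sample-size requirement of Corollary~\ref{cor:variable-selection} with $\calS_{\calT}=\calS_0$, so $\mathrm{argmax}_{i\in\calS_0^c}|[\G]_i|\in\calS^*\setminus\calS_0$.

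The inductive step is the crux. Assuming (i)--(ii) up to stage $l-1$, Proposition~\ref{prop:sparse-select} yields explicit deterministic bounds $V_t$ and $W_t$ on the global and restricted errors $\|\Bbeta_t-\Bbeta^*\|$ and $\|[\Bbeta_t-\Bbeta^*]_{\calS_{l-1}}\|$ over $\calT_l$; each has the form of a statistical rate, an $\OffDiagS$-dependent bias, and a transient $\sum_{i<l}(\tau_i/t)^{\Ca-2}\|[\Bbeta^*]_{\calS_i\setminus\calS_{i-1}}\|^2$ from coordinates added but not yet equilibrated. I then invoke Corollary~\ref{cor:variable-selection} with $\calS_{\calT}=\calS_{l-1}$ and window $\calT_l$; its separation guarantee needs the accumulated $\sum_{t\in\calT_l}W_t$ and $\sum_{t\in\calT_l}V_t$ (which enter the $\OffDiagS,\OffDiagO$ terms and the concentration denominators of Lemma~\ref{lem:sparse-gradient}) to stay below the signal $|\calT_l|\lambda_{\min}\|[\Bbeta^*]_{\calS^*\setminus\calS_{l-1}}\|$. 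Here the two windowing choices diverge. For the local window, summing the squared transient against this signal produces the first branch of Condition~2(a); for the cumulative window the range starts at $0$, and the tail sum $\sum_{t\ge\tau_i}t^{-(\Ca-2)/2}$ collapses the power of $\tau_i$, yielding the milder linear-in-$\tau_i$ first branch of Condition~2(b). The second branch of each is the Corollary~\ref{cor:variable-selection} sample-size threshold at stage $l$. Together they force the separation inequality to hold on $\calT_l$, so the $l$-th update is correct, closing the induction.

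Finally I would handle the probability and the terminal bound. With $\delta_l = C\log l + \log(d/s)$ as in Corollary~\ref{cor:variable-selection}, each stage's selection-failure probability is $C(\tau_l d/s)^{-100}$, and the combined tails from Proposition~\ref{prop:sparse-select} and the per-stage failures, $\sum_{l}\exp(-c\{s\log(2d/s)+\delta_l\})$, sum over the at most $s_+\le s$ stages and the infinite horizon $t\le +\infty$ to at most $Cd^{-100}$; the $\log(d/s)$ offset in $\delta_l$ is what converts $s$-many stage-failures into $d^{-100}$, and the infinite-horizon control inherited from Theorem~\ref{thm:nonsymLR} is what prevents tail accumulation. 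Once all $s_+$ updates are correct, $\calS^*\subseteq\calS_{s_+}\subseteq\calS_t$ for every $t\ge\tau_{s_+}$, so the $\OffDiagS$-bias term in Proposition~\ref{prop:sparse-select} vanishes and only the statistical rate plus the geometrically decaying transient (here $\Ca\ge 5$ enters) survive, giving the stated bound.

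The main obstacle is the transient accounting in the inductive step: certifying the $l$-th selection needs the within-window error dominated by signal, yet $\calT_l$ may begin before the stage-$1,\ldots,l-1$ coordinates have equilibrated, so the Proposition~\ref{prop:sparse-select} transient must be summed over $\calT_l$ and shown negligible. Matching that summation to the exact exponents in Condition~2 — quadratic-in-signal with a $\tau_{l-1}^{\Ca-4}$ factor for local windows, linear for cumulative — hinges on the choice $\Ca\ge 5$, guaranteeing both $\Ca-4\ge 1$ (so the $\tau_{l-1}^{\Ca-4}$ factor in 2(a) is nondegenerate) and $\Ca-2>2$ (so the transient is summable). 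A secondary subtlety is reconciling the detection target $\calS^*\setminus\calS_{l-1}$ with conditions phrased through the post-update residual signal $\|[\Bbeta^*]_{\calS^*\setminus\calS_l}\|$, which reflects that the binding constraint is detecting the weakest yet-unrecovered coordinate.
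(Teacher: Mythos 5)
Your proposal is correct and follows essentially the same route as the paper, which establishes Theorem~\ref{thm:sparse-est-supprecv} precisely by combining Proposition~\ref{prop:sparse-select} (stagewise error dynamics under correct updates) with Lemma~\ref{lem:sparse-gradient}/Corollary~\ref{cor:variable-selection} (separation of the selection statistic) in an induction over the support-update stages, with Conditions 1--2 serving exactly the two roles you identify (transient domination and per-stage sample-size threshold) and the $\delta_l = C\log l + \log(d/s)$ choice converting the stagewise and infinite-horizon tails into the stated $Cd^{-100}$ bound. Your accounting of the local-versus-cumulative window exponents and of the role of $\Ca \ge 5$ matches the mechanism the paper's conditions encode, so there is nothing to correct.
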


Theorem~\ref{thm:sparse-est-supprecv} shows once the support is fully recovered, the estimation error decays at the statistically optimal rate. The contributions from earlier support-mismatch stages diminish rapidly owing to the exponent $C_a - 2$, and the long-term performance matches the minimax-optimal rate for sparse linear regression under dependent data. In practice, one may update the support whenever the statistic $\G$ exhibits clear $O(t)$ growth on coordinates outside the current support, as suggested by Corollary~\ref{cor:variable-selection}. Theorem~\ref{thm:sparse-est-supprecv} is numerically verified in Section~\ref{sec:numeric}. To this end, our procedure provides a computationally efficient alternative to existing approaches such as online LASSO algorithms \citep{yang2023online, han2024online}, which require solving a regularized optimization problem and maintaining $O(d^2)$ summary statistics at each step.

\section{Contextual Linear Bandit with Dependent Data}
\label{sec:bandit}

We now turn to the online contextual linear bandit problem. Relative to the regression setting, this problem introduces an additional and fundamental source of dependence: beyond the inherent dependence in the data stream $(\X_t, \xi_t)$, the sequence of observations is now adaptively generated through the decision-making policy. Each action taken at time $t$ depends on the current estimate, which in turn depends on all past outcomes. This feedback loop creates a twofold dependence structure, temporal dependence in the covariates and noise, and adaptive dependence induced by exploration-exploitation decisions. Our goal in this section is to show that the proposed SGD framework is sufficiently robust to handle this full dependence structure. In particular, the same techniques developed for dependent linear regression can be integrated with an $\varepsilon$-greedy exploration strategy to achieve simultaneously (i) statistically optimal estimation error for each arm and (ii) statistically optimal regret. This extends classical analyses for i.i.d. covariates and noise \citep{goldenshluger2013linear, gao2019batched, bastani2020online, bastani2021mostly, chen2021statistical, chen2022online, duan2024online} to the substantially more general dependent-data setting.

At each time $t$, the decision-maker observes a covariate vector $\X_t$ and must select one of the $K$ arms. Each arm $i \in [K]$ is parameterized by an unknown vector $\Bbeta_i^*$. Pulling arm $i$ yields the reward
$Y_t=\X_t^{\top} \Bbeta_{i}^* + \xi_t$, where $\xi_t$ denotes the noise term, which is allowed to be dependent. Let $a_t$ denote the arm selected at time $t$. Over a (possibly infinite) horizon $T$, the goal is to maximize the cumulative reward, or equivalently, to minimize the regret
\begin{align}
	\textsf{Regret}(T):=\EE\l\{\sum_{t=1}^T \l(\max_{i\in[K]} \X_t^{\top}\Bbeta_{i}^* - \X_t^{\top}\Bbeta_{a_t}^*\r)\r\}.
	\label{eq:regret}
\end{align}
Equivalently, the objective is to learn a decision rule for $a_t$ that minimizes regret, or to identify the oracle decision region that maps $\X_t$ to the optimal arm. Although this objective differs from minimizing estimation error at a single time point (as in Section~\ref{sec:LR}), we will show that our approach achieves both goals: it yields statistically optimal estimation error for each arm and statistically optimal regret. To accomplish this, we consider a natural integration of SGD with an $\varepsilon$-greedy exploration mechanism. The algorithm is given below.

\noindent\rule{\linewidth}{0.4pt}
\paragraph{Algorithm}  
\vspace{-.3cm}
Let $\{\Bbeta_{i}^{(0)}\}$ be arbitrary initial estimates, and let $\{\eta_{t}\}$ and $\{ \pi_t \}$ denote the stepsize and exploration probability sequences, respectively. At time $t$, we observe the covariate $\X_t$ and draw $\alpha_t \sim \mathrm{Bernoulli}(\pi_t)$. If $\alpha_t=0$ (the exploitation step), we pull the arm that maximizes the estimated reward, 
$$ 
    a_t\in \arg\max_{i\in[K]} \X_t^{\top}\Bbeta_i^{(t)}.
$$ 
If $\alpha_t=1$ (the exploration step), we select an arm uniformly at random: 
$$
    a_t\sim\mathrm{Unif}\{1,2,\ldots,K\}.
$$ 
After pulling arm , we observe the reward $Y_t$, and update only the selected arm using SGD: 
\begin{align}
    \Bbeta_{i}^{(t+1)} = \Bbeta_{i}^{(t)} - \eta_t\ \II_{\{a_t=i\}}\cdot (\X_t^{\top}\Bbeta_{i}^{(t)} - Y_t)\ \X_t,\quad i\in[K].
    \label{alg:LBD}
\end{align}
Remark that $\{\pi_t\}$ and $\{\eta_{t}\}$ are to be specified.

\noindent\rule{\linewidth}{0.4pt}
\medskip

Denote $\calF_t=\sigma(Y_{t-1},\alpha_t,\X_{t-1},\ldots,Y_0,\alpha_0,\X_0)$. Clearly, $\Bbeta_{i}^{(t)}\in\calF_t$. Define the augmented $\sigma$-field $\calF_t^+=\sigma(\X_t,Y_{t-1},\alpha_t,\X_{t-1},\ldots,Y_0,\alpha_0,\X_0) $, so that $\calF_t\subseteq\calF_t^+\subseteq\calF_{t+1}$. At time $t$, with probability $\pi_t$, the algorithm selects an arm uniformly at random from $[K]$; this constitutes the exploration step and is used to acquire information about all arms. With probability $1-\pi_t$, the algorithm selects the arm that maximizes the estimated reward based on past data; this is the exploitation step. The exploration rate $\pi_t$ governs the fundamental trade-off: full exploration yields the smallest estimation error but incurs large regret, whereas pure exploitation risks committing to incorrect arms. Consequently, $\pi_t$ influences both the estimation accuracy of each $\Bbeta_i^*$ and the cumulative regret. In this work, we allow for a broad class of exploration schedules, including functionally defined choices of $\pi_t$ as formalized in Definition~\ref{def:exploration rate}. The stepsize sequence $\{ \eta_t \}$ serves as an additional algorithmic parameter to be specified. 

\begin{remark}[$K=2$ versus $K\geq 2$]
    Throughout, we consider $K\geq 2$. Importantly, the multi-arm case $K>2$ is not a trivial extension of the two-arm setting. When $K=2$, each arm is optimal for a nonempty region of covariates. In contrast, when $K\geq 3$, some arms may be globally sub-optimal, never achieving maximal reward for any covariate. For instance, let $\Bbeta_1^*= (1, 0)^{\top}$, $\Bbeta_2^*= (0.1, 0)^{\top}$, $\Bbeta_3^*= (-1, 0)^{\top}$, $\Bbeta_4^*= (0, 1)^{\top}$, $\Bbeta_5^*=(0, -1)^{\top}$. Then for every $\X \in \RR^2$, arm 2 is never optimal. Such sub-optimal arms introduce additional complexity that does not arise when $K=2$ \citep{chen2022online,chen2021statistical}.
\end{remark}

\subsection{Decision Region Characterizations}

In this section, we introduce a conic approximation of the arm-specific decision regions. For arm $i$, the oracle decision region is defined as 
$$
    \calU_i^*=\bigg\{\X\in\RR^d:\ \X^{\top}\Bbeta_{i}^*> \max_{j\neq i}\X^{\top}\Bbeta_{j}^*\bigg\}.
$$ 
It is straightforward to verify that $\calU_i^*$ is either a cone or an empty set. Indeed, if $\x\in \calU_i^*$, then for any positive scalar $a$, we have $a\x\in \calU_i^*$. To minimize the regret, we need to estimate and approximate the oracle region $\calU_i^*$. A line of influential studies \citep{goldenshluger2013linear, wang2018minimax, bastani2020online, hao2020high, chen2020statistical, bastani2021mostly} assume that the covariates lie in a bounded domain and approximate $\calU_i^*$ using bounded polyhedral regions constructed from separating hyperplanes. Specifically, they consider
\begin{align}
	\widetilde{\calU}_i(h;D):=\bigg\{\|\X\|\leq D:\ \X^{\top}\Bbeta_{i}^*- \max_{j\neq i}\X^{\top}\Bbeta_{j}^*\geq h\bigg\}.
	\label{eq:tilde ui}
\end{align}

The set $\widetilde{\calU}_i(h;D)$ excludes the entire neighborhood of the origin. To address this limitation, we propose an alternative characterization, which we term the conic approximation of the decision region:
\begin{align}
	\calU_i(h):=\bigg\{\X\in\RR^{d}:\ \frac{ \X^{\top}\Bbeta_{i}^*- \max_{j\neq i}\X^{\top}\Bbeta_{j}^*}{\|\X\|}\geq h\bigg\}.
	\label{eq:ui}
\end{align}
The region $\mathcal{U}_i(h)$ is again either conic or empty, mirroring the geometric structure of $\mathcal{U}_i^*$. Moreover, the family $\mathcal{U}_i(h)$ naturally accommodates unbounded covariate support and retains coverage of neighborhoods near the origin. This feature is particularly advantageous in settings where the covariate distribution is highly concentrated around the origin, in which case $\widetilde{\mathcal{U}}_i(h; D)$ may discard substantial probability mass by removing the entire origin. In contrast, when covariates take values from a discrete set, the two constructions $\mathcal{U}_i(h)$ and $\widetilde{\mathcal{U}}_i(h; D)$ coincide. To illustrate the geometric differences, Figure~\ref{fig:decision_region} depicts the bounded approximation $\widetilde{\mathcal{U}}_i(h; D)$ and the conic approximation $\mathcal{U}_i(h)$. The next lemma formally characterizes the relationships between $\mathcal{U}_i^*$, $\mathcal{U}_i(h)$, and $\widetilde{\mathcal{U}}_i(h; D)$, and establishes several key geometric properties of the proposed conic region.
        
        \begin{figure}
            \centering
            \begin{subfigure}[b]{0.45\textwidth}
                \centering
                 \includegraphics[width=\linewidth]{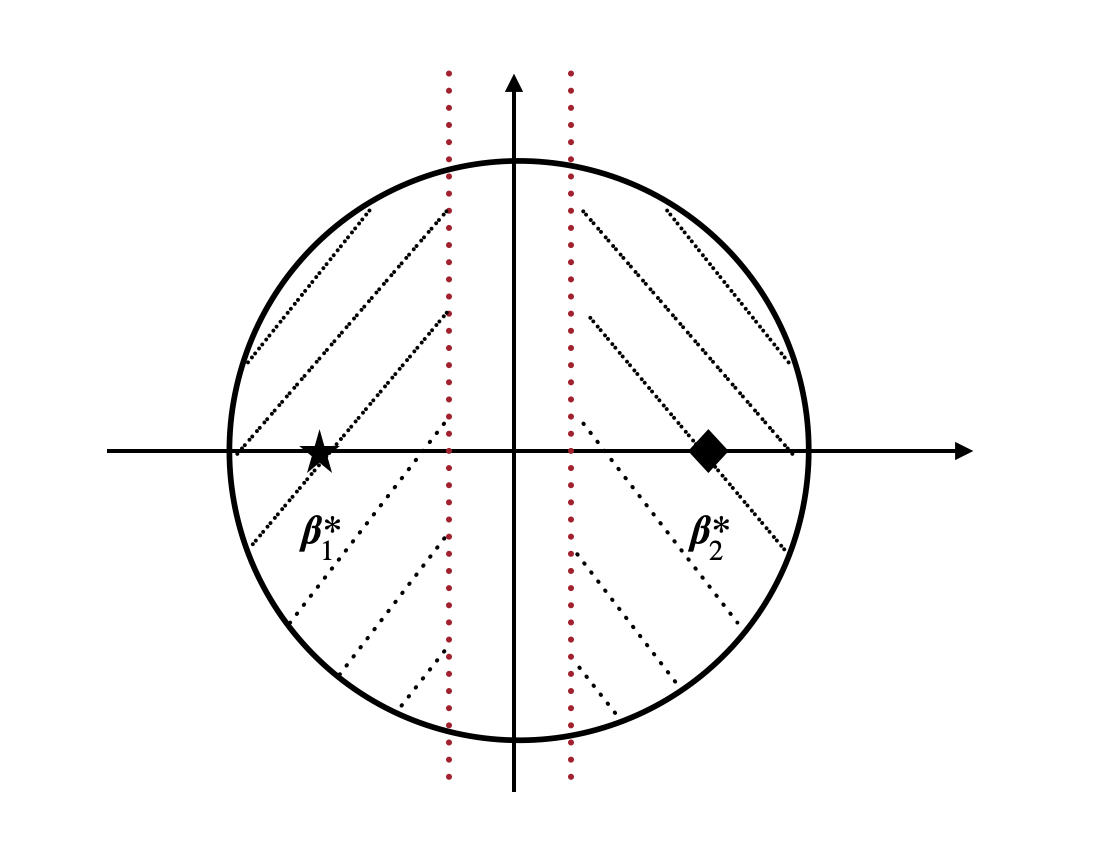}
            \caption{Region of $\widetilde{\calU}_i$}
            \label{fig:tilde_Ui}
            \end{subfigure}
            \hfill
           \begin{subfigure}[b]{0.45\textwidth}
               \centering
               \includegraphics[width=\linewidth]{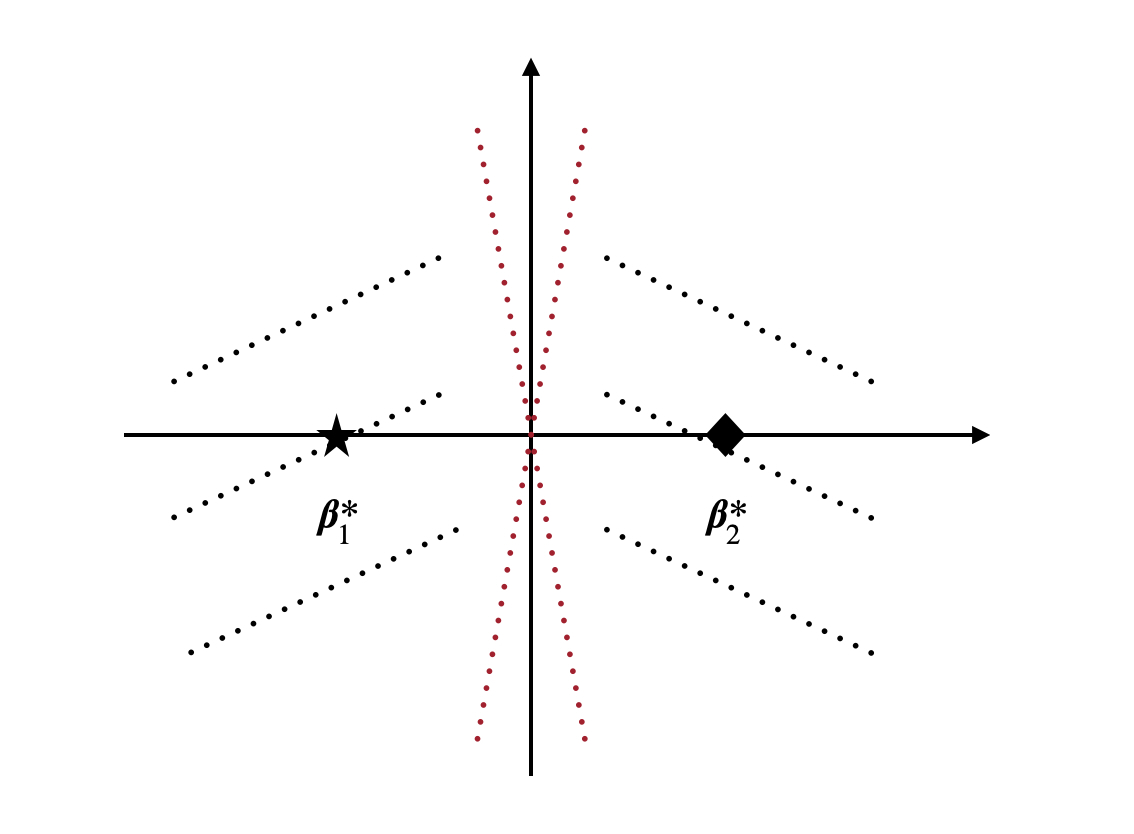}
               \caption{Region of $\calU_i$}
               \label{fig:Ui}
           \end{subfigure}
           \caption{Two approximations of the oracle decision region  $\calU_i^*$. The left panel illustrates the bounded polyhedral approximation $\widetilde{\mathcal{U}}_i$, shown as the shaded region enclosed by several line segments. The right panel shows the conic approximation $\mathcal{U}_i$, which forms an unbounded, scale-invariant region.}
           \label{fig:decision_region}
        \end{figure}

\begin{lemma} \label{lem:region Ui}
	 For any $h>0$ and $D>0$, the following set relations hold:
	\begin{enumerate}
		\item[(i)] For $\widetilde{\calU}_i(h;D)$, it has $\widetilde{\calU}_i(h;D)\subsetneq \calU_i(h/D)\cap  \{\X:\|\X\|\leq D \}\subsetneq \calU_i^*\cap \{\X:\|\X\|\leq D \}$.
        
		\item[(ii)] Conversely, it has $\calU_i(h)\cap  \{\X:\|\X\|\leq D \}\nsubseteq \widetilde{\calU}_i(hD;D)$.
        
		\item[(iii)] Moreover, $\calU_i(h)\subseteq\calU_i^*$, and in particular $\calU_i(0)$ is the closure of $\calU_i^*$. For any $h_1>h_2$, it has $\calU_i(h_1)\subseteq\calU_i(h_2)$.
	\end{enumerate}
Furthermore, if $\calU_i^*\neq\emptyset$, then for any 
$$
    h\in \bigg( 0, \, \min_{i\in\calA}\sup_{\|\z\|=1}(\z^{\top}\Bbeta_{i}^*-\max_{j\neq i}\z^{\top}\Bbeta_{j}^*) \bigg),
$$
we have $\calU_i(h)\neq \emptyset$. Moreover, if $\calU_j^*=\emptyset$, then
\begin{align*}
	\X^{\top}\Bbeta_{j}^*\leq \max_{i\neq j}\X^{\top}\Bbeta_i^* ~~\text{for all }~ \X\in\RR^d.
\end{align*}
\end{lemma}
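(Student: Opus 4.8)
The plan is to work throughout with the gap function $g_i(\X):=\X^{\top}\Bbeta_i^*-\max_{j\neq i}\X^{\top}\Bbeta_j^*$, which is continuous and positively homogeneous of degree one, i.e.\ $g_i(a\X)=a\,g_i(\X)$ for every $a>0$. In this notation the three regions read $\calU_i^*=\{\X:g_i(\X)>0\}$, $\calU_i(h)=\{\X\neq\boldsymbol{0}:g_i(\X)\geq h\|\X\|\}$, and $\widetilde{\calU}_i(h;D)=\{\|\X\|\leq D:g_i(\X)\geq h\}$. Every assertion then reduces to comparing the scalar $g_i(\X)$ against a fixed threshold (for $\widetilde{\calU}_i$) versus a norm-proportional threshold (for $\calU_i$), so the argument is essentially bookkeeping with these inequalities, using homogeneity to pass between a point and its rescalings.

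For part (i), if $\X\in\widetilde{\calU}_i(h;D)$ then $\|\X\|\leq D$ and $g_i(\X)\geq h$, whence $g_i(\X)/\|\X\|\geq h/\|\X\|\geq h/D$, giving the first inclusion; and since then $g_i(\X)\geq(h/D)\|\X\|>0$ we get $\X\in\calU_i^*$, giving the second. To see both are strict I would produce witnesses by rescaling. Taking any $\X_0\in\widetilde{\calU}_i(h;D)$ and setting $\X=\epsilon\X_0$ with $\epsilon$ small preserves the normalized gap $g_i(\X)/\|\X\|=g_i(\X_0)/\|\X_0\|\geq h/D$ (by homogeneity) but drives the absolute gap $g_i(\X)=\epsilon\,g_i(\X_0)$ below $h$, so $\X\in\calU_i(h/D)\cap\{\|\X\|\leq D\}$ while $\X\notin\widetilde{\calU}_i(h;D)$. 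For the second strict inclusion, the intermediate value theorem applied to the continuous $g_i$ on $\SS^{d-1}$ yields a unit direction whose normalized gap is positive but below $h/D$; it lies in $\calU_i^*$ yet not in $\calU_i(h/D)$. The same rescaling argument settles part (ii): any sufficiently small-norm point whose normalized gap is $\geq h$ lies in $\calU_i(h)\cap\{\|\X\|\leq D\}$ yet has absolute gap below $hD$, so it escapes $\widetilde{\calU}_i(hD;D)$.

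For part (iii), monotonicity $\calU_i(h_1)\subseteq\calU_i(h_2)$ for $h_1>h_2$ and the inclusion $\calU_i(h)\subseteq\calU_i^*$ for $h>0$ are immediate, since $g_i(\X)\geq h\|\X\|>0$. For the non-emptiness claim I would invoke compactness: if $\calU_i^*\neq\emptyset$ then $g_i$ is somewhere positive, so by homogeneity $M_i:=\sup_{\|\z\|=1}g_i(\z)>0$ is attained on the compact sphere $\SS^{d-1}$; for any $h<M_i$ the maximizing unit vector satisfies $g_i(\z)=M_i>h=h\|\z\|$, so $\z\in\calU_i(h)$, and taking $h<\min_{i\in\calA}M_i$ makes this uniform over the relevant arms. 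The final implication is the contrapositive of the definition of $\calU_j^*$: emptiness of $\{g_j>0\}$ is exactly the statement that $g_j(\X)\leq 0$, i.e.\ $\X^{\top}\Bbeta_j^*\leq\max_{i\neq j}\X^{\top}\Bbeta_i^*$, for every $\X$.

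I expect the only genuinely delicate point to be the claim that $\calU_i(0)$ is the closure of $\calU_i^*$. That the closure sits inside $\{g_i\geq 0\}$ follows from continuity of $g_i$; the reverse---that every boundary direction with $g_i(\X)=0$ is a limit of strictly dominating directions---is where the structure of $g_i$ does the real work. Given $\calU_i^*\neq\emptyset$, fix an interior direction $\X_0$ with $g_i(\X_0)>0$; for any $\X$ with $g_i(\X)\geq 0$ the perturbations $\X+\epsilon\X_0$ satisfy $g_i(\X+\epsilon\X_0)\geq g_i(\X)+\epsilon\,g_i(\X_0)>0$, using that $g_i$ is superadditive (its linear part is additive and $-\max_{j\neq i}\X^{\top}\Bbeta_j^*$ is concave and positively homogeneous, hence superadditive). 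Thus $\X$ is approached by points of $\calU_i^*$ as $\epsilon\downarrow 0$. The sole caveat is the origin, which the normalized definition formally excludes; up to this single point the two sets coincide, and this is the only care the argument genuinely demands.
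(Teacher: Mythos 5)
Your proof of part (i) is correct and essentially the same as the paper's: the paper proves \emph{only} part (i), and does so by the identical homogeneity argument --- the inclusion follows from $g_i(\x)\geq h\geq (h/D)\|\x\|$ for $\x\in\widetilde{\calU}_i(h;D)$, and strictness from rescaling such a point so that its normalized gap is preserved while its absolute gap drops below $h$ (the paper's explicit witness is $\frac{h}{2\,g_i(\x)}\,\x$; your $\epsilon\x_0$ is the same device). Everything else you wrote goes beyond the paper, which omits the second strict inclusion in (i), parts (ii)--(iii), and the final claims entirely. Those additional arguments are sound: the compactness argument for non-emptiness of $\calU_i(h)$, the contrapositive reading of $\calU_j^*=\emptyset$, and the superadditivity of $g_i$ (a linear term plus the negative of a max, i.e.\ concave and positively homogeneous) for the closure identity are all correct, and you rightly flag that $\calU_i(0)$ excludes the origin while the closure of $\calU_i^*$ contains it --- a genuine imprecision in the lemma statement itself. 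Two caveats are worth recording if this were written out in full: (a) the intermediate-value argument for $\calU_i(h/D)\cap\{\X:\|\X\|\le D\}\subsetneq\calU_i^*\cap\{\X:\|\X\|\le D\}$ requires $d\ge 2$ (connectedness of $\SS^{d-1}$) and $\calU_i^*\neq\emptyset$, and the unit-vector witness must be rescaled by $D$ when $D<1$; (b) every strictness or non-inclusion claim --- in your proof and in the paper's --- implicitly assumes the relevant sets are non-empty, since for instance $\emptyset\subsetneq\emptyset$ is false, and $\calU_i(h)\cap\{\X:\|\X\|\le D\}\nsubseteq\widetilde{\calU}_i(hD;D)$ fails vacuously when $\calU_i(h)=\emptyset$. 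Neither caveat is a gap relative to the paper, which does not address these degenerate cases either.
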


\begin{proof}
	We prove only part (i). For brevity, write $\calX:=\l\{\X:\|\X\|\leq D\r\}$. Clearly, $\widetilde{\calU}_i(h;D)\subsetneq\calX$. We show $\widetilde{\calU}_i(h;D)\subsetneq \calU_i(h/D)$. For any $\x\in\widetilde{\calU}_i(h;D)$, $\x^{\top}\Bbeta_{i}^*-\max_{j\neq i} \x^{\top}\Bbeta_{j}^*\geq h\geq \frac{h}{D}\|\x\|$, implying $\widetilde{\calU}_i(h;D)\subset \calU_i(h/D)$. Conversely, the vector $\frac{h}{2(\x^{\top}\Bbeta_{i}^*-\max_{j\neq i}\x^{\top}\Bbeta_{j}^*)}\cdot \x $ lies in $\calU_i(h/D)$ but not in $ \widetilde{\calU}_i(h;D)$, completing the proof that $\widetilde{\calU}_i(h;\calX)$ is strictly contained in $\calX\cap\calU_i(h)$.
\end{proof}

Next, let ${\boldsymbol{\beta}_i}$ denote estimators of ${\boldsymbol{\beta}_i^*}$. The corresponding empirical decision region in \eqref{alg:LBD} is
\begin{align}
	\calX\l(i,\{\Bbeta_{j}\}_{j\in[K]}\r):=\bigg\{\X\in\RR^d:\; \X^{\top}\Bbeta_{i}>\max_{j\neq i}\X^{\top}\Bbeta_{j} \bigg\}.
\end{align}
During exploitation steps, the algorithm selects arm $i$ whenever $\X_t\in\calX (i,\{\Bbeta_{j}^{(t)}\}_{j\in[K]} )$. Importantly, $\calX (i,\{\Bbeta_{j}\}_{j\in[K]} )$ is a conic region: if $\mathbf{X}$ belongs to the set, then for any $a>0$, $a\mathbf{X}$ also belongs to it. The next result formalizes the relationship between the empirical region and the oracle approximation $\mathcal{U}_i$.

\begin{lemma}
\label{lem:decision region}
    If $\| \Bbeta_{i} - \Bbeta_{i}^* \| \le h_0$ holds for all $i\in[K]$, then we have
	$$
        \calU_i(2h_0)\subseteq\calX\Big(i,\l\{\Bbeta_{j}\r\}_{j\in[K]}\Big)\subseteq \calU_i(-2h_0).
    $$
\end{lemma}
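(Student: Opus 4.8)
The plan is to establish the two inclusions separately, in each case reducing a comparison of inner products with the estimated parameters $\Bbeta_j$ to a comparison with the oracle parameters $\Bbeta_j^*$ via a single Cauchy--Schwarz perturbation bound. Because the hypothesis gives $\|\Bbeta_j-\Bbeta_j^*\|\le h_0$ for every $j\in[K]$, I would first record that
\begin{equation*}
|\X^\top(\Bbeta_j-\Bbeta_j^*)|\le \|\X\|\,\|\Bbeta_j-\Bbeta_j^*\|\le h_0\|\X\|,
\end{equation*}
so replacing $\Bbeta_j$ by $\Bbeta_j^*$ (or vice versa) in any linear functional $\X^\top\Bbeta_j$ perturbs its value by at most $h_0\|\X\|$. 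Throughout I assume $\X\neq\boldsymbol{0}$, since the ratio defining $\calU_i(\cdot)$ is only evaluated at nonzero covariates.

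For the right-hand inclusion $\calX(i,\{\Bbeta_j\}_{j\in[K]})\subseteq\calU_i(-2h_0)$, I would fix $\X$ with $\X^\top\Bbeta_i>\max_{j\neq i}\X^\top\Bbeta_j$, let $j^\star$ attain $\max_{j\neq i}\X^\top\Bbeta_j^*$, and decompose
\begin{equation*}
\X^\top\Bbeta_i^*-\X^\top\Bbeta_{j^\star}^*=\X^\top(\Bbeta_i^*-\Bbeta_i)+(\X^\top\Bbeta_i-\X^\top\Bbeta_{j^\star})+\X^\top(\Bbeta_{j^\star}-\Bbeta_{j^\star}^*).
\end{equation*}
The middle term is positive by assumption and each of the two perturbation terms is at least $-h_0\|\X\|$, so $\X^\top\Bbeta_i^*-\max_{j\neq i}\X^\top\Bbeta_j^*>-2h_0\|\X\|$; dividing by $\|\X\|$ places $\X$ in $\calU_i(-2h_0)$.

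For the left-hand inclusion $\calU_i(2h_0)\subseteq\calX(i,\{\Bbeta_j\}_{j\in[K]})$, I would fix $\X$ with $\X^\top\Bbeta_i^*-\max_{j\neq i}\X^\top\Bbeta_j^*\ge 2h_0\|\X\|$ and, for an arbitrary $j\neq i$, use the symmetric decomposition
\begin{equation*}
\X^\top\Bbeta_i-\X^\top\Bbeta_j=\X^\top(\Bbeta_i-\Bbeta_i^*)+(\X^\top\Bbeta_i^*-\X^\top\Bbeta_j^*)+\X^\top(\Bbeta_j^*-\Bbeta_j).
\end{equation*}
Since $\X^\top\Bbeta_i^*-\X^\top\Bbeta_j^*\ge\X^\top\Bbeta_i^*-\max_{k\neq i}\X^\top\Bbeta_k^*\ge 2h_0\|\X\|$ and each perturbation term is bounded below by $-h_0\|\X\|$, this yields $\X^\top\Bbeta_i-\X^\top\Bbeta_j\ge 0$ for every $j\neq i$.

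The hard part is precisely this last step: the perturbation bound delivers only the non-strict inequality $\X^\top\Bbeta_i\ge\X^\top\Bbeta_j$, whereas membership in the open region $\calX(i,\{\Bbeta_j\})$ demands a strict inequality. Loss of strictness can occur only on the boundary cone $\{\X:\X^\top\Bbeta_i^*-\max_{j\neq i}\X^\top\Bbeta_j^*=2h_0\|\X\|\}$ and only when, for the maximizing index, both $\X^\top(\Bbeta_i-\Bbeta_i^*)$ and $\X^\top(\Bbeta_j^*-\Bbeta_j)$ simultaneously attain their extreme value $-h_0\|\X\|$; this is a lower-dimensional, measure-zero set. Since the lemma is used only to sandwich the empirical decision region between the oracle conic regions in the estimation-error and regret analysis, where the covariate law is integrated, such a boundary contributes nothing. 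I would therefore note that the open version $\{\X:(\X^\top\Bbeta_i^*-\max_{j\neq i}\X^\top\Bbeta_j^*)/\|\X\|>2h_0\}\subseteq\calX(i,\{\Bbeta_j\})$ holds verbatim (taking $\X^\top\Bbeta_i-\X^\top\Bbeta_j\ge\eps\|\X\|>0$ on $\calU_i(2h_0+\eps)$ restores strictness for any $\eps>0$), so the stated closed inclusion holds up to this negligible boundary.
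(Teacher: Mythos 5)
Your proof is correct and follows essentially the same route as the paper's: one Cauchy--Schwarz perturbation bound $|\X^\top(\Bbeta_j-\Bbeta_j^*)|\le h_0\|\X\|$, applied in a sandwich argument for each inclusion. The only cosmetic difference is that you introduce the explicit maximizer $j^\star$ of $\max_{j\neq i}\X^\top\Bbeta_j^*$, whereas the paper works directly with the fact that the maximum itself is perturbed by at most $h_0$ after normalization.

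One point worth highlighting: your treatment of the strictness issue in the inclusion $\calU_i(2h_0)\subseteq\calX\big(i,\{\Bbeta_j\}_{j\in[K]}\big)$ is actually more careful than the paper's own proof. The paper derives the same non-strict inequality $\X^\top\Bbeta_i/\|\X\|\ge\max_{j\neq i}\X^\top\Bbeta_j/\|\X\|$ and then silently asserts membership in $\calX$, which is defined by a strict inequality. As you observe, the closed inclusion can literally fail on a boundary set: for instance, with $d=1$, $K=2$, $\Bbeta_1^*=2h_0$, $\Bbeta_2^*=0$, and $\Bbeta_1=\Bbeta_2=h_0$ (so both estimation errors equal $h_0$), every $\X>0$ lies in $\calU_1(2h_0)$ but satisfies $\X^\top\Bbeta_1=\X^\top\Bbeta_2$, hence is excluded from $\calX(1,\{\Bbeta_j\})$. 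Your remark that the open version $\calU_i(2h_0+\eps)\subseteq\calX(i,\{\Bbeta_j\})$ holds for every $\eps>0$, and that the discrepancy is confined to a boundary set that is immaterial once the covariate law is integrated in the regret and estimation analysis, is the right way to reconcile the statement with its downstream use.
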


Lemma~\ref{lem:decision region} links the empirical decision regions used in the update rule~\eqref{alg:LBD} to the conic approximations of the oracle regions $\calU_i^*$, with an explicit and interpretable dependence on the estimation error $\|\Bbeta_{i}-\Bbeta_{i}^* \|$. This result is a key structural component of both our finite-sample and asymptotic analyses. It ensures that sufficiently accurate parameter estimates yield decision regions that correctly approximate the true optimal regions, enabling control of both regret and estimation error within a unified framework.

The classical notion of separability between optimal and suboptimal arms in contextual bandits \citep{bastani2020online, bastani2021mostly} is expressed through bounded-domain approximations such as $\widetilde{\calU}_i$. Our next assumption adapts this separability condition to the scale-invariant regions $\calU_i(h)$.

\begin{assumption}[Arm Optimality]
\label{assm:arm opt}
There exists $h>0$ and a partition $\calA\cup\calA^c=[K]$ such that for all $t$ and $i \in \calA$, $\EE (\X_t \X_t^{\top}\cdot\II\{\X_t\in\calU_i(h) \} \,|\, \calF_t )\succeq \lambda_{\min}\cdot\I_{d}$, and for every $j\in\calA^c$,
$$
    \PP\bigg(\X_t:\; \frac{\max_{i\in [K]}\X_t^{\top}\Bbeta_{i}^*-\X_t^{\top}\Bbeta_{j}^*}{\|\X_t\|} \geq h\bigg| \calF_t\bigg)=1 .
$$	
\end{assumption}

\medskip
\begin{remark}[On the Characterization of Optimal Regions]
Assumption~\ref{assm:arm opt} is formulated using our scale-invariant definition of the optimal decision region, $\calU_i(h)$ in \eqref{eq:ui}. This represents a subtle but important departure from conventional formulations in the contextual bandit literature. For example, influential works such as \cite{bastani2020online} and \cite{bastani2021mostly} employ a bounded-domain characterization, which corresponds to our construction of $\widetilde{\calU}_i(h;D)$ in \eqref{eq:tilde ui}. While such bounded-domain approximations are appropriate when covariates lie within a compact set, they introduce two limitations. First, they require prior knowledge of a bounded support, an assumption that may be violated in many modern applications where covariates are naturally unbounded. Second, they exclude an entire neighborhood around the origin. This exclusion can be problematic for well-behaved designs, such as multivariate Gaussian covariates, whose probability mass is heavily concentrated near the origin. In these settings, removing the region near zero may eliminate a substantial portion of the informative design space, thereby reducing applicability or statistical efficiency. Our definition of $\calU_i(h)$ circumvents these issues. By construction, $\calU_i(h)$ is scale-invariant (i.e., a cone), making it well suited for unbounded covariate distributions while retaining the essential geometric properties of the oracle region $\calU_i^*$. Notably, $\calU_i(h)$ involves the origin rather than excluding it, ensuring that no disproportionately large region of high-probability covariate mass is discarded. As a result, Assumption~\ref{assm:arm opt} provides a more general and broadly applicable foundation for contextual bandit analysis, allowing our theoretical guarantees to hold under more general data-generating mechanisms than those accommodated by bounded-domain assumptions.
\end{remark}

Additionally, if Assumption~\ref{assm:arm opt} holds for some $h$, then it automatically holds for any $h'\in(0,h)$. In the special case $K=2$, this assumption is always satisfied and thus requires no further conditions. We now proceed to establish the regularity properties of the problem, which form a key component in analyzing the convergence dynamics of the proposed method. The intuition underlying these properties is partly inspired by \cite{bastani2020online}. Notably, our framework does not require boundedness on $\X$ or the parameters $\Bbeta^*$, nor does it rely on an i.i.d. assumption on $\{ \X_t \}$.

\begin{lemma}[Regularity Properties]
	Suppose Assumption~\ref{assm:arm opt} holds with $h>0$. Then the following properties hold. 
    \begin{enumerate}
	    \item If the estimates satisfy $\{\|\Bbeta_{i}-\Bbeta_{i}^*\|\leq \frac{h}{2},\ i\in[K] \}$, then with probability one, for any $j\in\calA^{c}$, $\calX(j,\{\Bbeta_i\}_{i\in[K]})=\emptyset$, that is,
	\begin{align*}
		{\rm argmax}_{i\in[K]} \X_t^\top\Bbeta_{i}\subseteq \calA,\quad {\rm a.s.}
	\end{align*}
    Moreover, for every $i\in\calA$,
    $$
      \calU_i\bigg(2\max_{j\in\calA}\|\Bbeta_j-\Bbeta_j^*\|\bigg)\subseteq\calX\Big(i,\l\{\Bbeta_{j}\r\}_{j\in[K]} \Big) \subseteq \calU_i\bigg(-2\max_{j\in\calA}\|\Bbeta_j-\Bbeta_j^*\| \bigg).
    $$
    
    \item For any $i\in\calA$ and any $\Bbeta_{i}\in\calF_t$, 
	\begin{multline} \label{eq:lem 1}
		\EE\bigg\{\II \bigg\{\X_t^{\top}\Bbeta_{i}\geq \max_{j} \X_t^{\top}\Bbeta_{j} \bigg\}\cdot \l(\X_t^{\top}\Bbeta_{i}-\X_t^{\top}\Bbeta_{i}^*\r)^2\bigg| \, \calF_t \bigg\}\\ \geq \II\bigg\{\max_{j}\|\Bbeta_{j}-\Bbeta_{j}^*\|\leq\frac{h}{2} \bigg\}\cdot\lambda_{\min}\|\Bbeta_{i}-\Bbeta_{i}^*\|^2. 
	\end{multline}
	\end{enumerate} 
    \label{lem:regularity}
\end{lemma}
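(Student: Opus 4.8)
The plan is to prove the two parts of Lemma~\ref{lem:regularity} separately, with part (1) being largely a geometric consequence of Assumption~\ref{assm:arm opt} and Lemma~\ref{lem:decision region}, and part (2) combining the geometric containment with the conditional eigenvalue lower bound.

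\textbf{Part (1).} First I would establish that suboptimal arms are never selected. Fix $j \in \calA^c$. By Assumption~\ref{assm:arm opt}, with conditional probability one we have $\frac{\max_{i\in[K]} \X_t^\top \Bbeta_i^* - \X_t^\top \Bbeta_j^*}{\|\X_t\|} \geq h$, i.e. $\X_t$ lies outside $\calU_j(-h)$ in the appropriate sense. The goal is to show $\X_t \notin \calX(j, \{\Bbeta_i\}_{i\in[K]})$. Suppose for contradiction that $\X_t^\top \Bbeta_j > \max_{i\neq j} \X_t^\top \Bbeta_i$, in particular $\X_t^\top \Bbeta_j \geq \X_t^\top \Bbeta_{i^*}$ where $i^* = \mathrm{argmax}_i \X_t^\top \Bbeta_i^*$. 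Using $\|\Bbeta_i - \Bbeta_i^*\| \leq h/2$ for every $i$, a Cauchy--Schwarz estimate gives $|\X_t^\top(\Bbeta_i - \Bbeta_i^*)| \leq \frac{h}{2}\|\X_t\|$ for each arm, so $\X_t^\top \Bbeta_j \leq \X_t^\top \Bbeta_j^* + \frac{h}{2}\|\X_t\|$ and $\X_t^\top \Bbeta_{i^*} \geq \X_t^\top \Bbeta_{i^*}^* - \frac{h}{2}\|\X_t\|$. Combining with the separation $\X_t^\top \Bbeta_{i^*}^* - \X_t^\top \Bbeta_j^* \geq h\|\X_t\|$ yields $\X_t^\top \Bbeta_j < \X_t^\top \Bbeta_{i^*}$, a contradiction. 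Hence $\calX(j, \{\Bbeta_i\}) = \emptyset$ a.s., which immediately gives $\mathrm{argmax}_{i\in[K]} \X_t^\top \Bbeta_i \subseteq \calA$. The nested-cone containment for $i \in \calA$ then follows directly from Lemma~\ref{lem:decision region} applied with $h_0 = \max_{j\in\calA}\|\Bbeta_j - \Bbeta_j^*\|$, once we note that on the relevant event the maximization over $[K]$ coincides with maximization over $\calA$ by the previous display.

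\textbf{Part (2).} The target inequality \eqref{eq:lem 1} is trivial on the complement of the event $\{\max_j \|\Bbeta_j - \Bbeta_j^*\| \leq h/2\}$, since the right-hand side is then zero and the left-hand side is a nonnegative conditional expectation. On the event itself, I would use part (1): the indicator $\II\{\X_t^\top \Bbeta_i \geq \max_j \X_t^\top \Bbeta_j\}$ equals $\II\{\X_t \in \calX(i, \{\Bbeta_j\})\}$, and by the containment $\calU_i(2\max_j\|\Bbeta_j-\Bbeta_j^*\|) \subseteq \calX(i,\{\Bbeta_j\})$ from part (1), we have $\II\{\X_t \in \calX(i,\{\Bbeta_j\})\} \geq \II\{\X_t \in \calU_i(h)\}$, using that $2\max_j\|\Bbeta_j-\Bbeta_j^*\| \leq h$ on this event together with the monotonicity $\calU_i(h) \subseteq \calU_i(h')$ for $h' \leq h$ from Lemma~\ref{lem:region Ui}(iii). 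Therefore I can lower-bound the conditional expectation by restricting the integration to $\calU_i(h)$:
\begin{align*}
\EE\big\{\II\{\X_t \in \calX(i,\{\Bbeta_j\})\}(\X_t^\top(\Bbeta_i - \Bbeta_i^*))^2 \,\big|\, \calF_t\big\} \geq \EE\big\{\II\{\X_t \in \calU_i(h)\}(\X_t^\top(\Bbeta_i - \Bbeta_i^*))^2 \,\big|\, \calF_t\big\}.
\end{align*}
Writing $(\X_t^\top(\Bbeta_i-\Bbeta_i^*))^2 = (\Bbeta_i - \Bbeta_i^*)^\top \X_t \X_t^\top (\Bbeta_i - \Bbeta_i^*)$ and pulling the $\calF_t$-measurable vector $\Bbeta_i - \Bbeta_i^*$ outside the conditional expectation, the right-hand side equals $(\Bbeta_i - \Bbeta_i^*)^\top \EE\{\X_t \X_t^\top \II\{\X_t \in \calU_i(h)\} \mid \calF_t\} (\Bbeta_i - \Bbeta_i^*)$, which is at least $\lambda_{\min}\|\Bbeta_i - \Bbeta_i^*\|^2$ by the eigenvalue bound in Assumption~\ref{assm:arm opt}. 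This gives exactly the claimed bound on the event.

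\textbf{Main obstacle.} The routine estimates are the Cauchy--Schwarz bounds; the one place requiring care is the interaction between randomness of the estimates $\Bbeta_j$ and the conditioning. Since each $\Bbeta_j^{(t)}$ is $\calF_t$-measurable (as noted after \eqref{alg:LBD}), the indicator event $\{\X_t \in \calX(i,\{\Bbeta_j\})\}$ depends on $\X_t$ only through the $\calF_t$-measurable direction vectors $\Bbeta_j$, so the set $\calU_i(h)$ used for comparison is itself $\calF_t$-measurable and Assumption~\ref{assm:arm opt}'s conditional expectation bound applies verbatim; I would state this measurability check explicitly. A subtler point is that part (1)'s containment is an almost-sure statement depending on the realized values $\Bbeta_j$, so when I invoke it inside the conditional expectation in part (2) I must ensure I am working on the event $\{\max_j\|\Bbeta_j-\Bbeta_j^*\|\leq h/2\}$, which is also $\calF_t$-measurable, so that the set inclusion of indicators holds pointwise before integrating. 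Handling this measurability bookkeeping cleanly is the only genuinely delicate step; the rest is geometry plus the assumed eigenvalue lower bound.
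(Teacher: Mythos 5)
Your part (1) first claim and your part (2) are correct and essentially reproduce the paper's own argument: the suboptimal-arm exclusion is the same triangle-inequality-plus-separation computation (the paper does it directly rather than by contradiction), and part (2) reduces, just as in the paper, to the conditional eigenvalue bound on $\calU_i(h)$ after noting that on the event $\{\max_j\|\Bbeta_j-\Bbeta_j^*\|\leq h/2\}$ the selection indicator dominates $\II\{\X_t\in\calU_i(h)\}$; your route through the part (1) containment and the monotonicity $\calU_i(h)\subseteq\calU_i(2\max_{j\in\calA}\|\Bbeta_j-\Bbeta_j^*\|)$ is a legitimate, slightly cleaner packaging of the paper's inline indicator comparison.

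The gap is in the nested-cone containment of part (1). You cannot apply Lemma~\ref{lem:decision region} ``with $h_0=\max_{j\in\calA}\|\Bbeta_j-\Bbeta_j^*\|$'': its hypothesis requires $\|\Bbeta_i-\Bbeta_i^*\|\leq h_0$ for \emph{every} $i\in[K]$, and the arms in $\calA^c$ only satisfy the much weaker bound $h/2$, which may far exceed $h_0$. Indeed, the entire point of this part of Lemma~\ref{lem:regularity} (as the paper remarks after it) is that the containment radius depends only on the accuracy of arms in $\calA$, which is precisely what Lemma~\ref{lem:decision region} cannot deliver. Your proposed fix, restricting the maximization to $\calA$, is the right idea but does not close the argument by itself: Lemma~\ref{lem:decision region} applied to the arm set $\calA$ produces inclusions between $\calX$-regions and cones defined via $\max_{j\in\calA\setminus\{i\}}\X^{\top}\Bbeta_j^*$, whereas the sets $\calU_i(\pm 2h_0)$ in the statement are defined via $\max_{j\neq i,\,j\in[K]}\X^{\top}\Bbeta_j^*$. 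For the outward inclusion $\calX(i,\{\Bbeta_j\}_{j\in[K]})\subseteq\calU_i(-2\max_{j\in\calA}\|\Bbeta_j-\Bbeta_j^*\|)$, one must additionally rule out that some arm $k\in\calA^c$ realizes the maximum of the \emph{true} rewards inside the definition of $\calU_i$; your argmax display controls the \emph{estimated} rewards only. This extra step requires invoking Assumption~\ref{assm:arm opt} a second time, namely
\begin{align*}
\max_{j\in\calA}\frac{\X^{\top}\Bbeta_j^*}{\|\X\|}\;\geq\;\max_{j\in\calA^c}\frac{\X^{\top}\Bbeta_j^*}{\|\X\|}+h \quad \text{a.s.},
\end{align*}
so that the $\calA^c$ arms are dominated in the true-reward maximum and the cone defined over $\calA$ coincides (up to the required slack) with the cone defined over $[K]$. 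This is exactly the step the paper's proof performs explicitly after its triangle-inequality chain, and it is absent from your proposal; without it the second inclusion does not follow.
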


Lemma~\ref{lem:regularity} shows that when the estimation error satisfies $\|\Bbeta_i-\Bbeta_i^*\|\leq \frac{h}{2}$, the quantity in \eqref{eq:lem 1} admits a sharp quadratic lower bound in $\| \Bbeta_i - \Bbeta_i^* \|$. Moreover, in contrast to Lemma~\ref{lem:region Ui}, under Assumption~\ref{assm:arm opt} and the same error condition, the approximation of the empirical decision region depends only on the estimation accuracy of the arms in $\calA$. Technically, Lemma~\ref{lem:regularity} plays a central role in enabling the exploitation-phase observations to contribute effectively to parameter refinement. When  $h' < h$, the region $\calU_i(h')$ more closely approximates the oracle region $\calU_i^*$, compared to $\calU_i(h)$, and the union $\cup_{i\in[K]}\calU_i(h')$ covers a larger subset of $\RR^d$. However, the lemma also indicates that attaining a strictly positive quadratic lower bound in \eqref{eq:lem 1} requires controlling the estimation error at a scale proportional to $h'$. Thus, reducing $h$ expands the decision region but simultaneously requires more accurate parameter estimates in order to maintain the curvature lower bound in \eqref{eq:lem 1}. This illustrates an inherent trade-off in selecting $h$: increasing the probability mass $\PP(\calU_i)$ comes at the cost of weakening the guaranteed quadratic lower bound.

\subsection{Non-asymptotic Performances}
\label{sec:LBD-nonasymptotic}

In this section, we investigate the non-asymptotic statistical performance of the proposed method. A notable feature of our approach is that both the estimation error for the parameters associated with arms in $\mathcal{A}$ and the cumulative regret achieve statistically optimal rates, while requiring only minimal storage, specifically, the most recent estimates and the total number of observations collected thus far. We now detail the convergence dynamics underlying this result.

For clarity of exposition, we assume throughout that the initial estimates satisfy $\|\Bbeta_{i}^{(0)}-\Bbeta_{i}^*\|^2\leq C\sigma^2/\lambda_{\max}$. Such an initialization can be obtained using either offline procedures or online methods. For example, as shown in Theorem~\ref{thm:nonsymLR}, SGD with a sufficiently small constant stepsize yields an $O(1)$ estimation error with linear convergence, thus meeting the above requirement. We next introduce a general prescription for the exploration rate $\pi_t$ and the associated sequence of tail probabilities $\{ \delta_t \}$. The following theorem establishes the resulting non-asymptotic estimation error dynamics.

\begin{theorem}[Non-asymptotic Error Dynamics with Tail Level $\delta_t$]
\label{thm:LBD-estimation}
    
Suppose Assumptions~\ref{assm:cov}, \ref{assm:noise}, and \ref{assm:arm opt} hold. Consider the stepsize $\eta_{t}=\frac{1}{\lambda_{\min}}\frac{\Ca}{t+\Cb d}$, and let $\pi\in (0,1]$ be any constant. For any sequence $\{\delta_t\}$ satisfying $(i)$ $0\leq\delta_t\leq\delta_{t+1}$ and $(ii)$ $\delta_{t} \leq Ct/\Cb$, the following statements hold:
\begin{enumerate}[(1)]

\item If $\pi_t\geq\pi$, $\Ca\geq 2K/\pi$, and $\Cb\geq 3\Ca^2(\lambda_{\max}^2/\lambda_{\min}^2)$, then with probability at least $1-K\sum_{l=0}^{t} \exp(-c(d+l/\Ca))-K\sum_{l=0}^{t}\exp(-c(d+\delta_l))$, we have for all $i\in [K]$,
\begin{align} \label{eq1:thm:LBD-estimation}
\|\Bbeta_{i}^{(t)}-\Bbeta_{i}^*\|^2\leq \ \frac{C^*}{\lambda_{\min}}\frac{d+\delta_{t}}{t+\Cb d}  \sigma^2  ~~\text{with}~~ C^*=C\Ca^2\frac{\lambda_{\max}}{\lambda_{\min}}+\Cb\frac{\lambda_{\min}}{\lambda_{\max}} .
\end{align}
        
\item For any $t\geq t_1:=16C^*d\sigma^2/(\lambda_{\min}h^2) -\Cb d$ and any exploration schedule $\pi_t\in[0,1]$, we have with probability at least $1-|\calA|\sum_{l=t_1}^{t+1} \exp(-c(d+l/\Ca))-|\calA|\sum_{l=t_1}^{t+1}\exp(-c(d+\delta_l))$ that for all $i\in \calA$,
\begin{align} 
\label{eq2:thm:LBD-estimation}
	\|\Bbeta_{i}^{(t)}-\Bbeta_{i}^*\|^2\leq \frac{C^{**}}{\lambda_{\min}}\frac{d+\delta_{t}}{t+\Cb d}  \sigma^2 ~~\text{with}~~ C^{**}=C^*+2C\Ca\frac{\lambda_{\max}}{\lambda_{\min}}.
\end{align}
Moreover, with probability at least $1-|\calA^c|\exp(-cd)\cdot\sum_{l=t_1}^{t+1}\II\l\{\pi_l\neq 0\r\}$, we have for all $i\in \calA^c$,
\begin{align}  
\label{eq3:thm:LBD-estimation}
\|\Bbeta_{i}^{(t)}-\Bbeta_{i}^*\|^2\leq\frac{h^2}{4} .
\end{align}\end{enumerate}
\end{theorem}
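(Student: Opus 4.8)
The plan is to control, for each arm $i$, the error $\Delta_i^{(t)}:=\Bbeta_{i}^{(t)}-\Bbeta_i^*$ through the one-step evolution of $\|\Delta_i^{(t)}\|^2$. When arm $i$ is pulled (event $\{a_t=i\}$), substituting $Y_t=\X_t^\top\Bbeta_i^*+\xi_t$ into \eqref{alg:LBD} gives $\Delta_i^{(t+1)}=(\I-\eta_t\X_t\X_t^\top)\Delta_i^{(t)}+\eta_t\xi_t\X_t$, and otherwise $\Delta_i^{(t+1)}=\Delta_i^{(t)}$; in both cases expanding the squared norm yields
$$\|\Delta_i^{(t+1)}\|^2=\|\Delta_i^{(t)}\|^2-2\eta_t\II_{\{a_t=i\}}(\X_t^\top\Delta_i^{(t)})^2+2\eta_t\II_{\{a_t=i\}}\xi_t\,\X_t^\top\Delta_i^{(t)}+\eta_t^2\II_{\{a_t=i\}}(\X_t^\top\Delta_i^{(t)}-\xi_t)^2\|\X_t\|^2 .$$
The three correction terms are a negative drift, a martingale increment with zero $\calF_t^+$-conditional mean (since $\EE\{\xi_t\mid\calF_t^+\}=0$ by Assumption~\ref{assm:noise} and $\II_{\{a_t=i\}},\X_t,\Delta_i^{(t)}\in\calF_t^+$), and a second-order variance term.

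The core is a lower bound on the conditional expectation of the drift, and this is precisely where the two parts diverge. For part (1), exploration pulls each arm uniformly and independently of $\X_t$, so $\EE\{\II_{\{a_t=i\}}(\X_t^\top\Delta_i^{(t)})^2\mid\calF_t\}\geq\frac{\pi}{K}\EE\{(\X_t^\top\Delta_i^{(t)})^2\mid\calF_t\}\geq\frac{\pi}{K}\lambda_{\min}\|\Delta_i^{(t)}\|^2$, where the last step uses $\EE\{\X_t\X_t^\top\mid\calF_t\}\succeq\lambda_{\min}\I_d$; the requirement $\Ca\geq2K/\pi$ makes the induced contraction strong enough. For part (2) I would instead invoke Lemma~\ref{lem:regularity}(2): during exploitation (weight $1-\pi_t$) the observations where $i$ is the estimated maximizer already supply $\EE\{\II\{\X_t^\top\Bbeta_i^{(t)}\geq\max_j\X_t^\top\Bbeta_j^{(t)}\}(\X_t^\top\Delta_i^{(t)})^2\mid\calF_t\}\geq\II\{\max_j\|\Delta_j^{(t)}\|\leq h/2\}\lambda_{\min}\|\Delta_i^{(t)}\|^2$, and combining this with the exploration weight $\pi_t/K$ gives a drift factor bounded below by $\lambda_{\min}/K$ uniformly over $\pi_t\in[0,1]$ whenever all arms satisfy $\|\Delta_j^{(t)}\|\leq h/2$. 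In both regimes the variance term is dominated using Assumption~\ref{assm:cov}: the conditional sub-Gaussian bound $\|\X_t\mid\calF_t\|_{\Psi_2}^2\leq\lambda_{\max}$ controls the factors $\EE\{(\X_t^\top\Delta_i^{(t)})^2\|\X_t\|^2\mid\calF_t\}$ and $\EE\{\xi_t^2\|\X_t\|^2\mid\calF_t\}$, and the stepsize constraint $\Cb\geq3\Ca^2(\lambda_{\max}^2/\lambda_{\min}^2)$ forces $\eta_t$ small enough that the drift dominates, leaving a net recursion $\EE\{\|\Delta_i^{(t+1)}\|^2\mid\calF_t\}\leq(1-c\eta_t\lambda_{\min})\|\Delta_i^{(t)}\|^2+C\eta_t^2\lambda_{\max}\sigma^2 d$.

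To upgrade this to a high-probability statement valid up to $T=+\infty$, the plan is to replace each conditional mean by its realized value plus a deviation and bound the accumulated martingale increments together with the fluctuations of $(\X_t^\top\Delta_i^{(t)})^2$ and $\|\X_t\|^2$ about their conditional means, using the conditional Orlicz-norm concentration developed in Section~\ref{sec:LR}. The sequence $\{\delta_t\}$ parametrizes the per-step failure level, so the events controlling step $l$ fail with probability $\exp(-c(d+l/\Ca))$ and $\exp(-c(d+\delta_l))$; once $\delta_l$ grows (e.g.\ $\delta_l\asymp\log l$) these are summable, which is exactly what permits the infinite-horizon union bound and removes the usual $T\leq\exp(cd)$ restriction. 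Solving the resulting deterministic recursion with $\eta_t=\frac{\Ca/\lambda_{\min}}{t+\Cb d}$ by the standard telescoping-product estimate delivers the $\frac{d+\delta_t}{t+\Cb d}$ rate with constants $C^*$ and $C^{**}$. For part (2) the threshold $t_1=16C^*d\sigma^2/(\lambda_{\min}h^2)-\Cb d$ is calibrated so that at $t=t_1$ the guaranteed bound equals $h^2/16\leq(h/2)^2$; this activates the indicator in Lemma~\ref{lem:regularity}(2), and an induction maintains all arms on the optimal trajectory for every $t\geq t_1$, the extra term in $C^{**}$ reflecting the exploitation contribution.

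Finally, for the suboptimal arms $i\in\calA^c$ in part (2), Lemma~\ref{lem:regularity}(1) shows that once all errors fall below $h/2$ these arms are never selected during exploitation, so they are updated only on exploration rounds; bounding the effect of a single SGD step and taking a union bound over the exploration times, counted by $\sum_l\II\{\pi_l\neq0\}$, keeps $\|\Delta_i^{(t)}\|^2\leq h^2/4$, which is \eqref{eq3:thm:LBD-estimation}. I expect the main obstacle to be the coupling between the decision region $\calX(i,\{\Bbeta_j^{(t)}\})$, which depends on the estimates of \emph{all} arms, and the single-arm recursion: the exploitation drift for arm $i\in\calA$ is available only while the event $\{\max_j\|\Delta_j^{(t)}\|\leq h/2\}$ holds, so parts (2) and (3) must be bootstrapped together and this event maintained uniformly over an unbounded horizon. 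Disentangling this adaptive dependence while keeping the accumulated tail probability summable, rather than growing with $T$, is the crux that Lemma~\ref{lem:regularity} and the conditional-Orlicz machinery are designed to resolve.
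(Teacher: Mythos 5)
Your proposal is correct and follows essentially the same route as the paper's proof: the same one-step expansion into drift, martingale increment, and variance terms; the exploration bound $\tfrac{\pi}{K}\lambda_{\min}$ for part (1) and Lemma~\ref{lem:regularity} combined with $\tfrac{\pi_t}{K}+(1-\pi_t)\geq\tfrac{1}{K}$ for part (2); conditional-Orlicz martingale concentration with the $\{\delta_t\}$-indexed per-step tails; and the joint induction event coupling the sharp bounds on $\calA$ with the $h/2$ bounds on $\calA^c$, with suboptimal arms touched only at exploration rounds counted by $\sum_l\II\{\pi_l\neq 0\}$. The crux you flag at the end—bootstrapping parts (2) and (3) together so the event $\{\max_j\|\Bbeta_j^{(t)}-\Bbeta_j^*\|\leq h/2\}$ is maintained uniformly over an unbounded horizon—is exactly how the paper's two-phase induction is organized.
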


\medskip
\begin{remark}[On Decaying Exploration and Its Relation to Exploration-Free Bandits]
A key implication of Theorem \ref{thm:LBD-estimation}(2) is that statistically optimal estimation rates for the optimal arms (those in $\calA$) remain attainable even when the exploration rate $\pi_t$ decays to zero. This provides rigorous theoretical justification for adaptive or gradually diminishing exploration schedules, and connects our framework to the emerging literature on \emph{exploration-free} bandit algorithms. In particular, \cite{bastani2021mostly} introduce a class of ``mostly exploration-free'' algorithms that achieve optimal regret under a geometric condition on the covariate distribution, known as \emph{covariate diversity}. This condition ensures that the randomness in the covariates supplies sufficient intrinsic exploration, thereby eliminating the need for explicit forced exploration. Our result offers a complementary and more general viewpoint: rather than relying on geometric assumptions about the covariates, we show that exploration becomes unnecessary once the estimation error satisfies $\max_i\|\Bbeta_i^{(t)}-\Bbeta^*\|\leq h/2$. In other words, explicit exploration is required only until the estimation error falls below a critical threshold; thereafter, the algorithm can operate under a near-greedy policy without compromising long-run performance. This insight highlights an algorithmic mechanism for substantially reducing or eliminating exploration, even in settings where geometric conditions such as covariate diversity fail or are difficult to verify.
\end{remark}

Theorem~\ref{thm:LBD-estimation} characterizes the estimation error rates in the presence of tail probabilities. Part (1) describes the convergence behavior when the exploration rate $\pi_t$ is sufficiently large, guaranteeing statistically optimal estimation. Part (2) extends the analysis to settings where exploration rates are small, possibly decreasing or even vanishing, and establishes that long-term statistically optimal rates are still attainable for arms in $\calA$, while the suboptimal arms maintain uniformly bounded estimation error. This result extends existing literature on estimation error and exploration schedules; a detailed comparison is provided in Section~\ref{sec:literature-bandit}. The following corollary provides a sharper characterization when $\delta_t=C\log t$.

\begin{corollary}[Estimation Error Dynamics --- $\delta_{t}=C\log t$]
Under the same conditions and the same $t_1$ and $\{\eta_{t}\}$ as in Theorem~\ref{thm:LBD-estimation}, we have:
	\begin{enumerate}[(1)]
		\item If $\pi_t\geq\pi$ and $\Ca\geq 2K/\pi$, $\Cb\geq C\Ca^2\lambda_{\max}^2/\lambda_{\min}^2$, then with probability at least $1-C K \Ca e^{- c d }$, $\|\Bbeta_{i}^{(t)}-\Bbeta_{i}^*\|^2 \leq \frac{C^*}{\lambda_{\min}}\frac{d+\log t}{t+\Cb d}\sigma^2$ for all $t$.
        
		\item For any $\pi_t\in[0,1]$ and $t\geq t_1$, with probability at least $1-|\calA| e^{- c d } -|\calA^c| e^{- c d } \cdot\sum_{l=t_1}^{t+1}\II\l\{\pi_l\neq 0\r\}$, we have for all $i\in\calA$, $\|\Bbeta_{i}^{(t)}-\Bbeta_{i}^*\|^2\leq \frac{C^{**}}{\lambda_{\min}}\frac{d+\log t}{t+\Cb d}\sigma^2$, and for all $i\in \calA^c$, $ \|\Bbeta_{i}^{(t)}-\Bbeta_{i}^*\|^2\leq \frac{h^2}{4}$.
	\end{enumerate}
	\label{cor:estimation error1}
\end{corollary}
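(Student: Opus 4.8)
The plan is to obtain Corollary~\ref{cor:estimation error1} directly from Theorem~\ref{thm:LBD-estimation} by specializing the free tail sequence to $\delta_t = C\log t$ and then collapsing the resulting step-wise failure probabilities into clean $e^{-cd}$ bounds. First I would check that $\delta_t=C\log t$ is admissible in the sense of Theorem~\ref{thm:LBD-estimation}: it is non-decreasing, so condition $(i)$ holds, and since $\log t\le t/e$ for all $t\ge 1$ we have $\delta_t\le Ct/\Cb$ after enlarging the leading constant, giving condition $(ii)$. Granting admissibility, the error bounds follow by inspection: substituting $\delta_t=C\log t$ into \eqref{eq1:thm:LBD-estimation} and \eqref{eq2:thm:LBD-estimation} produces the advertised rates $\tfrac{C^*}{\lambda_{\min}}\tfrac{d+\log t}{t+\Cb d}\sigma^2$ and $\tfrac{C^{**}}{\lambda_{\min}}\tfrac{d+\log t}{t+\Cb d}\sigma^2$, and the bound \eqref{eq3:thm:LBD-estimation} for $i\in\calA^c$ carries over verbatim since it never involves $\delta_t$.

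The substantive step is to bound the two families of probability sums that appear in Theorem~\ref{thm:LBD-estimation}. For the geometric family I would estimate
\[
\sum_{l=0}^{t}\exp\!\big(-c(d+l/\Ca)\big)=e^{-cd}\sum_{l=0}^{t}e^{-cl/\Ca}\le \frac{e^{-cd}}{1-e^{-c/\Ca}}\le C\,\Ca\,e^{-cd},
\]
using $1-e^{-x}\ge x/2$ on $(0,1)$ so that $(1-e^{-c/\Ca})^{-1}\le 2\Ca/c$. For the logarithmic family the crucial point is that each step now contributes a polynomially small probability, so that
\[
\sum_{l=0}^{t}\exp\!\big(-c(d+C\log l)\big)=e^{-cd}\sum_{l=1}^{t}l^{-cC}\le C'e^{-cd},
\]
provided the constant $C$ in $\delta_t=C\log t$ is taken large enough that $cC>1$; then $\sum_{l\ge 1}l^{-cC}$ is a convergent $p$-series whose value is finite and independent of $t$ (and of the horizon $T$).

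Combining the two estimates finishes the argument. In part $(1)$, the failure probability from Theorem~\ref{thm:LBD-estimation}$(1)$ is at most $K\cdot C\Ca e^{-cd}+K\cdot C'e^{-cd}$, which is $\le CK\Ca e^{-cd}$ uniformly in $t$, yielding the stated confidence $1-CK\Ca e^{-cd}$. In part $(2)$, applying the same two bounds with the index set $\calA$ (and with the sums starting at $t_1$) gives a total failure probability of order $|\calA|e^{-cd}$, where the fixed factor $\Ca$ is absorbed by relabeling the exponent constant $c$; the statement for $i\in\calA^c$ is simply the probability bound attached to \eqref{eq3:thm:LBD-estimation}.

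The one genuinely delicate point, and the reason the guarantee extends to $T=+\infty$, is the logarithmic family: summability of $\sum_l l^{-cC}$ is exactly what prevents the union bound from blowing up over infinitely many time steps. The schedule $\delta_t=C\log t$ is calibrated so that step $l$ fails with probability only $O(l^{-cC}e^{-cd})$, whose infinite sum stays $O(e^{-cd})$; this is the mechanism that upgrades the finite-horizon bounds of Theorem~\ref{thm:LBD-estimation} into statements holding for all $t$. Accordingly, I expect verifying $cC>1$ --- equivalently, choosing the multiplicative constant in $\delta_t$ large relative to the concentration constant $c$ --- to be the only step that requires genuine care.
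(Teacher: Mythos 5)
Your proposal is correct and follows essentially the same route as the paper, which states this corollary without separate proof precisely because it follows from Theorem~\ref{thm:LBD-estimation} by substituting $\delta_t=C\log t$, bounding the geometric sum $\sum_l e^{-c(d+l/\Ca)}\leq C\Ca e^{-cd}$, and using summability of the $p$-series $\sum_l l^{-cC}$ (with $C$ chosen so that $cC>1$) to keep the accumulated tail at $O(e^{-cd})$ uniformly over an infinite horizon. You also correctly identify the choice $cC>1$ as the one point requiring care, which is exactly the mechanism the paper relies on to allow $T=+\infty$.
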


Corollary~\ref{cor:estimation error1}$(1)$ implies that if $\pi_t\geq \pi$, then for all $T\leq +\infty$, the estimator satisfies, with probability at least $1-CK\Cb\exp(-cd)$,
\begin{align}
	\|\Bbeta_{i}^{(T)}-\Bbeta_{i}^*\|^2\leq C^{*}   \frac{\max\{d,\log T\}}{\lambda_{\min}(T+\Cb d)}  \sigma^2, \quad T=0,1,\cdots,+\infty.
	\label{eq1:cor:estimation error1}
\end{align}
If $T\geq \exp(d)$, the extra $\log T$ term becomes unavoidable, as shown in Section~\ref{sec:LR}. Corollary~\ref{cor:estimation error1}$(2)$ further covers settings where the exploration rate may be arbitrarily small. Under appropriate conditions, e.g., where $\pi_t = 0$ for all $t\geq t_2$ or $\calA^c=\emptyset$, the estimator for arms $i\in\calA$ satisfies the same bound for all $T\leq \infty$.

Before presenting the regret bound, we introduce the following measures of model complexity:
\begin{align*}
	\ComSQ:=\sum_{j=1}^{K}\sum_{i=1}^{K} \|\Bbeta_{i}^*-\Bbeta_{j}^*\|,\quad \ComOne:=\sum_{j=1}^K \max_{i}\|\Bbeta_{i}^*-\Bbeta_{j}^*\|,\quad\ComMax:=\max_{i,j} \|\Bbeta_{i}^*-\Bbeta_{j}^*\|.
\end{align*}
When the pairwise distances among the parameters $\{\Bbeta_i^*\}$ are large, the parameter $h$ in Assumption~\ref{assm:arm opt} can also be chosen large, potentially simplifying the identification of optimal arms. However, incorrect decisions in such settings can incur larger regret. The next theorem formalizes how these complexity measures influence regret. Define the accumulated tail probability
$$ 
	\textsf{Tail}(T,d,\{\delta_{l}\}): = e^{-cd} \bigg(    c'\Cb +K\sum_{l=0}^{t_1} e^{ -c \delta_{l} } +|\calA|\sum_{l=t_1+1}^{T} e^{ -c \delta_{l} } + |\calA^c| \sum_{l=t_1+1}^{T}\II\{\pi_l\neq 0\} \bigg).
$$

\begin{theorem}[Regret Bound under General Learning Rates]
\label{thm:regret}
	Under the same assumptions, stepsize choices, and the same $t_1$ and parameter conditions as in Theorem~\ref{thm:LBD-estimation}, suppose $\pi_t\geq 1/3$ for $t\leq t_1$ and $\pi_t\in[0,1]$ for $t\geq t_1$. Then
	\begin{multline*}
		\textsf{Regret}(T)\leq \underbrace{\sqrt{\lambda_{\max}}\frac{\sum_{t=0}^{T}\pi_t}{K} \min\{\ComSQ, \sqrt{d}\ComOne \}}_{R_1}\\+\underbrace{\sqrt{\lambda_{\max}}\textsf{Tail}(T,d,\{\delta_{t}\})\min\{\ComSQ, \sqrt{d} \ComMax \}}_{R_2}\\+\underbrace{C\sqrt{\frac{\lambda_{\max}}{\lambda_{\min}}} \sigma \min\{ K, \sqrt{d}\}\sum_{t=0}^{T}\sqrt{\frac{d+\delta_{t}}{t+\Cb d}}}_{R_3}.
	\end{multline*}
\end{theorem}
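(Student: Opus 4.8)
The plan is to bound the cumulative regret step by step, writing $\EE[\textsf{Regret}(T)]=\sum_{t}\EE[r_t]$ with $r_t:=\max_i\X_t^\top\Bbeta_i^*-\X_t^\top\Bbeta_{a_t}^*$, and decomposing each $r_t$ according to whether step $t$ explores ($\alpha_t=1$) or exploits ($\alpha_t=0$). Since the deterministic schedule $\pi_t$ governs an independent draw of $\alpha_t$ given $\calF_t$, conditioning first on $\calF_t$ splits $\EE[r_t]$ into $\pi_t\,\EE[g_t]$, where $g_t:=\tfrac1K\sum_i(\max_j\X_t^\top\Bbeta_j^*-\X_t^\top\Bbeta_i^*)$ is the conditional exploration regret, plus the exploitation regret of the greedy arm $a_t^{\mathrm{exploit}}:=\arg\max_i\X_t^\top\Bbeta_i^{(t)}$. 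For the exploration term I would bound $g_t$ in two ways: using $\max_j\X_t^\top\Bbeta_j^*-\X_t^\top\Bbeta_i^*\le\sum_j|\X_t^\top(\Bbeta_j^*-\Bbeta_i^*)|$ together with $\EE[|\X_t^\top\v|\mid\calF_t]\le C\sqrt{\lambda_{\max}}\|\v\|$ (valid since $\Bbeta_i^{(t)}\in\calF_t$) yields $\tfrac{C\sqrt{\lambda_{\max}}}{K}\ComSQ$; alternatively $\max_j\X_t^\top\Bbeta_j^*-\X_t^\top\Bbeta_i^*\le\|\X_t\|\max_j\|\Bbeta_j^*-\Bbeta_i^*\|$ with $\EE[\|\X_t\|\mid\calF_t]\le C\sqrt{d\lambda_{\max}}$ yields $\tfrac{C\sqrt{d\lambda_{\max}}}{K}\ComOne$. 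Taking the minimum and summing $\pi_t$ reproduces $R_1$.

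For the exploitation regret I would use the elementary inequality, valid because $a_t^{\mathrm{exploit}}$ is the empirical and $i^\star:=\arg\max_i\X_t^\top\Bbeta_i^*$ the oracle maximizer,
\begin{align*}
r_t^{\mathrm{exploit}}\le \X_t^\top(\Bbeta_{i^\star}^*-\Bbeta_{i^\star}^{(t)})+\X_t^\top(\Bbeta_{a_t}^{(t)}-\Bbeta_{a_t}^*)\le 2\max_i|\X_t^\top(\Bbeta_i^{(t)}-\Bbeta_i^*)|.
\end{align*}
The ``good'' event $\calG_t$, which is $\calF_t$-measurable since every $\Bbeta_i^{(t)}\in\calF_t$, is taken to be the success event of Theorem~\ref{thm:LBD-estimation}: part (1) for $t\le t_1$ and parts (2)--(3) for $t>t_1$. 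On $\calG_t$ with $t>t_1$, all arm errors satisfy $\|\Bbeta_i^{(t)}-\Bbeta_i^*\|\le h/2$, so Lemma~\ref{lem:regularity}(1) forces both $a_t^{\mathrm{exploit}}$ and $i^\star$ into $\calA$; the displayed inequality then involves only arms in $\calA$, whose errors obey the sharp rate from \eqref{eq2:thm:LBD-estimation}. Conditioning on $\calF_t$, I bound $\EE[r_t^{\mathrm{exploit}}\II_{\calG_t}\mid\calF_t]$ either by $2\sqrt{\lambda_{\max}}\sum_{i\in\calA}\|\Bbeta_i^{(t)}-\Bbeta_i^*\|$ (factor $K$) or by $2\,\EE[\|\X_t\|\mid\calF_t]\max_{i\in\calA}\|\Bbeta_i^{(t)}-\Bbeta_i^*\|$ (factor $\sqrt d$); inserting the rate $\sqrt{(d+\delta_t)/(t+\Cb d)}\,\sigma/\sqrt{\lambda_{\min}}$ and summing over $t$ yields $R_3$. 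For $t\le t_1$ the same two estimates are applied under part (1), which controls all $K$ arms.

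On the complementary event $\calG_t^c$ I would use only the crude deterministic bound, giving $\EE[r_t^{\mathrm{exploit}}\mid\calF_t]\le C\sqrt{\lambda_{\max}}\min\{\ComSQ,\sqrt d\,\ComMax\}$ through the same two strategies ($\sum_i|\X_t^\top(\Bbeta_j^*-\Bbeta_i^*)|$ versus $\|\X_t\|\ComMax$). Because $\calG_t^c\in\calF_t$, the regret magnitude and the failure probability decouple upon taking expectations, so $\EE[r_t^{\mathrm{exploit}}\II_{\calG_t^c}]\le C\sqrt{\lambda_{\max}}\min\{\ComSQ,\sqrt d\,\ComMax\}\,\PP(\calG_t^c)$. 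Summing the per-step failure probabilities from Theorem~\ref{thm:LBD-estimation}(1)--(3) reconstructs $\textsf{Tail}(T,d,\{\delta_l\})$: the geometric sums $\sum_l e^{-cl/\Ca}$ with prefactor $K$ contribute the $c'\Cb\,e^{-cd}$ term after using $\Ca\ge 2K/\pi$ (hence $K\Ca\lesssim\Cb$), while the $e^{-c\delta_l}$ and $\II\{\pi_l\neq0\}$ terms match verbatim. This yields $R_2$.

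The main obstacle, I expect, is the exploitation analysis for $t>t_1$. One must establish that $\calG_t$ is $\calF_t$-measurable so that tail probabilities and regret magnitudes factor cleanly, and crucially invoke Lemma~\ref{lem:regularity}(1) to guarantee that only arms in $\calA$—whose errors genuinely decay—can ever be played or be oracle-optimal on $\calG_t$. Without this restriction the non-decaying $O(h)$ errors of arms in $\calA^c$ (controlled only by \eqref{eq3:thm:LBD-estimation}) would enter the bound through $\max_i|\X_t^\top(\Bbeta_i^{(t)}-\Bbeta_i^*)|$ and destroy the summable $R_3$ rate. The remaining work is the bookkeeping that verifies the accumulated per-time failure probabilities telescope exactly into $\textsf{Tail}$.
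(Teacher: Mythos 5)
Your proposal matches the paper's own proof in all essentials: the same per-step decomposition into exploration and exploitation regret via $\alpha_t$, the same two-way bounds (pairwise sums versus $\|\X_t\|$-based bounds) yielding the $\min\{\ComSQ,\sqrt{d}\,\cdot\}$ terms, the same good-event/bad-event split built on the success events of Theorem~\ref{thm:LBD-estimation}, the same use of Lemma~\ref{lem:regularity} to confine play to $\calA$ for $t>t_1$ (with the oracle arm lying in $\calA$ by Assumption~\ref{assm:arm opt}), and the same $\calF_t$-measurability argument to decouple failure probabilities from regret magnitudes into the $\textsf{Tail}$ term. The argument is correct as outlined and requires no changes beyond the bookkeeping you already flag.
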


Theorem~\ref{thm:regret} decomposes the cumulative regret into three components. The first term, $R_1$, captures regret due to exploration and scales with $\sum_{t=0}^{T}\pi_t$, which represents the approximate number of exploration iterations. The second term, $R_2$, arises from the failure probability in Theorem~\ref{thm:LBD-estimation} and is typically negligible. The third term, $R_3$, captures the dominant contribution and scales as $\sqrt{T}$. To minimize overall regret, one should therefore keep $\sum_{t=0}^T \pi_t$ as small as possible while still satisfying the required conditions. As discussed in Theorem~\ref{thm:LBD-estimation}, a convenient choice is to set $\pi_t=f(t)$ for some $f$ in the class $\Pi(\tau,\pi)$.


\begin{definition}
	A function $f(\cdot):\RR^+\to\RR^+$ belongs to $\Pi(\tau, \pi)$ if $(i)$ $f$ is decreasing; $(ii)$ $0\leq f(x)\leq 1$; and $(iii)$ $f(x)\geq \pi$ for all $x\leq \tau$.
	\label{def:exploration rate}
\end{definition}

Since $f$ is decreasing, we have $\sum_{t=1}^{T}\pi_t\leq \int_{0}^{T}f(x)  {\rm d}x$. The following are some examples of the exploration rate $\pi_t$ that satisfy Definition \ref{def:exploration rate}.

\begin{example}[Example Functions satisfying Definition~\ref{def:exploration rate}] 
The following functions belong to $\Pi(\tau,1/3)$.
\begin{enumerate}
		\item Let $ f(x) =\frac{C_{\pi}}{x+2C_{\pi}}$, where $C_{\pi}\geq \tau$. Then $f\in\Pi(\tau,1/3)$ and $\sum_{t=1}^{T}\pi_t\leq C_{\pi}\ln\l(\frac{T+2C_{\pi}}{T}\r)$;

		\item Let $f(x)= (\frac{C_{\pi}}{x+2^{1/p}C_{\pi}}  )^{p}$ with $0< p<1$ and $C_{\pi}\geq 3^{1-1/p}\tau$. Then $f\in\Pi(\tau,1/3)$ and $\sum_{t=1}^{T}\pi_t\leq C_{\pi}^{p} (T+2^{1/p}C_{\pi})^{1-p}$.
	\end{enumerate}
\label{exp:regret}
\end{example}

Substituting $\delta_t=0$ or $\delta_t=\log t$ into Theorem~\ref{thm:regret} gives the following corollary.

\begin{corollary}[Regret: Unaware of $h$] 
\label{cor:regret 1}
Assume the same assumptions and parameters as in Theorem~\ref{thm:regret}. Let $\eta_{t}=\frac{1}{\lambda_{\min}}\frac{\Ca}{t+\Cb d}$ and $\pi_t =f(t)\in\Pi(t_1,1/3)$. If $\calA^c\neq\emptyset$, then for all $T\leq \exp(Cd)$, 
$$
    \textsf{Regret}(T)\leq \sqrt{\lambda_{\max}}\frac{\int_{0}^{T}f(t) {\rm d} t}{K} \min \{\ComSQ, \sqrt{d}\ComOne \} +C\sqrt{C^*}\sqrt{\frac{\lambda_{\max}}{\lambda_{\min}}}   \frac{\sigma \min \{ K, \sqrt{d} \} T\sqrt{d} }{\sqrt{T+\Cb d} + \sqrt{\Cb d}} .
$$
If $\calA^c=\emptyset$, then for all $T\leq +\infty$, 
$$
	\textsf{Regret}(T)\leq \sqrt{\lambda_{\max}}\frac{\int_{0}^{T}f(t) {\rm d} t}{K}  \min \{\ComSQ, \sqrt{d}\ComOne \} +C\sqrt{C^*}\sqrt{\frac{\lambda_{\max}}{\lambda_{\min}}}  \frac{\sigma \min \{ K, \sqrt{d} \}T\sqrt{d+\log T} }{\sqrt{T+\Cb d} + \sqrt{\Cb d}}  .
$$
\end{corollary}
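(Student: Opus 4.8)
The plan is to derive Corollary~\ref{cor:regret 1} by specializing the three-term regret bound in Theorem~\ref{thm:regret} to the prescribed exploration schedule $\pi_t = f(t)\in\Pi(t_1,1/3)$ and to the two natural tail-probability choices $\delta_t=0$ (when $\calA^c=\emptyset$) or $\delta_t=C\log t$ (when $\calA^c\neq\emptyset$). First I would verify that the stated exploration schedule meets the hypotheses of Theorem~\ref{thm:regret}: since $f\in\Pi(t_1,1/3)$ is decreasing with $f(x)\geq 1/3$ for all $x\leq t_1$, we immediately have $\pi_t\geq 1/3$ for $t\leq t_1$ and $\pi_t\in[0,1]$ for $t\geq t_1$, as required. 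I would then handle each of the three terms $R_1,R_2,R_3$ in turn.

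For $R_1$, using monotonicity of $f$ I would bound $\sum_{t=0}^{T}\pi_t\leq \int_0^T f(t)\,{\rm d}t$ (the inequality stated right after Definition~\ref{def:exploration rate}), which directly yields the leading exploration term $\sqrt{\lambda_{\max}}\,\tfrac{\int_0^T f(t)\,{\rm d}t}{K}\min\{\ComSQ,\sqrt{d}\,\ComOne\}$ in both displayed bounds. For $R_3$, the task is to estimate the sum $\sum_{t=0}^{T}\sqrt{(d+\delta_t)/(t+\Cb d)}$. Plugging in $\delta_t=0$ gives $\sqrt{d}\sum_{t=0}^T (t+\Cb d)^{-1/2}$, and an integral comparison $\sum_{t=0}^T (t+\Cb d)^{-1/2}\leq \int_0^T (x+\Cb d)^{-1/2}\,{\rm d}x+(\Cb d)^{-1/2}=2(\sqrt{T+\Cb d}-\sqrt{\Cb d})+(\Cb d)^{-1/2}$. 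Rationalizing $\sqrt{T+\Cb d}-\sqrt{\Cb d}=T/(\sqrt{T+\Cb d}+\sqrt{\Cb d})$ produces exactly the $T\sqrt{d}/(\sqrt{T+\Cb d}+\sqrt{\Cb d})$ shape in the $\calA^c=\emptyset$ display; the case $\delta_t=C\log t$ is handled identically after pulling $\sqrt{d+\log T}$ out of the sum as a uniform upper bound over $t\leq T$, giving the $T\sqrt{d+\log T}$ numerator. The $\min\{K,\sqrt{d}\}$ factor and the $\sqrt{\lambda_{\max}/\lambda_{\min}}\,\sigma\sqrt{C^*}$ prefactor are carried through unchanged from $R_3$.

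The main obstacle is the $R_2$ term, which must be shown to be absorbed (up to constants) into the other two terms under each horizon restriction. Here I would invoke the definition of $\textsf{Tail}(T,d,\{\delta_l\})$ and treat the two cases separately. When $\calA^c=\emptyset$, the last indicator sum in $\textsf{Tail}$ vanishes, and with $\delta_t=0$ the remaining geometric-in-$d$ factor $e^{-cd}(c'\Cb + K(t_1{+}1) + |\calA|(T{-}t_1))$ is controlled because each $e^{-cd}$ contributes exponential decay in $d$; I would argue that for all $T\leq+\infty$ this contributes only a lower-order additive constant relative to $R_3\sim \sqrt{T}$, so it can be folded into the displayed constant $C$. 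When $\calA^c\neq\emptyset$, taking $\delta_t=C\log t$ makes $\sum_{l} e^{-c\delta_l}=\sum_l l^{-cC}$ a convergent series for $C$ large, so the estimation-phase contributions to $\textsf{Tail}$ are bounded by a constant times $e^{-cd}$; the horizon cap $T\leq\exp(Cd)$ is exactly what ensures the surviving $|\calA^c|\sum_{l=t_1+1}^T\II\{\pi_l\neq0\}\leq |\calA^c|\,T$ term, multiplied by $e^{-cd}$, stays bounded. Thus the delicate point is confirming that the chosen horizon restriction in each case renders $R_2\cdot\min\{\ComSQ,\sqrt{d}\,\ComMax\}$ dominated by $R_3$, allowing it to be dropped from the final statement; once that domination is established, the two displayed inequalities follow by collecting $R_1$ and $R_3$.
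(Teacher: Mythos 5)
Your overall strategy --- specializing Theorem~\ref{thm:regret} under the schedule $\pi_t=f(t)$, bounding $\sum_t\pi_t$ by $\int_0^T f(t)\,{\rm d}t$, and estimating $R_3$ via the integral comparison $\sum_{t=0}^T(t+\Cb d)^{-1/2}\le 2T/(\sqrt{T+\Cb d}+\sqrt{\Cb d})+(\Cb d)^{-1/2}$ --- is exactly the paper's route (the paper obtains the corollary by substituting $\delta_t=0$ or $\delta_t=\log t$ into Theorem~\ref{thm:regret}), and those computations are correct. However, you have swapped the two choices of $\delta_t$ between the two cases, and the swap breaks the argument at its most delicate point.

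The display with numerator $T\sqrt{d}$ and the restriction $T\le\exp(Cd)$ is the $\calA^c\neq\emptyset$ case and corresponds to $\delta_t=0$; the display with numerator $T\sqrt{d+\log T}$, valid for all $T\le+\infty$, is the $\calA^c=\emptyset$ case and corresponds to $\delta_t=C\log t$. Your proposal assigns $\delta_t=0$ to $\calA^c=\emptyset$ and $\delta_t=C\log t$ to $\calA^c\neq\emptyset$, and is then internally inconsistent: you say $\delta_t=0$ ``produces exactly the $T\sqrt{d}/(\sqrt{T+\Cb d}+\sqrt{\Cb d})$ shape in the $\calA^c=\emptyset$ display,'' but that shape appears in the other display. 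The swap is not cosmetic. With $\calA^c=\emptyset$ and $\delta_t=0$, the tail term is
$$
\textsf{Tail}(T,d,\{0\})=e^{-cd}\big(c'\Cb+K(t_1+1)+|\calA|(T-t_1)\big),
$$
which grows \emph{linearly} in $T$; hence $R_2\asymp|\calA|\,T\,e^{-cd}\min\{\ComSQ,\sqrt{d}\,\ComMax\}$ eventually dominates $R_3\asymp\sqrt{dT}$ (once $T\gtrsim d\,e^{2cd}$), and the resulting regret bound is linear in $T$. Your claim that this term ``contributes only a lower-order additive constant relative to $R_3$ for all $T\le+\infty$'' is therefore false; with $\delta_t=0$ no infinite-horizon statement is possible. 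The entire purpose of taking $\delta_t=C\log t$ in the $\calA^c=\emptyset$ case is that $\sum_l e^{-c\delta_l}=\sum_l l^{-cC}$ converges, so $\textsf{Tail}$ is bounded uniformly in $T$, purchasing $T=+\infty$ at the price of the $\sqrt{d+\log T}$ factor in $R_3$. Conversely, in the $\calA^c\neq\emptyset$ case one takes $\delta_t=0$: then $R_3$ carries only $\sqrt{d}$, and the cap $T\le\exp(Cd)$ (with $C$ small relative to $c$) is precisely what keeps both the $|\calA|T$ and the $|\calA^c|\sum_l\II\{\pi_l\neq 0\}\le|\calA^c|T$ contributions to $\textsf{Tail}$ bounded. (Your choice $\delta_t=C\log t$ could also be made to work in that case, but only by additionally noting that the cap gives $\log T\le Cd$, hence $\sqrt{d+\log T}=O(\sqrt{d})$ --- a step you do not take.) Once the assignments are corrected, the remainder of your computation goes through.
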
 

The cumulative regret bound consists of two main components. The first depends on the horizon $T$ via the integral $\int_0^T f(s){\rm d}s$, determined by the exploration schedule and potentially growing more slowly than $\sqrt{T}$. The second term scales as $\sqrt{T}$. For instance, choosing $\pi_t= (\frac{C_\pi}{t+2^{1/p}C_\pi} )^p$ with $1/2\le p\le 1$ (Example~\ref{exp:regret}) yields
$$
    \textsf{Regret}(T)\leq O(\sqrt{T}).
$$
Thus, Theorems \ref{thm:LBD-estimation} and \ref{thm:regret} together show that optimal regret and estimation error can be achieved simultaneously under a wide class of exploration schedules. Moreover, the dependence on $T$ and $d$ is statistically optimal. In some settings, e.g., as discussed in \cite{bastani2020online}, the parameter $h$ may be treated as known. When $h$ is known, Theorems \ref{thm:LBD-estimation} and \ref{thm:regret} imply that exploration is unnecessary once the estimation error satisfies $\max_{i\in[K]} \| \Bbeta_i^{(t)}-\Bbeta_i^{*} \| \le h/2$. Thus, the exploration rate may be set to $\pi_t=0$ beyond this point, as formalized below.

\begin{corollary}[Regret: Aware of $h$] 
\label{cor:regret 2}
Under the same assumptions and conditions as Theorem~\ref{thm:regret}, take $\eta_t=\frac{1}{\lambda_{\min}}\frac{\Ca}{t+\Cb d}$ and set $\pi_t\ge1/2$ for $t<t_1$ and $\pi_t=0$ for $t\ge t_1$. Then for all $T\le\infty$,
$$
		\textsf{Regret}(T)\leq \sqrt{\lambda_{\max}}\frac{t_1}{K} \min \{\ComSQ, \sqrt{d}\ComOne \}\\
		+C\sqrt{C^*}\sqrt{\frac{\lambda_{\max}}{\lambda_{\min}}}   \frac{\sigma \min \{ K, \sqrt{d} \} T\sqrt{d+\log T} }{\sqrt{T+\Cb d} + \sqrt{\Cb d}}   .
$$
\end{corollary}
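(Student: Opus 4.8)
The plan is to specialize Theorem~\ref{thm:regret} to the prescribed schedule $\pi_t\ge 1/2$ for $t<t_1$ and $\pi_t=0$ for $t\ge t_1$, together with the tail level $\delta_t=\log t$, and then bound the three regret components $R_1,R_2,R_3$ separately. The conceptual crux is the admissibility of this schedule: because we are in the ``aware of $h$'' regime, the forced exploration over $t<t_1$ drives the error of \emph{every} arm below $h/2$ by time $t_1$ (Theorem~\ref{thm:LBD-estimation}, combining \eqref{eq1:thm:LBD-estimation} at $\pi=1/2$ with the defining choice $t_1+\Cb d=16C^*d\sigma^2/(\lambda_{\min}h^2)$). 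Lemma~\ref{lem:regularity}(1) then guarantees that, on the high-probability event where $\max_j\|\Bbeta_j^{(t)}-\Bbeta_j^*\|\le h/2$, every exploitation step selects an arm in $\calA$; hence with $\pi_t=0$ the suboptimal arms in $\calA^c$ are never pulled after $t_1$, so their estimates stay frozen at the admissible level $h^2/4$ required by \eqref{eq3:thm:LBD-estimation}, while the arms in $\calA$ keep attaining the optimal rate \eqref{eq2:thm:LBD-estimation}. This is exactly what legitimizes $\pi_t=0$ and zeroes out the $\calA^c$ contribution to the tail.

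For $R_1$: since $\pi_t=0$ for $t\ge t_1$ and $\pi_t\le1$ otherwise, $\sum_{t=0}^{T}\pi_t=\sum_{t=0}^{t_1-1}\pi_t\le t_1$, giving $R_1\le \sqrt{\lambda_{\max}}\,\tfrac{t_1}{K}\min\{\ComSQ,\sqrt d\,\ComOne\}$, the first claimed term. For $R_2$: with $\pi_l=0$ for $l\ge t_1$ the indicator sum $\sum_{l=t_1+1}^{T}\II\{\pi_l\neq0\}$ vanishes, and with $\delta_l=\log l$ the exponential sums $\sum_l e^{-c\delta_l}=\sum_l l^{-c}$ converge to an absolute constant, so $\textsf{Tail}(T,d,\{\delta_l\})=O(e^{-cd})$ and $R_2=O\big(\sqrt{\lambda_{\max}}\,e^{-cd}\min\{\ComSQ,\sqrt d\,\ComMax\}\big)$. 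Since $\ComMax\le\ComOne$ and $e^{-cd}\le t_1/K$ in the relevant range of $d$, this term is dominated by $R_1$ and absorbed into it.

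The remaining work is the $R_3$ computation. Factoring $\sqrt{d+\log t}\le\sqrt{d+\log T}$ out of the sum reduces matters to bounding $\sum_{t=0}^{T}(t+\Cb d)^{-1/2}$. Because $t\mapsto(t+\Cb d)^{-1/2}$ is decreasing, an integral comparison gives $\sum_{t=0}^{T}(t+\Cb d)^{-1/2}\le (\Cb d)^{-1/2}+2\big(\sqrt{T+\Cb d}-\sqrt{\Cb d}\big)$, and rationalizing $\sqrt{T+\Cb d}-\sqrt{\Cb d}=T/(\sqrt{T+\Cb d}+\sqrt{\Cb d})$ yields $\sum_{t=0}^{T}(t+\Cb d)^{-1/2}\le C\,T/(\sqrt{T+\Cb d}+\sqrt{\Cb d})$. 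Multiplying by $\sqrt{d+\log T}$ and by the $R_3$ prefactor $C\sqrt{\lambda_{\max}/\lambda_{\min}}\,\sigma\min\{K,\sqrt d\}$ (whose constant carries the $\sqrt{C^*}$ inherited from the per-arm bound $\|\Bbeta_i^{(t)}-\Bbeta_i^*\|\le\sqrt{C^*/\lambda_{\min}}\,\sqrt{(d+\delta_t)/(t+\Cb d)}\,\sigma$) reproduces the second claimed term. Adding the three bounds and absorbing $R_2$ into $R_1$ finishes the proof.

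The step I expect to be the main obstacle is not the arithmetic of $R_3$ but the justification that switching to $\pi_t=0$ beyond $t_1$ keeps the run within the hypotheses of Theorem~\ref{thm:regret}: one must verify that the suboptimal arms never re-enter $\arg\max_i\X_t^\top\Bbeta_i^{(t)}$ once exploration stops, which requires the separation guarantee of Lemma~\ref{lem:regularity}(1) to hold uniformly over all $t\ge t_1$ on a single event, rather than pointwise in $t$. Securing this uniformity—so that one $e^{-cd}$-type failure budget covers the entire, possibly infinite, horizon—is precisely where the horizon-free tail-accumulation control of Theorem~\ref{thm:LBD-estimation} is indispensable.
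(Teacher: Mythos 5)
Your proposal is correct and takes essentially the same route as the paper: Corollary~\ref{cor:regret 2} is obtained precisely by substituting the schedule ($\pi_t\ge 1/2$ for $t<t_1$, $\pi_t=0$ for $t\ge t_1$) and $\delta_t=C\log t$ into Theorem~\ref{thm:regret}, observing that the $|\calA^c|\sum_{l}\II\{\pi_l\neq 0\}$ term in $\textsf{Tail}$ vanishes (and, via \eqref{eq3:thm:LBD-estimation} with an empty indicator sum, that arms in $\calA^c$ are never pulled after $t_1$), so the guarantee extends to $T=+\infty$, and then bounding $R_1\le\sqrt{\lambda_{\max}}\,(t_1/K)\min\{\ComSQ,\sqrt d\,\ComOne\}$ and $R_3$ by the integral comparison $\sum_{t=0}^T(t+\Cb d)^{-1/2}\le C\,T/(\sqrt{T+\Cb d}+\sqrt{\Cb d})$. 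The only cosmetic caveat, shared with the paper's own treatment of Corollary~\ref{cor:regret 1}, is that the convergence of $\sum_l e^{-c\delta_l}$ requires the constant in $\delta_l=C\log l$ to be large enough that the exponent exceeds one, and the absorption of $R_2$ into the displayed terms is a constant-bookkeeping step rather than a literal domination by the first term.
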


Setting $\pi_t=0$ ensures that suboptimal arms in $\calA^c$ are never selected. Compared with Corollary \ref{cor:regret 1}, this result removes the restriction $T\le C\exp(cd)$ even when $\calA^c \neq \emptyset$. The improvement stems from the fact that with $\pi_t=0$, exploration-induced errors no longer accumulate, allowing non-asymptotic guarantees to extend to an infinite horizon.

\subsection{Asymptotic Analysis and Inference}
\label{sec:asymptotic}

In this section, we investigate the asymptotic behavior of the iterates  $\Bbeta_{i}^{(t)}$, from which we derive confidence intervals for $\Bbeta_{i}^*$. There is a growing literature on inference for SGD iterates in contextual linear bandits; see, for example, \cite{chen2021statisticalb}, \cite{han2022online}, \cite{chen2022online}, and \cite{duan2024online}, which study inference under i.i.d. covariates and noise with $1/t^\alpha$ stepsize schemes, constant exploration rates, and related settings. A detailed comparison with these approaches is provided in Section~\ref{sec:literature-bandit}.

Throughout, we maintain the assumptions $\EE\{\xi_t|\calF_t^+\}=0$ and $\EE\{\X_t|\calF_t\}=\boldsymbol{0}$ almost surely. To ensure asymptotic convergence of the SGD iterates, we require the existence of limiting second-moment matrices for both the covariates and the noise.

\begin{assumption}[Covariates and Noise for Asymptotic Convergence]
\label{assm:cov asymp}
There exist matrices $\bSigma^*$ and $\bSigma_{i}^*$ such that
$$
    \displaystyle\lim_{t\to+\infty}\EE\big\{ \|\EE\{\X_t^{\top}\X_t|\calF_t\}-\bSigma^* \|\big\}=0 ~~\mbox{ and }~~  \displaystyle\lim_{t\to+\infty} \EE \big\{ \|\EE \{\X_t\X_t^\top\cdot\II \{\X_t\in\calU_i^* \}|\calF_t \}- \bSigma_i^* \| \big\}=0.
$$ 
In addition, there exist constants $\lambda_{\max},\lambda_{\min}>0$ such that almost surely,
$$
    \lambda_{\min}\I\preceq\EE \{\X_t\X_t^{\top}|\calF_t  \} ~~\mbox{ and }~~ \displaystyle \sup_{\V\in\SS^{d-1}:\,\V\in\calF_t}\EE \{(\X_t^{\top}\V)^4|\calF_t \}\leq\lambda_{\max}^2 .
$$ 
Assume $\xi_t\perp\X_t|\calF_t$. There exist $\sigma,\sigma_*>0$ such that $\displaystyle\lim_{t\to+\infty}\EE \{ |\EE\{\xi_t^2|\calF_t\}-\sigma_*^2 | \}=0$ and $\EE \{\xi_t^4|\calF_t^+ \}\leq \sigma^4$ almost surely.
\end{assumption}

Assumption~\ref{assm:cov asymp} is needed to establish the asymptotic distribution of $\Bbeta_{i}^{(t)}$. However, near the boundary $\partial \calU_i^*$, the behavior $\X_t$ may still cause difficulties. To illustrate, consider the case $\Bbeta_1^* = (3, 0)^\top$ and $\Bbeta_2^* = (-1, 0)^\top$, and suppose
$$
\PP\{ \X = (0, 1)^\top \} = \PP \{ \X = (0, -1)^\top \} = \frac{1}{2} 
\PP\{ \X = (2, 0)^\top \} = \PP \{ \X = (-1, 0)^\top \} = \frac{1}{5}.
$$
Here, $\partial \calU_1^* = \{(0, y)^\top: y \in \RR\}$ is the $y$-axis, but $\PP(\X \in \partial\calU_1^*) = 2/5$. Although our non-asymptotic guarantees in Theorems~\ref{thm:LBD-estimation} and~\ref{thm:regret} continue to hold, the asymptotic behavior depends on sensitivity on the frequency with which $\X_t$ lies near the decision boundary. In particular, Assumption~\ref{assm:cov asymp} alone is insufficient. To address this, we introduce spherical measures that quantify how $\X_t$ spreads on the unit sphere. For any measurable $A \in \sigma(\SS^{d-1})$, define
$$
    \nu_X(A;t):=\EE \big(\|\X_t\|\cdot\II \{ \X_t / \|\X_t\| \in A \} \big|\calF_t \big),\quad \kappa_X(A;t):=\EE \big(\|\X_t\|^2\cdot\II \{ \X_t / \|\X_t\| \in A \}  \big|\calF_t \big).
$$ 
We impose the following mild regularity condition near $\partial\calU_i^*$.

\begin{assumption}
\label{assm:density asymp}
In addition to Assumption~\ref{assm:arm opt}, there exists $h_0\in(0,h]$ and constants $\kappa_0, \nu_0>0$ such that for any $h_1, h_2\in[-h_0,h_0]$, $ \kappa_X(\calU_i(h_1)\backslash\calU_i(h_2))\leq \kappa_0|h_1-h_2|$ and $ \nu_X(\calU_i(h_1)\backslash\calU_i(h_2))\leq\nu_0|h_1-h_2|$.
\end{assumption}

Assumption~\ref{assm:density asymp} includes the conditions of covariates in \cite{chen2021statistical,han2022online}. The following remark provides an explicit example for Assumption~\ref{assm:density asymp}.

\begin{remark}
Assumption~\ref{assm:density asymp} encompasses a broad class of distributions. For simplicity of presentation, we take $\X_t$ to be i.i.d., though the arguments extend to the dependent case. In this remark, we write $\kappa_X(\cdot)$ and $\nu_X(\cdot)$ without the index $t$. Let $\mu$ denote the Lebesgue measure on the sphere $\SS^{d-1}$. By Lebesgue's Decomposition Theorem, the measure $\nu_X$ admits the decomposition
$$
    \nu_X=\nu_X^{(1)}+\nu_X^{(2)},
$$
where $\nu_X^{(1)}\ll \mu$ and $\nu_X^{(2)}\perp \mu$. Radon–Nikodym's theorem then guarantees the existence of the derivative $\frac{{\rm d} \nu_X^{(1)}}{{\rm d} \mu}$, and for any $A\in\sigma(\SS^{d-1})$, 
$$
    \nu_X^{(1)}(A)=\int_{A} \frac{{\rm d} \nu_X^{(1)}}{{\rm d} \mu}  {\rm d} \mu.
$$ 
If $\nu_X^{(2)}\bigl(\calU_i(-h_0)\setminus\calU_i(h_0)\bigr)=0$ and $\frac{d\nu_X^{(1)}}{d\mu}\le \kappa_0$, then Assumption~\ref{assm:density asymp} holds for $\nu_X$. The same reasoning applies to $\kappa_X$. Common distributions such as multivariate Gaussian and multivariate Student's $t$ distributions fall within the scope of Assumption~\ref{assm:density asymp}.
\end{remark}

If Assumption~\ref{assm:arm opt} and Assumption~\ref{assm:density asymp} hold with parameters $h$ and $h_0$, respectively, then they also hold with any smaller $h'<\min\{ h, h_0\}$. Thus, for simplicity, we assume throughout that Assumptions~\ref{assm:arm opt} and \ref{assm:density asymp} hold with a common parameter $h$. We define
\begin{align*}
	\bSigma_{i}(\pi^*):=\frac{\pi^*}{K}\bSigma^*+(1-\pi^*)\bSigma_{i}^*,
\end{align*}
and write its singular value decomposition as $\bSigma_{i}(\pi^*) =\U_{i}(\pi^*)\boldsymbol{\Lambda}_i(\pi^*)\U_{i}(\pi^*)^{\top}$, where $\U_{i}(\pi^*)$ is orthogonal and $\boldsymbol{\Lambda}_i(\pi^*)=\operatorname{diag}\{\lambda_j(\pi^*)\}$. Let $\Ca'=\Ca/\lambda_{\min}$, so that the stepsize is $\eta_{t}=\Ca'/(t+\Cb d)$. We further define
$$
    \boldsymbol{\Lambda}_{i}(\pi^*,\Ca',\sigma_*):=\operatorname{diag} \bigg\{ \frac{\sigma_*^2}{\lambda_j(\pi^*)}\cdot\frac{(\Ca'\lambda_{j}(\pi^*))^2}{2\Ca'\lambda_{j}(\pi^*)-1} \bigg\}.
$$ 
We now present a Bahadur-type representation for the SGD iterates. Note that $\{ X_t\}$ and $\{ \xi_t \}$ form a Markov process rather than an i.i.d. sequence. Throughout, $O_{\PP}(\cdot)$ denotes stochastic boundedness.

\begin{proposition}[Bahadur-Type Representation]
\label{prop:Bahadur}
Suppose Assumptions~\ref{assm:arm opt}, \ref{assm:cov asymp}, and \ref{assm:density asymp} hold for some common $h$. \footnote{We assume a common $h$ for clarity; if Assumption~\ref{assm:arm opt} holds for $h$, it automatically holds for any smaller $h'<h$.} Under the same $\{\eta_{t}\}$ and $\{\pi_t\}$ as in Theorem~\ref{thm:regret}, if  $\lim_{t\to+\infty}\pi_t=\pi^*$, then
	\begin{align*}
		\Bbeta_{i}^{(t)}-\Bbeta_{i}^*=\sum_{j=0}^t \prod_{l=j+1}^{t}\Big\{ \I-\eta_{l}\cdot\Big(\frac{\pi_l}{K}\bSigma+(1-\pi_l)\bSigma_{i}^*\Big)\Big\}  \eta_{j}\cdot\xi_j\cdot\X_j\cdot\II\l\{a_j=i\r\}+\R_{i}^{(t)}.
	\end{align*}
    Moreover, under either of the following conditions:
    \begin{enumerate}
        \item[(1)] $\pi^*=0$, $i\in\calA$ and on the event $\cup_{t=t_1}^{+\infty}\{\max\|\Bbeta_i^{(t)}-\Bbeta_i^*\|\leq \frac{h}{2}\}$;
        
        \item[(2)] $\pi^*>0$, $\Ca\geq 4K/\pi^*$, and $i\in[K]$,
    \end{enumerate}
we have $\R_i^{(t)}=O_{\PP}(1/t)$ and 
\begin{align*}
    & \sqrt{t}\sum_{j=0}^t \prod_{l=j+1}^{t}\Big\{ \I-\eta_{l}\cdot\Big(\frac{\pi_l}{K}\bSigma+(1-\pi_l)\bSigma_{i}(\pi^*)\Big)\Big\} \eta_{j}\xi_j\X_j\cdot\II \{a_j=i \} \\
    & ~~~~~~~~~~~~~~~~~~~~~~~~~~~~~~~~~~~~\rightsquigarrow N(0,\U_i(\pi^*)\boldsymbol{\Lambda}_{i}(\pi^*,\Ca',\sigma_*)\U_i(\pi^*)^{\top}).
\end{align*}
\end{proposition}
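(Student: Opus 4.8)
The plan is to convert the arm-$i$ update~\eqref{alg:LBD} into an affine stochastic recursion for the error $\boldsymbol{\Delta}_i^{(t)}:=\Bbeta_i^{(t)}-\Bbeta_i^*$, peel off the martingale piece that carries the Gaussian limit, and show the leftover vanishes after rescaling by $\sqrt t$. Substituting $Y_t=\X_t^{\top}\Bbeta_i^*+\xi_t$ on $\{a_t=i\}$ yields
\[
\boldsymbol{\Delta}_i^{(t+1)}=(\I-\eta_t\H_t)\,\boldsymbol{\Delta}_i^{(t)}+\eta_t\,\II\{a_t=i\}\,\xi_t\,\X_t,\qquad \H_t:=\II\{a_t=i\}\,\X_t\X_t^{\top}.
\]
Let $\A_t:=\tfrac{\pi_t}{K}\bSigma^*+(1-\pi_t)\bSigma_i^*$ be the deterministic target matrix (the $\bSigma$ in the stated decomposition), and define $M_t$ by the companion recursion $M_{t+1}=(\I-\eta_t\A_t)M_t+\eta_t\II\{a_t=i\}\xi_t\X_t$ with $M_0=\mathbf 0$; unrolling reproduces exactly the series in the claimed decomposition. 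Since $M_t$ is $\calF_t$-measurable and $\R_i^{(t)}:=\boldsymbol{\Delta}_i^{(t)}-M_t$, subtracting the two recursions gives the clean relation
\[
\R_i^{(t+1)}=(\I-\eta_t\H_t)\,\R_i^{(t)}-\eta_t\,(\H_t-\A_t)\,M_t ,\qquad \R_i^{(0)}=\boldsymbol{\Delta}_i^{(0)} .
\]
Everything then reduces to bounding the driving term $\eta_t(\H_t-\A_t)M_t$, noting first that $\EE\|M_t\|^2\lesssim d/t$ (a direct martingale-variance computation using the contraction of $\prod_{l>j}(\I-\eta_l\A_l)$), so $\|M_t\|=O_{\PP}(\sqrt{d/t})$ without circularity.

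The heart of the argument is controlling $\H_t-\A_t=(\H_t-\bar\H_t)+(\bar\H_t-\A_t)$, where $\bar\H_t:=\EE\{\H_t\mid\calF_t\}$; the first summand is a martingale difference, and because $M_t\in\calF_t$, the product $(\H_t-\bar\H_t)M_t$ is itself a martingale difference, which lets me bound its accumulated effect by orthogonality. The second summand is the bias
\[
\bar\H_t-\A_t=\tfrac{\pi_t}{K}\big(\EE\{\X_t\X_t^{\top}\mid\calF_t\}-\bSigma^*\big)+(1-\pi_t)\big(\EE\{\X_t\X_t^{\top}\II\{\X_t\in\calX_t\}\mid\calF_t\}-\bSigma_i^*\big),
\]
with $\calX_t:=\calX(i,\{\Bbeta_j^{(t)}\})$ the empirical decision region. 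I would restrict to the high-probability event of Corollary~\ref{cor:estimation error1} on which $\max_j\|\boldsymbol{\Delta}_j^{(t)}\|\le h/2$ and $\|\boldsymbol{\Delta}_i^{(t)}\|\lesssim\sqrt{(d+\log t)/t}$ for all large $t$ (for case~(1), $\pi^*=0$, this is the conditioning event $\cup_{t\ge t_1}\{\max\|\boldsymbol{\Delta}_i^{(t)}\|\le h/2\}$). There Lemma~\ref{lem:decision region} gives $\calX_t\triangle\calU_i^*\subseteq\calU_i(-2h_0)\setminus\calU_i(2h_0)$ with $h_0=\max_j\|\boldsymbol{\Delta}_j^{(t)}\|$, so Assumption~\ref{assm:density asymp} bounds the region-mismatch part of the bias in operator norm by $\kappa_X(\calU_i(-2h_0)\setminus\calU_i(2h_0))\le 4\kappa_0 h_0\lesssim\sqrt{d/t}$, while the residual pieces are the $o(1)$ second-moment gaps supplied by Assumption~\ref{assm:cov asymp}. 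This feedback loop—$\calX_t$ depends on the very estimates whose fluctuations I am trying to control—is what I expect to be the main obstacle, and it is precisely decoupled by the already-established estimation-error concentration, reducing the bias to an explicit $O(\sqrt{d/t})+o(1)$ bound.

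Next I would propagate the remainder recursion. Because $\A_t\to\bSigma_i(\pi^*)$ with smallest eigenvalue bounded below—by $\lambda_{\min}$ when $\pi^*=0,\,i\in\calA$ (as $\bSigma_i^*\succeq\lambda_{\min}\I$ via Assumption~\ref{assm:arm opt}, using $\calU_i(h)\subseteq\calU_i^*$), and by $\tfrac{\pi^*}{K}\lambda_{\min}$ when $\pi^*>0$—the products $\prod_{l>j}(\I-\eta_l\A_l)$ contract like $(j/t)^{\rho}$ with $\rho=\Ca'\lambda_{\min}(\A)>1$ under the stated conditions (in particular $\Ca\ge 4K/\pi^*$ forces $2\Ca'\lambda_k>1$ for every eigenvalue). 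Weighting the region-bias term $\eta_j(\sqrt{d/j})\,\|M_j\|$ and the martingale-fluctuation term by these contraction factors and summing yields an $O_{\PP}(1/t)$ contribution, while the $o(1)$ second-moment gap contributes a post-scaling $o(1)$ term by a Kronecker-lemma argument; the initialization $\prod_{l<t}(\I-\eta_l\H_l)\boldsymbol{\Delta}_i^{(0)}$ decays faster than any polynomial. Altogether $\sqrt t\,\R_i^{(t)}\overset{\PP}{\to}0$, giving $\R_i^{(t)}=O_{\PP}(1/t)$ for the dominant terms. For the leading term, $\{\eta_j\xi_j\X_j\II\{a_j=i\}\}_j$ are martingale differences since $\EE\{\xi_j\mid\calF_j^+\}=0$ and $\X_j,\II\{a_j=i\}\in\calF_j^+$, so I apply a martingale CLT to $\sqrt t\,M_t$, checking the Lindeberg condition through $\EE\{\xi_t^4\mid\calF_t^+\}\le\sigma^4$ and $\EE\{(\X_t^{\top}\V)^4\mid\calF_t\}\le\lambda_{\max}^2$.

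It remains to identify the limiting covariance. Using $\xi_j\perp\X_j\mid\calF_j$, $\EE\{\xi_j^2\mid\calF_j\}\to\sigma_*^2$, and $\EE\{\X_j\X_j^{\top}\II\{a_j=i\}\mid\calF_j\}\to\bSigma_i(\pi^*)$, the rescaled conditional variance of $\sqrt t\,M_t$ is asymptotically $t\,\sigma_*^2\sum_j(\prod_{l>j}(\I-\eta_l\bSigma_i(\pi^*)))^2\eta_j^2\,\bSigma_i(\pi^*)$, the early terms with $\pi_l\neq\pi^*$ being washed out by the contraction. Diagonalizing in the basis $\U_i(\pi^*)$, the $k$-th coordinate is
\[
t\,\sigma_*^2\,\lambda_k(\pi^*)\sum_{j}\eta_j^2\prod_{l>j}(1-\eta_l\lambda_k(\pi^*))^2\;\longrightarrow\;\frac{\sigma_*^2}{\lambda_k(\pi^*)}\cdot\frac{(\Ca'\lambda_k(\pi^*))^2}{2\Ca'\lambda_k(\pi^*)-1},
\]
where $\eta_j\sim\Ca'/j$ and $\prod_{l>j}(1-\eta_l\lambda_k)^2\sim(j/t)^{2\Ca'\lambda_k}$, and the summability of the resulting $\sum_j j^{2\Ca'\lambda_k-2}/t^{2\Ca'\lambda_k}$ is exactly where $2\Ca'\lambda_k>1$ is used. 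This reproduces the diagonal of $\boldsymbol{\Lambda}_i(\pi^*,\Ca',\sigma_*)$, and combined with $\sqrt t\,\R_i^{(t)}\overset{\PP}{\to}0$ via Slutsky yields the stated $N(0,\U_i(\pi^*)\boldsymbol{\Lambda}_i(\pi^*,\Ca',\sigma_*)\U_i(\pi^*)^{\top})$ limit. The covariance and CLT steps are comparatively routine; the decoupling of the data-dependent decision region from the estimation error in the bias analysis is the step I expect to demand the most care.
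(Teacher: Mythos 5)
Your overall architecture mirrors the paper's --- a leading martingale term, a region-mismatch bias controlled through Lemma~\ref{lem:decision region} and Assumption~\ref{assm:density asymp} on the estimation-error concentration event, and a martingale CLT whose variance computation (diagonalization in $\U_i(\pi^*)$, the $2\Ca'\lambda_k>1$ summability condition) is essentially identical to the paper's analysis of its term $\B_2$. The gap is in the remainder propagation. Your recursion $\R_i^{(t+1)}=(\I-\eta_t\H_t)\R_i^{(t)}-\eta_t(\H_t-\A_t)M_t$ unrolls with the weights $\prod_{l>j}(\I-\eta_l\H_l)$, where $\H_l=\II\{a_l=i\}\X_l\X_l^{\top}$ is random and of rank at most one; each factor has $d-1$ unit eigenvalues, so these products are not contractions in operator norm, and they involve randomness from times after $j$, so the unrolled summands $\prod_{l>j}(\I-\eta_l\H_l)\,\eta_j(\H_j-\bar{\H}_j)M_j$ are \emph{not} martingale differences in $j$ and the orthogonality computation you appeal to is unavailable. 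The contraction rate $(j/t)^{\rho}$ you invoke belongs to the deterministic products $\prod_{l>j}(\I-\eta_l\A_l)$, which never appear in the recursion you wrote. The fix is to regroup the same algebra, using $M_t=\boldsymbol{\Delta}_i^{(t)}-\R_i^{(t)}$, as $\R_i^{(t+1)}=(\I-\eta_t\A_t)\R_i^{(t)}-\eta_t(\H_t-\A_t)\boldsymbol{\Delta}_i^{(t)}$, so that the unrolled weights are deterministic and contracting and the martingale part of the forcing, $(\H_j-\bar{\H}_j)\boldsymbol{\Delta}_i^{(j)}$, is a genuine martingale difference. This is exactly the structure of the paper's terms $\B_3$ and $\B_4$, which act on $\Bbeta_i^{(s)}-\Bbeta_i^*$ rather than on $M_s$.

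Even after this repair, the stated rate $\R_i^{(t)}=O_{\PP}(1/t)$ is out of reach on your route. Because you anchor the leading term to the limit matrices $\A_t=\frac{\pi_t}{K}\bSigma^*+(1-\pi_t)\bSigma_i^*$, the bias $\bar{\H}_t-\A_t$ contains the second-moment gaps $\EE\{\X_t\X_t^{\top}|\calF_t\}-\bSigma^*$ and $\EE\{\X_t\X_t^{\top}\II\{\X_t\in\calU_i^*\}|\calF_t\}-\bSigma_i^*$, for which Assumption~\ref{assm:cov asymp} supplies convergence in mean with \emph{no rate}; their contribution to the remainder is therefore only $o_{\PP}(1/\sqrt{t})$. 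You acknowledge this and then still assert $O_{\PP}(1/t)$ ``for the dominant terms,'' which does not prove the claimed bound. The paper avoids the problem by unrolling against the $\calF_t$-conditional covariances $\bSigma_{i,t}^*$ (random, but uniformly elliptic by Assumption~\ref{assm:arm opt} and Equation~\eqref{eq1:bahadur}): its bias term $\B_3$ then consists purely of the region mismatch $\bSigma_{i,t}-\bSigma_{i,t}^*$, which carries the explicit bound $\kappa_0\max_j\|\Bbeta_j^{(t)}-\Bbeta_j^*\|=O_{\PP}(\sqrt{d/t})$ from Assumption~\ref{assm:density asymp}, while the rate-free convergence of Assumption~\ref{assm:cov asymp} is deferred to the CLT variance identification, where no rate is needed. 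If you insist on deterministic matrices in the leading term (as the proposition's display literally reads), you must either weaken the remainder claim to $\sqrt{t}\,\R_i^{(t)}\to 0$ in probability, or first establish the representation with the conditional covariances and then transfer.
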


The asymptotic distribution notably does not depend on $\Cb$. Proposition~\ref{prop:Bahadur} allows the construction of confidence intervals for $\Bbeta_i^*$. Since $\pi^*$ and $\Ca'$ are known, the remaining task is to estimate $\bSigma$ and $\bSigma_i^*$.

\begin{lemma}[Estimation of $\bSigma$ and $\bSigma_{i}^*$]
\label{lem:asymp estimate cov}
	Under Assumption~\ref{assm:cov asymp}, the estimator
    \begin{align*}
		\widehat{\bSigma}^*(t):=\frac{1}{t}\sum_{l=1}^{t}\X_l\X_l^{\top}  
	\end{align*}
    converges in probability to $\bSigma^*$ under Frobenius norm.\footnote{Any norm equivalent to the Frobenius norm may be used.} Furthermore, if $\Bbeta_i^{(t)}\to\Bbeta_i^*$ in probability, then
    \begin{align*}
        \widehat{\bSigma}_i(t):=\frac{1}{(1-\pi^*)t}\sum_{l=1}^{t} \X_l\X_l^{\top}\cdot\II\{a_l=i,\alpha_l=0\} \to \bSigma_i^*,\quad \widehat{\sigma}_*^2(t):=\frac{1}{t}\sum_{l=1}^t(Y_l-\X_l^{\top}\Bbeta_{a_l})^2\to\sigma_*^2 ,   
    \end{align*}
    both in probability.
\end{lemma}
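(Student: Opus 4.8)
The plan is to prove all three limits with a single recipe: decompose each summand into a conditional mean plus a martingale difference, drive the averaged martingale differences to zero in $L_2$ (which yields convergence in probability since $d$ is fixed), and evaluate the Ces\`aro limit of the averaged conditional means through the limiting second-moment hypotheses in Assumption~\ref{assm:cov asymp}. For $\widehat{\bSigma}^*(t)$ this is immediate. I would write $\X_l\X_l^{\top}=\EE\{\X_l\X_l^{\top}\mid\calF_l\}+\D_l$ with $\D_l:=\X_l\X_l^{\top}-\EE\{\X_l\X_l^{\top}\mid\calF_l\}$; then $\{\D_l\}$ is a matrix martingale difference for $\{\calF_l\}$, and the fourth-moment bound $\sup_{\V}\EE\{(\X_l^{\top}\V)^4\mid\calF_l\}\le\lambda_{\max}^2$ gives $\EE\|\D_l\|_F^2\le\EE\|\X_l\|^4\le d^2\lambda_{\max}^2$ by a coordinatewise Cauchy--Schwarz bound. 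Orthogonality of martingale differences then yields $\EE\|t^{-1}\sum_{l}\D_l\|_F^2\le d^2\lambda_{\max}^2/t\to0$, while $\EE\|t^{-1}\sum_l(\EE\{\X_l\X_l^{\top}\mid\calF_l\}-\bSigma^*)\|_F\le t^{-1}\sum_l\EE\|\EE\{\X_l\X_l^{\top}\mid\calF_l\}-\bSigma^*\|_F\to0$ because each term vanishes and the Ces\`aro average of a null sequence is null. Combining the two gives $\widehat{\bSigma}^*(t)\to\bSigma^*$ in probability.

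For $\widehat{\bSigma}_i(t)$ three further points must be handled. On an exploitation step ($\alpha_l=0$) the event $\{a_l=i\}$ coincides with $\{\X_l\in\calX(i,\{\Bbeta_j^{(l)}\})\}$, so each summand is $S_l:=\II\{\alpha_l=0\}\,\X_l\X_l^{\top}\II\{\X_l\in\calX(i,\{\Bbeta_j^{(l)}\})\}$, in which the decision region is \emph{random}. Because $\alpha_l$ is independent of $\X_l$ given the past, I would pass to the coarser filtration $\calF_l^{-}:=\sigma(\X_{l-1},Y_{l-1},\alpha_{l-1},\ldots)$, under which the estimates $\Bbeta_j^{(l)}$ are measurable and $\EE\{S_l\mid\calF_l^{-}\}=(1-\pi_l)M_l$ with $M_l:=\EE\{\X_l\X_l^{\top}\II\{\X_l\in\calX(i,\{\Bbeta_j^{(l)}\})\}\mid\calF_l^{-}\}$; thus $S_l-(1-\pi_l)M_l$ is a bounded martingale difference whose average vanishes in $L_2$. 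It then remains to identify the limit of $((1-\pi^*)t)^{-1}\sum_l(1-\pi_l)M_l$. Setting $r_l:=\max_j\|\Bbeta_j^{(l)}-\Bbeta_j^*\|$, Lemma~\ref{lem:decision region} places the symmetric difference $\calX(i,\{\Bbeta_j^{(l)}\})\,\triangle\,\calU_i^*$ inside the shell $\calU_i(-2r_l)\setminus\calU_i(2r_l)$, whose $\kappa_X$-mass is at most $4\kappa_0 r_l$ once $2r_l\le h_0$ by Assumption~\ref{assm:density asymp}; hence $\|M_l-\EE\{\X_l\X_l^{\top}\II\{\X_l\in\calU_i^*\}\mid\calF_l^{-}\}\|_F\le 4\kappa_0 r_l\to0$ in probability under the stated hypothesis. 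Since $1-\pi_l\to1-\pi^*$ and the latter conditional moment has Ces\`aro mean $\bSigma_i^*$, we obtain $((1-\pi^*)t)^{-1}\sum_l(1-\pi_l)M_l\to\bSigma_i^*$, so $\widehat{\bSigma}_i(t)\to\bSigma_i^*$.

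For $\widehat{\sigma}_*^2(t)$ I would expand the residual as $Y_l-\X_l^{\top}\Bbeta_{a_l}^{(l)}=\xi_l+\X_l^{\top}\Delta_l$ with $\Delta_l:=\Bbeta_{a_l}^*-\Bbeta_{a_l}^{(l)}$, so that the summand equals $\xi_l^2+2\xi_l\X_l^{\top}\Delta_l+(\X_l^{\top}\Delta_l)^2$. The leading term is treated exactly as above: $\xi_l^2-\EE\{\xi_l^2\mid\calF_l\}$ is a martingale difference bounded in $L_2$ by $\EE\{\xi_l^4\mid\calF_l^{+}\}\le\sigma^4$, and $t^{-1}\sum_l\EE\{\xi_l^2\mid\calF_l\}\to\sigma_*^2$ by the noise part of Assumption~\ref{assm:cov asymp}, so $t^{-1}\sum_l\xi_l^2\to\sigma_*^2$. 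For the remainder, $\|\Delta_l\|\le r_l$ gives $(\X_l^{\top}\Delta_l)^2\le\|\X_l\|^2 r_l^2$, and since $\EE\{\|\X_l\|^2\mid\calF_l\}\le d\lambda_{\max}$ with $\EE r_l^2\to0$ (by uniform integrability, or directly from the non-asymptotic rates of Theorem~\ref{thm:LBD-estimation}), the Ces\`aro average $t^{-1}\sum_l\|\X_l\|^2 r_l^2\to0$ in probability; the cross term is then controlled by Cauchy--Schwarz, $|t^{-1}\sum_l\xi_l\X_l^{\top}\Delta_l|\le(t^{-1}\sum_l\xi_l^2)^{1/2}(t^{-1}\sum_l(\X_l^{\top}\Delta_l)^2)^{1/2}\to0$. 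Collecting the three pieces yields $\widehat{\sigma}_*^2(t)\to\sigma_*^2$.

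The main obstacle is the region-approximation step for $\widehat{\bSigma}_i(t)$: unlike $\widehat{\bSigma}^*$ and $\widehat{\sigma}_*^2$, the relevant indicator depends on the \emph{current} estimates through a non-smooth decision boundary that drifts with $l$, so a naive summand lacks a stable conditional mean. The resolution couples Lemma~\ref{lem:decision region} (to sandwich the empirical region between $\calU_i(\pm 2r_l)$) with the boundary-measure bound of Assumption~\ref{assm:density asymp} (to turn the shrinking sandwich into a vanishing $\kappa_X$-mass), and separately disentangles the exploration coin $\alpha_l$ via the refined filtration $\calF_l^{-}$. Everything else reduces to the standard martingale-difference-plus-Ces\`aro argument.
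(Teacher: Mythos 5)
Your proposal is correct and takes essentially the same route as the paper: the paper's proof of Lemma~\ref{lem:asymp estimate cov} is exactly your martingale-difference-plus-Ces\`aro recipe (Chebyshev on the Frobenius norm of the averaged differences, then Assumption~\ref{assm:cov asymp} for the averaged conditional means applied to $\widehat{\bSigma}^*(t)$), after which it disposes of the other two estimators with ``can be proved similarly.'' Your write-up in fact supplies the details that phrase glosses over --- conditioning out the exploration coin, sandwiching the drifting empirical decision region between $\calU_i(\pm 2r_l)$ via Lemma~\ref{lem:decision region} and converting that into a vanishing $\kappa_X$-mass via Assumption~\ref{assm:density asymp}, and the residual expansion for $\widehat{\sigma}_*^2(t)$ together with the integrability caveat needed to pass from $r_l \to 0$ in probability to $\EE\, r_l^2 \to 0$ --- all of which are the intended (and genuinely necessary) ingredients behind the paper's ``similarly.''
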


Since $\pi^*$ and $\Ca'$ are known, we construct
$$
    \widehat{\bSigma}_i(\pi^*,t):=\frac{\pi^*}{K}\widehat{\bSigma}^*(t)+(1-\pi^*)\widehat{\bSigma}_i(t) , 
$$
and let its singular value decomposition be $\widehat{\U}_i(\pi^*,t)\widehat{\boldsymbol{\Lambda}}_i(\pi^*,t)\widehat{\U}_i(\pi^*,t)^{\top}$. By Lemma~\ref{lem:asymp estimate cov} and and the Davis–Kahan theorem \citep{davis1970rotation,yu2015useful},
\begin{align*}
    \widehat{\U}_i(\pi^*,t)\to \U_i(\pi^*),\quad \widehat{\boldsymbol{\Lambda}}_i(\pi^*,t)\to \boldsymbol{\Lambda_i}(\pi^*),
\end{align*}
in probability. Consequently,
$$
    \widehat{\boldsymbol{\Lambda}}_{i}(t):=\operatorname{diag}\bigg\{ \frac{\widehat{\sigma}_*^2(t)}{\widehat{\lambda}_j(\pi^*,t) }\cdot\frac{(\Ca'\widehat{\lambda}_{j}(\pi^*,t))^2}{2\Ca'\widehat{\lambda}_{j}(\pi^*,t)-1} \bigg\}\to \boldsymbol{\Lambda}_{i}(\pi^*,\Ca',\sigma_*)\quad\text{in probability}.
$$
For any set $\calC\subseteq\RR^d$ with probability mass $1-\alpha$ under $N(0,\I)$, 
$$
    \lim_{t\to\infty}\PP\big\{ \sqrt{t} \widehat{\boldsymbol{\Lambda}}_{i}(t)^{-1/2}(\Bbeta_i^{(t)}-\Bbeta_i^*)\in\calC\big\} =1-\alpha,
$$
which yields asymptotically valid confidence intervals for $\Bbeta_i^*$.

\subsection{Related Literature}
\label{sec:literature-bandit}

In this section, we review the literature on linear contextual bandits and related online learning approaches. The pioneering works of \cite{goldenshluger2013linear}, \cite{bastani2020online}, and \cite{bastani2021mostly} estimate the underlying parameters of multi-arm contextual bandits using ordinary least squares (OLS) or LASSO applied to stored historical observations. Their analyses are non-asymptotic and rely on bounded i.i.d. covariates together with i.i.d. noise. These studies impose a margin condition under which the optimal regret rate is $\log(T)$. In contrast, our framework does not require this condition; instead, the minimax-optimal regret rate in our setting is of order $\sqrt{T}$. Additional offline contextual bandit algorithms can be found in \citet{dimakopoulou2017estimation}, \citet{kang2023heavy}, and \citet{rusmevichientong2010linearly}. \cite{khamaru2021near} and \cite{chen2021statistical} investigate the asymptotic properties of Markov linear regression, but do not address regret. Both works rely on OLS estimators. In particular, \citet{chen2021statistical} focus on bounded i.i.d. covariates and i.i.d. noise in the special case of two arms ($K=2$). To date, no offline algorithm is known to achieve optimal non-asymptotic performance when covariates and noise follow a Markov process. Our work contributes to this gap by providing both non-asymptotic and asymptotic analysis for this setting in an \emph{online} fashion, without storing historical observations.

More recently, \citet{chen2021statisticalb} and \citet{chen2022online} take computational and memory constraints into account by employing SGD. Their analyses also focus on the two-arm setting ($K=2$), where the suboptimal arm set $\calA^c$ is empty. They assume i.i.d. covariates and noise. \citet{chen2022online} further study scenarios with a constant exploration rate, which yields $O(T)$ regret but enables asymptotically Gaussian inference for the estimator. It is important to emphasize that the general multi-arm setting with $K>2$ is \emph{not} a straightforward extension of the two-arm case, either asymptotically or non-asymptotically; see, for example, Proposition~\ref{prop:Bahadur}. By applying Theorem~\ref{thm:LBD-estimation} and Theorem~\ref{thm:regret} to the two-arm case with i.i.d. covariates and noise, our approach improves upon existing online contextual bandit results, sharpening the regret from $\textsf{Regret}(T)=O(T^{2/3})$ to the minimax-optimal $\sqrt{T}$ rate. Moreover, we strengthen the Bahadur-type representation by obtaining a residual term that is stochastically $O(1/t)$. Our framework removes the classical exploration-exploitation trade-off by fully leveraging dependent data, enabling the estimation error for arms in $\calA$ to match the performance of fully exploratory designs. In contrast to prior work, which requires the exploration rate not to decay too quickly, our results accommodate a broad range of exploration schemes and even allow $\pi_t = 0$ for sufficiently large $t$; see Theorem~\ref{thm:regret}, Proposition~\ref{prop:Bahadur}, and Corollaries~\ref{cor:regret 1} and \ref{cor:regret 2}.

Another line of work studies online bandits with low-rank matrix structures \citep{han2022online,duan2024online,li2023online}. These methods also rely on SGD updates, but use stepsizes of order $1/t^\alpha$ with $\alpha\in(0,1)$, a strategy different from ours. Their regret bounds typically scale as $T^{2/3}$, which exceeds the minimax lower bound $\sqrt{T}$. Whether the low-rank structure introduces additional technical challenges remains an open question and is an interesting direction for future research.

\section{Numeric Experiments}
\label{sec:numeric} 

This section presents a series of numerical experiments evaluating the performance of the proposed algorithms. Across all settings, the empirical results align closely with the theoretical predictions. Unless otherwise specified, we set the ambient dimension to $d = 99$ and the time horizon to $T = 999{,}999$.

\paragraph{Linear Regression.} We begin with experiments for online linear regression (Theorem~\ref{thm:nonsymLR}, Section~\ref{sec:LR}). The procedure employs an initial constant stepsize to stabilize the residual scale, followed by a decaying stepsize of the form $\frac{\Ca}{t-t_1+\Cb d}$. Figure~\ref{fig:LR_Ca} reports convergence dynamics for several choices of $\Ca \in \{ 3,10,50 \}$ under a common $\Cb$. The resulting estimation trajectories are remarkably similar across the different stepsize magnitudes. Figures~\ref{fig:LR_iid_Ca} and \ref{fig:LR_dependent_Ca} then compare two data-generating processes: (i) i.i.d. covariates and noise, and (ii) dependent covariates and noise generated according to
\begin{align}
    \X_t=\text{Independent Rad(1/2)}\times \frac{1}{\|\X_{t-1}\|}\times \X_{t-1}+N(\boldsymbol{0},\I_d),\label{eq:numeric:covariate}
\end{align}
\begin{align}
    \xi_t=\text{Independent Rad(1/2)}\times \frac{\operatorname{sign}(\xi_{t-1})\min\{|\xi_{t-1}|,1\}}{|\X_{t-1}(1)|}\times \X_{t-1}(1)+N(\boldsymbol{0},\I_d),\label{eq:numeric:noise}
\end{align}
 where Rad$(1/2)$ denotes Rademacher random variables and $N(\boldsymbol{0},\I_d)$ represents standard multivariate Gaussian. Both settings use the same signal-to-noise ratio. Notably, dependence in the data does not impede estimation accuracy, consistent with our theoretical guarantees.
 
 Figure~\ref{fig:LR_Cb} investigates sensitivity with respect to $\Cb$ while fixing $\Ca$. Even under substantial variations of $\Cb$, the long-run covariance behavior remains nearly identical. Figure~\ref{fig:LR_dependent_Cb} shows that this robustness persists when the covariates and noise follow the dependent processes \eqref{eq:numeric:covariate}–\eqref{eq:numeric:noise}, again mirroring the i.i.d. results in Figure~\ref{fig:LR_iid_Cb}.
 
\begin{figure}
    \centering
    \begin{subfigure}[b]{0.45\textwidth}
        \centering
        \includegraphics[width=\linewidth]{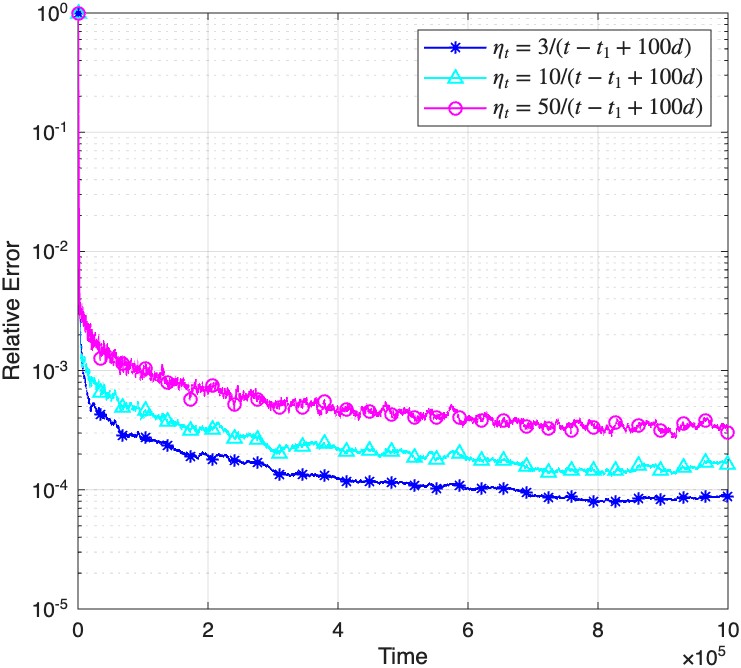}
    \caption{SGD for Independent Observations}
    \label{fig:LR_iid_Ca}
    \end{subfigure}
    \hfill
    \begin{subfigure}[b]{0.45\textwidth}
        \centering
        \includegraphics[width=\linewidth]{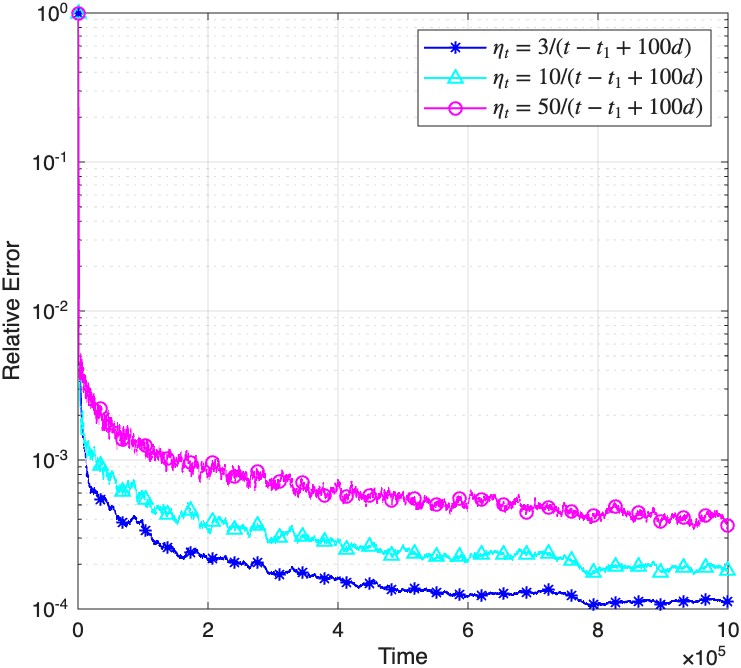}
    \caption{SGD for Dependent Observations}
    \label{fig:LR_dependent_Ca}
    \end{subfigure}
    \caption{Sensitivity with respect to $\Ca$: SGD in online linear regression under both i.i.d. and dependent observations. Each curve depicts the convergence dynamics for $\Ca \in \{ 3,10,50 \}$.}
    \label{fig:LR_Ca}
\end{figure}

\begin{figure}
    \centering
    \begin{subfigure}[b]{0.45\textwidth}
        \centering
        \includegraphics[width=\linewidth]{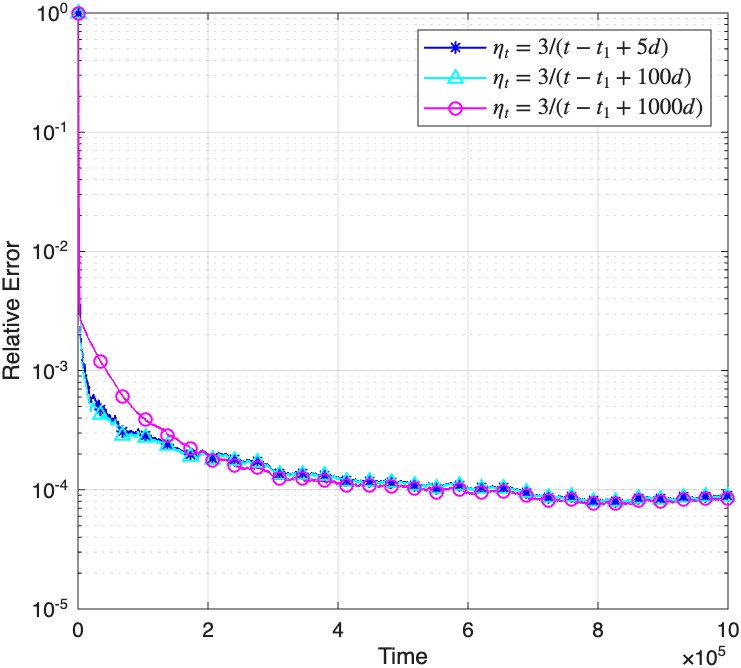}
    \caption{SGD for Independent Observations}
    \label{fig:LR_iid_Cb}
    \end{subfigure}
    \hfill
    \begin{subfigure}[b]{0.45\textwidth}
        \centering
        \includegraphics[width=\linewidth]{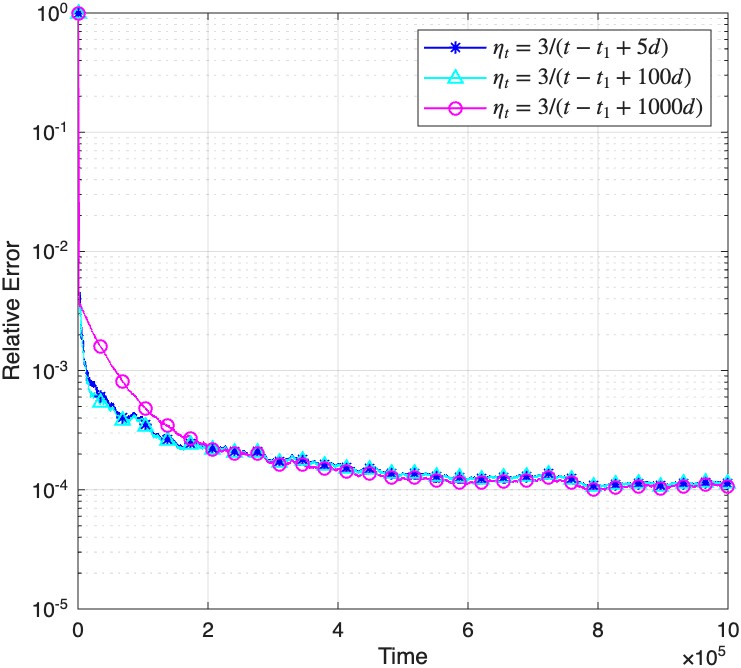}
    \caption{SGD for Dependent Observations}
    \label{fig:LR_dependent_Cb}
    \end{subfigure}
    \caption{Sensitivity with respect to $\Cb$: SGD in online linear regression under both i.i.d. and dependent observations. Each curve shows the convergence dynamics for $\Cb \in \{ 5,100,1000\}$.}
    \label{fig:LR_Cb}
\end{figure}

\paragraph{Sparse Linear Regression.} This segment presents the numerical experiment for online sparse linear regression (Theorem~\ref{thm:sparse-est-supprecv}, Section~\ref{sec:sparse}). When the time horizon is sufficiently large, the support of $\Bbeta^*$ can be fully recovered. We set $T = 9{,}999{,}999$ and $|\operatorname{supp}(\Bbeta^*)| = 4$. Moreover, the covariates and noise are generated to be dependent following Equation~\ref{eq:numeric:covariate} and \ref{eq:numeric:noise}. The dense SGD algorithm with a constant stepsize scheme (Theorem~\ref{thm:nonsymLR}(1)) is used for initialization. After this initialization phase, we perform a one-step hard-thresholding, retaining the six largest entries of $\Bbeta_t$ in absolute value. We compare the performance of the sparse SGD method (Theorem~\ref{thm:sparse-est-supprecv}) with that of the dense-parameter SGD algorithm (Theorem~\ref{thm:nonsymLR}). For a fair comparison, both algorithms use the same tuning parameters, $\Ca = 3$ and $\Cb = 100$.

Figure~\ref{fig:LSR_LSNR} corresponds to a weak signal-to-noise ratio setting, $\|\Bbeta^*\| / \EE|\xi| = 5$, where the largest signal equals $4\EE|\xi|$ and the smallest is as low as $0.004\EE|\xi|$. After initialization, only two nonzero components are recovered. We set $\alpha = 7$ in Theorem~\ref{thm:sparse-est-supprecv}, meaning that the sparse algorithm updates the support of $\Bbeta_t$ at most seven times. Despite the extremely weak magnitude of the remaining two signals relative to the noise, the sparse algorithm successfully identifies all four support entries by the end of the time horizon. As shown in Figure~\ref{fig:LSR_LSNR}, the sparse estimator achieves substantially smaller estimation error than the dense SGD estimator. Figure~\ref{fig:LSR_HSNR} reports the results under a high signal-to-noise ratio. After the constant stepsize phase and the one-step hard-thresholding, three of the four signals are recovered. The sparse algorithm subsequently identifies the final missing signal, again producing markedly smaller estimation error than the dense SGD algorithm.

\begin{figure}
    \centering
    \begin{subfigure}[b]{0.45\textwidth}
        \centering
        \includegraphics[width=\linewidth]{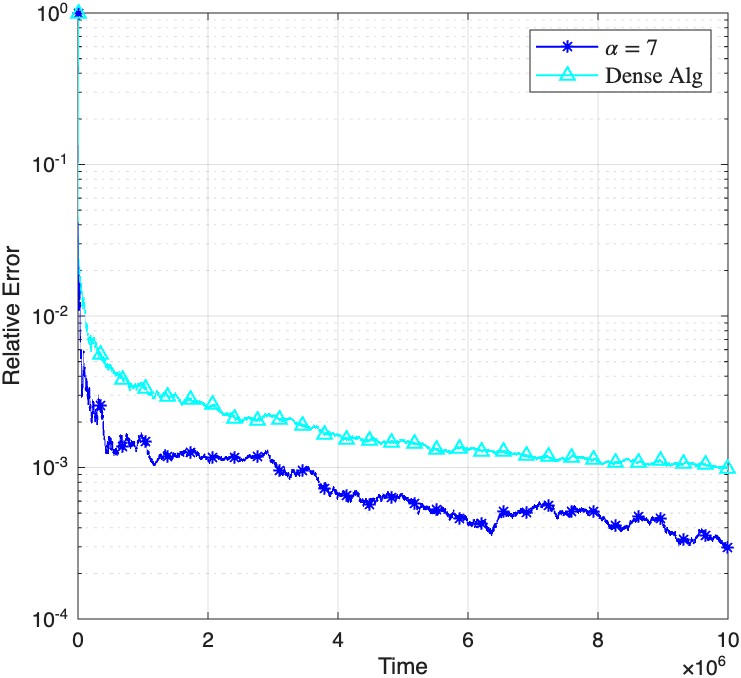}
        \caption{Low Signal-to-Noise Ratio: $\frac{\|\Bbeta^*\|}{\EE|\xi|}=5$}
        \label{fig:LSR_LSNR}
    \end{subfigure}
    \begin{subfigure}[b]{0.45\textwidth}
        \centering
        \includegraphics[width=\linewidth]{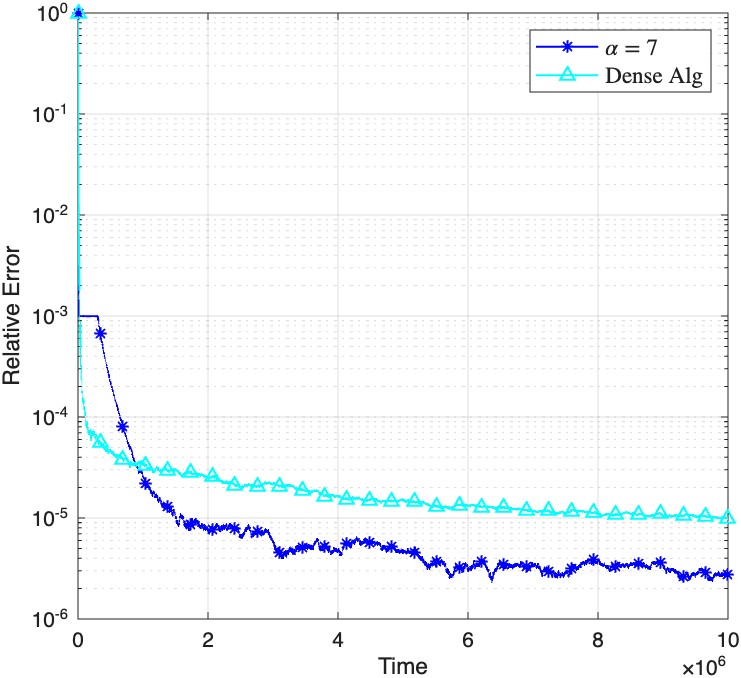}
        \caption{High Signal-to-Noise Ratio: $\frac{\|\Bbeta^*\|}{\EE|\xi|}=500$}
        \label{fig:LSR_HSNR}
    \end{subfigure}
    \caption{Sparse linear regression: the true parameter $\Bbeta^*\in\RR^{99}$ has support size $|\operatorname{supp}(\Bbeta^*)|=4$. The ``Dense Alg'' curve corresponds to the estimator in Theorem~\ref{thm:nonsymLR}$(2)$; while the ``$\alpha=7$'' curve reports the estimation error from the algorithm in Theorem~\ref{thm:sparse-est-supprecv}.}
    \label{fig:LSR}
\end{figure}

\paragraph{Linear Bandit.} We next study the contextual linear bandit setting (Theorems~\ref{thm:LBD-estimation} and \ref{thm:regret}, Section~\ref{sec:bandit}). Following the observations from the linear regression experiments, which show negligible accuracy loss under dependence, we focus here on the dependent data-generating processes \eqref{eq:numeric:covariate} and \eqref{eq:numeric:noise}. We set $K=5$ arms, of which exactly one is suboptimal, and choose parameters satisfying Assumption~\ref{assm:arm opt}. The stepsize schedule mirrors the regression experiment: a short warm-start phase with constant stepsize, followed by $\eta_t=\frac{20}{t-t_1+50d}$.

Figure~\ref{fig:LBD} compares estimation accuracy and regret across several exploration schemes. Figure~\ref{fig:LBD_Estimation} reports the estimation error of the optimal arm. Notably, the estimation trajectories under the decaying exploration rates $\pi_t=\frac{5}{\sqrt{t-t_1+50}}$ and $\pi_t=\frac{5}{t-t_1+50}$ are nearly indistinguishable from those obtained with a constant exploration rate. This demonstrates that the dependence structure induced by the decision-making process provides sufficient implicit exploration to sustain accurate estimation, in line with the guarantees of Theorem~\ref{thm:LBD-estimation}. Figure~\ref{fig:LBD_Regret} illustrates the cumulative regret under various exploration strategies. When the exploration rate is held constant, the regret grows linearly and is noticeably larger than under the decreasing exploration schedules $\pi_t=\frac{5}{\sqrt{t-t_1+50}}$ or $\pi_t=\frac{5}{t-t_1+50}$. The superior performance of the decaying exploration rules is consistent with the theoretical guarantees established in Theorem~\ref{thm:regret}.

\begin{figure}
    \centering
    \begin{subfigure}[b]{0.46\textwidth}
        \centering
        \includegraphics[width=\linewidth]{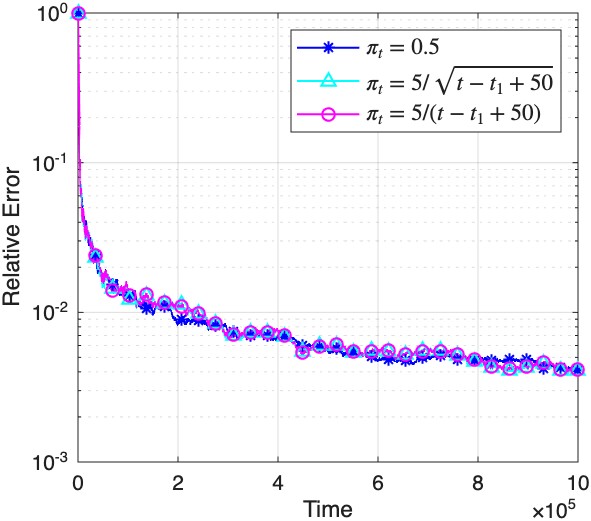}
    \caption{Estimation Error}
    \label{fig:LBD_Estimation}
    \end{subfigure}
    \hfill
    \begin{subfigure}[b]{0.45\textwidth}
        \centering
        \includegraphics[width=\linewidth]{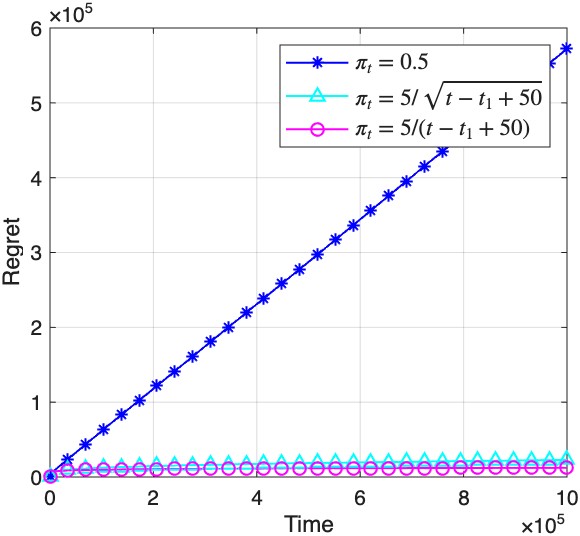}
    \caption{Regret}
    \label{fig:LBD_Regret}
    \end{subfigure}
    \caption{Exploration rate schemes: estimation error and regret under three exploration schemes, $\pi_t=\tfrac{1}{2},\tfrac{5}{\sqrt{t-t_1+50}}$, and $\tfrac{5}{t-t_1+50}$, all computed under a common fixed SGD stepsize scheme.}
    \label{fig:LBD}
\end{figure}

\section{Acknowledgment}
Y. Shen acknowledges support by the National Science Foundation grant DMS CAREER-2045068. 

\bibliographystyle{plainnat}
\bibliography{reference}

@article{bastani2020online,
  title={Online decision making with high-dimensional covariates},
  author={Bastani, Hamsa and Bayati, Mohsen},
  journal={Operations Research},
  volume={68},
  number={1},
  pages={276--294},
  year={2020},
  publisher={INFORMS}
}

@article{shen2024online,
  title={Online Quantile Regression},
  author={Shen, Yinan and Xia, Dong and Zhou, Wen-Xin},
  journal={Journal of Machine Learning Research},
  volume={26},
  number={231},
  pages={1--55},
  year={2025}
}

@article{shao2022berry,
	title={Berry--{E}sseen bounds for multivariate nonlinear statistics with applications to {M}-estimators and stochastic gradient descent algorithms},
	author={Shao, Qi-Man and Zhang, Zhuo-Song},
	journal={Bernoulli},
	volume={28},
	number={3},
	pages={1548--1576},
	year={2022},
	publisher={Bernoulli Society for Mathematical Statistics and Probability}
}

@article{bastani2021mostly,
	title={Mostly exploration-free algorithms for contextual bandits},
	author={Bastani, Hamsa and Bayati, Mohsen and Khosravi, Khashayar},
	journal={Management Science},
	volume={67},
	number={3},
	pages={1329--1349},
	year={2021},
	publisher={INFORMS}
}

@article{sun2020adaptive,
	title={Adaptive {H}uber regression},
	author={Sun, Qiang and Zhou, Wen-Xin and Fan, Jianqing},
	journal={Journal of the American Statistical Association},
	volume={115},
	number={529},
	pages={254--265},
	year={2020},
	publisher={Taylor \& Francis}
}

@article{candes2009near,
	title={Near-ideal model selection by $\ell_1$ minimization},
	author={Candes, Emmanuel J and Plan, Yaniv},
	journal={The Annals of Statistics},
	volume={37},
	number={5A},
	year={2009},
	publisher={Institute of Mathematical Statistics}
}

@article{fan2008sure,
	title={Sure independence screening for ultrahigh dimensional feature space},
	author={Fan, Jianqing and Lv, Jinchi},
	journal={Journal of the Royal Statistical Society Series B: Statistical Methodology},
	volume={70},
	number={5},
	pages={849--911},
	year={2008},
	publisher={Oxford University Press}
}

@article{fan2014adaptive,
	title={Adaptive robust variable selection},
	author={Fan, Jianqing and Fan, Yingying and Barut, Emre},
	journal={The Annals of Statistics},
	volume={42},
	number={1},
	pages={324},
	year={2014},
        publisher={Institute of Mathematical Statistics}
}

@article{zhang2010nearly,
	title={Nearly Unbiased Variable Selection under Minimax Concave Penalty},
	author={Zhang, Cun-Hui},
	journal={The Annals of Statistics},
	volume={38},
	number={2},
	pages={894--942},
	year={2010}
}

@article{bogdan2015slope,
	title={SLOPE--Adaptive Variable Selection via Convex Optimization},
	author={Bogdan, Malgorzata and Van Den Berg, Ewout and Sabatti, Chiara and Su, Weijie and Cand{\`e}s, Emmanuel},
	journal={The Annals of Applied Statistics},
	volume={9},
	number={65},
	pages={1103--1140},
	year={2015}
}

@article{blumensath2009iterative,
	title={Iterative hard thresholding for compressed sensing},
	author={Blumensath, Thomas and Davies, Mike E},
	journal={Applied and Computational Harmonic Analysis},
	volume={27},
	number={3},
	pages={265--274},
	year={2009},
	publisher={Elsevier}
}

@inproceedings{fan2018statistical,
	title={Statistical sparse online regression: A diffusion approximation perspective},
	author={Fan, Jianqing and Gong, Wenyan and Li, Chris Junchi and Sun, Qiang},
	booktitle={International Conference on Artificial Intelligence and Statistics},
	pages={1017--1026},
	year={2018},
	organization={PMLR}
}

@article{bellec2018slope,
	title={Slope meets lasso: improved oracle bounds and optimality},
	author={Bellec, Pierre C and Lecu{\'e}, Guillaume and Tsybakov, Alexandre B},
	journal={The Annals of Statistics},
	volume={46},
	number={6B},
	pages={3603--3642},
	year={2018},
	publisher={Institute of Mathematical Statistics}
}

@article{chen2020statistical,
	title={Statistical Inference for Model Parameters in Stochastic Gradient Descent},
	author={Chen, Xi and Lee, Jason D and Tong, Xin T and Zhang, Yichen},
	journal={The Annals of Statistics},
	volume={48},
	number={1},
	pages={251--273},
	year={2020},
	publisher={Institute of Mathematical Statistics}
}

@article{gao2019batched,
	title={Batched multi-armed bandits problem},
	author={Gao, Zijun and Han, Yanjun and Ren, Zhimei and Zhou, Zhengqing},
	journal={Advances in Neural Information Processing Systems},
	volume={32},
	year={2019}
}

@article{chen2021statistical,
	title={Statistical inference for online decision making: In a contextual bandit setting},
	author={Chen, Haoyu and Lu, Wenbin and Song, Rui},
	journal={Journal of the American Statistical Association},
	volume={116},
	number={533},
	pages={240--255},
	year={2021},
	publisher={Taylor \& Francis}
}

@article{duan2024online,
	title={Online policy learning and inference by matrix completion},
	author={Duan, Congyuan and Li, Jingyang and Xia, Dong},
	journal={arXiv preprint arXiv:2404.17398},
	year={2024}
}

@article{goldenshluger2013linear,
	title={A linear response bandit problem},
	author={Goldenshluger, Alexander and Zeevi, Assaf},
	journal={Stochastic Systems},
	volume={3},
	number={1},
	pages={230--261},
	year={2013},
	publisher={INFORMS}
}

@article{li2023online,
	title={Online tensor learning: Computational and statistical trade-offs, adaptivity and optimal regret},
	author={Li, Jingyang and Cai, Jian-Feng and Chen, Yang and Xia, Dong},
	journal={arXiv preprint arXiv:2306.03372},
	year={2023}
}

@article{wei2025online,
	title={Online Inference for Quantiles by Constant Learning-Rate Stochastic Gradient Descent},
	author={Wei, Ziyang and Li, Jiaqi and Chen, Likai and Wu, Wei Biao},
	journal={arXiv preprint arXiv:2503.02178},
	year={2025}
}

@article{han2022online,
	title={Online statistical inference in decision-making with matrix context},
  author={Han, Qiyu and Sun, Will Wei and Zhang, Yichen},
  journal={The Annals of Statistics},
  volume={53},
  number={5},
  pages={1963--1986},
  year={2025},
  publisher={Institute of Mathematical Statistics}
}

@article{han2024ucb,
	title={{UCB} algorithms for multi-armed bandits: Precise regret and adaptive inference},
	author={Han, Qiyang and Khamaru, Koulik and Zhang, Cun-Hui},
	journal={arXiv preprint arXiv:2412.06126},
	year={2024}
}

@article{cramer1936some,
	title={Some Theorems on Distribution Functions},
	author={Cram{\'e}r, H and Wold, H},
	journal={Journal of the London Mathematical Society},
	volume={1},
	number={4},
	pages={290--294},
	year={1936},
	publisher={Oxford University Press}
}

@article{brown1971martingale,
	title={Martingale Central Limit Theorems},
	author={Brown, B. M.},
	journal={The Annals of Mathematical Statistics},
	volume={42},
	number={1},
	pages={59--66},
	year={1971},
	publisher={Institute of Mathematical Statistics}
}

@book{durrett2019probability,
	title={Probability: Theory and Examples},
	author={Durrett, Rick},
	volume={49},
	year={2019},
	publisher={Cambridge University Press}
}

@article{han2024online,
	title={Online inference with debiased stochastic gradient descent},
	author={Han, Ruijian and Luo, Lan and Lin, Yuanyuan and Huang, Jian},
	journal={Biometrika},
	volume={111},
	number={1},
	pages={93--108},
	year={2024},
	publisher={Oxford University Press}
}

@article{ma2024high,
  title={High-probability minimax lower bounds},
  author={Ma, Tianyi and Verchand, Kabir A and Samworth, Richard J},
  journal={arXiv preprint arXiv:2406.13447},
  year={2024}
}

@incollection{van2002hoeffding,
  title={On {H}oeffding's inequality for dependent random variables},
  author={van de Geer, Sara A},
  booktitle={Empirical Process Techniques for Dependent Data},
  pages={161--169},
  year={2002},
  publisher={Springer}
}

@article{shamir2011variant,
  title={A variant of {A}zuma's inequality for martingales with subgaussian tails},
  author={Shamir, Ohad},
  journal={arXiv preprint arXiv:1110.2392},
  year={2011}
}

@article{azuma1967weighted,
  title={Weighted sums of certain dependent random variables},
  author={Azuma, Kazuoki},
  journal={Tohoku Mathematical Journal, Second Series},
  volume={19},
  number={3},
  pages={357--367},
  year={1967},
  publisher={Mathematical Institute, Tohoku University}
}

@article{lauer2023uniform,
  title={Uniform risk bounds for learning with dependent data sequences},
  author={Lauer, Fabien},
  journal={arXiv preprint arXiv:2303.11650},
  year={2023}
}

@article{wu2007strong,
  title={Strong invariance principles for dependent random variables},
  author={Wu, Wei Biao},
  journal={The Annals of Probability},
  volume={35},
  number={6},
  pages={2294--2320},
  year={2007}
}

@article{xiao2012covariance,
  title={COVARIANCE MATRIX ESTIMATION FOR STATIONARY TIME SERIES},
  author={Xiao, Han and Wu, Wei Biao},
  journal={The Annals of Statistics},
  pages={466--493},
  year={2012},
  publisher={JSTOR}
}

@article{van1995exponential,
  title={Exponential inequalities for martingales, with application to maximum likelihood estimation for counting processes},
  author={van de Geer, Sara},
  journal={The Annals of Statistics},
  pages={1779--1801},
  year={1995}
}

@article{polyak1992acceleration,
  title={Acceleration of stochastic approximation by averaging},
  author={Polyak, Boris T and Juditsky, Anatoli B},
  journal={SIAM Journal on Control and Optimization},
  volume={30},
  number={4},
  pages={838--855},
  year={1992},
  publisher={SIAM}
}

@article{fang2018online,
  title={Online bootstrap confidence intervals for the stochastic gradient descent estimator},
  author={Fang, Yixin and Xu, Jinfeng and Yang, Lei},
  journal={Journal of Machine Learning Research},
  volume={19},
  number={78},
  pages={1--21},
  year={2018}
}

@article{masry1997local,
  title={Local polynomial estimation of regression functions for mixing processes},
  author={Masry, Elias and Fan, Jianqing},
  journal={Scandinavian Journal of Statistics},
  volume={24},
  number={2},
  pages={165--179},
  year={1997},
  publisher={Wiley Online Library}
}

@article{srikant2024rates,
  title={Rates of convergence in the central limit theorem for markov chains, with an application to td learning},
  author={Srikant, R},
  journal={arXiv preprint arXiv:2401.15719},
  year={2024}
}

@article{lu2022almost,
  title={Almost sure invariance principle of {$\beta$}-mixing time series in {H}ilbert space},
  author={Lu, Jianya and Wu, Wei Biao and Xiao, Zhijie and Xu, Lihu},
  journal={arXiv preprint arXiv:2209.12535},
  year={2022}
}

@inproceedings{wang2018minimax,
  title={Minimax concave penalized multi-armed bandit model with high-dimensional covariates},
  author={Wang, Xue and Wei, Mingcheng and Yao, Tao},
  booktitle={International Conference on Machine Learning},
  pages={5200--5208},
  year={2018},
  organization={PMLR}
}

@article{ren2024dynamic,
  title={Dynamic batch learning in high-dimensional sparse linear contextual bandits},
  author={Ren, Zhimei and Zhou, Zhengyuan},
  journal={Management Science},
  volume={70},
  number={2},
  pages={1315--1342},
  year={2024},
  publisher={INFORMS}
}

@article{hao2020high,
  title={High-dimensional sparse linear bandits},
  author={Hao, Botao and Lattimore, Tor and Wang, Mengdi},
  journal={Advances in Neural Information Processing Systems},
  volume={33},
  pages={10753--10763},
  year={2020}
}

@article{khamaru2021near,
  title={Near-optimal inference in adaptive linear regression},
  author={Khamaru, Koulik and Deshpande, Yash and Lattimore, Tor and Mackey, Lester and Wainwright, Martin J},
	journal={The Annals of Statistics},
	volume={53},
	number={6},
	page={2329--2355},
	year={2025}
}

@article{nagaraj2020least,
  title={Least squares regression with markovian data: Fundamental limits and algorithms},
  author={Nagaraj, Dheeraj and Wu, Xian and Bresler, Guy and Jain, Prateek and Netrapalli, Praneeth},
  journal={Advances in Neural Information Processing Systems},
  volume={33},
  pages={16666--16676},
  year={2020}
}

@article{davis1970rotation,
  title={The rotation of eigenvectors by a perturbation. {III}},
  author={Davis, Chandler and Kahan, William Morton},
  journal={SIAM Journal on Numerical Analysis},
  volume={7},
  number={1},
  pages={1--46},
  year={1970},
  publisher={SIAM}
}

@article{perchet2013multi,
  title={The Multi-Armed Bandit Problem with Covariates},
  author={Perchet, Vianney and Rigollet, Philippe},
  journal={The Annals of Statistics},
  volume={41},
  number={2},
  pages={693--721},
  year={2013},
 publisher = {Institute of Mathematical Statistics}
}

@article{jin2016provable,
  title={Provable efficient online matrix completion via non-convex stochastic gradient descent},
  author={Jin, Chi and Kakade, Sham M and Netrapalli, Praneeth},
  journal={Advances in Neural Information Processing Systems},
  volume={29},
  year={2016}
}

@article{chen2022online,
  title={Online statistical inference for contextual bandits via stochastic gradient descent},
  author={Chen, Xi and Lai, Zehua and Li, He and Zhang, Yichen},
  journal={arXiv preprint arXiv:2212.14883},
  year={2022}
}

@article{dimakopoulou2017estimation,
  title={Estimation considerations in contextual bandits},
  author={Dimakopoulou, Maria and Zhou, Zhengyuan and Athey, Susan and Imbens, Guido},
  journal={arXiv preprint arXiv:1711.07077},
  year={2017}
}

@inproceedings{kang2023heavy,
  title={Heavy-tailed linear bandit with {H}uber regression},
  author={Kang, Minhyun and Kim, Gi-Soo},
  booktitle={Uncertainty in Artificial Intelligence},
  pages={1027--1036},
  year={2023},
  organization={PMLR}
}

@article{rusmevichientong2010linearly,
  title={Linearly parameterized bandits},
  author={Rusmevichientong, Paat and Tsitsiklis, John N},
  journal={Mathematics of Operations Research},
  volume={35},
  number={2},
  pages={395--411},
  year={2010},
  publisher={INFORMS}
}

@article{yu2015useful,
  title={A useful variant of the {D}avis--{K}ahan theorem for statisticians},
  author={Yu, Yi and Wang, Tengyao and Samworth, Richard J},
  journal={Biometrika},
  volume={102},
  number={2},
  pages={315--323},
  year={2015},
  publisher={Oxford University Press}
}

@inproceedings{merlevede2009bernstein,
  title={Bernstein inequality and moderate deviations under strong mixing conditions},
  author={Merlev{\`e}de, Florence and Peligrad, Magda and Rio, Emmanuel},
  booktitle={High dimensional probability V: the Luminy volume},
  volume={5},
  pages={273--293},
  year={2009},
  publisher={Institute of Mathematical Statistics}
}

@article{koltchinskii1994komlos,
	author = {Koltchinskii, Vladimir I},
	journal = {Journal of Theoretical Probability},
	pages = {73--118},
	publisher = {Springer},
	title = {{K}oml{\'o}s-{M}ajor-{T}usn{\'a}dy approximation for the general empirical process and {H}aar expansions of classes of functions},
	volume = {7},
	year = {1994}}

@article{chen2021statisticalb,
  title={Statistical inference for online decision making via stochastic gradient descent},
  author={Chen, Haoyu and Lu, Wenbin and Song, Rui},
  journal={Journal of the American Statistical Association},
  volume={116},
  number={534},
  pages={708--719},
  year={2021},
  publisher={Taylor \& Francis}
}

@article{doukhan1994functional,
  title={The functional central limit theorem for strongly mixing processes},
  author={Doukhan, Paul and Massart, Pascal and Rio, Emmanuel},
  journal={Annales de l'IHP Probabilit{\'e}s et statistiques},
  volume={30},
  number={1},
  pages={63--82},
  year={1994},
  publisher={Institute of Mathematical Statistics}
}

@article{dedecker2000functional,
  title={On the functional central limit theorem for stationary processes},
  author={Dedecker, J{\'e}r{\^o}me and Rio, Emmanuel},
  journal={Annales de l'Institut Henri Poincare (B) Probability and Statistics},
  volume={36},
  number={1},
  pages={1--34},
  year={2000},
  organization={Elsevier}
}

@inproceedings{liurevisiting,
  title={Revisiting the Last-Iterate Convergence of Stochastic Gradient Methods},
  author={Liu, Zijian and Zhou, Zhengyuan},
  booktitle={The Twelfth International Conference on Learning Representations},
year = {2024}
}

@article{agrawalla2025statistical,
  title={Statistical Inference for Linear Functionals of Online Least-squares {SGD} when {$t \gtrsim d^{1+\delta}$}},
  author={Agrawalla, Bhavya and Balasubramanian, Krishnakumar and Ghosal, Promit},
  journal={arXiv preprint arXiv:2510.19734},
  year={2025}
}

@article{tartakovsky2024nearly,
  title={Nearly Optimum Properties of Certain Multi-Decision Sequential Rules for General Non-iid Stochastic Models},
  author={Tartakovsky, Alexander G},
  journal={arXiv preprint arXiv:2405.00928},
  year={2024}
}

@inproceedings{yang2023online,
  title={Online linearized {LASSO}},
  author={Yang, Shuoguang and Yan, Yuhao and Zhu, Xiuneng and Sun, Qiang},
  booktitle={International Conference on Artificial Intelligence and Statistics},
  pages={7594--7610},
  year={2023},
  organization={PMLR}
}

@article{robbins1951stochastic,
  title={A stochastic approximation method},
  author={Robbins, Herbert and Monro, Sutton},
  journal={The Annals of Mathematical Statistics},
  pages={400--407},
  year={1951},
  publisher={JSTOR}
}

@article{kiefer1952stochastic,
  title={Stochastic estimation of the maximum of a regression function},
  author={Kiefer, Jack and Wolfowitz, Jacob},
  journal={The Annals of Mathematical Statistics},
  pages={462--466},
  year={1952},
  publisher={JSTOR}
}

@incollection{follmer2016stochastic,
  title={Stochastic {F}inance: {A}n {I}ntroduction in {D}iscrete {T}ime},
  author={F{\"o}llmer, Hans and Schied, Alexander},
  booktitle={Stochastic Finance},
  year={2016},
  publisher={De Gruyter}
}

@article{barron2003conditional,
  title={Conditional essential suprema with applications},
  author={Barron, Emmanuel N and Cardaliaguet, Pierre and Jensen, Reed},
  journal={Applied Mathematics and Optimization},
  volume={48},
  number={3},
  pages={229--253},
  year={2003},
  publisher={Springer}
}

@article{lepinette2019conditional,
  title={Conditional cores and conditional convex hulls of random sets},
  author={L{\'e}pinette, Emmanuel and Molchanov, Ilya},
  journal={Journal of Mathematical Analysis and Applications},
  volume={478},
  number={2},
  pages={368--392},
  year={2019},
  publisher={Elsevier}
}

@article{jacod2017statistical,
  title={Statistical properties of microstructure noise},
  author={Jacod, Jean and Li, Yingying and Zheng, Xinghua},
  journal={Econometrica},
  volume={85},
  number={4},
  pages={1133--1174},
  year={2017},
  publisher={Wiley Online Library}
}

@article{black1986noise,
  title={Noise},
  author={Black, Fischer},
  journal={The Journal of Finance},
  volume={41},
  number={3},
  pages={528--543},
  year={1986},
  publisher={Wiley Online Library}
}

@article{ubukata2009estimation,
  title={Estimation and testing for dependence in market microstructure noise},
  author={Ubukata, Masato and Oya, Kosuke},
  journal={Journal of Financial Econometrics},
  volume={7},
  number={2},
  pages={106--151},
  year={2009},
  publisher={Oxford University Press}
}

@article{lai1987adaptive,
  title={Adaptive treatment allocation and the multi-armed bandit problem},
  author={Lai, Tze Leung},
  journal={The Annals of Statistics},
  pages={1091--1114},
  year={1987},
  publisher={JSTOR}
}

@inproceedings{wu2006robust,
  title={Robust prediction of network traffic using quantile regression models},
  author={Wu, Wei Biao and Xu, Zhiwei and Wang, Yu},
  booktitle={2006 IEEE International Conference on Information Reuse \& Integration},
  pages={220--225},
  year={2006},
  organization={IEEE}
}

@article{wu2007inference,
  title={Inference of trends in time series},
  author={Wu, Wei Biao and Zhao, Zhibiao},
  journal={Journal of the Royal Statistical Society Series B: Statistical Methodology},
  volume={69},
  number={3},
  pages={391--410},
  year={2007},
  publisher={Oxford University Press}
}

\newpage
\appendix
	\begin{center}
	\textbf{\Large{Supplementary Materials to ``SGD under Dependence:\\ Optimal Estimation, Regret and Inference"}}
\end{center}

\section{Proofs in Section~\ref{sec:LR}}
\label{sec:proofLR}
\subsection{Conditional Orlicz Norm}
In this section, we formally introduce the definition of conditional Orlicz norm. We need the following definition of essential supremum and we refer interested readers to \cite{barron2003conditional}, \cite{follmer2016stochastic} and \cite{lepinette2019conditional} for more detailed discussion.
\begin{definition}[Definition of Essential Supremum/Infimum]
    Let $\Phi$ be any set of random variables on $(\Omega,\calF,\PP)$.
    \begin{enumerate}
        \item There exists a random variable $\varphi^*$ satisfying the following two preperties.
        \begin{enumerate}
        \item  $\varphi^*\geq\varphi$ P-a.s. for all $\varphi\in\Phi$;
        \item for every $\psi\geq\varphi$ P-a.s. for all $\varphi\in\Phi$, it has $\psi\geq \varphi^*$;
    \end{enumerate}
    then $\varphi^*$ is the essential supremum of $\Phi$. The essential infimum of $\Phi$ is the essential supremum of $-\Phi$.
    \item Suppose in addition that $\Phi$ is directed upward, i.e., for every $\varphi,\widetilde{\varphi}\in\Phi$ there exists $\psi\in\Phi$ such that $\psi\geq \varphi\vee\widetilde{\varphi}$. Then there exists an increasing sequence $\varphi_1\leq\varphi_2\leq\cdots$ in $\Phi$ such that $\varphi^*=\lim_{n\to+\infty}\varphi_n$ P-almost surely.
    \end{enumerate}
    \label{def:esssup}
\end{definition}

\begin{theorem}[Conditional Orlicz Norm]
        Suppose $X$ is a real valued random variable of $(\Omega,\calF_X,\PP)$. Then for any $\calF\subseteq\calF_X$, let $\Phi:=\{\varphi\in\calF:\; \varphi\geq 0,\; \EE\{\exp(|X/\varphi|^{\alpha})|\calF\}\leq 2 \}$. Define the conditional Orlicz norm as follows,
        \begin{align*}
            \|X|\calF\|_{\Psi_\alpha}:=\operatorname{ess\, inf}\Phi.
        \end{align*}
        Then for $\alpha\geq1$, $\|\cdot|\calF\|_{\Psi_{\alpha}}$ is well defined as a norm and satisfies $\EE\{\exp(|X/\|X|\calF\|_{\Psi_\alpha}|^{\alpha})|\calF\}\leq 2 $.
        \label{thm:conditional orlicz norm}
    \end{theorem}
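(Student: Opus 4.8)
The plan is to verify, in order, (a) existence of the essential infimum together with its realization as the almost-sure limit of a \emph{decreasing} sequence in $\Phi$, (b) attainment of the defining bound at that infimum, and (c) the norm axioms, of which the triangle inequality is the only substantive step. Throughout I assume the nontrivial case $\Phi\neq\emptyset$; when $\Phi=\emptyset$ the norm equals $+\infty$ and the statement is vacuous.

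First I would show that $\Phi$ is directed downward, i.e.\ closed under pairwise minima. Given $\varphi,\widetilde{\varphi}\in\Phi$, the event $A:=\{\varphi\le\widetilde{\varphi}\}$ lies in $\calF$ because both random variables are $\calF$-measurable, so
\begin{align*}
\exp\big(|X/(\varphi\wedge\widetilde{\varphi})|^\alpha\big)=\II_A\exp\big(|X/\varphi|^\alpha\big)+\II_{A^c}\exp\big(|X/\widetilde{\varphi}|^\alpha\big).
\end{align*}
Taking $\EE\{\cdot|\calF\}$ and pulling out the $\calF$-measurable indicators yields $\EE\{\exp(|X/(\varphi\wedge\widetilde{\varphi})|^\alpha)|\calF\}\le 2$, hence $\varphi\wedge\widetilde{\varphi}\in\Phi$. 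Applying Definition~\ref{def:esssup} to $-\Phi$, which is therefore directed upward, the essential infimum $\varphi^*:=\operatorname{ess\,inf}\Phi=\|X|\calF\|_{\Psi_\alpha}$ exists and there is a decreasing sequence $\varphi_1\ge\varphi_2\ge\cdots$ in $\Phi$ with $\varphi_n\downarrow\varphi^*$ almost surely.

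Next I would attain the bound by conditional monotone convergence. Since $\varphi_n\downarrow\varphi^*$ pointwise, $\exp(|X/\varphi_n|^\alpha)\uparrow\exp(|X/\varphi^*|^\alpha)$ (with the convention $|X/0|^\alpha=+\infty$ on $\{X\neq 0\}$), so the conditional MCT gives
\begin{align*}
\EE\big\{\exp(|X/\varphi^*|^\alpha)\,\big|\,\calF\big\}=\lim_{n}\EE\big\{\exp(|X/\varphi_n|^\alpha)\,\big|\,\calF\big\}\le 2\quad\text{a.s.}
\end{align*}
In particular, on $\{\varphi^*=0\}$ this forces $X=0$ a.s., so definiteness holds automatically. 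It then remains to check the remaining norm axioms in the a.s., $\calF$-measurable sense. Non-negativity is immediate. For homogeneity with an $\calF$-measurable scalar $c$, the substitution $\psi\mapsto\psi/|c|$ shows $\Phi_{cX}=|c|\,\Phi_X$ on $\{c\neq0\}$, whence $\|cX|\calF\|_{\Psi_\alpha}=|c|\,\|X|\calF\|_{\Psi_\alpha}$, while on $\{c=0\}$ both sides vanish. The triangle inequality is the main step: writing $\psi_\alpha(x):=e^{x^\alpha}-1$, which is convex and non-decreasing on $[0,\infty)$ \emph{precisely because} $\alpha\ge1$, and setting $a:=\|X|\calF\|_{\Psi_\alpha}$, $b:=\|Y|\calF\|_{\Psi_\alpha}$, step (b) reads $\EE\{\psi_\alpha(|X|/a)|\calF\}\le1$ and $\EE\{\psi_\alpha(|Y|/b)|\calF\}\le1$. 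Since the weights $a/(a+b)$ and $b/(a+b)$ are $\calF$-measurable, convexity gives pointwise
\begin{align*}
\psi_\alpha\!\left(\frac{|X+Y|}{a+b}\right)\le\frac{a}{a+b}\,\psi_\alpha\!\left(\frac{|X|}{a}\right)+\frac{b}{a+b}\,\psi_\alpha\!\left(\frac{|Y|}{b}\right),
\end{align*}
and taking $\EE\{\cdot|\calF\}$ bounds the left side by $1$, so $a+b\in\Phi_{X+Y}$ and thus $\|X+Y|\calF\|_{\Psi_\alpha}\le a+b$.

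The main obstacle I anticipate is the careful bookkeeping of $\calF$-measurable exceptional events: handling the loci $\{a=0\}$, $\{b=0\}$, and $\{a=+\infty\}$ in the homogeneity and triangle steps, and justifying the decreasing-sequence extraction and conditional MCT when $\varphi^*$ vanishes on a set of positive measure. These are all resolved by the same indicator-splitting device used to prove closure under minima, but they require stating the conventions ($0\cdot\infty$, $|X/0|^\alpha$, division by a vanishing norm) precisely so that every manipulation remains valid almost surely.
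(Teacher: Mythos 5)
Your proposal is correct and follows the same overall route as the paper: realize the conditional Orlicz norm as an essential infimum of $\Phi$, extract a decreasing sequence converging to it, pass to the limit to attain the defining bound, and derive the triangle inequality from convexity of $x\mapsto e^{x^\alpha}$ on $[0,\infty)$ for $\alpha\ge 1$. That said, two of your refinements repair real gaps in the paper's terse write-up, so they are worth recording. First, you actually prove that $\Phi$ is closed under pairwise minima (indicator splitting on $\{\varphi\le\widetilde{\varphi}\}\in\calF$); the paper simply asserts that $-\Phi$ is directed upward before invoking its essential-supremum definition. Second, and more substantively, you establish attainment at $\varphi^*=\|X|\calF\|_{\Psi_\alpha}$ \emph{before} proving the norm axioms. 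The paper does the reverse: its triangle-inequality step uses $\EE\{\exp((|X|/u)^{\alpha})|\calF\}\le 2$ with $u=\|X|\calF\|_{\Psi_\alpha}+\eps$, but this bound does not follow from the definition of the essential infimum alone --- no single element of $\Phi$ need lie below $\varphi^*+\eps$ almost surely --- so it secretly relies on the attainment (equivalently, the directedness) established only at the end of the paper's proof. Your ordering removes that circularity; the price is the bookkeeping on the degenerate loci $\{a=0\}$ and $\{b=0\}$, which you correctly flag and which the same indicator-splitting device handles. The remaining differences (conditional monotone convergence versus the paper's Fatou, which both suffice for the inequality; $\psi_\alpha(x)=e^{x^\alpha}-1$ versus $e^{x^\alpha}$ in the convexity step; homogeneity for $\calF$-measurable rather than constant scalars; definiteness deduced from attainment rather than from the paper's Jensen argument) are cosmetic or strictly more general.
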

    The proof is adapted from the proof for Orlicz norm $\|\cdot\|_{\Psi_\alpha}$.
    \begin{proof}
We first prove $\|\cdot|\calF\|_{\Psi_\alpha}$ is a norm. It is obvious that for any scalar $a$, it has $\|aX|\calF\|_{\Psi_{\alpha}}=|a|\cdot\|X|\calF\|_{\Psi_{\alpha}}$. On the other hand, if $X=0$ a.s., then $\|X|\calF\|_{\Psi_{\alpha}}=0$ a.s.. Conversely, if $\|X|\calF\|_{\Psi_{\alpha}}=0$ a.s., then let $A:=\{w:X(w)\neq 0\}$. By Jensen's inequality, we have $\exp((\EE\{|X|/K_{\calF}|\calF\})^\alpha)\leq\EE\l\{\exp((|X|/K_{\calF})^\alpha)\big|\calF\r\}\leq 2$, and it implies $\EE|X|=0$, which shows $X=0$ almost surely. We only need to verify the triangle inequality. For any $\eps>0$, let $u=\|X|\calF\|_{\alpha}+\eps$ and $v=\|Y|\calF\|_{\Psi_{\alpha}}+\eps$, and it has \begin{align*}
        \EE\l\{\exp\l(\l(\frac{|X+Y|}{u+v}\r)^{\alpha}\r)\bigg|\calF\r\}&\leq\EE\l\{\exp\l(\l(\frac{|X|+|Y|}{u+v}\r)^{\alpha}\r)\bigg|\calF\r\}\\
        &=\EE\l\{\exp\l(\l(\frac{u}{u+v}\frac{|X|}{u}+\frac{v}{u+v}\frac{|X|}{v}\r)^{\alpha}\r)\bigg|\calF\r\}\\
        &\leq\frac{u}{u+v}\EE\l\{\exp\l(\l(\frac{|X|}{u}\r)^{\alpha}\r)\bigg|\calF\r\}+\frac{v}{u+v}\EE\l\{\exp\l(\l(\frac{|X|}{v}\r)^{\alpha}\r)\bigg|\calF\r\}\\
        &\leq 2,
        \end{align*}
        which proves $\|\cdot|\calF\|_{\Psi_\alpha}$ is a norm. Moreover, $-\Phi$ is directed upward and then by Definition~\ref{def:esssup}, we have the existence of a decreasing positive random variables $\varphi_1\geq\varphi_2\geq\cdots$ in $\Phi$ such that $\lim_{n\to+\infty} \varphi_n=\varphi^*:=\|X|\calF\|_{\Psi_\alpha}$. Hence, by Fatou's Lemma, we have
        \begin{align*}
            \EE\l\{\exp\l|\frac{X}{\|X|\calF\|_{\Psi_\alpha}}\r|^\alpha|\calF\r\}=\lim_{n\to+\infty} \EE\l\{\exp\l|\frac{X}{\varphi_n}\r|^\alpha|\calF\r\}\leq 2.
        \end{align*}
    \end{proof}
\begin{lemma}[Preliminary] Suppose $\X,\Y$ are random vectors and $\calF$ is a sigma field, then we have
		\begin{align*}
			\|\X\Y^{\top}|\calF\|_{\Psi_1}\leq \|\X|\calF\|_{\Psi_2}\cdot\|\Y|\calF\|_{\Psi_2}.
		\end{align*}
	If $X\perp Y|\calF$, then $\EE[XY|\calF]=\EE[X|\calF]\cdot\EE[Y|\calF]$ and $\EE[X|Y,\calF]=\EE[X|\calF]$, $a.s.$. If $\|X|\calF\|_{\Psi_\alpha}\leq K$ holds $a.s.$, and $K$ is a constant, then $\|X\|_{\Psi_\alpha}\leq K$. If $X$ is independent of $\calF$, then $\|X|\calF\|_{\Psi_1}=\|X\|_{\Psi_1}$, $a.s.$. If $X,Y$ are nonnegative nonnegative random variables and $|Y|\leq K$ a.s., where $K$ is a constant, then $\|XY|\calF\|_{\Psi_\alpha}\leq K\|X|\calF\|$.
	\end{lemma}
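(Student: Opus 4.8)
The plan is to dispatch the five assertions one at a time, treating the product inequality as the substantive step and reducing the rest to elementary properties of conditional expectation. For the first claim I would first reduce the matrix statement to a scalar one via the definition $\|\X\Y^{\top}|\calF\|_{\Psi_1}=\sup_{\u,\v}\|(\u^{\top}\X)(\v^{\top}\Y)|\calF\|_{\Psi_1}$, where the supremum runs over $\calF$-measurable unit vectors $\u,\v\in\SS^{d-1}$. Fixing such $\u,\v$ and writing $a=\u^{\top}\X$, $b=\v^{\top}\Y$, $K_a=\|a|\calF\|_{\Psi_2}$, $K_b=\|b|\calF\|_{\Psi_2}$ (both $\calF$-measurable), I would apply Young's inequality $|ab|/(K_aK_b)\le\frac12(|a|^2/K_a^2+|b|^2/K_b^2)$ together with convexity of $\exp$, so that $\EE\{\exp(|ab|/(K_aK_b))|\calF\}\le\frac12\EE\{\exp(|a/K_a|^2)|\calF\}+\frac12\EE\{\exp(|b/K_b|^2)|\calF\}\le2$ almost surely. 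This gives $\|ab|\calF\|_{\Psi_1}\le K_aK_b\le\|\X|\calF\|_{\Psi_2}\cdot\|\Y|\calF\|_{\Psi_2}$ a.s., which is the target bound before taking the supremum.

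The delicate point, and the step I expect to be the main obstacle, is passing from fixed directions to the supremum over $\u,\v$: an essential supremum of an uncountable family of random variables need not be measurable, and the a.s. inequalities above hold only on a null-set that can depend on the direction. I would resolve this by restricting to a countable dense set $D\subset\SS^{d-1}$. Since the maps $\u\mapsto\|\u^{\top}\X|\calF\|_{\Psi_2}$ and $(\u,\v)\mapsto\|(\u^{\top}\X)(\v^{\top}\Y)|\calF\|_{\Psi_1}$ are almost surely continuous in their arguments, both suprema are attained along $D$, so the inequality reduces to a countable union of a.s. statements and therefore survives on a single full-measure event. Establishing this continuity (equivalently, that the conditional Orlicz norm of a direction is an a.s.-continuous functional of the direction) is the one place where genuine measure-theoretic care is needed.

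The remaining four claims are then routine. For the conditional-independence identities I would invoke the defining property that $\EE\{f(X)g(Y)|\calF\}=\EE\{f(X)|\calF\}\cdot\EE\{g(Y)|\calF\}$ for bounded measurable $f,g$, pass to $f,g=\mathrm{id}$ by truncation and integrability, and derive $\EE\{X|Y,\calF\}=\EE\{X|\calF\}$ from the tower rule. The implication $\|X|\calF\|_{\Psi_\alpha}\le K\Rightarrow\|X\|_{\Psi_\alpha}\le K$ follows immediately by taking unconditional expectations in $\EE\{\exp(|X/K|^\alpha)|\calF\}\le2$ via the tower property. For $X$ independent of $\calF$ I would use the freezing lemma: the map $u\mapsto g(u):=\EE\exp(|X/u|)$ is deterministic and decreasing, and independence gives $\EE\{\exp(|X/\varphi|)|\calF\}=g(\varphi)$ for any $\calF$-measurable $\varphi>0$; hence the constraint $g(\varphi)\le2$ forces $\varphi\ge\|X\|_{\Psi_1}$ a.s., while the constant choice $\varphi=\|X\|_{\Psi_1}$ is feasible, identifying the essential infimum with $\|X\|_{\Psi_1}$. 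Finally, for nonnegative $X,Y$ with $Y\le K$, monotonicity yields $0\le XY\le KX$ pointwise, so $\EE\{\exp(|XY/(K\|X|\calF\|_{\Psi_\alpha})|^\alpha)|\calF\}\le\EE\{\exp(|X/\|X|\calF\|_{\Psi_\alpha}|^\alpha)|\calF\}\le2$, giving $\|XY|\calF\|_{\Psi_\alpha}\le K\|X|\calF\|_{\Psi_\alpha}$.
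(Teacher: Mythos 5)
The lemma you were asked to prove is one the paper itself never proves: it is stated as a ``Preliminary,'' immediately followed by the remark that the properties of the conditional Orlicz norm ``are used as knowledge.'' So there is no paper argument to compare against, and your write-up fills a genuine omission. On the substance, your route is the natural conditional analogue of the standard unconditional facts, and it is correct: the product bound follows exactly as you say from Young's inequality $|ab|/(K_aK_b)\leq\tfrac12(|a/K_a|^2+|b/K_b|^2)$ combined with $e^{(x+y)/2}\leq\tfrac12(e^x+e^y)$ and monotonicity of conditional expectation, giving $\EE\{\exp(|ab|/(K_aK_b))\,|\,\calF\}\leq 2$ a.s.\ for each fixed pair of directions; the conditional-independence identities, the tower-property argument for $\|X|\calF\|_{\Psi_\alpha}\leq K\Rightarrow\|X\|_{\Psi_\alpha}\leq K$, the freezing-lemma identification of $\|X|\calF\|_{\Psi_1}$ with $\|X\|_{\Psi_1}$ under independence (using that the essential infimum defining the conditional norm reduces to the deterministic set $\{u:\EE\exp(|X/u|)\leq2\}=[\|X\|_{\Psi_1},\infty)$), and the pointwise monotonicity argument for the last claim are all sound.

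The one place that needs repair is the step you yourself flag: the asserted a.s.\ \emph{continuity} of $\u\mapsto\EE\{\exp(|\u^{\top}\X/K|^2)|\calF\}$ is both more than you need and not clearly true (upper semicontinuity would require a uniform integrability argument that the hypothesis $\leq 2$ at scale $K$ does not supply). What the dense-set reduction actually requires is only lower semicontinuity, and that is free. Work with a regular conditional distribution $P(\omega,\cdot)$ of the data given $\calF$ (available since the state space is $\RR^d$), so that the conditional expectation is a.s.\ the integral $F(u,\omega)=\int \exp(|u^{\top}x/K(\omega)|^2)\,P(\omega,{\rm d}x)$. Fatou's lemma gives, for any $u_n\to u$,
\begin{align*}
F(u,\omega)\;\leq\;\liminf_{n\to\infty}F(u_n,\omega),
\end{align*}
so $F(\cdot,\omega)$ is lower semicontinuous and hence $\sup_{u\in\SS^{d-1}}F(u,\omega)=\sup_{u\in D}F(u,\omega)$ for a countable dense $D$; a bound by $2$ on $D$ therefore extends to all deterministic directions on a single full-measure event. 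The same disintegration also closes a point your sketch glosses over: the paper's definition takes the supremum over $\calF$-measurable \emph{random} directions $\u$, and for such $\u$ one has a.s.\ $\EE\{\exp(|\u^{\top}\X/K|^2)|\calF\}(\omega)=F(\u(\omega),\omega)$, which is then controlled by the deterministic-direction bound just established. With these two substitutions (Fatou in place of continuity, and the disintegration identity to cover random directions), your proof is complete.
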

    \begin{lemma}
        If $\calF_1\subseteq\calF_2$ are sigma fields and $\|\X|\calF_2\|_{\Psi_2}\leq K$ for some constant $K$, then $\|\X|\calF_1\|_{\Psi_2}\leq K$. Additionally, for $\X\in\calF_2$ with $\|\X|\calF_1\|_{\Psi_{2}}\leq\lambda$ and for $\|\xi|\calF_2\|_{\Psi_2}\leq K$, then we have $\|\xi\X|\calF_1\|_{\Psi_1}\leq K\lambda$.
    \end{lemma}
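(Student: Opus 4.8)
The plan is to prove the two assertions separately, both reducing to the scalar definition of the conditional Orlicz norm applied along directions $\u\in\SS^{d-1}$ that are $\calF_1$-measurable, together with the tower property of conditional expectation. The first assertion is a monotonicity-in-the-$\sigma$-field statement that follows from iterated conditioning; the second is a mixed-conditioning product bound that I would establish by the Cauchy--Schwarz/AM--GM splitting underlying the unconditional $\Psi_1\le\Psi_2\cdot\Psi_2$ inequality, but arranged so that the two factors are peeled off against the correct $\sigma$-fields.

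For the first assertion, fix any $\u\in\SS^{d-1}$ that is $\calF_1$-measurable. Since $\calF_1\subseteq\calF_2$, such $\u$ is also $\calF_2$-measurable, so the hypothesis $\|\X|\calF_2\|_{\Psi_2}\le K$ gives $\EE\{\exp(|\u^\top\X/K|^2)\,|\,\calF_2\}\le 2$ almost surely. Taking $\EE\{\cdot\,|\,\calF_1\}$ of both sides and using the tower property yields $\EE\{\exp(|\u^\top\X/K|^2)\,|\,\calF_1\}\le 2$ almost surely. Because this holds for every admissible $\u$ and the constant $2$ is an almost-sure upper bound for the whole family, the minimality property of the essential supremum (Definition~\ref{def:esssup}) gives $\sup_{\u\in\SS^{d-1},\,\u\in\calF_1}\EE\{\exp(|\u^\top\X/K|^2)\,|\,\calF_1\}\le 2$, i.e. $\|\X|\calF_1\|_{\Psi_2}\le K$.

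For the second assertion, fix again a $\calF_1$-measurable $\u\in\SS^{d-1}$ and write $Z:=\u^\top\X$, which is $\calF_2$-measurable because $\X\in\calF_2$. I would start from the pointwise bound $\exp(|\xi Z|/(K\lambda))\le\exp(Z^2/(2\lambda^2))\exp(\xi^2/(2K^2))$, a consequence of $|\xi Z|/(K\lambda)\le\tfrac12(Z^2/\lambda^2+\xi^2/K^2)$. Conditioning first on $\calF_2$, the factor $\exp(Z^2/(2\lambda^2))$ is measurable and pulls out, while $\EE\{\exp(\xi^2/(2K^2))\,|\,\calF_2\}\le(\EE\{\exp(\xi^2/K^2)\,|\,\calF_2\})^{1/2}\le\sqrt2$ by conditional Jensen applied to $t\mapsto t^{1/2}$ and the hypothesis $\|\xi|\calF_2\|_{\Psi_2}\le K$. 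Applying the tower property to descend to $\calF_1$, and then the same Jensen halving together with $\|\X|\calF_1\|_{\Psi_2}\le\lambda$ to obtain $\EE\{\exp(Z^2/(2\lambda^2))\,|\,\calF_1\}\le\sqrt2$, gives $\EE\{\exp(|\xi Z|/(K\lambda))\,|\,\calF_1\}\le\sqrt2\cdot\sqrt2=2$. Taking the essential supremum over $\u$ as before yields $\|\xi\X|\calF_1\|_{\Psi_1}\le K\lambda$. The main obstacle is precisely this bookkeeping of the two distinct $\sigma$-fields: the bound cannot be obtained by a single application of the conditional product lemma, and the correct order of operations---exploit $\calF_2$-measurability of $\u^\top\X$ to isolate the $\xi$-factor under $\calF_2$, then collapse to $\calF_1$ by the tower property---is what makes the exponents halve cleanly and the constants multiply to exactly $2$.
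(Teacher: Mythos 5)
Your proof is correct. There is, however, nothing in the paper to compare it against: this lemma is stated (together with the preceding ``Preliminary'' lemma) right after the proof of Theorem~\ref{thm:conditional orlicz norm}, and the paper then simply declares that these properties ``are used as knowledge,'' so no proof is given. Your argument supplies exactly what is omitted, and both halves check out. The monotonicity half is the tower property plus monotonicity of conditional expectation, applied direction-by-direction: any $\calF_1$-measurable $\u\in\SS^{d-1}$ is automatically $\calF_2$-measurable, so $\EE\{\exp(|\u^\top\X/K|^2)\,|\,\calF_1\}=\EE\{\EE\{\exp(|\u^\top\X/K|^2)\,|\,\calF_2\}\,|\,\calF_1\}\leq 2$ a.s.; the passage from per-direction a.s.\ bounds to a bound on the supremum of the family is handled via the essential supremum of Definition~\ref{def:esssup}, which is the same level of rigor as the paper's own definition of the vector norm. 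The product half is also sound: with $Z=\u^\top\X\in\calF_2$, the AM--GM bound $|\xi Z|/(K\lambda)\leq\tfrac{1}{2}(Z^2/\lambda^2+\xi^2/K^2)$, pulling the $\calF_2$-measurable factor $\exp(Z^2/(2\lambda^2))$ out of $\EE\{\cdot\,|\,\calF_2\}$, the conditional Jensen halving $\EE\{\exp(\xi^2/(2K^2))\,|\,\calF_2\}\leq(\EE\{\exp(\xi^2/K^2)\,|\,\calF_2\})^{1/2}\leq\sqrt{2}$, the tower property down to $\calF_1$, and one more halving on the $Z$-factor give $\sqrt{2}\cdot\sqrt{2}=2$, i.e.\ $\|\xi\X|\calF_1\|_{\Psi_1}\leq K\lambda$ with the clean constant claimed. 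Your closing observation is also apt: the paper's conditional product lemma ($\|\X\Y^\top|\calF\|_{\Psi_1}\leq\|\X|\calF\|_{\Psi_2}\|\Y|\calF\|_{\Psi_2}$) requires both factors to be controlled given the \emph{same} $\sigma$-field, so it cannot be invoked directly here; the two-stage conditioning ($\calF_2$ first to dispose of $\xi$, then $\calF_1$) is precisely what the mixed hypothesis forces, and is the natural proof the authors presumably had in mind.
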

   \noindent Afterwards, the properties of conditional Orlicz norm are used as knowledge.
\subsection{Proof of Theorem~\ref{thm:nonsymLR}}
In this section, we only prove the constant stepsize scheme $\eta_{t}:=\eta$ of Theorem~\ref{thm:nonsymLR}. We defer the remaining proof to Section~\ref{sec:proof-bandit}, where a more general convergence is proved and the second part of Theorem~\ref{thm:nonsymLR} is included as a special case.
\begin{proof}[Proof of Theorem~\ref{thm:nonsymLR} (i)]
	We prove by induction. Denote the event $$\calE_t:=\l\{\|\Bbeta_{t}-\Bbeta^*\|^2\leq C^*(1-c)^{2t}\|\Bbeta_{0}-\Bbeta^*\|^2+2\frac{\lambda_{\max}}{\lambda_{\min}}\eta\sigma^2d\r\},$$ where values of $c,C^*$ will be specified later. Obviously, the event holds with $t=0$. We proceed to prove the convergence dynamics of $\Bbeta_{t+1}-\Bbeta^*$ under $\cup_{l=0}^t \calE_l$. Let $\calE_t^X:=\l\{\|\X_t\|^2\leq 2\lambda_{\max}d\r\}$. Assumption~\ref{assm:cov} implies $\PP(\calE_t^X|\calF_t)\geq1-\exp(-Cd)$. According to the update Equation~\eqref{alg:regression}, we can write the estimation error as follows,
	\begin{align*}
		\l\|\Bbeta_{t+1}-\Bbeta^*\r\|^2=\l\|\Bbeta_{t}-\Bbeta^*\r\|^2-2\eta\l(\Bbeta_{t}-\Bbeta^*\r)^{\top}\X_t\l(\X_t^{\top}\Bbeta_t-Y_t\r)+\eta^2 \l(\X_t^{\top}\Bbeta_t-Y_t\r)^2 \l\|\X_t\r\|^2.
	\end{align*}
	We first consider its expectation conditional on $\calF_t^+$. Assumption~\ref{assm:noise} implies that $\EE\l\{\xi_t|\calF_t^+\r\}=0$. Hence, we have
	\begin{multline*}
		\EE\l\{\l\|\Bbeta_{t+1}-\Bbeta^*\r\|^2\big|\calF_t^+ \r\}\\
		\leq\l\|\Bbeta_{t}-\Bbeta^*\r\|^2-2\eta\l(\Bbeta_{t}-\Bbeta^*\r)^{\top}\X_t\X_t^{\top}\l(\Bbeta_{t}-\Bbeta^*\r)+\eta^2\l(\X_t^{\top}\Bbeta_{t}-\X_t^{\top}\Bbeta^*\r)^2\|\X_t\|^2+\eta^2\sigma^2\|\X_t\|^2.
	\end{multline*}
	We further consider its expectation conditional on $\calF_t$. By Assumption~\ref{assm:cov}, we have
	\begin{align*}
		\EE\l\{\l\|\Bbeta_{t+1}-\Bbeta^*\r\|^2\big|\calF_t \r\}
		\leq \l( 1-2\eta\lambda_{\min}+\eta^2\lambda_{\max}^2d\r)\l\|\Bbeta_{t}-\Bbeta^*\r\|^2+\eta^2\sigma^2\lambda_{\max}d.
	\end{align*}
	We then consider the difference between $\l\|\Bbeta_{t+1}-\Bbeta^*\r\|^2$ and $\EE\l\{\l\|\Bbeta_{t+1}-\Bbeta^*\r\|^2\big|\calF_t \r\}$,
	\begin{align*}
		\l\|\Bbeta_{t+1}-\Bbeta^*\r\|^2&-\EE\l\{\l\|\Bbeta_{t+1}-\Bbeta^*\r\|^2\big|\calF_t \r\}=\underbrace{2\eta\cdot\xi_t\l(\Bbeta_{t}-\Bbeta^*\r)^{\top}\X_t}_{I_1^{(t)}}\\
		&\underbrace{-2\eta \l(\Bbeta_{t}-\Bbeta^*\r)^{\top}\l\{\X_t\X_t^{\top}-\EE\l\{\X_t\X_t^{\top} \big|\calF_t\r\}\r\}\l(\Bbeta_{t}-\Bbeta^*\r)}_{I_2^{(t)}}+\underbrace{\eta^2\l\{\xi_t^2\|\X_t\|^2-\EE\l\{\xi_t^2\|\X_t\|^2\big|\calF_t\r\}\r\}}_{I_3^{(t)}}\\
		&+\underbrace{\eta^2\l\{\l(\X_t^{\top}\Bbeta_{t}-\X_t^{\top}\Bbeta^*\r)^2\|\X_t\|^2-\EE\l\{\l(\X_t^{\top}\Bbeta_{t}-\X_t^{\top}\Bbeta^*\r)^2\|\X_t\|^2\big|\calF_t\r\} \r\}}_{I_4^{(t)}}.
	\end{align*}
	Then, we decompose the estimation error as follows,
	\begin{align*}
		\|\Bbeta_{t+1}-\Bbeta^*\|^2&=	\EE\l\{\l\|\Bbeta_{t+1}-\Bbeta^*\r\|^2\big|\calF_t \r\}+\l\{\l\|\Bbeta_{t+1}-\Bbeta^*\r\|^2 -	\EE\l\{\l\|\Bbeta_{t+1}-\Bbeta^*\r\|^2\big|\calF_t \r\}\r\}\\
		&\leq \l( 1-2\eta\lambda_{\min}+\eta^2\lambda_{\max}^2d\r)\l\|\Bbeta_{t}-\Bbeta^*\r\|^2+\eta^2\sigma^2\lambda_{\max}d+I_1^{(t)}+I_2^{(t)}+I_3^{(t)}+I_4^{(t)}.
	\end{align*}
	For $\eta\leq \lambda_{\min}/(d\lambda_{\max}^2)$, we have
    \begin{align*}
        \|\Bbeta_{t+1}-\Bbeta^*\|^2\leq \l( 1-\eta\lambda_{\min}\r)\l\|\Bbeta_{t}-\Bbeta^*\r\|^2+\eta^2\sigma^2\lambda_{\max}d+I_1^{(t)}+I_2^{(t)}+I_3^{(t)}+I_4^{(t)}.
    \end{align*}
    We then accumulate the above equation until $t=0$,
	\begin{equation}
		\begin{split}
			&\|\Bbeta_{t+1}-\Bbeta^*\|^2\leq
			 \l( 1-\eta\lambda_{\min}\r)^{t+1}\l\|\Bbeta_{0}-\Bbeta^*\r\|^2+\sum_{l=0}^{t}\l( 1-\eta\lambda_{\min}\r)^{t-l}I_{1}^{(l)}+\sum_{l=0}^{t}\l( 1-\eta\lambda_{\min}\r)^{t-l}I_{2}^{(l)}\\
			&~~~~~~~~~~~~~~+\sum_{l=0}^{t}\l( 1-\eta\lambda_{\min}\r)^{t-l}I_{3}^{(l)}+\sum_{l=0}^{t}\l( 1-\eta\lambda_{\min}\r)^{t-l}I_{4}^{(l)}+\eta^2\sigma^2\lambda_{\max}d\sum_{l=0}^{t}\l( 1-\eta\lambda_{\min}\r)^l.
		\end{split}
		\label{eq1:regression}
	\end{equation}
	 We then bound each term of the RHS for Equation~\ref{eq1:regression}. It's worth noting that $$ \l( 1-\eta\lambda_{\min}\r)^{t+1}\leq \l( 1-\eta\lambda_{\min}/2\r)^{2(t+1)},\quad \eta^2\sigma^2\lambda_{\max}d\sum_{l=0}^{t}\l( 1-\eta\lambda_{\min}\r)^l \leq\frac{\lambda_{\max}}{\lambda_{\min}}\eta\sigma^2d.$$We then only need to bound the cumulated term for $I_1,I_2,I_3,I_4$. First consider the $I_1$ term, as $\Bbeta_t\in\calF_t$, and we have
	\begin{align*}
		\EE\l\{\xi_l\l(\Bbeta_{l}-\Bbeta^*\r)^{\top}\X_l\big|\calF_l\r\}=0,\quad \l\| \xi_l\l(\Bbeta_{l}-\Bbeta^*\r)^{\top}\X_l\big|\calF_l\r\|_{\Psi_1}\leq \sqrt{\lambda_{\max}}\sigma\|\Bbeta_{l}-\Bbeta^*\|.
	\end{align*}
Additionally, under the event $\calE_l$, $\|\Bbeta_{l}-\Bbeta^*\|^2\leq C^* (1-c)^{2l}\|\Bbeta_l-\Bbeta^*\|^2+2\frac{\lambda_{\max}}{\lambda_{\min}}\eta \sigma^2 d$. We then use technical Lemma~\ref{teclem:azuma}. Thus, under the event $\cup_{l=0}^t\calE_l$, we have 
\begin{align*}
	\sum_{l=0}^t \l\| (1-\eta\lambda_{\min})^{t-l}I_{1}^{(l)}\big|\calF_l\r\|_{\Psi_1}^2&\leq C\eta^2\sigma^2\lambda_{\max}\sum_{l=0}^t (1-\eta\lambda_{\min})^{2t-2l} ((1-c)^{2l}\|\Bbeta_{0}-\Bbeta^*\|^2+\frac{\lambda_{\max}}{\lambda_{\min}}d\eta\sigma^2)\\
	&\leq C\eta^2(1-c)^{2t}\frac{1}{1-\frac{1-\eta\lambda_{\min}}{1-c}}\|\Bbeta_{0}-\Bbeta^*\|^2+d\eta^3 \sigma^4\frac{\lambda_{\max}^2}{\lambda_{\min}(1-(1-\eta\lambda_{\min})^2)}\\
	&\leq C\frac{\eta\lambda_{\max}\sigma^2}{\lambda_{\min}}(1-c)^{2t}\|\Bbeta_{0}-\Bbeta^*\|^2+d\frac{\lambda_{\max}^2\eta^2\sigma^4}{\lambda_{\min}^2},
\end{align*}
where the last line is due to $4c\leq \eta\lambda_{\min}$. Additionally, we have
\begin{align*}
	\max_{l}\l\| (1-\eta\lambda_{\min})^{t-l}I_{1}^{(l)}\big|\calF_l\r\|_{\Psi_1}\leq C\eta\sigma\sqrt{\lambda_{\max}}\l((1-c)^{t}\|\Bbeta_{0}-\Bbeta^*\|+\sqrt{\frac{\lambda_{\max}}{\lambda_{\min}}}\sqrt{\eta}\sqrt{d}\sigma \r).
\end{align*}
 Thus, by Lemma~\ref{teclem:azuma}, we have
\begin{multline*}
	\PP\l(\l|\sum_{l=0}^{t}\l( 1-\eta\lambda_{\min}\r)^{t-l}I_{1}^{(l)}\r|\geq s \r)
	\leq\exp\l(-\min\l\{ \frac{s^2}{C\l(\frac{\sigma^2\eta\lambda_{\max}}{\lambda_{\min}}(1-c)^{2t}\|\Bbeta_{0}-\Bbeta^*\|^2+d\frac{\eta^2\sigma^4\lambda_{\max}^2}{\lambda_{\min}^2} \r)},\r.\r. \\ \l.\l.\frac{s}{\eta\sigma\sqrt{\lambda_{\max}}\l((1-c)^t\|\Bbeta_{0}-\Bbeta^*\|+\sqrt{\frac{\lambda_{\max}}{\lambda_{\min}}}\sqrt{\eta}\sqrt{d}\sigma \r)}\r\}\r).
\end{multline*}
We insert $s=(1-c)^{2t+2}\|\Bbeta_{0}-\Bbeta^*\|^2+\frac{\lambda_{\max}}{\lambda_{\min}}\eta\sigma^2d\geq 2(1-c)^{t+1}\sqrt{\frac{\lambda_{\max}}{\lambda_{\min}}}\sqrt{\eta d}\|\Bbeta_{0}-\Bbeta^*\|\sigma$ into the above equation and it yields
\begin{multline*}
	\PP\l(\l|\sum_{l=0}^{t}\l( 1-\eta\lambda_{\min}\r)^{t-l}I_{1}^{(l)}\r|\geq (1-c)^{2t+2}\|\Bbeta_{0}-\Bbeta^*\|^2+\frac{\lambda_{\max}}{\lambda_{\min}}\eta\sigma^2d \r)\\ \leq\exp\l(-c_1\min\l\{d,\sqrt{\frac{1}{\lambda_{\min}}\frac{d}{\eta}}\r\}\r)
\end{multline*}
We then consider the term $I_2$. Assumption~\ref{assm:cov} guarantees,
\begin{align*}
	\l\|\l(\Bbeta_{l}-\Bbeta^*\r)^{\top}\l\{\X_l\X_l^{\top}-\EE\l\{\X_l\X_l^{\top} \big|\calF_l\r\}\r\}\l(\Bbeta_{l}-\Bbeta^*\r)\big|\calF_l\r\|_{\Psi_1}\leq \lambda_{\max}\|\Bbeta_{l}-\Bbeta^*\|^2.
\end{align*} Under the event $\cup_{l=0}^t\calE_l$, we have that 
\begin{align*}
	\sum_{l=0}^t \l\| (1-\eta\lambda_{\min})^{t-l}I_{2}^{(l)}\big|\calF_l\r\|_{\Psi_1}^2&\leq C\eta^2\lambda_{\max}^2\sum_{l=0}^t (1-\eta\lambda_{\min})^{2t-2l}\l((1-c)^{4l}\|\Bbeta_{0}-\Bbeta^*\|^4+\frac{\lambda_{\max}^2}{\lambda_{\min}^2}\eta^2\sigma^4d^2\r)\\
	&\leq C\eta\frac{\lambda_{\max}^2}{\lambda_{\min}}(1-c)^{4t}\|\Bbeta_{0}-\Bbeta^*\|^4+C\frac{\lambda_{\max}^4}{\lambda_{\min}^3}\eta^3 \sigma^4d^2,
\end{align*}
and
\begin{align*}
	\max_{l}\l\| (1-\eta\lambda_{\min})^{t-l}I_{2}^{(l)}\big|\calF_l\r\|_{\Psi_1}\leq C\eta\lambda_{\max}(1-c)^{2t}\|\Bbeta_{0}-\Bbeta^*\|^2+\eta^2\frac{\lambda_{\max}^2}{\lambda_{\min}}\sigma^2d.
\end{align*}
Then follow Lemma~\ref{teclem:azuma} and select $s=(1-c)^{2t+2}\|\Bbeta_{0}-\Bbeta^*\|^2+\frac{\lambda_{\max}}{\lambda_{\min}}\eta\sigma^2d$, we have
\begin{align*}
	\PP\l(\l|\sum_{l=0}^{t}\l( 1-\eta\lambda_{\min}\r)^{t-l}I_{1}^{(l)}\r|\geq (1-c)^{2t+2}\|\Bbeta_{0}-\Bbeta^*\|^2+\frac{\lambda_{\max}}{\lambda_{\min}}\eta\sigma^2d\r) \leq \exp\l(-c_1\frac{\lambda_{\min}}{\lambda_{\max}^2}\frac{1}{\eta}\r).
\end{align*}
We then consider $I_3$.  Under $\calE_l^X$, by Assumption~\ref{assm:noise}, we have \begin{align*}
	\l\|\l\{\xi_l^2\|\X_l\|^2-\EE\l\{\xi_l^2\|\X_l\|^2\big|\calF_l\r\}\r\}\big|\calF_l  \r\|_{\Psi_1}\leq C\lambda_{\max}\sigma^2d.
\end{align*}
Under the event $\cup_{l=0}^t \calE_l^X$, we have
\begin{align*}
	\sum_{l=0}^t \l\| (1-\eta\lambda_{\min})^{t-l}I_{3}^{(l)}\big|\calF_l\r\|_{\Psi_1}^2&\leq \sum_{l=0}^t(1-\eta\lambda_{\min})^{2t-2l}\eta^4\lambda_{\max}^2\sigma^4d^2\leq\frac{\lambda_{\max}^2}{\lambda_{\min}}\eta^3d^2\sigma^4,
\end{align*}
and
\begin{align*}
	\max_{l}\l\| (1-\eta\lambda_{\min})^{t-l}I_{3}^{(l)}\big|\calF_l\r\|_{\Psi_1}\leq \eta^2\lambda_{\max}\sigma^2d.
\end{align*}
Follow Lemma~\ref{teclem:azuma} and then we insert $s=(1-c)^{2t+2}\|\Bbeta_{0}-\Bbeta^*\|^2+\frac{\lambda_{\max}}{\lambda_{\min}}\eta\sigma^2d$,
\begin{align*}
	\PP\l( \l|\sum_{l=0}^{t}\l( 1-\eta\lambda_{\min}\r)^{t-l}I_{3}^{(l)}\r|\geq (1-c)^{2t+2}\|\Bbeta_{0}-\Bbeta^*\|^2+\frac{\lambda_{\max}}{\lambda_{\min}}\eta\sigma^2d\r)
	\leq \exp\l(-c_1\frac{1}{\lambda_{\max}}\frac{1}{\eta}\r).
\end{align*}
Finally, we consider $I_4$. Under the event $\calE_t^X$, we have
\begin{align*}
	\l\| \l\{\l(\X_l^{\top}\Bbeta_{l}-\X_t^{\top}\Bbeta^*\r)^2\|\X_l\|^2-\EE\l\{\l(\X_l^{\top}\Bbeta_{l}-\X_l^{\top}\Bbeta^*\r)^2\|\X_l\|^2\big|\calF_l\r\} \r\}\big|\calF_l\r\|_{\Psi_1}\leq C\lambda_{\max}^2d\|\Bbeta_{l}-\Bbeta^*\|^2.
\end{align*}
Under the event $\cup_{l=0}^t \calE_l^X$, we have
\begin{align*}
	\sum_{l=0}^t \l\| (1-\eta\lambda_{\min})^{t-l}I_{4}^{(l)}\big|\calF_l\r\|_{\Psi_1}^2&\leq C\eta^4\lambda_{\max}^4d^2\sum_{l=0}^t (1-\eta\lambda_{\min})^{2t-2l}\l((1-c)^{4l}\|\Bbeta_{0}-\Bbeta^*\|^4+\frac{\lambda_{\max}^2}{\lambda_{\min}^2} \eta^2\sigma^4d^2\r)\\
	&\leq C\frac{\lambda_{\max}^4}{\lambda_{\min}}\eta^3d^2\|\Bbeta_{0}-\Bbeta^*\|^4+C\frac{\lambda_{\max}^6}{\lambda_{\min}^3}\eta^5\sigma^4d^4,
\end{align*}
and
\begin{align*}
	\max_{l} \l\| (1-\eta\lambda_{\min})^{t-l}I_{4}^{(l)}\big|\calF_l\r\|_{\Psi_1}\leq C\eta^2\lambda_{\max}^2d(1-c)^{2t}\|\Bbeta_{0}-\Bbeta^*\|^2+C\eta^3(1-c)^{2t}\frac{\lambda_{\max}^3}{\lambda_{\min}}\sigma^2d^2.
\end{align*}
Insert $s=(1-c)^{2t+2}\|\Bbeta_{0}-\Bbeta^*\|^2+\frac{\lambda_{\max}}{\lambda_{\min}}\eta\sigma^2d$ into Lemma~\ref{teclem:azuma}, which leads to
\begin{multline*}
	\PP\l( \l|\sum_{l=0}^{t}\l( 1-\eta\lambda_{\min}\r)^{t-l}I_{4}^{(l)}\r|\geq (1-c)^{2t+2}\|\Bbeta_{0}-\Bbeta^*\|^2+\frac{\lambda_{\max}}{\lambda_{\min}}\eta\sigma^2d \r)\\
	\leq\exp\l(-\min\l\{\frac{\lambda_{\min}}{\lambda_{\max}^4\eta^3d^2},\frac{1}{\lambda_{\max}^2\eta^2d}\r\}\r).
\end{multline*}
Thus, Equation~\eqref{eq1:regression} arrives at,
\begin{align*}
	\|\Bbeta_{t+1}-\Bbeta^*\|^2\leq 5(1-c)^{2t+2}\|\Bbeta_{0}-\Bbeta^*\|^2+5\frac{\lambda_{\max}}{\lambda_{\min}}\eta\sigma^2d,
\end{align*}
with probability over $1-5\exp(-c_1d)$. And $c$ is any constant satisfying $c\leq\eta\lambda_{\min}/2$.
\end{proof}

\section{Proofs in Section~\ref{sec:sparse}}
\label{sec:proofSparse}
This section presents the proofs of sparse regression, specifically, consisting of Lemma~\ref{lem:FixedSupport}, Lemma~\ref{lem:sparse-gradient} and Proposition~\ref{prop:sparse-select}.
\subsection{Proof of Lemma~\ref{lem:FixedSupport}}
\label{sec:proof-FixedSupport}
First of all, the SGD update with a fixed support $\calS_0$ can be written as,
\begin{align*}
	\Bbeta_{t+1}-\Bbeta^*=\calH_{\calS_0}\l(\Bbeta_{t}-\eta_{t}\cdot\l(\X_t^{\top}\Bbeta_{t}-Y_t\r)\X_t\r)-\Bbeta^*=\Bbeta_{t}-\Bbeta^*-\eta_{t}\cdot\l(\X_t^{\top}\Bbeta_{t}-Y_t\r)\calH_{\calS_0}(\X_t).
\end{align*}
It's worth noting that $\Bbeta_{t}-\Bbeta^*$ is supported on $\calS^*\cup\calS_0$, where $\calS^*\cup\calS_0=(\calS^*\setminus\calS_0)\cup\calS_0$. For one thing, the iterates restricted on $\calS^*\setminus\calS_0$ satisfy
\begin{align*}
	\l[\Bbeta_{t+1}-\Bbeta^* \r]_{\calS^*\setminus \calS_0} = \l[\Bbeta_{t}-\Bbeta^* \r]_{\calS^*\setminus \calS_0}=[-\Bbeta^*]_{\calS^*\setminus\calS_0},
\end{align*}
while for entries on $\calS_0$, it is
\begin{align*}
	\l[\Bbeta_{t+1}-\Bbeta^* \r]_{ \calS_0}= [\Bbeta_{t}-\Bbeta^*]_{\calS_0}-\eta_{t}[\X_t]_{\calS_0}(\X_t^{\top}\Bbeta_{t}-Y_t).
\end{align*}
We are going to prove Lemma~\ref{lem:FixedSupport} by induction. Define the event $$\calE_t:=\l\{\|[\Bbeta_{t}-\Bbeta^*]_{\calS_0}\|^2\leq C^*\frac{s\log(2d/s)+\delta_{t}}{t+\Cb s\log(2d/s)}\frac{\sigma^2}{\lambda_{\min}}+C\l(\frac{\OffDiagS}{\lambda_{\min}}\r)^2\cdot\|[\Bbeta^*]_{\calS^*\setminus\calS_0}\|^2\r\},$$
where $C^*$ is some constant that does not depend on $t,d,s$, having its close form in Lemma~\ref{lem:FixedSupport}. It is obvious that $\calE_0$ holds. We will prove $\calE_{t+1}$ holds with high probability under $\cup_{l=0}^t\calE_{l}$. First of all, the squared estimation error supported on $\calS_0$ has
\begin{multline*}
	\|[\Bbeta_{t+1}-\Bbeta^* ]_{ \calS_0} \|^2=\|[\Bbeta_{t}-\Bbeta^*]_{\calS_0} \|^2-2\eta_{t}(\X_t^{\top}\Bbeta_{t}-Y_t)[\Bbeta_{t}-\Bbeta^*]_{\calS_0}^{\top}[\X_t]_{\calS_0}\\
	+\eta_{t}^2(\X_t^{\top}\Bbeta_{t}-Y_t)^2\| [\X_t]_{\calS_0}\|^2.
\end{multline*}
We remark that $\X_t$ might depend on $\calS_{0}$. Based on Assumption~\ref{assm:cov-sparse} and Lemma~\ref{teclem:maxsum tail}, we have $\EE\{\|[\X_t]_{\calS_{0}}\|^4|\calF_t\}\leq s^2\log^2(2d/s)\lambda_{\max}^2$ and, moreover, we have
\begin{align*}
	&\EE\l\{(\X_t^{\top}\Bbeta_{t}-Y_t)^2\| [\X_t]_{\calS_0}\|^2\big|\calF_t \r\}=\EE\l\{(\X_t^{\top}\Bbeta_{t}-\X_t^{\top}\Bbeta^*)^2\| [\X_t]_{\calS_0}\|^2\big|\calF_t \r\}+\EE\l\{\xi_t^2\| [\X_t]_{\calS_0}\|^2\big|\calF_t \r\}\\
	&~~~~~\leq \frac{s\log(2d/s)}{2\|\Bbeta_{t}-\Bbeta^*\|^2}\EE\l\{(\X_t\Bbeta_t-\X_t^{\top}\Bbeta^*)^4\big|\calF_t\r\}+\frac{\|\Bbeta_{t}-\Bbeta^*\|^2}{2s\log(2d/s)}\EE\l\{\|[\X_t]_{\calS_0}\|^4\big|\calF_t\r\}+\sigma^2s\log(2d/s)\lambda_{\max}\\
	&~~~~~\leq \lambda_{\max}^2s\log(2d/s)\|\Bbeta_{t}-\Bbeta^*\|^2+2\sigma^2s\log(2d/s)\lambda_{\max}\leq 3\sigma^2s\log(2d/s)\lambda_{\max},
\end{align*}
where the second line uses the inequality $a^2+b^2\geq 2ab$. Thus, the conditional expectation of $\|\Bbeta_{t+1}-\Bbeta^*\|^2$ satisfies the following equation,
\begin{multline*}
	\EE\l\{\l\|[\Bbeta_{t+1}-\Bbeta^* ]_{ \calS_0}\r\|^2\big|\calF_t\r\}\leq \l(1-2\eta_{t}\lambda_{\min}\r) \l\|\l[\Bbeta_{t}-\Bbeta^* \r]_{ \calS_0}\r\|^2+3\eta_{t}^2\lambda_{\max}\cdot s\log(2d/s)\sigma^2\\
	+2\eta_{t}\OffDiagS \cdot \|\l[\Bbeta_{t}-\Bbeta^* \r]_{\calS_0}\|\cdot\|[\Bbeta^*]_{\calS^*\setminus\calS_0}\|.
\end{multline*}
We bound the third term of the RHS with $2\OffDiagS \cdot \|\l[\Bbeta_{t}-\Bbeta^* \r]_{\calS_0}\|\cdot\|[\Bbeta^*]_{\calS^*\setminus\calS_0}\|\leq \lambda_{\min}\|[\Bbeta_{t}-\Bbeta^*]_{\calS_0}\|^2+(\OffDiagS)^2\|[\Bbeta^*]_{\calS^*\setminus\calS_0}\|^2/\lambda_{\min}$. After simplifications, it arrives at
\begin{multline*}
	\EE\l\{\l\|[\Bbeta_{t+1}-\Bbeta^* ]_{ \calS_0}\r\|^2\big|\calF_t\r\}
	\leq \l(1-\eta_{t}\lambda_{\min}\r)\l\|\l[\Bbeta_{t}-\Bbeta^* \r]_{ \calS_0}\r\|^2\\
	+3\eta_{t}^2\lambda_{\max}s\log(2d/s)\sigma^2+\eta_{t}\|[\Bbeta^*]_{\calS^*\setminus\calS_0}\|^2\cdot\frac{(\OffDiagS)^2}{\lambda_{\min}}.
\end{multline*}
We then decompose the difference between the squared estimation error and the conditional expectation,
\begin{align*}
	&\|[\Bbeta_{t+1}-\Bbeta^* ]_{ \calS_0}\|^2-\EE\l\{\l\|[\Bbeta_{t+1}-\Bbeta^* ]_{ \calS_0}\r\|^2\big|\calF_t\r\}\\
	&~~~~~~=-2\eta_{t}\underbrace{\l\{ [\Bbeta_{t}-\Bbeta^*]_{\calS_0}^{\top}[\X_t]_{\calS_0}\X_t^{\top} (\Bbeta_{t}-\Bbeta^*)-\EE\l\{[\Bbeta_{t}-\Bbeta^*]_{\calS_0}^{\top}[\X_t]_{\calS_0}\X_t^{\top} (\Bbeta_{t}-\Bbeta^*)\big| \calF_t \r\}\r\}}_{\boldsymbol{\varXi}_{1}^{(t)}}\\
	&~~~~~~~+2\eta_{t}\underbrace{\l\{ \xi_t[\Bbeta_{t}-\Bbeta^*]_{\calS_0}^{\top}[\X_t]_{\calS_0}-\EE\l\{\xi_t[\Bbeta_{t}-\Bbeta^*]_{\calS_0}^{\top}[\X_t]_{\calS_0}\big| \calF_t \r\}\r\}}_{\boldsymbol{\varXi}_2^{(t)}}\\
	&~~~~~~~+\eta_{t}^2\underbrace{\l\{(\X_t^{\top}\Bbeta_{t}-Y_t)^2\|[\X_t]_{\calS_0}\|^2-\EE\l\{(\X_t^{\top}\Bbeta_{t}-Y_t)^2\|[\X_t]_{\calS_0}\|^2\big|\calF_t \r\}\r\}}_{\boldsymbol{\varXi}_3^{(t)}}.
\end{align*}
It is obvious that the three terms $\boldsymbol{\varXi}_i^{(t)}$ are all martingale differences, with conditional Orlicz norm bounded with
\begin{align*}
	\|\boldsymbol{\varXi}_1^{(t)}|\calF_t\|_{\Psi_1}\leq \lambda_{\max}\|[\Bbeta_{t}-\Bbeta^*]_{\calS_0}\|^2+\OffDiagS\cdot\|[\Bbeta_{t}-\Bbeta^*]_{\calS_0}\|\cdot\|[\Bbeta^*]_{\calS^*\setminus\calS_0}\|,
\end{align*}
and
\begin{align*}
	\|\boldsymbol{\varXi}_2^{(t)}|\calF_t\|_{\Psi_1}\leq\sqrt{\lambda_{\max}}\sigma\|[\Bbeta_{t}-\Bbeta^*]_{\calS_0}\|.
\end{align*}
Denote $\calE_t^X:=\{\|[\X_t]_{\calS_0}\|^2\leq C\lambda_{\max}(s\log(2d/s)+\delta_{t})\}$ and by Lemma~\ref{teclem:maxsum tail}, we have $\PP(\calE_t^X)\geq 1-\exp(-c_2(s\log(2d/s)+\delta_{t}))$. Thus, under $\calE_t^X$, we have
\begin{align*}
	\|\boldsymbol{\varXi}_3^{(t)}|\calF_t,\calE_t^X\|_{\Psi_1}\leq C(s\log(2d/s)+\delta_{t})\lambda_{\max}\sigma^2.
\end{align*}
Thus, putting the conditional expectation together with the difference, we have
\begin{multline*}
	\|[\Bbeta_{t+1}-\Bbeta^* ]_{ \calS_0}\|^2=\EE\l\{ \|[\Bbeta_{t+1}-\Bbeta^* ]_{ \calS_0}\|^2\big|\calF_t\r\}+\l\{ \|[\Bbeta_{t+1}-\Bbeta^* ]_{ \calS_0}\|^2-\EE\l\{\|[\Bbeta_{t+1}-\Bbeta^* ]_{ \calS_0}\|^2\big|\calF_t\r\}\r\}\\
	\leq \l(1-\eta_{t}\lambda_{\min}\r)\l\|\l[\Bbeta_{t}-\Bbeta^* \r]_{ \calS_0}\r\|^2+3\eta_{t}^2\lambda_{\max}s\log(2d/s)\sigma^2
	\\
	+\eta_{t}\|[\Bbeta^*]_{\calS^*\setminus\calS_0}\|^2\cdot\frac{(\OffDiagS)^2}{\lambda_{\min}}-2\eta_{t}\boldsymbol{\varXi}_1^{(t)}+2\eta_{t}\boldsymbol{\varXi}_2^{(t)}+\eta_{t}^2\boldsymbol{\varXi}_3^{(t)}.
\end{multline*}
Insert the stepsize $\eta_{t}=\frac{\Ca}{\lambda_{\min}}\frac{1}{t+\Cb s\log(2d/s)}$ into the above equation and then we have
\begin{multline*}
	\|[\Bbeta_{t+1}-\Bbeta^* ]_{ \calS_0}\|^2\leq \underbrace{\prod_{j=0}^{t}\l(1-\frac{\Ca}{j+\Cb s\log(2d/s)}\r)\l\|\l[\Bbeta_{0}-\Bbeta^* \r]_{ \calS_0}\r\|^2}_{\boldsymbol{\mathit{A}}_1}\\
	+\underbrace{3s\log(2d/s)\Ca^2\frac{\lambda_{\max}}{\lambda_{\min}^2}\sum_{l=0}^{t}\prod_{j=l}^{t}\l(1-\frac{\Ca}{j+\Cb s\log(2d/s)}\r)\l(\frac{1}{l+\Cb s\log(2d/s)}\r)^2\sigma^2}_{\boldsymbol{\mathit{A}}_2}\\
	+\underbrace{\Ca\sum_{l=0}^{t}\prod_{j=l}^{t}\l(1-\frac{\Ca}{j+\Cb s\log(2d/s)}\r)\frac{1}{l+\Cb s\log(2d/s)}\|[\Bbeta^*]_{\calS^*\setminus\calS_0}\|^2\cdot\frac{(\OffDiagS)^2}{\lambda_{\min}^2}}_{\boldsymbol{\mathit{A}}_3}\\
	-\underbrace{2\frac{\Ca}{\lambda_{\min}}\sum_{l=0}^{t}\prod_{j=l}^{t}\l(1-\frac{\Ca}{j+\Cb s\log(2d/s)}\r)\frac{1}{l+\Cb s\log(2d/s)}\boldsymbol{\varXi}_1^{(l)}}_{\boldsymbol{\mathit{A}}_4}\\
	+\underbrace{2\frac{\Ca}{\lambda_{\min}}\sum_{l=0}^{t}\prod_{j=l}^{t}\l(1-\frac{\Ca}{j+\Cb s\log(2d/s)}\r)\frac{1}{l+\Cb s\log(2d/s)}\boldsymbol{\varXi}_2^{(l)}}_{\boldsymbol{\mathit{A}}_5}\\
	+\underbrace{\frac{\Ca^2}{\lambda_{\min}^2}\sum_{l=0}^{t}\prod_{j=l}^{t}\l(1-\frac{\Ca}{j+\Cb s\log(2d/s)}\r)\frac{1}{(l+\Cb s\log(2d/s))^2}\boldsymbol{\varXi}_3^{(l)}}_{\boldsymbol{\mathit{A}}_6}.
\end{multline*}
We then bound each of the terms respectively. First, consider $\A_1$. By Lemma~\ref{teclem:prod upper bound}, we have
\begin{align*}
	\prod_{j=0}^{t}\l(1-\frac{\Ca}{j+\Cb s\log(2d/s)}\r)\leq \l(\frac{\Cb s\log(2d/s)}{t+1+\Cb s\log(2d/s)}\r)^{\Ca},
\end{align*}
which implies $\boldsymbol{\mathit{A}}_1\leq\frac{\Cb s\log(2d/s)}{t+\Cb s\log(2d/s)}\|[\Bbeta_{0}-\Bbeta^*]_{\calS_0}\|^2$. Then consider term $\boldsymbol{\mathit{A}}_2$. By Lemma~\ref{teclem:prod upper bound}, we have $\prod_{j=l}^{t}\l(1-\frac{\Ca}{j+\Cb s\log(2d/s)}\r)\l(\frac{1}{l+\Cb s\log(2d/s)}\r)^2\leq\frac{(l+\Cb s\log(2d/s))^{\Ca -2}}{(t+1+\Cb s\log(2d/s))^{\Ca}}$ and Lemma~\ref{teclem:sum upper bound} further implies $\sum_{l=0}^{t}\prod_{j=l}^{t}\l(1-\frac{\Ca}{j+\Cb s\log(2d/s)}\r)\frac{1}{(l+\Cb s\log(2d/s))^2}\leq\frac{2/\Ca}{t+1+\Cb s\log(2d/s)}$. Thus, we have
\begin{align*}
	\boldsymbol{\mathit{A}}_2\leq C\frac{\lambda_{\max}}{\lambda_{\min}^2}\cdot\frac{\Ca s\log(2d/s)}{t+1+\Cb s\log(2d/s)}\sigma^2.
\end{align*}
As for term $\boldsymbol{\mathit{A}}_3$, similarly, we have
\begin{align*}
	\boldsymbol{\mathit{A}}_3\leq C\frac{(\OffDiagS)^2}{\lambda_{\min}^2}\cdot\|[\Bbeta^*]_{\calS^*\setminus\calS_0}\|^2.
\end{align*}
We then consider $\boldsymbol{\mathit{A}}_4$. We notice that
\begin{multline*}
	\l\|\prod_{j=l}^{t}\l(1-\frac{\Ca}{j+\Cb s\log(2d/s)}\r)\frac{1}{l+\Cb s\log(2d/s)}\boldsymbol{\varXi}_1^{(l)}\bigg|\calF_l \r\|_{\Psi_1}\\
	\leq \frac{(l+\Cb s\log(2d/s))^{\Ca-1}}{(t+1+\Cb s\log(2d/s))^{\Ca}}\l(\lambda_{\max}\|[\Bbeta_{l}-\Bbeta^*]_{\calS_0}\|^2+\frac{(\OffDiagS)^2}{\lambda_{\max}}\|[\Bbeta^*]_{\calS^*\setminus\calS_0}\|^2 \r).
\end{multline*}
Then with Lemma~\ref{teclem:azuma}, under the event $\cup_{l=0}^t \calE_{l}$, we have
\begin{multline*}
	\PP\l(\l| \sum_{l=0}^t \prod_{j=l}^{t}\l(1-\frac{\Ca}{j+\Cb s\log(2d/s)}\r)\frac{1}{l+\Cb s\log(2d/s)}\boldsymbol{\varXi}_1^{(l)}\r|  \geq c_1\frac{C^*}{\Ca}\frac{s\log(2d/s)+\delta_{t+1}}{t+1+\Cb s}\sigma^2\r.\\\l.+c_1\frac{(\OffDiagS)^2}{\Ca\lambda_{\max}}\|[\Bbeta^*]_{\calS^*\setminus\calS_0}\|^2\r)
	\leq\exp\l(-c_2\frac{\Cb s\log(2d/s)+t}{\Ca}\frac{\lambda_{\min}^2}{\lambda_{\max}^2}\r),
\end{multline*}
where $c_1<0.1$ is some sufficiently small constant and it further implies
\begin{align*}
	|\boldsymbol{\mathit{A}}_4 |\leq c_1C^*\frac{s\log(2d/s)+\delta_{t}}{t+1+\Cb s\log(2d/s)}\frac{\sigma^2}{\lambda_{\min}}+c_1\l(\frac{\OffDiagS}{\lambda_{\min}}\r)^2\|[\Bbeta^*]_{\calS\setminus\calS_0}\|^2.
\end{align*}
Regarding term $\boldsymbol{\mathit{A}}_5$, we have
\begin{multline*}
	\l\|\prod_{j=l}^{t}\l(1-\frac{\Ca}{j+\Cb s\log(2d/s)}\r)\frac{1}{l+\Cb s\log(2d/s)}\boldsymbol{\varXi}_2^{(l)}  \big|\calF_{l}\r\|_{\Psi_1}\\
	\leq C\sqrt{\lambda_{\max}}\sigma\frac{(l+\Cb s\log(2d/s))^{\Ca-1}}{(t+1+\Cb s\log(2d/s))^{\Ca}}\cdot\|[\Bbeta_l-\Bbeta^*]_{\calS_0}\|.
\end{multline*}
Then, under event $\cup_{l=0}^t\{\calE_{l}\}$ and by Lemma~\ref{teclem:azuma}, we have
\begin{multline*}
	\PP\l(\l|\sum_{l=0}^{t}\prod_{j=l}^{t}\l(1-\frac{\Ca}{j+\Cb s\log(2d/s)}\r)\frac{1}{l+\Cb s\log(2d/s)}\boldsymbol{\varXi}_2^{(l)}\r| \r.\\ \l.\geq c_1\frac{C^*}{\Ca}\frac{(s\log(2d/s)+\delta_{t+1})\sigma^2}{t+1+\Cb s\log(2d/s)}+c_1\frac{1}{\Ca}\frac{(\OffDiagS)^2}{\lambda_{\min}}\|[\Bbeta^*]_{\calS^*\setminus\calS_0}\|^2\r)\\
	 \leq\exp\l(-c_2\min\l\{ (s\log(2d/s)+\delta_{t+1})\frac{C^*}{\Ca}\frac{\lambda_{\min}}{\lambda_{\max}},\sqrt{s\log(2d/s)+\delta_{t+1}}\sqrt{\Cb s\log(2d/s)+t}\frac{\sqrt{C^*}}{\Ca}\frac{\sqrt{\lambda_{\min}}}{\sqrt{\lambda_{\max}}}\r\}\r),
\end{multline*}
under which, we have,
\begin{align*}
	|\boldsymbol{\mathit{A}}_5|\leq c_1\frac{C^*}{\lambda_{\min}}\frac{s\log(2d/s)+\delta_{t+1}}{t+1+\Cb s\log(2d/s)}\sigma^2+c_1\l(\frac{\OffDiagS}{\lambda_{\min}}\r)^2\|[\Bbeta^*]_{\calS^*\setminus\calS_0}\|^2.
\end{align*}
We then only need to consider the term $\boldsymbol{\mathit{A}}_6$, and under $\calE_l^X$, it has
\begin{multline*}
	\l\|\prod_{j=l}^{t}\l(1-\frac{\Ca}{j+\Cb s\log(2d/s)}\r)\frac{1}{(l+\Cb s\log(2d/s))^2}\boldsymbol{\varXi}_3^{(l)}\bigg|\calF_{l},\calE_{l}^X\r\|_{\Psi_1}\\
	\leq C\frac{(l+\Cb s\log(2d/s))^{\Ca-2}}{(t+1+\Cb s\log(2d/s))^{\Ca}}(s\log(2d/s)+\delta_{l})\lambda_{\max}\sigma^2.
\end{multline*}
Thus, by Lemma~\ref{teclem:azuma}, under $\cup_{l=0}^t\calE_{l}^X$, it has
\begin{multline*}
	\PP\l(\l|\prod_{j=l}^{t}\l(1-\frac{\Ca}{j+\Cb s\log(2d/s)}\r)\frac{1}{(l+\Cb s\log(2d/s))^2}\boldsymbol{\varXi}_3^{(l)}  \r|\geq c_1C^*\frac{\lambda_{\min}}{\Ca^2}\frac{(s\log(2d/s)+\delta_{t+1})\sigma^2}{t+1+\Cb s\log(2d/s)}\r)\\
	\leq\exp\l(-c_2\l(s\log(2d/s)+\frac{t}{\Cb}\r)\min\l\{ \frac{\lambda_{\min}^2}{\lambda_{\max}^2}\frac{\Cb}{\Ca}\l(\frac{C^*}{\Ca}\r)^2,\frac{\lambda_{\min}}{\lambda_{\max}}\frac{C^*\Cb}{\Ca^2}\r\}\r),
\end{multline*}
which implies
\begin{align*}
	|\boldsymbol{\mathit{A}}_6|\leq c_1C^*\frac{1}{\lambda_{\min}}\frac{s\log(2d/s)+\delta_{t+1}}{t+1+\Cb s\log(2d/s)}\sigma^2.
\end{align*}
Thus, overall, under the events $\cup_{l=0}^t\{\calE_l,\calE_{l}^X\}$ and by the value of $\Cb\geq\Ca$ and $C^*$, with probability over $1-\exp(-c(\Cb s\log(2d/s)+ t)/\Ca)-\exp(-c(s\log(2d/s)+\delta_{t+1}))$, we have
\begin{align*}
	\|[\Bbeta_{t+1}-\Bbeta^*]_{\calS_0}\|^2\leq C^*\frac{1}{\lambda_{\min}}\frac{s\log(2d/s)+\delta_{t+1}}{t+1+\Cb s\log(2d/s)}\sigma^2+c\frac{(\OffDiagS)^2}{\lambda_{\min}^2}\cdot\|[\Bbeta^*]_{\calS^*\setminus\calS_0}\|^2,
\end{align*}
which completes the proof.
\subsection{Proof of Lemma~\ref{lem:sparse-gradient}}
The vector $\G$ can be written as the sum of two terms
\begin{align}
	\G(\{\Bbeta_t,\X_t,Y_t\}_{t\in\calT})= \sum_{t\in\calT} \X_t\X_t^{\top}\l(\Bbeta_t-\Bbeta^*\r)-\sum_{t\in\calT}\xi_t\cdot\X_t.
	\label{eq1:sparse-gradient}
\end{align}
We first analyze the second term, which contains the noise. It's worth noting that $\xi_t\X_t$ is martingale difference with respect to $\calF_t$, and it has $\|\xi_t\X_t|\calF_t\|_{\Psi_1}\leq \sigma\sqrt{\lambda_{\max}}$. Lemma~\ref{teclem:vec max norm} proves that for any $\delta>0$, with probability over $1-d\exp(-c\min\{\delta,\sqrt{\delta|\calT|}\})$, we have
\begin{align}
	\l\|\sum_{t\in\calT}\xi_t\X_t \r\|_{\infty}\leq C\sigma\sqrt{\lambda_{\max}}\sqrt{\delta|\calT|}.
	\label{eq2}
\end{align}
It is also noteworthy that there possibly exists dependence between $\X_t$ and $\calS_{0}$. By Lemma~\ref{teclem:Bernstein partial length}, for any $\calS\subseteq[d]$ with size $|\calS|\leq s$, we have
\begin{equation}
	\begin{split}
		&\PP\l(\l\|\calH_{\calS}\l(\sum_{t\in\calT}\xi_t\X_t\r) \r\|\geq C\sigma\sqrt{\lambda_{\max}}\sqrt{|\calT|}\sqrt{s\log(2d/s)+\delta}\r)\\
		&~~~~~~~~~~~~~~~~~~~~~~~~~~~~~~~~~~~~~~~~~~~~~~~~~~~~~~~\leq\exp\l(-c\min\l\{\delta, \sqrt{|\calT|/(s\log(2d/s))}\sqrt{\delta}\r\}\r).
	\end{split}
	\label{eq3}
\end{equation}
~\\
~\\
Then, it only remains to analyze the first term of \eqref{eq1:sparse-gradient}, namely, $ \sum_{t\in\calT} \X_t\X_t^{\top}\l(\Bbeta_t-\Bbeta^*\r)$. Specifically, we aim to provide an upper bound for entries located on $\calS_{\calT}^c\cap\calS^{*c}$, which represent entries that are not included by $\calS^*$ and to provide a lower bound for entries supported on $\calS^*\setminus\calS_{\calT}$. Firstly, for $i\in \calS_{\calT}^c\cap\calS^{*c}$, we decompose it as follows
\begin{multline*}
	[\sum_{t\in\calT} \X_t\X_t^{\top}\l(\Bbeta_t-\Bbeta^*\r)]_{i}=\sum_{t\in\calT} [\X_t]_{i}\X_t^{\top}\l(\Bbeta_t-\Bbeta^*\r)\\
	=\underbrace{\sum_{t\in\calT}\EE\l\{ [\X_t]_{i}\X_t^{\top}\l(\Bbeta_t-\Bbeta^*\r)\big|\calF_t\r\}}_{\D_1}+\underbrace{\sum_{t\in\calT}\l\{ [\X_t]_{i}\X_t^{\top}\l(\Bbeta_t-\Bbeta^*\r)-\EE\l\{ [\X_t]_{i}\X_t^{\top}\l(\Bbeta_t-\Bbeta^*\r)\big|\calF_t\r\}\r\}}_{\D_2}.
\end{multline*}
We first bound term $\D_1$. Recall the definition of $\OffDiagO$. Under the event $\calE_{\{\calS_{\calT},V_{\calT},W_{\calT}\}}$, we have
\begin{multline*}
	\l|\sum_{t\in\calT}\EE\l\{ [\X_t]_{i}\X_t^{\top}\l(\Bbeta_t-\Bbeta^*\r)\big|\calF_t\r\}\r|\\
	\leq \sum_{t\in\calT}\l\|\EE\l\{ [\X_t]_{i}[\X_t]_{\calS^*\cup\calS_{\calT}}\r\}\big|\calF_t\r\| \cdot\| \Bbeta_t-\Bbeta^*\|
	\leq \OffDiagO \sum_{t\in\calT}\| \Bbeta_t-\Bbeta^*\|\leq \OffDiagO\sum_{t\in\calT}V_{t}.
\end{multline*}
We then consider term $\D_2$. It is worth noting that $\D_2$ is sum of $|\calT|$ martingale differences with respect to $\{\calF_t\}$ and under event $\calE_{\{\calS_{\calT},D_\calT,U_\calT\}}$, each single term of $\D_2$ satisfies
\begin{align*}
	\l\|\l\{ [\X_t]_{i}\X_t^{\top}\l(\Bbeta_t-\Bbeta^*\r)-\EE\l\{ [\X_t]_{i}\X_t^{\top}\l(\Bbeta_t-\Bbeta^*\r)\big|\calF_t\r\}\r\}\big|\calF_t \r\|_{\Psi_1}\leq\OffDiagO V_{t}.
\end{align*}
Thus, by Lemma~\ref{teclem:azuma}, we have
\begin{multline*}
	\PP\l( \l| \sum_{t\in\calT}\l\{ [\X_t]_{i}\X_t^{\top}\l(\Bbeta_t-\Bbeta^*\r)-\EE\l\{ [\X_t]_{i}\X_t^{\top}\l(\Bbeta_t-\Bbeta^*\r)\big|\calF_t\r\}\r\}\r|\geq u\r)\\
	\leq\exp\l(-\min\l\{\frac{u^2}{(\OffDiagO)^2\sum_{t\in\calT}V_t^2},\frac{u}{2\OffDiagO\max_t V_t}\r\}\r).
\end{multline*}
Putting the above two equations together and taking the uniform for $\calS_{\calT}^c\cap\calS^{*c}$, we have
\begin{multline*}
	\PP\l(\l\|[\sum_{t\in\calT} \X_t\X_t^{\top}\l(\Bbeta_t-\Bbeta^*\r)]_{\calS^{*c}\cap\calS_{\calT}^c}\r\|_{\infty} \geq \OffDiagO\sum_{t\in\calT} V_t+u\r)\\
	\leq d\exp\l(-\min\l\{\frac{u^2}{(\OffDiagO)^2\sum_{t\in\calT}V_t^2},\frac{u}{2\OffDiagO\max_t V_t}\r\}\r).
\end{multline*}
Thus, in all, together with Equation~\eqref{eq2}, we have the entrywise norm for the vector $	\G(\{\Bbeta_t,\X_t,Y_t\}_{t\in\calT})$,
\begin{multline*}
	\PP\l( \l\|[\G(\{\Bbeta_t,\X_t,Y_t\}_{t\in\calT})]_{\calS^{*c}\cap \calS_{\calT}^c} \r\|_{\infty}\geq \OffDiagO\sum_{t\in\calT} V_t+u+C\sigma\sqrt{\lambda_{\max}} \sqrt{\delta|\calT|}\r)\\
	 \leq d\exp\l(-\min\l\{\frac{u^2}{(\OffDiagO)^2\sum_{t\in\calT}V_t^2},\frac{u}{2\OffDiagO\max_t V_t}\r\}\r)+d\exp(-c\min\{\delta,\sqrt{\delta|\calT|}\}).
\end{multline*}
We then consider the lower bound of the vector supported on $\calS^*\setminus\calS_{\calT}$. We still decompose it into the conditional expectation term and the martingale difference term,
\begin{multline*}
	[\sum_{t\in\calT} \X_t\X_t^{\top}\l(\Bbeta_t-\Bbeta^*\r)]_{\calS^*\setminus\calS_{\calT}}
	=\underbrace{\sum_{t\in\calT}\EE\l\{ [\X_t]_{\calS^*\setminus\calS_{\calT}}\X_t^{\top}\l(\Bbeta_t-\Bbeta^*\r)\big|\calF_t\r\}}_{\E_1} \\+ \underbrace{\sum_{t\in\calT} \l\{[\X_t]_{\calS^*\setminus\calS_{\calT}}\X_t^{\top}\l(\Bbeta_t-\Bbeta^*\r) - \EE\l\{ [\X_t]_{\calS^*\setminus\calS_{\calT}}\X_t^{\top}\l(\Bbeta_t-\Bbeta^*\r)\big|\calF_t\r\}\r\}}_{\E_2}.
\end{multline*}
We first consider the term $\E_1$. Notice that $\Bbeta_{t}-\Bbeta^*$ is supported on $\calS_{\calT}\cup\calS^*$ and we further decompose it as follows,
\begin{multline*}
	\l\| \sum_{t\in\calT}\EE\l\{ [\X_t]_{\calS^*\setminus\calS_{\calT}}\X_t^{\top}\l(\Bbeta_t-\Bbeta^*\r)\big|\calF_t\r\} \r\|=\l\| \sum_{t\in\calT}\EE\l\{ [\X_t]_{\calS^*\setminus\calS_{\calT}} [\X_t]_{\calS^*\cup\calS_{\calT}}^{\top}\big|\calF_t\r\}\l(\Bbeta_t-\Bbeta^*\r) \r\|\\
	\geq \l\| \sum_{t\in\calT}\EE\l\{ [\X_t]_{\calS^*\setminus\calS_{\calT}} [\X_t]_{ \calS^*\setminus\calS_{\calT}}^{\top}\big|\calF_t\r\} [\Bbeta^*]_{ \calS^*\setminus\calS_{\calT}}\r\|-\l\| \sum_{t\in\calT}\EE\l\{ [\X_t]_{\calS^*\setminus\calS_{\calT}} [\X_t]_{ \calS_{\calT}}^{\top}\big|\calF_t\r\}[\Bbeta_{t}-\Bbeta^*]_{ \calS_{\calT}} \r\|.
\end{multline*}
By Assumption~\ref{assm:cov-sparse} and Lemma~\ref{teclem:min eigenvalue}, we have
\begin{align*}
	 \l\| \sum_{t\in\calT}\EE\l\{ [\X_t]_{\calS^*\setminus\calS_{\calT}} [\X_t]_{ \calS^*\setminus\calS_{\calT}}^{\top}\big|\calF_t\r\} [\Bbeta^*]_{ \calS^*\setminus\calS_{\calT}}\r\|\geq |\calT|\cdot\lambda_{\min}\| [\Bbeta^*]_{\calS^*\setminus\calS_{\calT}}\|,
\end{align*}
and on the other hand, under the event $\calE_{\{\calS_{\calT},D_\calT,U_\calT\}}$, we have
\begin{align*}
\l\| \sum_{t\in\calT}\EE\l\{ [\X_t]_{\calS^*\setminus\calS_{\calT}} [\X_t]_{ \calS_{\calT}}^{\top}\big|\calF_t\r\}[\Bbeta_{t}-\Bbeta^*]_{ \calS_{\calT}} \r\|\leq \OffDiagS\sum_{t\in\calT} W_t.
\end{align*}
Thus in all, we have the lower bound for $\|\E_1\|$,
\begin{align*}
	\|\E_1\|\geq |\calT|\cdot\lambda_{\min}\| [\Bbeta^*]_{\calS^*\setminus\calS_{\calT}}\|-\OffDiagS\sum_{t\in\calT} W_t.
\end{align*}
Then, we are going to find the upper bound for $\E_2$'s norm. We notice that
\begin{multline*}
	\l\|\l\{[\X_t]_{\calS^*\setminus\calS_{\calT}}\X_t^{\top}\l(\Bbeta_t-\Bbeta^*\r) - \EE\l\{ [\X_t]_{\calS^*\setminus\calS_{\calT}}\X_t^{\top}\l(\Bbeta_t-\Bbeta^*\r)\big|\calF_t\r\}\r\}\big|\calF_t \r\|_{\Psi_1}\\
	\leq \OffDiagS W_t+\lambda_{\max}\|[\Bbeta^*]_{\calS^*\setminus\calS_{\calT}}\|.
\end{multline*}
Hence, by Lemma~\ref{teclem:Bernstein Martingale}, we have
\begin{multline*}
	\PP\l( \l\|\sum_{t\in\calT} \l\{[\X_t]_{\calS^*\setminus\calS_{\calT}}\X_t^{\top}\l(\Bbeta_t-\Bbeta^*\r) - \EE\l\{ [\X_t]_{\calS^*\setminus\calS_{\calT}}\X_t^{\top}\l(\Bbeta_t-\Bbeta^*\r)\big|\calF_t\r\}\r\}\r\|\geq 2u  \r)\leq\exp\l(Cs\r)\\
	\times\exp\l(-\min\l\{\frac{u^2}{(\OffDiagS)^2\sum_{t\in\calT} W_t^2+|\calT|\lambda_{\max}^2\|[\Bbeta^*]_{\calS^*\setminus\calS_{\calT}}\|^2},\frac{u}{\OffDiagS\max_{t\in\calT} W_t+\lambda_{\max}\|[\Bbeta^*]_{\calS^*\setminus\calS_{\calT}}\|}\r\}\r).
\end{multline*}
Thus, combining these inequalities, we have
\begin{multline*}
	\PP\l( \l\|[\sum_{t\in\calT} \X_t\X_t^{\top}\l(\Bbeta_t-\Bbeta^*\r)]_{\calS^*\setminus\calS_{\calT}} \r\| \geq |\calT|\cdot\lambda_{\min}\| [\Bbeta^*]_{\calS^*\setminus\calS_{\calT}}\|-\OffDiagS\sum_{t\in\calT} W_t -u \r)\geq 1-\\
    \exp\l(Cs-\min\l\{\frac{u^2}{(\OffDiagS)^2\sum_{t\in\calT} W_t^2+|\calT|\lambda_{\max}^2\|[\Bbeta^*]_{\calS^*\setminus\calS_\calT}\|^2},\frac{u}{\OffDiagS\max_{t\in\calT} W_t+\lambda_{\max}\|[\Bbeta^*]_{\calS^*\setminus\calS_\calT}\|}\r\}\r).
\end{multline*}
Together with Equation~\ref{eq3}, we have
\begin{multline*}
	\PP\l( \l\|[\G(\{\Bbeta_t,\X_t,Y_t\}_{t\in\calT})]_{\calS^*\setminus\calS_{\calT}} \r\| \geq |\calT|\lambda_{\min}\| [\Bbeta^*]_{\calS^*\setminus\calS_{\calT}}\|-\OffDiagS\sum_{t\in\calT} W_t\r. \\ \l.-u-C\sigma\sqrt{\lambda_{\max}}\sqrt{|\calT|}\sqrt{|\calS^*\setminus\calS_{\calT}|\log(2d/|\calS^*\setminus\calS_{\calT}|)+\delta} \r)
	\geq 1-\exp(Cs)\\
    \times\exp\l(-\min\l\{\frac{u^2}{(\OffDiagS)^2\sum_{t\in\calT} W_t^2+|\calT|\lambda_{\max}^2\|[\Bbeta^*]_{\calS^*\setminus\calS_\calT}\|^2},\frac{u}{\OffDiagS\max_{t\in\calT} W_t+\lambda_{\max}\|[\Bbeta^*]_{\calS^*\setminus\calS_\calT}\|}\r\}\r)\\-\exp\l(-c\min\l\{\delta, \sqrt{\delta}\sqrt{|\calT|/ (|\calS^*\setminus\calS_{\calT}|\log(2d/|\calS^*\setminus\calS_{\calT}|))}\r\}\r),
\end{multline*}
which completes the proof.

\subsection{Proof of Proposition~\ref{prop:sparse-select}}
We prove Proposition~\ref{prop:sparse-select} by induction. Lemma~\ref{lem:FixedSupport} proves the iterates' dynamics of $t\in[0,\tau_1-1]$. Here, we only prove the iterates in the interval $[\tau_1,\tau_2-1]$ and for general $[\tau_l,\tau_l-1]$, it can be proved in the same way only with more notations. During  $t\in[\tau_1,\tau_2-1]$, the update is
$$\Bbeta_{t+1}=\calH_{\calS_1}(\Bbeta_{t}-\eta_t\cdot\g_t),\quad t\in[\tau_1,\tau_2-1].$$
During this procedure, we have
$$\|[\Bbeta_{t}-\Bbeta^*]_{\calS^*\setminus\calS_1} \|=\|[\Bbeta_{0}-\Bbeta^*]_{\calS^*\setminus\calS_1} \|.$$
For entries on $\calS_1\setminus\calS^*$, using a similar analysis to the proof of Lemma~\ref{lem:FixedSupport}, we have
\begin{multline*}
	\|[\Bbeta_{t+1}-\Bbeta^* ]_{ \calS_1}\|^2
	\leq \l(1-\eta_{t}\lambda_{\min}\r)\l\|\l[\Bbeta_{t}-\Bbeta^* \r]_{ \calS_1}\r\|^2+2\eta_{t}^2\lambda_{\max}s\sigma^2
	\\
	+\eta_{t}\|[\Bbeta^*]_{\calS^*\setminus\calS_1}\|^2\cdot\frac{(\OffDiagS)^2}{\lambda_{\min}}-2\eta_{t}\boldsymbol{\varXi}_1^{(t)}+2\eta_{t}\boldsymbol{\varXi}_2^{(t)}+\eta_{t}^2\boldsymbol{\varXi}_3^{(t)},
\end{multline*}
where $\boldsymbol{\varXi}_1^{(t)},\boldsymbol{\varXi}_2^{(t)},\boldsymbol{\varXi}_3^{(t)}$ are inherited from the the proof of Lemma~\ref{lem:FixedSupport} with the substitution of $\calS_1$ in $\calS_0$ accordingly. We accumulate the above equation from $t=\tau_1$ and it arrives at
\begin{equation}
    \begin{split}
        &\|[\Bbeta_{t+1}-\Bbeta^* ]_{ \calS_1}\|^2
	\leq  \prod_{j=\tau_1}^{t}\l(1-\frac{\Ca}{j+\Cb s\log(2d/s)}\r)\l\|\l[\Bbeta_{\tau_1}-\Bbeta^* \r]_{ \calS_1}\r\|^2\\
	&~~~~~+2s\sigma^2\Ca^2\frac{\lambda_{\max}}{\lambda_{\min}^2}\sum_{l=\tau_1}^{t}\prod_{j=l}^{t}\l(1-\frac{\Ca}{j+\Cb s\log(2d/s)}\r)\l(\frac{1}{l+\Cb s\log(2d/s)}\r)^2\\
	&~~~~~+\Ca\sum_{l=\tau_1}^{t}\prod_{j=l}^{t}\l(1-\frac{\Ca}{j+\Cb s\log(2d/s)}\r)\frac{1}{l+\Cb s\log(2d/s)}\|[\Bbeta^*]_{\calS^*\setminus\calS_1}\|^2\cdot\frac{(\OffDiagS)^2}{\lambda_{\min}^2}\\
	&~~~~~-2\frac{\Ca}{\lambda_{\min}}\sum_{l=\tau_1}^{t}\prod_{j=l}^{t}\l(1-\frac{\Ca}{j+\Cb s\log(2d/s)}\r)\frac{1}{l+\Cb s\log(2d/s)}\boldsymbol{\varXi}_1^{(l)}\\
	&~~~~~+2\frac{\Ca}{\lambda_{\min}}\sum_{l=\tau_1}^{t}\prod_{j=l}^{t}\l(1-\frac{\Ca}{j+\Cb s\log(2d/s)}\r)\frac{1}{l+\Cb s\log(2d/s)}\boldsymbol{\varXi}_2^{(l)}\\
	&~~~~~+\frac{\Ca^2}{\lambda_{\min}^2}\sum_{l=\tau_1}^{t}\prod_{j=l}^{t}\l(1-\frac{\Ca}{j+\Cb s\log(2d/s)}\r)\frac{1}{(l+\Cb s\log(2d/s))^2}\boldsymbol{\varXi}_3^{(l)}.
    \end{split}
    \label{eq1:prop:sparse-select}
\end{equation}
It is worth noting that at $\tau_1$, it has
$$\l\|\l[\Bbeta_{\tau_1}-\Bbeta^* \r]_{ \calS_1}\r\|^2=\l\|\l[\Bbeta_{\tau_1}-\Bbeta^* \r]_{ \calS_0}\r\|^2+\big\|[\Bbeta^*]_{\calS_1\setminus\calS_0} \big\|^2.$$
We then insert the above equation of $\l\|\l[\Bbeta_{\tau_1}-\Bbeta^* \r]_{ \calS_0}\r\|^2$ into the Equation~\ref{eq1:prop:sparse-select} and accumulate the update until $t=0$, which yields
\begin{multline*}
	\|[\Bbeta_{t+1}-\Bbeta^* ]_{ \calS_1}\|^2
	\leq  \underbrace{\prod_{j=0}^{t}\l(1-\frac{\Ca}{j+\Cb s\log(2d/s)}\r)\l\|\l[\Bbeta_{0}-\Bbeta^* \r]_{ \calS_0}\r\|^2}_{\boldsymbol{\mathit{B}}_1}\\
	+\underbrace{\prod_{j=\tau_1}^{t}\l(1-\frac{\Ca}{j+\Cb s\log(2d/s)}\r)\l\|\l[\Bbeta^* \r]_{ \calS_1\setminus\calS_{0}}\r\|^2}_{\boldsymbol{\mathit{B}}_2}\\
	+\underbrace{2s\sigma^2\Ca^2\frac{\lambda_{\max}}{\lambda_{\min}^2}\sum_{l=0}^{t}\prod_{j=l}^{t}\l(1-\frac{\Ca}{j+\Cb s\log(2d/s)}\r)\l(\frac{1}{l+\Cb s\log(2d/s)}\r)^2}_{\boldsymbol{\mathit{B}}_3}\\
	+\underbrace{\Ca\sum_{l=0}^{t}\prod_{j=l}^{t}\l(1-\frac{\Ca}{j+\Cb s\log(2d/s)}\r)\frac{1}{l+\Cb s\log(2d/s)}\|[\Bbeta^*]_{\calS^*\setminus\calS_l}\|^2\cdot\frac{(\OffDiagS)^2}{\lambda_{\min}^2}}_{\boldsymbol{\mathit{B}}_4}\\
	-\underbrace{2\frac{\Ca}{\lambda_{\min}}\sum_{l=0}^{t}\prod_{j=l}^{t}\l(1-\frac{\Ca}{j+\Cb s\log(2d/s)}\r)\frac{1}{l+\Cb s\log(2d/s)}\boldsymbol{\varXi}_1^{(l)}}_{\boldsymbol{\mathit{B}}_5}\\
	+\underbrace{2\frac{\Ca}{\lambda_{\min}}\sum_{l=0}^{t}\prod_{j=l}^{t}\l(1-\frac{\Ca}{j+\Cb s\log(2d/s)}\r)\frac{1}{l+\Cb s\log(2d/s)}\boldsymbol{\varXi}_2^{(l)}}_{\boldsymbol{\mathit{B}}_6}\\
	+\underbrace{\frac{\Ca^2}{\lambda_{\min}^2}\sum_{l=0}^{t}\prod_{j=l}^{t}\l(1-\frac{\Ca}{j+\Cb s\log(2d/s)}\r)\frac{1}{(l+\Cb s\log(2d/s))^2}\boldsymbol{\varXi}_3^{(l)}}_{\boldsymbol{\mathit{B}}_7}.
\end{multline*}
Firstly, we bound the term $\boldsymbol{\mathit{B}}_1$ via Lemma~\ref{teclem:prod upper bound},
\begin{align*}
	\boldsymbol{\mathit{B}}_1\leq \frac{\Cb s\log(2d/s)}{t+1+\Cb s\log(2d/s)}\|[\Bbeta_{0}-\Bbeta^*]_{\calS_{0}}\|^2\leq\frac{\Cb s\log(2d/s)}{t+1+\Cb s\log(2d/s)}\frac{\sigma^2}{\lambda_{\max}}.
\end{align*}
Regarding term $\boldsymbol{\mathit{B}}_2$, we still apply Lemma~\ref{teclem:prod upper bound} and it has
\begin{align*}
	\boldsymbol{\mathit{B}}_2\leq \l(\frac{\tau_1+\Cb s\log(2d/s)}{t+\Cb s\log(2d/s)}\r)^{\Ca}\|[\Bbeta^*]_{\calS_1\setminus\calS_{0}}\|^2.
\end{align*}
As for term $\B_3$, it has
\begin{multline*}
	\prod_{j=l}^{t}\l(1-\frac{\Ca}{j+\Cb s\log(2d/s)}\r)\l(\frac{1}{l+\Cb s\log(2d/s)}\r)^2\sigma^2\\
	\leq \l(\frac{1}{t+1+\Cb s\log(2d/s)}\r)^{\Ca}\l(l+\Cb s\log(2d/s)\r)^{\Ca-2}\sigma^2.
\end{multline*}
Then, by Lemma~\ref{teclem:sum upper bound}, it can be bounded with
\begin{align*}
	\boldsymbol{\mathit{B}}_3\leq \Ca\frac{\lambda_{\max}}{\lambda_{\min}^2}\frac{2s}{t+1+\Cb s\log(2d/s)}\sigma^2.
\end{align*}
In a similar fashion, we have the following bound for $\boldsymbol{\mathit{B}}_4$,
\begin{align*}
	\boldsymbol{\mathit{B}}_4\leq\l(\frac{\OffDiagS}{\lambda_{\min}}\r)^2\cdot\|[\Bbeta^*]_{\calS^*\setminus\calS_1} \|^2 + \l(\frac{\tau_1+\Cb s\log(2d/s)}{t+\Cb s\log(2d/s)}\r)^{\Ca}\l(\frac{\OffDiagS}{\lambda_{\min}}\r)^2\cdot\|[\Bbeta^*]_{\calS_1\setminus\calS_0} \|^2.
\end{align*}
We then consider the term $\boldsymbol{\mathit{B}}_5$. Specifically, the conditional Orlicz norm can be bounded with
\begin{align*}
	\|\boldsymbol{\varXi}_1^{(l)}|\calF_l\|_{\Psi_1}&\leq \lambda_{\max}\|[\Bbeta_{l}-\Bbeta^*]_{\calS_l}\|^2+\OffDiagS\cdot\|[\Bbeta_{l}-\Bbeta^*]_{\calS_l}\|\cdot\|[\Bbeta^*]_{\calS^*\setminus\calS_l}\|\\
	&\leq 2\lambda_{\max}\|[\Bbeta_{l}-\Bbeta^*]_{\calS_l}\|^2+\frac{(\OffDiagS)^2}{\lambda_{\max}}\|[\Bbeta^*]_{\calS^*\setminus\calS_l}\|^2.
\end{align*}
By Lemma~\ref{teclem:azuma}, under event $\cup_{l=0}^t\{\calE_{l}\}$, we have
\begin{multline*}
		\PP\l(\l| \sum_{l=0}^t \prod_{j=l}^{t}\l(1-\frac{\Ca}{j+\Cb s\log(2d/s)}\r)\frac{1}{l+\Cb s\log(2d/s)}\boldsymbol{\varXi}_1^{(l)}\r|\geq c_1\frac{C^*}{\Ca}\frac{(s\log(2d/s)+\delta_{t})\sigma^2}{t+1+\Cb s\log(2d/s)} \r.\\\l. +\frac{c_1}{\Ca}\frac{(\OffDiagS)^2}{\lambda_{\min}}\|[\Bbeta^*]_{\calS^*\setminus\calS_1}\|^2+\lambda_{\min}\frac{c_1}{\Ca}\l(\frac{\tau_1+\Cb s\log(2d/s)}{t+1+\Cb s\log(2d/s)}\r)^{\Ca-2} \|[\Bbeta^*]_{\calS_1\setminus\calS_{0}}\|^2\r)\\
	\leq\exp\l(-c_2\frac{\Cb s\log(2d/s)+t}{\Ca}\frac{\lambda_{\min}^2}{\lambda_{\max}^2}\r),
\end{multline*}
under which we have \begin{multline*}
	|\boldsymbol{\mathit{B}}_5|\leq c_1\frac{C^*}{\lambda_{\min}}\frac{(s\log(2d/s)+\delta_{t})\sigma^2}{t+1+\Cb s\log(2d/s)}+c_1\l(\frac{\OffDiagS}{\lambda_{\min}}\r)^2\|[\Bbeta^*]_{\calS^*\setminus\calS_1}\|^2\\+c_1\l(\frac{\tau_1+\Cb s\log(2d/s)}{t+1+\Cb s\log(2d/s)}\r)^{\Ca-2}\cdot\|[\Bbeta^*]_{\calS_1\setminus\calS_{0}}\|^2.
\end{multline*}
As for the term $\boldsymbol{\mathit{B}}_6$, we have
\begin{align*}
	\|\boldsymbol{\varXi}_2^{(t)}|\calF_t\|_{\Psi_1}\leq\sqrt{\lambda_{\max}}\sigma\|[\Bbeta_{t}-\Bbeta^*]_{\calS_t}\|,
\end{align*}
which leads to
\begin{multline*}
	\l\|\prod_{j=l}^{t}\l(1-\frac{\Ca}{j+\Cb s\log(2d/s)}\r)\frac{1}{l+\Cb s\log(2d/s)}\boldsymbol{\varXi}_2^{(l)}  \big|\calF_{l}\r\|_{\Psi_1}\\
	\leq C\sqrt{\lambda_{\max}}\sigma\frac{(l+\Cb s\log(2d/s))^{\Ca-1}}{(t+1+\Cb s\log(2d/s))^{\Ca}}\cdot\|[\Bbeta_l-\Bbeta^*]_{\calS_l}\|.
\end{multline*}
Then under event $\cup_{l=0}^t\{\calE_{l}\}$, by Lemma~\ref{teclem:azuma}, we have
\begin{multline*}
	\PP\l(\l|\sum_{l=0}^{t}\prod_{j=l}^{t}\l(1-\frac{\Ca}{j+\Cb s\log(2d/s)}\r)\frac{1}{l+\Cb s\log(2d/s)}\boldsymbol{\varXi}_2^{(l)}\r| \geq c_1\frac{C^*}{\Ca}\frac{(s\log(2d/s)+\delta_{t+1})\sigma^2}{t+1+\Cb s\log(2d/s)}\r.\\
	 \l.+\frac{c_1}{\Ca}\frac{(\OffDiagS)^2}{\lambda_{\min}}\|[\Bbeta^*]_{\calS^*\setminus\calS_1}\|^2+\lambda_{\min}\frac{c_1}{\Ca}\l(\frac{\tau_1+\Cb s\log(2d/s)}{t+1+\Cb s\log(2d/s)}\r)^{\Ca-2}\|[\Bbeta^*]_{\calS_1\setminus\calS_0}\|^2\r)\\
	\leq\exp\l(-c_2\min\l\{ (s\log(2d/s)+\delta_{t+1})\frac{C^*}{\Ca}\frac{\lambda_{\min}}{\lambda_{\max}},\sqrt{s\log(2d/s)+\delta_{t+1}}\sqrt{\Cb s\log(2d/s)+t}\frac{\sqrt{C^*}}{\Ca}\frac{\sqrt{\lambda_{\min}}}{\sqrt{\lambda_{\max}}}\r\}\r),
\end{multline*}
which implies the following bound
\begin{multline*}
	|\boldsymbol{\mathit{B}}_6|\leq c_1\frac{C^*}{\lambda_{\min}}\frac{(s\log(2d/s)+\delta_{t+1})\sigma^2}{t+1+\Cb s\log(2d/s)}\\
	+c_1\frac{(\OffDiagS)^2}{\lambda_{\min}^2}\|[\Bbeta^*]_{\calS^*\setminus\calS_1}\|^2+c_1\l(\frac{\tau_1+\Cb s\log(2d/s)}{t+1+\Cb s\log(2d/s)}\r)^{\Ca-2}\cdot\|[\Bbeta^*]_{\calS_1\setminus\calS_0}\|^2.
\end{multline*}  Recall the definition $\calE_t^X:=\{\max_{|\calS|\leq s}\|[\X_t]_{\calS}\|^2\leq C\lambda_{\max}(s\log(2d/s)+\delta_{t})\}$ (defined in Section~\ref{sec:proof-FixedSupport}). Hence, under $\calE_t^X$, we have
\begin{align*}
	\|\boldsymbol{\varXi}_3^{(t)}|\calF_t,\calE_t^X\|_{\Psi_1}\leq C(s\log(2d/s)+\delta_{t})\lambda_{\max}\sigma^2.
\end{align*}
With a similar analysis to Section~\ref{sec:proof-FixedSupport}, we have with probability over $1-\exp(-c_2(s\log(2d/s)+t/\Cb))$, $$|\boldsymbol{\mathit{B}}_7|\leq c_1C^*\frac{1}{\lambda_{\min}}\frac{s\log(2d/s)+\delta_{t+1}}{t+1+\Cb s\log(2d/s)}\sigma^2.$$
Thus, altogether, we finished proving the convergence dynamics for $t\in[\tau_1,\tau_2-1]$,
\begin{multline*}
	\|[\Bbeta_{t+1}-\Bbeta^*]_{\calS_1}\|^2\leq C^*\frac{s\log(2d/s)}{t+1+\Cb s\log(2d/s)}\sigma^2+C\l(\frac{\OffDiagS}{\lambda_{\min}}\r)^2\|[\Bbeta^*]_{\calS^*\setminus\calS_1}\|^2\\+C\l(\frac{\tau_1+\Cb s\log(2d/s)}{t+1+\Cb s\log(2d/s)}\r)^{\Ca-2}\|[\Bbeta^*]_{\calS_1\setminus\calS_{0}}\|^2.
\end{multline*}
\section{Proofs in Section~\ref{sec:bandit}}
\label{sec:proof-bandit}
\subsection{Proof of Lemma~\ref{lem:decision region}}
\begin{proof}
	Recall that $\max_{i\in[K]}\|\Bbeta_{i}-\Bbeta_{i}^*\|\leq h_0$. Then by triangle inequality, we have
	\begin{align*}
		\l|\frac{\X^\top\Bbeta_{i}^*}{\|\X\|}- \frac{\X^\top\Bbeta_{i}}{\|\X\|}\r|\leq\|\Bbeta_{i}-\Bbeta_{i}^*\|\leq h_0,\quad \l|\max_{j\neq i}\frac{\X^\top\Bbeta_{j}^*}{\|\X\|}-\max_{j\neq i}\frac{\X^\top\Bbeta_{j}}{\|\X\|}\r|\leq\max_{j\neq i}\|\Bbeta_{j}-\Bbeta_{j}^*\|\leq h_0.
	\end{align*}
	 Hence, for $\X\in\calX\l(i,\l\{\Bbeta_{j}\r\}\r)$, it has
	\begin{align*}
		\frac{\X^\top\Bbeta_i^*}{\|\X\|}&= \frac{\X^\top\Bbeta_i}{\|\X\|}+\l(\frac{\X^\top\Bbeta_i^*}{\|\X\|}-\frac{\X^\top\Bbeta_i}{\|\X\|}\r)\geq \max_{j\neq i}\frac{\X^\top\Bbeta_j}{\|\X\|}-h_0\geq\max_{j\neq i}\frac{\X^\top\Bbeta_j^*}{\|\X\|}-2h_0,
	\end{align*}
which implies $\X\in\calU_i(-2h_0)$. Hence, we finish proving $\calX\l(i,\l\{\Bbeta_{j}\r\}\r)\subseteq \calU_i(-2h_0)$. On the other hand, if $\X\in\calU_i(2h_0)$, then we have
	\begin{align*}
		\frac{\X^\top\Bbeta_{i}}{\|\X\|}= \frac{\X^\top\Bbeta_i^*}{\|\X\|}+\l(\frac{\X^\top\Bbeta_i}{\|\X\|}-\frac{\X^\top\Bbeta_i^*}{\|\X\|}\r)\geq\max_{j\neq i}\frac{\X^\top\Bbeta_{j}^*}{\|\X\|}+2h_0-h_0\geq\max_{j\neq i}\frac{\X^\top\Bbeta_{j}}{\|\X\|},
	\end{align*}
	which shows $\X\in\calX\l(i,\l\{\Bbeta_{j}\r\}\r)$. Thus, we complete the proof.
\end{proof}
\subsection{Proof of Lemma~\ref{lem:regularity}}
\begin{proof}[Proof of Lemma~\ref{lem:regularity}]
We first prove claim $(1)$ of Lemma~\ref{lem:regularity}. For $j\in\calA^c$, we have
	\begin{align*}
		\frac{\X_t^{\top}\Bbeta_{j}}{\|\X_t\|}=\frac{\X_t^{\top}\Bbeta_{j}^*}{\|\X_t\|}+\frac{\X_t^{\top}\l(\Bbeta_{j}-\Bbeta_{j}^*\r)}{\|\X_t\|}= \frac{\max_{i\in[K]}\X_t^{\top}\Bbeta_{i}^*}{\|\X_t\|}-\frac{\max_{i\in[K]}\X_t^{\top}\Bbeta_{i}^*-\X_t^\top\Bbeta_{j}^*}{\|\X_t\|}+\frac{\X_t^{\top}\l(\Bbeta_{j}-\Bbeta_{j}^*\r)}{\|\X_t\|}.
	\end{align*}
	Assumption~\ref{assm:arm opt} guarantees $\frac{\max_{i\in[K]}\X_t^{\top}\Bbeta_{i}^*-\X_t^\top\Bbeta_{j}^*}{\|\X_t\|}\geq h$, a.s.. Thus, together with $\X_t^{\top}\l(\Bbeta_{j}-\Bbeta_{j}^*\r)\leq\|\X_t\|\cdot\|\Bbeta_{i}-\Bbeta_{i}^*\| $ and under the event $\{\|\Bbeta_{i}-\Bbeta_{i}^*\|\leq \frac{h}{2},\ i\in[K]\}$, it arrives at
	\begin{align*}
		\frac{\X_t^{\top}\Bbeta_{j}}{\|\X_t\|}\leq \frac{\max_{i\in[K]}\X_t^{\top}\Bbeta_{i}^*}{\|\X_t\|} -\frac{h}{2}< \frac{\max_{i\in[K]}\X_t^{\top}\Bbeta_{i}^*}{\|\X_t\|},
	\end{align*}
	which implies $\calX(j,\{\Bbeta_i\})=\emptyset$ for $j\in\calA^c$. 

Denote $h_{\calA}:=\max_{i\in\calA}\|\Bbeta_i-\Bbeta_i^*\|$. Further, under the event $\{\|\Bbeta_{i}-\Bbeta_{i}^*\|\leq \frac{h}{2},\ i\in[K]\}$, for any $i\in\calA$, if $\X\in\calX\l(i,\l\{\Bbeta_{j}\r\}\r)$, it has 
	\begin{align*}
		\frac{\X^\top\Bbeta_i^*}{\|\X\|}&= \frac{\X^\top\Bbeta_i}{\|\X\|}+\l(\frac{\X^\top\Bbeta_i^*}{\|\X\|}-\frac{\X^\top\Bbeta_i}{\|\X\|}\r)\geq \max_{j\neq i,j\in\calA}\frac{\X^\top\Bbeta_j}{\|\X\|}-h_{\calA}\geq\max_{j\neq i,j\in\calA}\frac{\X^\top\Bbeta_j^*}{\|\X\|}-2h_{\calA},
	\end{align*}
    and by Assumption~\ref{assm:arm opt}, it has $$ \max_{j\in\calA}\frac{\X^\top\Bbeta_j^*}{\|\X\|}\geq \max_{j\in\calA^c}\frac{\X^\top\Bbeta_j^*}{\|\X\|}+h.$$
The above two equation together imply $\X\in\calU_i(-2h_{\calA})$. Hence, we finish proving $\calX\l(i,\l\{\Bbeta_{j}\r\}\r)\subseteq \calU_i(-2h_{\calA})$. On the other hand, for any $\X\in\calU_i(2h_{\calA})$, we have
	\begin{align*}
		\frac{\X^\top\Bbeta_{i}}{\|\X\|}= \frac{\X^\top\Bbeta_i^*}{\|\X\|}+\l(\frac{\X^\top\Bbeta_i}{\|\X\|}-\frac{\X^\top\Bbeta_i^*}{\|\X\|}\r)\geq\max_{j\neq i,j\in\calA}\frac{\X^\top\Bbeta_{j}^*}{\|\X\|}+2h_{\calA}-h_{\calA}.
	\end{align*}
    Then, by triangle inequality $\max_{j\neq i,j\in\calA}\frac{\X^\top\Bbeta_{j}^*}{\|\X\|}\geq \max_{j\neq i,j\in\calA}\frac{\X^\top\Bbeta_{j}}{\|\X\|}-h_{\calA}$, we have $\frac{\X^\top\Bbeta_{i}}{\|\X\|}\geq \max_{j\neq i,j\in\calA}\frac{\X^\top\Bbeta_{j}}{\|\X\|}$. Moreover, under the event $\{\|\Bbeta_{i}-\Bbeta_{i}^*\|\leq \frac{h}{2},\ i\in[K]\}$, we have $\X\notin \calX\l(k,\l\{\Bbeta_{j}\r\}\r)$ for all $k\in\calA^c$. Hence, we finish proving $\X\in\calX\l(i,\l\{\Bbeta_{j}\r\}\r)$.
    
	We then only need to prove claim $(2)$ of Lemma~\ref{lem:regularity}. By triangle inequality, for all $\X\in\RR^d$, it has
	\begin{align*}
		\X^{\top}\Bbeta_{i}&\geq \X^{\top}\Bbeta_{i}^{*} - \left| \X^{\top}\Bbeta_{i} - \X^{\top}\Bbeta_{i}^{*}\right|\\
		&\geq\max_{j\neq i} \X^{\top}\Bbeta_{j}- 2\max_{j}\left| \X^{\top}\Bbeta_{j} - \X^{\top}\Bbeta_{j}^{*}\right| + \left(\X^{\top}\Bbeta_{i}^{*} -  \max_{j\neq i} \X^{\top}\Bbeta_{j}^{*} \right).
	\end{align*}
	It further implies that
	\begin{multline*}
		\II\l\{\X^\top\Bbeta_{i}\geq \max_{j\neq i} \X^{\top}\Bbeta_{j} \r\}\\ 
		\geq \II\l\{2\max_{j}\left| \X^{\top}\Bbeta_{j} - \X^{\top}\Bbeta_{j}^{*}\right|\cdot \|\X\|^{-1} \leq \left(\X^{\top}\Bbeta_{i}^{*} -  \max_{j\neq i} \X^{\top}\Bbeta_{j}^{*} \right)\cdot\|\X\|^{-1} \r\}.
	\end{multline*}
	Thus, using the above equation, we have the following bound
	\begin{align*}
		&~~~~\EE\left\{\II{\{\X_t^{\top}\Bbeta_{i} \geq \max_j \X_t^{\top}\Bbeta_j\}}\cdot \X_t\X_t^{\top}\bigg| \calF_t \right\}\\
		&\succeq \EE\left\{\II{\left\{ 2\max_{j}\left| \X_t^{\top}\Bbeta_{j} - \X_t^{\top}\Bbeta_{j}^{*}\right|\cdot\|\X_t\|^{-1} \leq \left(\X_t^{\top}\Bbeta_{i}^{*} -  \max_{j\neq i} \X_t^{\top}\Bbeta_{j}^{*} \right)\cdot\|\X_t\|^{-1}\right\}}\cdot \X_t\X_t^{\top}\bigg| \calF_t \right\}.
	\end{align*}
	Recall that under Assumption~\ref{assm:arm opt}, $\X^{\top}\Bbeta_{i}^*-\max_{j\neq i}\X^{\top}\Bbeta_{j}^*\geq h\|\X\|$ holds for all $\X\in \calU_i(h)$. Thus, we restrict the right hand side to region $\calU_i(h)$ and then it arrives at
	\begin{multline*}
		\EE\left\{\II{\{\X_t^{\top}\Bbeta_{i} \geq \max_j \X_t^{\top}\Bbeta_j\}}\cdot \X_t\X_t^{\top}\bigg| \calF_t \right\}\\
		\succeq\EE\left\{\II{\left\{ 2\max_{j}\left| \X_t^{\top}\Bbeta_{j} - \X_t^{\top}\Bbeta_{j}^{*}\right| \cdot \|\X_t\|^{-1}\leq h\right\}}\cdot \X_t\X_t^{\top} \cdot \II\l\{ \X_t\in \calU_i(h)\r\}\bigg|\calF_t \right\}.
	\end{multline*}
	On the other hand, it's obvious that $\max_{j}| \X_t^{\top}\Bbeta_{j} - \X_t^{\top}\Bbeta_{j}^{*}| \cdot \|\X_t\|^{-1}\leq\max_{j}\|\Bbeta_{j}-\Bbeta_{j}^*\|$. Then we have
	\begin{align*}
		\EE\left\{\II{\{\X_t^{\top}\Bbeta_{i} \geq \max_j \X_t^{\top}\Bbeta_j\}}\cdot \X_t\X_t^{\top}\bigg| \calF_t \right\}
		&\succeq \II\l\{\max_{j}\|\Bbeta_{j}-\Bbeta_j^*\|\leq\frac{h}{2}\r\}\cdot\EE\left\{\X_t\X_t^{\top} \cdot \II\l\{ \X_t\in \calU_i\r\}\big|\calF_t \right\},
	\end{align*}
	which proves the first statement of Lemma~\ref{lem:regularity}. It remains to discuss $\calA^c$.  Thus, we complete the proof.
\end{proof}

\subsection{Proof of Theorem~\ref{thm:LBD-estimation}}
\label{sec:proof:thm:LBD-estimation}
Recall $\calF_t=\sigma(Y_{t-1},\X_{t-1},\alpha_{t-1},\ldots)$. The following proposition justifies the conditional expectation of the estimation error.
\begin{proposition}
	Under the same conditions as Theorem~\ref{thm:LBD-estimation}, for $i\in\calA$, it has
	\begin{multline*}
		\EE\l\{\|\Bbeta_{i}^{(t+1)} - \Bbeta_{i}^*\|^2 \big| \calF_t\r\}\leq \|\Bbeta_{i}^{(t)}-\Bbeta_{i}^*\|^2-2\eta_{t}\frac{\pi_t}{K}\lambda_{\min}\|\Bbeta_{i}^{(t)}-\Bbeta_{i}^*\|^2+2\eta_{t}^2d\lambda_{\max}^2\|\Bbeta_{i}^{(t)}-\Bbeta_{i}^*\|^2\\-2\eta_{t}(1-\pi_t)\lambda_{\min}\|\Bbeta_{i}^{(t)}-\Bbeta_{i}^*\|^2\cdot\II\l\{\max_{j}\|\Bbeta_{j}^{(t)}-\Bbeta_{i}^*\|\leq\frac{h}{2}\r\}+2\eta_{t}^2d\lambda_{\max}\sigma^2,
	\end{multline*}
	and for $i\in\calA^c$, we have
	\begin{multline*}
		\EE\left\{\|\Bbeta_{i}^{(t+1)} - \Bbeta_{i}^*\|^2 \big| \calF_t\right\}
		\leq \|\Bbeta_{i}^{(t)}-\Bbeta_{i}^*\|^2-2\eta_{t}\frac{\pi_t}{K}\lambda_{\min}\|\Bbeta_{i}^{(t)}-\Bbeta_{i}^*\|^2\\
		+2\eta_{t}^2d\lambda_{\max}\sigma^2\cdot\l(\frac{\pi_t}{K}+(1-\pi_t) \cdot  \II\l\{ \max_{i\in[K]}\|\Bbeta_{i}^{(t)}-\Bbeta_{i}^*\|\geq \frac{h}{2}\r\} \r)\\
		+2\eta_{t}^2d\lambda_{\max}^2\|\Bbeta_{i}^{(t)}-\Bbeta_{i}^*\|^2\cdot\l(\frac{\pi_t}{K}+(1-\pi_t) \cdot  \II\l\{ \max_{i\in[K]}\|\Bbeta_{i}^{(t)}-\Bbeta_{i}^*\|\geq \frac{h}{2}\r\} \r),
	\end{multline*}
	where $C>0$ is some constant.
	\label{prop: conditional expectation}
\end{proposition}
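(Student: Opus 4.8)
The plan is to compute the one-step conditional expectation by peeling off the two sources of randomness in the prescribed order $\calF_t\subseteq\calF_t^+\subseteq\calF_{t+1}$, and then invoking the two halves of Lemma~\ref{lem:regularity}. First I would expand the squared update~\eqref{alg:LBD}: writing $\v_i:=\Bbeta_i^{(t)}-\Bbeta_i^*$, using $\II\{a_t=i\}^2=\II\{a_t=i\}$, and noting that on $\{a_t=i\}$ we have $Y_t=\X_t^\top\Bbeta_i^*+\xi_t$ so that $\X_t^\top\Bbeta_i^{(t)}-Y_t=\X_t^\top\v_i-\xi_t$, one obtains
\[
\|\Bbeta_i^{(t+1)}-\Bbeta_i^*\|^2=\|\v_i\|^2-2\eta_t\II\{a_t=i\}\big[(\X_t^\top\v_i)^2-\xi_t\X_t^\top\v_i\big]+\eta_t^2\II\{a_t=i\}(\X_t^\top\v_i-\xi_t)^2\|\X_t\|^2 .
\]
Conditioning on $\calF_t^+$ (which contains $\X_t$, $\alpha_t$ and hence $a_t$, so $\II\{a_t=i\}$, $\X_t^\top\v_i$ and $\|\X_t\|^2$ are all measurable) and applying Assumption~\ref{assm:noise}, namely $\EE\{\xi_t|\calF_t^+\}=0$ and $\EE\{\xi_t^2|\calF_t^+\}\le\sigma^2$, kills the cross term and replaces $(\X_t^\top\v_i-\xi_t)^2$ by $(\X_t^\top\v_i)^2+\sigma^2$.

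The second step is to integrate out $\X_t$ and the exploration randomness given $\calF_t$, after splitting $\II\{a_t=i\}=\II\{\alpha_t=1\}\II\{U_t=i\}+\II\{\alpha_t=0\}\II\{\X_t\in\calX(i,\{\Bbeta_j^{(t)}\})\}$, where $U_t$ is the uniform exploration draw. For the negative (curvature) term I treat the two pieces separately. The exploration piece is handled by independence of $(\alpha_t,U_t)$ from $\X_t$ given $\calF_t$ together with $\EE\{\X_t\X_t^\top|\calF_t\}\succeq\lambda_{\min}\I$, producing $\tfrac{\pi_t}{K}\lambda_{\min}\|\v_i\|^2$; this is the term common to both $\calA$ and $\calA^c$. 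For $i\in\calA$ the exploitation piece is exactly the quantity lower bounded in Lemma~\ref{lem:regularity}(2)/\eqref{eq:lem 1}, which supplies the extra $(1-\pi_t)\lambda_{\min}\,\II\{\max_j\|\Bbeta_j^{(t)}-\Bbeta_j^*\|\le h/2\}\|\v_i\|^2$; for $i\in\calA^c$ I simply discard this nonnegative exploitation contribution.

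For the positive (variance) terms $\EE\{\II\{a_t=i\}(\X_t^\top\v_i)^2\|\X_t\|^2|\calF_t\}$ and $\sigma^2\,\EE\{\II\{a_t=i\}\|\X_t\|^2|\calF_t\}$, the moment control is routine: Assumption~\ref{assm:cov} gives $\|\X_t^\top\v_i\,|\,\calF_t\|_{\Psi_2}\le\sqrt{\lambda_{\max}}\|\v_i\|$ and each coordinate of $\X_t$ a conditional sub-Gaussian norm $\le\sqrt{\lambda_{\max}}$, so Cauchy--Schwarz and the sub-Gaussian fourth-moment bounds yield $\EE\{(\X_t^\top\v_i)^2\|\X_t\|^2|\calF_t\}\le Cd\lambda_{\max}^2\|\v_i\|^2$ and $\EE\{\|\X_t\|^2|\calF_t\}\le d\lambda_{\max}$. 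For $i\in\calA$ I bound $\II\{a_t=i\}\le 1$, giving directly the stated $d\lambda_{\max}^2\|\v_i\|^2$ and $d\lambda_{\max}\sigma^2$ terms. For $i\in\calA^c$ the key is Lemma~\ref{lem:regularity}(1): whenever $\max_j\|\Bbeta_j^{(t)}-\Bbeta_j^*\|\le h/2$ the empirical region $\calX(i,\{\Bbeta_j^{(t)}\})$ is empty for suboptimal $i$, so the exploitation indicator satisfies $\II\{\X_t\in\calX(i,\{\Bbeta_j^{(t)}\})\}\le\II\{\max_j\|\Bbeta_j^{(t)}-\Bbeta_j^*\|\ge h/2\}$; since the latter is $\calF_t$-measurable it factors out of the conditional expectation, and combined with the exploration weight $\tfrac{\pi_t}{K}$ it yields the common prefactor $\tfrac{\pi_t}{K}+(1-\pi_t)\II\{\max_j\|\cdots\|\ge h/2\}$ multiplying both variance terms.

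The main obstacle is not any single estimate but the disciplined bookkeeping: one must keep the nested conditioning straight (so that $\II\{a_t=i\}$ is frozen when integrating $\xi_t$ against $\calF_t^+$, yet decomposes into exploration and exploitation parts when integrating $\X_t$ against $\calF_t$), and one must route the exploitation indicator through the correct half of Lemma~\ref{lem:regularity} --- part (2) to extract exploitation curvature for $\calA$, and part (1) to confine suboptimal selections to the bad-estimate event for $\calA^c$. Ensuring that each surviving indicator is $\calF_t$-measurable, hence pulls out of the conditional expectation, is the delicate point; the covariate moment bounds themselves are standard sub-Gaussian computations.
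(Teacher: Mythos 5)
Your proof is correct and follows essentially the same route as the paper's: the same expansion of the squared update, the same two-stage tower of conditioning (first the augmented field containing $\X_t$, then $\calF_t$), the same exploration/exploitation split of $\II\{a_t=i\}$, Lemma~\ref{lem:regularity}(2) to extract the curvature term for $i\in\calA$, Lemma~\ref{lem:regularity}(1) to confine suboptimal pulls to the $\calF_t$-measurable event $\{\max_j\|\Bbeta_{j}^{(t)}-\Bbeta_{j}^*\|\geq h/2\}$, and standard sub-Gaussian moment bounds for the $\eta_t^2$ variance terms. The only cosmetic difference is that you enlarge the intermediate $\sigma$-field to include $(\alpha_t,U_t)$ before integrating out $\xi_t$, whereas the paper computes $\EE\{\II\{a_t=i\}\mid\calF_t^+\}$ directly and relies on conditional independence of the noise and the arm-selection randomness; both amount to the same tower argument.
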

\begin{proof}[Proof of Proposition~\ref{prop: conditional expectation}]
	Indeed, the update can be characterized as follows,
	\begin{align*}
		\Bbeta_{i}^{(t+1)} - \Bbeta_{i}^*= \Bbeta_{i}^{(t)}-\Bbeta_{i}^* - \eta_t\ \II{\{a_t=i\}}\cdot (\X_t^{\top}\Bbeta_{i}^{(t)} - Y_t)\ \X_t.
	\end{align*}
	Then the $L_2$ normed estimation error rate has,
	\begin{multline*}
		\|\Bbeta_{i}^{(t+1)} - \Bbeta_{i}^*\|^2= \|\Bbeta_{i}^{(t)}-\Bbeta_{i}^*\|^2 - 2\eta_t\ \II{\{a_t=i\}}\cdot (\X_t^{\top}\Bbeta_{i}^{(t)} - \X_t^\top \Bbeta_{i}^*)^2\\
		-2\eta_{t}\ \II{\{a_t=i\}}\ \xi_t \ (\X_t^{\top}\Bbeta_{i}^{(t)} - \X_t^\top \Bbeta_{i}^*)+\eta_{t}^2\ \II{\{a_t=i\}}\cdot (\X_t^{\top}\Bbeta_{i}^{(t)} - Y_t)^2\ \|\X_t\|^2.
	\end{multline*}
	Recall $\calF_t^+:=\sigma\left(\X_t,Y_{t-1},\X_{t-1},\alpha_{t-1},\dots\right)$ and we first consider the conditional expectation of the above equation. First of all, the indicator function has
	\begin{align*}
		\EE\left\{ \II{\{a_t=i\}}\big| \calF_t^+\right\} &=\EE\left\{ \II{\{a_t=i\}}\cdot \II\left\{\alpha_t=1\right\}\big| \calF_t^+\right\}+\EE\left\{ \II{\{a_t=i\}} \cdot \II\left\{\alpha_t=0\right\}\big| \calF_t^+\right\}\\
		&= \frac{\pi_t}{K}+(1-\pi_t)\cdot \II\l\{i\in\arg\max_j \X_t^{\top}\Bbeta_j^{(t)}\r\}.
	\end{align*}
	Secondly, the conditional expectation of $(\X_t^{\top}\Bbeta_{i}^{(t)} - Y_t)^2$ is given by
	\[\EE \left\{\l(\X_t^{\top}\Bbeta_{i}^{(t)} - Y_t\r)^2\bigg| \calF_t^+ \right\} \leq \l(\X_t^{\top}\Bbeta_{i}^{(t)} - \X_t^{\top}\Bbeta_{i}^*\r)^2+\sigma^2.\]
	Thus, altogether, we have
	\begin{multline*}
		\EE\left\{\left\|\Bbeta_{i}^{(t+1)} - \Bbeta_{i}^*\right\|^2 \bigg| \calF_t^+\right\}\\
		= \left\|\Bbeta_{i}^{(t)}-\Bbeta_{i}^*\right\|^2 - 2\eta_t\ \left(\frac{\pi_t}{K} + (1-\pi_t)\cdot \II\l\{i \in \arg\max_j \X_t^{\top}\Bbeta_j^{(t)}\r\}\right)\cdot \l(\X_t^{\top}\Bbeta_{i}^{(t)} - \X_t^\top \Bbeta_{i}^*\r)^2\\
		+\eta_{t}^2\ \left(\frac{\pi_t}{K}+(1-\pi_t)\cdot \II\l\{i= \arg\max_j \X_t^{\top}\Bbeta_j^{(t)}\r\}\right)\cdot \left((\X_t^{\top}\Bbeta_{i}^{(t)} - \X_t^{\top}\Bbeta_{i}^*)^2+\sigma^2\right)\ \left\|\X_t\right\|^2.
	\end{multline*}
	Then, the expectation conditional on $\calF_t$ has,
	\begin{multline*}
		\EE\left\{\left\|\Bbeta_{i}^{(t+1)} - \Bbeta_{i}^*\right\|^2 \bigg| \calF_t\right\}
		\leq \|\Bbeta_{i}^{(t)}-\Bbeta_{i}^*\|^2 - 2\eta_t\frac{\pi_t}{K}\lambda_{\min} \| \Bbeta_{i}^{(t)} - \Bbeta_{i}^*\|^2 \\
		-2\eta_{t} (1-\pi_t)\cdot \underbrace{\EE\left\{ \II{\{i = \arg\max_j \X_t^{\top}\Bbeta_j^{(t)}\}}\cdot (\X_t^{\top}\Bbeta_{i}^{(t)} - \X_t^\top \Bbeta_{i}^*)^2\big| \calF_t \right\}}_{\A_1}\\
		+\eta_{t}^2\cdot\frac{\pi_t}{K} \ \sigma^2\cdot \EE\left\|\X_t\right\|^2+\eta_{t}^2\cdot \frac{\pi_t}{K}\cdot \underbrace{\EE\left\{ (\X_t^{\top}\Bbeta_{i}^{(t)} - \X_t^\top \Bbeta_{i}^*)^2\cdot\left\|\X_t\right\|^2\big| \calF_t \right\}}_{\A_2}\\
		+\eta_{t}^2\cdot(1-\pi_t)\cdot\underbrace{\EE\left\{ \II{\{i=\arg\max_j \X_t^{\top}\Bbeta_j^{(t)}\}}\cdot (\X_t^{\top}\Bbeta_{i}^{(t)} - \X_t^\top \Bbeta_{i}^*)^2\cdot \|\X_t\|^2\big| \calF_t \right\}}_{\A_3}\\
		+\eta_{t}^2\cdot(1-\pi_t)\cdot\sigma^2\cdot\underbrace{\EE\left\{ \II{\{i=\arg \max_j \X_t^{\top}\Bbeta_j^{(t)}\}}\cdot \|\X_t\|^2\big| \calF_t \right\}}_{\A_4}.
	\end{multline*}
	We shall discuss the bound of $\A_1,\A_2,\A_3,\A_4$ separately depending on whether $i\in\calA$ or $i\in\calA^c$.
	\begin{enumerate}[1]
		\item For $i\in\calA$.
		
		\noindent\textbf{Bound of $\A_1$}
		Lemma~\ref{lem:regularity} gives
		$$\A_1\geq \lambda_{\min} \|\Bbeta_{i}^{(t)} - \Bbeta_{i}\|^2\cdot \II\l\{\max_{i}\|\Bbeta_{i}^{(t)}-\Bbeta_{i}^*\|\leq\frac{h}{2}\r\}. $$
		\noindent\textbf{Bound of $\A_2$} To bound $\A_2$, we know that $\Bbeta_t\in\calF_t$. Then under Assumption~\ref{assm:cov regression}, we have
		\begin{align*}
			\A_2\leq \| \Bbeta_{i}^{(t)}-\Bbeta_{i}^*\|^2\cdot \l\|\EE\left\{ \X_t\X_t^\top\cdot\left\|\X_t\right\|^2 \right\}\big|\calF_t\r\|\leq 2d\lambda_{\max}^2 \| \Bbeta_{i}^{(t)}-\Bbeta_{i}^*\|^2,
		\end{align*}
		where $C$ is some constant.
		
		\noindent\textbf{Bound of $\A_3$} Similar to bound of $\A_2$, we have
		\begin{align*}
			\A_3\leq 2d\lambda_{\max}^2\|\Bbeta_{i}^{(t)}-\Bbeta^*\|^2.
		\end{align*}
		\noindent\textbf{Bound of $\A_4$} In a similar fashion, we have
		\begin{align*}
			\A_4\leq 2d\lambda_{\max}.
		\end{align*}
		Thus, in all, the conditional expected estimation error has,
		\begin{multline*}
			\EE\left\{\|\Bbeta_{i}^{(t+1)} - \Bbeta_{i}^*\|^2 \big| \calF_t\right\}\leq \|\Bbeta_{i}^{(t)}-\Bbeta_{i}^*\|^2-2\eta_{t}\frac{\pi_t}{K}\lambda_{\min}\|\Bbeta_{i}^{(t)}-\Bbeta_{i}^*\|^2+2\eta_{t}^2d\lambda_{\max}\sigma^2\\
			-2\eta_{t}(1-\pi_t)\lambda_{\min}\|\Bbeta_{i}^{(t)}-\Bbeta_{i}^*\|^2\cdot\II\l\{\max_{i}\|\Bbeta_{i}^{(t)}-\Bbeta_{i}^*\|\leq\frac{h}{2}\r\}+2\eta_{t}^2d\lambda_{\max}^2\|\Bbeta_{i}^{(t)}-\Bbeta_{i}^*\|^2.
		\end{multline*}
		\item For $i\in\calA^c$.
		
		\noindent\textbf{Bound of $\A_1$} We use $\A_1\geq 0$.
		
		\noindent\textbf{Bound of $\A_2$} Its bound is same as the case when $i\in\calA$.
		
		\noindent\textbf{Bound of $\A_3$} Lemma~\ref{lem:regularity} shows that when $\max_{i\in[K]}\|\Bbeta_{i}^{(t)}-\Bbeta_{i}^*\|\leq h/2$, $$\arg\max_{i\in[K]} \X^{\top}\Bbeta_{i}^{(t)} \subseteq \calA.$$ Thus, we have $\A_3\leq Cd\lambda_{\max}^2\|\Bbeta_{i}^{(t)}-\Bbeta^*\|^2\cdot\II\l\{ \max_{i\in[K]}\|\Bbeta_{i}^{(t)}-\Bbeta_{i}^*\|\geq h/2\r\}$.
		
		\noindent\textbf{Bound of $\A_4$} In a similar fashion, we have
		\begin{align*}
			\A_4\leq 2d\lambda_{\max}\cdot\II\l\{ \max_{i\in[K]}\|\Bbeta_{i}^{(t)}-\Bbeta_{i}^*\|\geq h/2\r\}.
		\end{align*}
		Thus, in all, the estimation error rate has
		\begin{multline*}
			\EE\l\{\|\Bbeta_{i}^{(t+1)} - \Bbeta_{i}^*\|^2 \big| \calF_t\r\}
			\leq \|\Bbeta_{i}^{(t)}-\Bbeta_{i}^*\|^2-2\eta_{t}\frac{\pi_t}{K}\lambda_{\min}\|\Bbeta_{i}^{(t)}-\Bbeta_{i}^*\|^2\\
			+\eta_{t}^2d\lambda_{\max}\sigma^2\cdot\l(\frac{\pi_t}{K}+(1-\pi_t) \cdot  \II\l\{ \max_{i\in[K]}\|\Bbeta_{i}^{(t)}-\Bbeta_{i}^*\|\geq \frac{h}{2}\r\} \r)\\
			+2\eta_{t}^2d\lambda_{\max}^2\|\Bbeta_{i}^{(t)}-\Bbeta_{i}^*\|^2\cdot\l(\frac{\pi_t}{K}+(1-\pi_t) \cdot  \II\l\{ \max_{i\in[K]}\|\Bbeta_{i}^{(t)}-\Bbeta_{i}^*\|\geq \frac{h}{2}\r\} \r),
		\end{multline*}
		which completes the proof.
	\end{enumerate}
	
\end{proof}
Then we are ready to prove Theorem~\ref{thm:LBD-estimation}.
\begin{proof}[Proof of Theorem~\ref{thm:LBD-estimation}]
	Before discussing the convergence, we introduce
	\begin{align*}
		&\|\Bbeta_{i}^{(t+1)} - \Bbeta_{i}^*\|^2  - \EE\l\{\|\Bbeta_{i}^{(t+1)} - \Bbeta_{i}^*\|^2 \big| \calF_t\r\}\\
		&~~~~~=-2\eta_{t}\underbrace{\l\{ \II{\{a_t=i\}}\cdot (\X_t^{\top}\Bbeta_{i}^{(t)} - \X_t^\top \Bbeta_{i}^*)^2 -\EE\l\{ \II{\{a_t=i\}}\cdot (\X_t^{\top}\Bbeta_{i}^{(t)} - \X_t^\top \Bbeta_{i}^*)^2\big|\calF_t\r\} \r\}}_{=:\boldsymbol{\Xi}_1^{(t)}} \\
		&~~~~~~~-2\eta_{t}\underbrace{\l\{\II{\{a_t=i\}}\ \xi_t \ (\X_t^{\top}\Bbeta_{i}^{(t)} - \X_t^\top \Bbeta_{i}^*)- \EE\l\{\II{\{a_t=i\}}\ \xi_t \ (\X_t^{\top}\Bbeta_{i}^{(t)} - \X_t^\top \Bbeta_{i}^*) \big|\calF_t\r\}\r\}}_{=:\boldsymbol{\Xi}_2^{(t)}}\\
		&~~~~~~~+\eta_{t}^2\underbrace{\l\{\II{\{a_t=i\}}\cdot (\X_t^{\top}\Bbeta_{i}^{(t)} - Y_t)^2\ \|\X_t\|^2-\EE\l\{\II{\{a_t=i\}}\cdot (\X_t^{\top}\Bbeta_{i}^{(t)} - Y_t)^2\ \|\X_t\|^2\r\} \r\}}_{=:\boldsymbol{\Xi}_3^{(t)}}.
	\end{align*}
	Conditional on $\calF_t$, it has
	\begin{align*}
		\|\boldsymbol{\Xi}_1^{(t)}|\calF_t\|_{\Psi_1}\leq \lambda_{\max}\|\Bbeta_{i}^{(t)}-\Bbeta_{i}^*\|^2,\quad	\|\boldsymbol{\Xi}_2^{(t)}|\calF_t \|_{\Psi_1}\leq \sigma\sqrt{\lambda_{\max}}\|\Bbeta_{i}^{(t)}-\Bbeta_{i}^*\|,
	\end{align*}
	while under $\calE_t^X:=\{\|\X_t\|^2\leq C\lambda_{\max}(d+\delta_t)\}$,
	\begin{align*}
		\|\boldsymbol{\Xi}_3^{(t)}|\calF_t,\ \calE_t \|_{\Psi_1}\leq C(d+\delta_t)\lambda_{\max}\sigma^2+(d+\delta_t)\lambda_{\max}^2\|\Bbeta_{i}^{(t)}-\Bbeta_{i}^*\|^2.
	\end{align*}
	Specifically, $\PP(\calE_t^X|\calF_t)\geq 1-\exp(-c_2(d+\delta_t))$.
	
	~\\\noindent\textbf{THE FIRST PHASE} Define event $$\calE_t:=\l\{\|\Bbeta_{i}^{(t)}-\Bbeta_{i}^*\|^2\leq \frac{C^*\; (d+\delta_t)}{t+\Cb d}\frac{\sigma^2}{\lambda_{\min}},\quad i\in[K]\r\},$$ where $C^*$ is some constant uncorrelated with $t,d,n$ and will be specified later. We are going to prove by induction. It is obvious that $\calE_0$ holds. Then we shall show event $\calE_{t+1}$ holds under $\cup_{s=0}^t\calE_s$. Substitue the exploration rate $\pi_t\geq \pi$ into Proposition~\ref{prop: conditional expectation} and then it arrives at
	$$\EE\left\{\|\Bbeta_{i}^{(t+1)} - \Bbeta_{i}^*\|^2 \big| \calF_t\right\}\leq \|\Bbeta_{i}^{(t)}-\Bbeta_{i}^*\|^2-2\eta_{t}\frac{\pi}{K}\lambda_{\min}\|\Bbeta_{i}^{(t)}-\Bbeta_{i}^*\|^2
	+C\eta_{t}^2d\lambda_{\max}\sigma^2+C\eta_{t}^2d\lambda_{\max}^2 \|\Bbeta_{i}^{(t)}-\Bbeta_{i}^*\|^2.$$
	Then the estimation error dynamics have
	\begin{align*}
		\|\Bbeta_{i}^{(t+1)} - \Bbeta_{i}^*\|^2 &= \EE\l\{\|\Bbeta_{i}^{(t+1)} - \Bbeta_{i}^*\|^2 \big| \calF_t\r\} + \|\Bbeta_{i}^{(t+1)} - \Bbeta_{i}^*\|^2  - \EE\l\{\|\Bbeta_{i}^{(t+1)} - \Bbeta_{i}^*\|^2 \big| \calF_t\r\}\\
		&\leq \|\Bbeta_{i}^{(t)}-\Bbeta_{i}^*\|^2-2\eta_{t}\frac{\pi}{K}\lambda_{\min}\|\Bbeta_{i}^{(t)}-\Bbeta_{i}^*\|^2
		+2\eta_{t}^2d\lambda_{\max}\sigma^2\\
		&~~~~~~~~~~~~~~~~~~~~~~+2\eta_{t}^2d\lambda_{\max}^2\|\Bbeta_{i}^{(t)}-\Bbeta_{i}\|^2+2\eta_{t}\boldsymbol{\Xi}_1^{(t)}+2\eta_{t}\boldsymbol{\Xi}_2^{(t)}+\eta_{t}^2\boldsymbol{\Xi}_3^{(t)}.
	\end{align*}
	Substitute the stepsize $\eta_{t}=\frac{1}{\lambda_{\min}}\frac{\Ca}{t+\Cb d}$ into the above equation,
	\begin{multline*}
		\|\Bbeta_{i}^{(t+1)} - \Bbeta_{i}^*\|^2
		\leq \l(1-\frac{\pi}{K}\frac{\Ca}{t+\Cb d}\r)\|\Bbeta_{i}^{(t)}-\Bbeta_i^*\|^2+\frac{2\,\Ca^2}{(t+\Cb d)^2}\,\frac{\lambda_{\max}}{\lambda_{\min}^2}\,d\,\sigma^2\\
		- \frac{2\,\Ca}{\lambda_{\min}}\frac{1}{t+\Cb d}\,\boldsymbol{\Xi}_1^{(t)} - \frac{2\, \Ca}{\lambda_{\min}}\frac{1}{t+\Cb d}\,\boldsymbol{\Xi}_2^{(t)}+\frac{\Ca^2}{\lambda_{\min}^2}\frac{1}{(t+\Cb d)^2}\,\boldsymbol{\Xi}_3^{(t)},
	\end{multline*}
	where $2\eta_td \leq\lambda_{\min}/\lambda_{\max}^2$ is used. We accumulate the upper bound until $t=0$,
	\begin{align*}
		\|\Bbeta_{i}^{(t+1)} - \Bbeta_{i}^*\|^2&\leq\underbrace{\prod_{s=0}^{t}  \l(1-\frac{\pi}{K}\frac{\Ca}{s+\Cb d}\r)\|\Bbeta_{i}^{(0)}-\Bbeta_i^*\|^2}_{\boldsymbol{\Delta}_1}+\underbrace{2\frac{\lambda_{\max}}{\lambda_{\min}^2}\sigma^2\sum_{s=0}^{t} \prod_{l=s+1}^{t}  \l(1-\frac{\pi}{K}\frac{\Ca}{l+\Cb d}\r)\frac{\Ca^2 d}{(s+\Cb d)^2}}_{\boldsymbol{\Delta}_2}\\
		&~~~~~~~~~~-\underbrace{\frac{2\,\Ca}{\lambda_{\min}}\sum_{s=0}^{t}\prod_{l=s+1}^{t}  \l(1-\frac{\pi}{K}\frac{\Ca}{l+\Cb d}\r)\frac{1}{s+\Cb d}\boldsymbol{\Xi}_1^{(s)}}_{\boldsymbol{\Delta}_3}\\
		&~~~~~~~~~~-\underbrace{\frac{2\, \Ca}{\lambda_{\min}}\sum_{s=0}^{t}\prod_{l=s+1}^{t}  \l(1-\frac{\pi}{K}\frac{\Ca}{l+\Cb d}\r)\frac{1}{s+\Cb d}\boldsymbol{\Xi}_2^{(s)}}_{\boldsymbol{\Delta}_4}\\
		&~~~~~~~~~~-\underbrace{\frac{\Ca^2}{\lambda_{\min}^2}\sum_{s=0}^{t}\prod_{l=s+1}^{t}  \l(1-\frac{\pi}{K}\frac{\Ca}{l+\Cb d}\r)\l(\frac{1}{s+\Cb d}\r)^2\boldsymbol{\Xi}_3^{(s)}}_{\boldsymbol{\Delta}_5}.
	\end{align*}
	Recall that we require $\Ca\geq 2K/\pi$. Then, by Lemma~\ref{teclem:prod upper bound}, we have
	\begin{align*}
		\prod_{s=0}^{t}  \l(1-\frac{\pi}{K}\frac{\Ca}{s+\Cb d}\r)\leq \l(\frac{\Cb\, d}{t+1+\Cb d}\r)^{\frac{\Ca \pi}{K}}\leq \frac{\Cb\, d}{t+1+\Cb d},
	\end{align*}
	and 
	\begin{align*}
		\l(1-\frac{\pi}{K}\frac{\Ca}{s+1+\Cb d}\r)\cdots\l(1-\frac{\pi}{K}\frac{\Ca}{t+\Cb d}\r)\l(\frac{1}{s+\Cb d}\r)^2\leq \frac{\l(s+\Cb d\r)^{\pi\cdot\Ca/K-2}}{\l(t+1+\Cb d\r)^{\pi\cdot\Ca/K}}.
	\end{align*}
	Then, sum up the above equation over $s$ and follow Lemma~\ref{teclem:sum upper bound} calculations, we have,
	\begin{align*}
		\sum_{s=0}^{t} \l(1-\frac{\pi}{K}\frac{\Ca}{s+1+\Cb d}\r)\cdots\l(1-\frac{\pi}{K}\frac{\Ca}{t+\Cb d}\r)\l(\frac{1}{s+\Cb d}\r)^2\leq \frac{2K}{\pi\cdot \Ca}\frac{1}{t+1+\Cb d}.
	\end{align*}
	Thus together with the initialization, we have the bound for $\boldsymbol{\Delta}_1$ and $\boldsymbol{\Delta}_2$,
	$$\boldsymbol{\Delta}_1\leq \frac{\Cb\, d}{t+1+\Cb d}\frac{\sigma^2}{\lambda_{\max}},\quad \boldsymbol{\Delta}_2\leq 4\Ca\frac{K}{\pi}\frac{\lambda_{\max}}{\lambda_{\min}^2}\frac{d\, \sigma^2}{t+1+\Cb d}.$$
	Then consider $\boldsymbol{\Delta}_3$. The conditional Orlicz norm can be bounded with
	\begin{align*}
		\l\|\prod_{l=s+1}^{t}  \l(1-\frac{\pi}{K}\frac{\Ca}{l+\Cb d}\r)\frac{1}{s+\Cb d}\boldsymbol{\Xi}_1^{(s)}\bigg|\calF_s\r\|_{\Psi_1}\leq C\lambda_{\max}\frac{\l(s+\Cb d\r)^{\Ca\cdot\pi/K-1}}{\l(t+1+\Cb d\r)^{\Ca\cdot\pi/K}}\cdot\|\Bbeta_{i}^{(s)}-\Bbeta_{i}^*\|^2,
	\end{align*}
	and under event $\cup_{s=0}^t\calE_s$, Lemma~\ref{teclem:azuma} leads to
	\begin{multline*}
		\PP\l(\l|\sum_{s=0}^{t}\prod_{l=s+1}^{t}  \l(1-\frac{\pi}{K}\frac{\Ca}{l+\Cb d}\r)\frac{1}{s+\Cb d}\boldsymbol{\Xi}_1^{(s)}\r|\geq c_1\frac{C^*}{\Ca}\frac{(d+\delta_{t+1})\ \sigma^2}{t+1+\Cb d}\r)\\
		\leq\exp\l(-c_2  \frac{\pi}{K}\frac{\Cb d+t}{\Ca}\frac{\lambda_{\min}^2}{\lambda_{\max}^2}\r),
	\end{multline*}
	where $c_1<0.1$ is some constant and under which we have
	\begin{align*}
		\big|\boldsymbol{\Delta}_3\big|\leq c_1C^*\frac{1}{\lambda_{\min}}\frac{(d+\delta_{t+1})\; \sigma^2}{t+1+\Cb d}.
	\end{align*}
	As for the term $\boldsymbol{\Delta}_4$, we have
	\begin{align*}
		\l\|\prod_{l=s+1}^{t}  \l(1-\frac{\pi}{K}\frac{\Ca}{l+\Cb d}\r)\frac{1}{s+\Cb d}\boldsymbol{\Xi}_2^{(s)}\bigg|\calF_s \r\|_{\Psi_1}\leq C\sqrt{\lambda_{\max}} \sigma\frac{(s+\Cb d)^{\Ca\cdot\pi/K-1}}{(t+1+\Cb d)^{\Ca\cdot\pi/K}}\cdot\|\Bbeta_{i}^{(s)}-\Bbeta_i^*\|,
	\end{align*}
	and then under event $\cup_{s=0}^t\calE_s$, Lemma~\ref{teclem:azuma} guarantees
	\begin{multline*}
		\PP\l(\l|\sum_{s=0}^{t}\prod_{l=s+1}^{t}  \l(1-\frac{\pi}{K}\frac{\Ca}{l+\Cb d}\r)\frac{1}{s+\Cb d}\boldsymbol{\Xi}_2^{(s)}\r|\geq c_1\frac{C^*}{\Ca}\frac{(d+\delta_{t+1})\ \sigma^2}{t+1+\Cb d}\r)\\
		\leq\exp\l(-c_2 \min\l\{(d+\delta_{t+1})\frac{C^*}{\Ca}\frac{\pi}{ K}\frac{\lambda_{\min}}{\lambda_{\max}},\sqrt{d+\delta_{t+1}}\sqrt{\Cb d+t}\frac{\sqrt{C^*}}{\Ca}\frac{\sqrt{\lambda_{\min}}}{\sqrt{\lambda_{\max}}}\r\}\r).
	\end{multline*}
	Hence, we have the bound for $\boldsymbol{\Delta}_4$,
	\begin{align*}
		\big| \boldsymbol{\Delta}_4\big|\leq c_1\frac{C^*}{\lambda_{\min}}\frac{(d+\delta_{t+1})\; \sigma^2}{t+1+\Cb d}.
	\end{align*}
	Finally, under $\calE_s^X$ and $\|\Bbeta_{t}-\Bbeta^*\|\leq 3\sigma^2/\lambda_{\max}$, the $\boldsymbol{\Delta}_5$ term has
	\begin{multline*}
		\l\|\prod_{l=s+1}^{t}  \l(1-\frac{\pi}{K}\frac{\Ca}{l+\Cb d}\r)\l(\frac{1}{s+\Cb d}\r)^2\boldsymbol{\Xi}_3^{(s)}\bigg|\calF_s,\calE_s^X\r\|_{\Psi_1}\\
		\leq C\lambda_{\max}(d+\delta_s)\sigma^2\frac{\l(s+\Cb d\r)^{\Ca\cdot\pi/K-2}}{\l(t+1+\Cb d\r)^{\Ca\cdot\pi/K}}+C(d+\delta_s)^2\lambda_{\max}^2\frac{(s+\Cb s)^{\Ca\cdot\pi/K-3}}{(t+1+\Cb d)^{\Ca\cdot\pi/K}}.
	\end{multline*}
	It implies that under event $\cup\calE_s^X$, we have
	\begin{multline*}
		\PP\l(\l|\sum_{s=0}^{t}\prod_{l=s+1}^{t}  \l(1-\frac{\pi}{K}\frac{\Ca}{l+\Cb d}\r)\l(\frac{1}{s+\Cb d}\r)^2\boldsymbol{\Xi}_3^{(s)}\r|\geq c_1C^*\frac{\lambda_{\min}}{\Ca^2}\frac{(d+\delta_{t+1})\ \sigma^2}{t+1+\Cb d}\r)\\ \leq\exp\l(-c_2\l(d+\frac{t}{\Cb}\r)\min\l\{ \frac{\lambda_{\min}^2}{\lambda_{\max}^2}\frac{\pi}{K}\frac{\Cb}{\Ca}\l(\frac{C^*}{\Ca}\r)^2,\frac{\lambda_{\min}}{\lambda_{\max}}\frac{C^*\Cb}{\Ca^2}\r\}\r),
	\end{multline*}
	under which we have
	\begin{align*}
		\big|\boldsymbol{\Delta}_5\big|\leq c_1\frac{C^*}{\lambda_{\min}}\frac{(d+\delta_{t+1})\; \sigma^2}{t+1+\Cb d}.
	\end{align*}
	Thus, under the aboved mentioned events, we have
	\begin{align*}
		\|\Bbeta_{i}^{(t+1)} - \Bbeta_{i}^*\|^2&\leq \frac{\Cb \; d}{t+1+\Cb d}\frac{\sigma^2}{\lambda_{\max}}+4\Ca\frac{K}{\pi}\frac{\lambda_{\max}}{\lambda_{\min}^2}\frac{{\rm d} \sigma^2}{t+1+\Cb d}+0.3\frac{C^*}{\lambda_{\min}}\frac{(d+\delta_{t+1})\; \sigma^2}{t+1+\Cb d}.
	\end{align*}
	With $C^*=8\Ca \frac{K}{\pi}\frac{\lambda_{\max}}{\lambda_{\min}}+2\Cb\frac{\lambda_{\min}}{\lambda_{\max}}$ and $\Cb\geq 3\Ca\frac{K}{\pi}\frac{\lambda_{\max}^2}{\lambda_{\min}^2}$, we have $$\|\Bbeta_{i}^{(t+1)} - \Bbeta_{i}^*\|^2\leq \frac{C^* \; (d+\delta_{t+1})}{t+1+\Cb d}\frac{\sigma^2}{\lambda_{\min}},\quad \text{for all } i\in[K],$$with probability excceeding $1-2K\exp\l(-cd-c\frac{t}{\Ca}\r)-K\exp\l(-c(d+\delta_{t+1})\r)$. 
	~\\
	~\\
	\noindent\textbf{THE SECOND PHASE} We will first consider the convergence dynamics of the set $\calA$ and then discuss $i\in\calA^c$. In this phase, $\pi_t\in[0,1]$ is arbitrary. We denote the event
	\begin{align*}
		\calE_t:=\l\{\|\Bbeta_{i}^{(t)}-\Bbeta_{i}^*\|^2\leq\frac{C^{**}\, (d+\delta_t)}{t+\Cb d}\frac{\sigma^2}{\lambda_{\min}},\; i\in\calA,\quad \|\Bbeta_{i}^{(t)}-\Bbeta_{i}^*\|\leq \frac{h}{2},\; i\in\calA^c\r\},
	\end{align*} 
	which is different from the event in the first phase. We prove by inducton. It is worth noting that $\calE_{t_1}$ holds. Then we shall prove under $\cup_{s=t_1}^{t}\calE_s$, $\calE_{t+1}$ holds. Then given $\{\max_{i\in[K]}\|\Bbeta_{i}^{(t)}-\Bbeta_{i}^*\|\leq\frac{h}{2}\}$, Proposition~\ref{prop: conditional expectation} proves, for all $i\in\calA$,
	\begin{align*}
		\EE\l\{\|\Bbeta_{i}^{(t+1)} - \Bbeta_{i}^*\|^2 \big| \calF_t\r\}\leq \|\Bbeta_{i}^{(t)}-\Bbeta_{i}^*\|^2-\eta_{t}\lambda_{\min}\frac{1}{K}\|\Bbeta_{i}^{(t)}-\Bbeta_{i}^*\|^2+C\eta_{t}^2d\lambda_{\max}\sigma^2,
	\end{align*}
	where $\frac{\pi_t}{K}+(1-\pi_t)\geq \frac{1}{K}$. Thus, the estimation error rates can be bounded with
	\begin{align*}
		\|\Bbeta_{i}^{(t+1)} - \Bbeta_{i}^*\|^2&=\EE\l\{\|\Bbeta_{i}^{(t+1)} - \Bbeta_{i}^*\|^2 \big| \calF_t\r\}+\l\{\|\Bbeta_{i}^{(t+1)} - \Bbeta_{i}^*\|^2 -\EE\l\{\|\Bbeta_{i}^{(t+1)} - \Bbeta_{i}^*\|^2 \big| \calF_t\r\}\r\}\\
		&\leq \|\Bbeta_{i}^{(t)}-\Bbeta_{i}^*\|^2-\eta_{t}\lambda_{\min}\frac{1}{K}\|\Bbeta_{i}^{(t)}-\Bbeta_{i}^*\|^2+C\eta_{t}^2d\lambda_{\max}\sigma^2 - 2\eta_{t}\boldsymbol{\Xi}_1^{(t)}-2\eta_{t}\boldsymbol{\Xi}_2^{(t)}+\eta_{t}^2\boldsymbol{\Xi}_3^{(t)}.
	\end{align*}
	Insert the stepsize $\eta_{t}=\frac{\Ca}{\lambda_{\min}}\ \frac{1}{t+\Cb d}$ into the above equation and then we have
	\begin{align*}
		\|\Bbeta_{i}^{(t+1)} - \Bbeta_{i}^*\|^2
		&\leq \l(1-\frac{\Ca/K}{t+\Cb d}\r)\|\Bbeta_{i}^{(t)}-\Bbeta_i^*\|^2+\frac{C\,\Ca^2}{(t+\Cb d)^2}\,\frac{\lambda_{\max}}{\lambda_{\min}^2}\,d\,\sigma^2\\
		&~~~~~~~~~~~~~~~~~~~~~~- \frac{2\,\Ca}{\lambda_{\min}}\frac{1}{t+\Cb d}\,\boldsymbol{\Xi}_1^{(t)} - \frac{2\, \Ca}{\lambda_{\min}}\frac{1}{t+\Cb d}\,\boldsymbol{\Xi}_2^{(t)}+\frac{\Ca^2}{\lambda_{\min}^2}\l(\frac{1}{t+\Cb d}\r)^2\,\boldsymbol{\Xi}_3^{(t)}.
	\end{align*}
	It leads to
	\begin{align*}
		\|\Bbeta_{i}^{(t+1)} - \Bbeta_{i}^*\|^2&\leq\underbrace{\prod_{s=t_1}^{t}  \l(1-\frac{\Ca/K}{s+\Cb d}\r)\|\Bbeta_{i}^{(t_1)}-\Bbeta_i^*\|^2}_{\boldsymbol{\Theta}_1}+\underbrace{C\frac{\lambda_{\max}}{\lambda_{\min}^2}\sigma^2\sum_{s=t_1}^{t} \prod_{l=s+1}^{t}  \l(1-\frac{\Ca/K}{l+\Cb d}\r)\frac{\Ca^2 d}{(s+\Cb d)^2}}_{\boldsymbol{\Theta}_2}\\
		&~~~~~~~~~~-\underbrace{\frac{2\,\Ca}{\lambda_{\min}}\sum_{s=t_1}^{t}\prod_{l=s+1}^{t}  \l(1-\frac{\Ca/K}{l+\Cb d}\r)\frac{1}{s+\Cb d}\boldsymbol{\Xi}_1^{(s)}}_{\boldsymbol{\Theta}_3}\\
		&~~~~~~~~~~-\underbrace{\frac{2\, \Ca}{\lambda_{\min}}\sum_{s=t_1}^{t}\prod_{l=s+1}^{t}  \l(1-\frac{\Ca/K}{l+\Cb d}\r)\frac{1}{s+\Cb d}\boldsymbol{\Xi}_2^{(s)}}_{\boldsymbol{\Theta}_4} \\
		&~~~~~~~~~~-\underbrace{\frac{\Ca^2}{\lambda_{\min}^2}\sum_{s=t_1}^{t}\prod_{l=s+1}^{t}  \l(1-\frac{\Ca/K}{l+\Cb d}\r)\l(\frac{1}{s+\Cb d}\r)^2\boldsymbol{\Xi}_3^{(s)}}_{\boldsymbol{\Theta}_5}.
	\end{align*}
	By Lemma~\ref{teclem:prod upper bound} and $\Ca>K$, we have
	\begin{align*}
		\prod_{s=t_1}^{t}  \l(1-\frac{\Ca/K}{s+\Cb d}\r)\leq \frac{t_1+\Cb d}{t+1+\Cb d},
	\end{align*}
	which proves $\boldsymbol{\Theta}_1 \leq \frac{t_1+\Cb d}{t+1+\Cb d}\|\Bbeta_{i}^{(t_1)}-\Bbeta_{i}^*\|^2\leq \frac{Cd}{t+1+\Cb d}\frac{\sigma^2}{\lambda_{\min}}$. Moreover, Lemma~\ref{teclem:prod upper bound} proves
	\begin{align*}
		\l(1-\frac{\Ca/K}{s+1+\Cb d}\r)\cdots\l(1-\frac{\Ca/K}{t+\Cb d}\r)\l(\frac{1}{s+\Cb d}\r)^2\leq \l(\frac{s+\Cb d}{t+1+\Cb d}\r)^{\Ca/K}\l(\frac{1}{s+\Cb d}\r)^2.
	\end{align*}
	Then, sum over $s$ and Lemma~\ref{teclem:sum upper bound} shows
	\begin{align*}
		\sum_{s=t_1}^{t} \l(1-\frac{\Ca/K}{s+1+\Cb d}\r)\cdots\l(1-\frac{\Ca/K}{t+\Cb d}\r)\l(\frac{1}{s+\Cb d}\r)^2\leq \frac{2K}{\Ca}\frac{1}{t+1+\Cb d},
	\end{align*}
	which proves.
	\begin{align*}
		\boldsymbol{\Theta}_2\leq C\frac{\Ca}{\lambda_{\min}}\frac{\lambda_{\max}}{\lambda_{\min}}\frac{{\rm d}\sigma^2}{t+1+\Cb d}.
	\end{align*}
	Then consider the term $\boldsymbol{\Theta}_3$. Its single component has
	\begin{align*}
		\l\|\prod_{l=s+1}^{t}  \l(1-\frac{\Ca/K}{l+\Cb d}\r)\frac{1}{s+\Cb d}\boldsymbol{\Xi}_1^{(s)}\bigg|\calF_s\r\|_{\Psi_1}\leq C\lambda_{\max}\frac{\l(s+\Cb d\r)^{\Ca/K-1}}{\l(t+1+\Cb d\r)^{\Ca/K}}\|\Bbeta_{i}^{(s)}-\Bbeta_{i}^*\|^2.
	\end{align*}
	Under event $\cup_{s=t_1}^{t}\calE_s$, Lemma~\ref{lem:regularity} infers
	\begin{align*}
		\PP\l(\l|\sum_{s=t_1}^{t}\prod_{l=s+1}^{t}  \l(1-\frac{\Ca/K}{l+\Cb d}\r)\frac{1}{s+\Cb d}\boldsymbol{\Xi}_1^{(s)}\r|\geq c_1\frac{C^*}{\Ca}\frac{(d+\delta_{t+1})\ \sigma^2}{t+1+\Cb d}\r)\leq\exp\l(-c_2 (\Cb d+t)\frac{1}{\Ca}\frac{\lambda_{\min}^2}{\lambda_{\max}^2}\r),
	\end{align*}
	where $c_1<0.1$ is some small constant and then we have
	\begin{align*}
		\l|\boldsymbol{\Theta}_3\r|\leq c_1C^{**}\frac{1}{\lambda_{\min}}\frac{(d+\delta_{t+1})\; \sigma^2}{t+1+\Cb d}.
	\end{align*}
	As for the $\boldsymbol{\Theta}_4$ term, we have
	\begin{align*}
		\l\|\prod_{l=s+1}^{t}  \l(1-\frac{\Ca/K}{l+\Cb d}\r)\frac{1}{s+\Cb d}\boldsymbol{\Xi}_2^{(s)} \bigg|\calF_s\r\|_{\Psi_1}\leq C\sqrt{\lambda_{\max}} \sigma\frac{(s+\Cb d)^{\Ca/K-1}}{(t+1+\Cb d)^{\Ca/K}}\|\Bbeta_{i}^{(s)}-\Bbeta_i^*\|.
	\end{align*}
	Then under event $\cup_{s=t_1}^t \calE_s$, we have
	\begin{multline*}
		\PP\l(\l|\sum_{s=t_1}^{t}\prod_{l=s+1}^{t}  \l(1-\frac{\Ca/K}{l+\Cb d}\r)\frac{1}{s+\Cb d}\boldsymbol{\Xi}_2^{(s)}\r|\geq c_1\frac{C^*}{\Ca}\frac{(d+\delta_{t+1})\ \sigma^2}{t+1+\Cb d}\r)\\
		\leq\exp\l(-c_2 \min\l\{(d+\delta_{t+1})\frac{C^*}{\Ca}\frac{\lambda_{\min}}{\lambda_{\max}},\sqrt{d+\delta_{t+1}}\sqrt{d+t/\Cb}\frac{\sqrt{C^*\Cb}}{\Ca}\frac{\sqrt{\lambda_{\min}}}{\sqrt{\lambda_{\max}}}\r\}\r),
	\end{multline*}
	which leads to
	\begin{align*}
		\l|\boldsymbol{\Theta}_4\r|\leq c_1\frac{C^{**}}{\lambda_{\min}}\frac{(d+\delta_{t+1})\; \sigma^2}{t+1+\Cb d}.
	\end{align*}
	Finally, consider the term $\boldsymbol{\Theta}_5$. Under $\calF_s$ and under event $\calE_s,\calE_s^X$, we have
	\begin{align*}
		\l\|\prod_{l=s+1}^{t}  \l(1-\frac{\Ca/K}{l+\Cb d}\r)\l(\frac{1}{s+\Cb d}\r)^2\boldsymbol{\Xi}_3^{(s)}\bigg|\calF_s,\calE_s^X\r\|_{\Psi_1}\leq C\lambda_{\max}(d+\delta_{s})\sigma^2\frac{\l(s+\Cb d\r)^{\Ca-2}}{\l(t+1+\Cb d\r)^{\Ca}}.
	\end{align*}
	Then under event $\cup_{s=t_1}^t\l\{\calE_s\cup\calE_s^X\r\}$, we have
	\begin{multline*}
		\PP\l(\l|\sum_{s=t_1}^{t}\prod_{l=s+1}^{t}  \l(1-\frac{\Ca/K}{l+\Cb d}\r)\l(\frac{1}{s+\Cb d}\r)^2\boldsymbol{\Xi}_3^{(s)}\r|\geq c_1C^{**}\frac{\lambda_{\min}}{\Ca^2}\frac{(d+\delta_{t+1})\ \sigma^2}{t+1+\Cb d}\r)\\
		\leq\exp\l(-c_2\l(d+\frac{t}{\Cb}\r)\min\l\{\frac{\lambda_{\min}^2}{\lambda_{\max}^2}\frac{\Cb}{\Ca}\l(\frac{C^{**}}{\Ca}\r)^2,\frac{\lambda_{\min}}{\lambda_{\max}}\frac{C^{**}\Cb}{\Ca^2} \r\}\r),
	\end{multline*}
	which shows
	\begin{align*}
		\l|\boldsymbol{\Theta}_5\r|\leq c_1\frac{C^{**}}{\lambda_{\min}}\frac{(d+\delta_{t+1})\; \sigma^2}{t+1+\Cb d}.
	\end{align*}
	Thus, altogether, with $C^{**}=C^*+2C\Ca\frac{\lambda_{\max}}{\lambda_{\min}}$, we have
	\begin{align*}
		\|\Bbeta_{i}^{(t+1)} - \Bbeta_{i}^*\|^2&\leq \frac{C^*}{\lambda_{\min}}\frac{d\,\sigma^2}{t+1+\Cb d}+ C\frac{\Ca}{\lambda_{\min}}\frac{\lambda_{\max}}{\lambda_{\min}}\frac{{\rm d}\sigma^2}{t+1+\Cb d} + 3c_1\frac{C^{**}}{\lambda_{\min}}\frac{(d+\delta_{t+1})\; \sigma^2}{t+1+\Cb d}\\
		&\leq \frac{C^{**}}{\lambda_{\min}}\frac{(d+\delta_{t+1})\;\sigma^2}{t+1+\Cb d},
	\end{align*}
	holds with probability exceeding $1-|\calA|\exp(-c(d+t/\Ca))-|\calA|\exp(-c(d+\delta_{t+1}))$ for all $i\in\calA$. Moreover, with $\Cb\geq \Ca\frac{K}{\pi}\frac{\lambda_{\max}^2}{\lambda_{\min}^2}$, we have $\|\Bbeta_{i}^{(t+1)} - \Bbeta_{i}^*\|^2\leq C\frac{\sigma^2}{\lambda_{\max}}$; given $\frac{C^*}{\lambda_{\min}}\frac{(d+\delta_{t_1})\;\sigma^2}{t_1+\Cb d}\leq\frac{h^2}{16}$, we have $\|\Bbeta_{i}^{(t+1)} - \Bbeta_{i}^*\|^2\leq \frac{h^2}{4}$.
	~\\
	~\\
	~\\
	Then consider bandits in $i\in\calA^c$. Proposition~\ref{prop: conditional expectation} proves that under $\calE_t$, we have
	\begin{align*}
		\EE\l\{\|\Bbeta_{i}^{(t+1)} - \Bbeta_{i}^*\|^2\bigg| \calF_t \r\}\leq\|\Bbeta_{i}^{(t)}-\Bbeta_{i}^*\|^2-2\eta_{t}\pi_t\lambda_{\min}\|\Bbeta_{i}^{(t)}-\Bbeta_{i}^*\|^2+C\eta_{t}^2 \frac{\pi_t}{K}\lambda_{\max}\; d\sigma^2.
	\end{align*}
	Substitute $\eta_{t}=\frac{\Ca}{\lambda_{\min}}\frac{1}{t+\Cb d}$ into the above equation and then we have
	\begin{align*}
		\EE\l\{\|\Bbeta_{i}^{(t+1)} - \Bbeta_{i}^*\|^2\bigg|\calF_t  \r\}\leq\l(1-\frac{\Ca\,\pi_t}{t+\Cb d}\r)\|\Bbeta_{i}^{(t)}-\Bbeta_{i}^*\|^2+C\Ca^2\frac{\pi_t}{K}\frac{\lambda_{\max}}{\lambda_{\min}^2}\l(\frac{1}{t+\Cb d}\r)^2.
	\end{align*}
	Thus we have
	\begin{align*}
		\|\Bbeta_{i}^{(t+1)} - \Bbeta_{i}^*\|^2
		&\leq\underbrace{\prod_{s=t_1}^{t}  \l(1-\frac{\Ca\pi_s}{s+\Cb d}\r)\|\Bbeta_{i}^{(t_1)}-\Bbeta_i^*\|^2}_{\boldsymbol{\Gamma}_1}+\underbrace{C\Ca^2d\sigma^2\frac{1}{K}\frac{\lambda_{\max}}{\lambda_{\min}^2}\sum_{s=t_1}^{t} \prod_{l=s+1}^{t}  \l(1-\frac{\Ca\pi_l}{l+\Cb d}\r)\frac{\pi_s}{(s+\Cb d)^2}}_{\boldsymbol{\Gamma}_2}\\
		&~~~~~~~~~~-\underbrace{\frac{2\,\Ca}{\lambda_{\min}}\sum_{s=t_1}^{t}\prod_{l=s+1}^{t}  \l(1-\frac{\Ca\pi_l}{l+\Cb d}\r)\frac{1}{s+\Cb d}\boldsymbol{\Xi}_1^{(s)}}_{\boldsymbol{\Gamma}_3}\\
		&~~~~~~~~~~-\underbrace{\frac{2\, \Ca}{\lambda_{\min}}\sum_{s=t_1}^{t}\prod_{l=s+1}^{t}  \l(1-\frac{\Ca\pi_l}{l+\Cb d}\r)\frac{1}{s+\Cb d}\boldsymbol{\Xi}_2^{(s)}}_{\boldsymbol{\Gamma}_4} \\
		&~~~~~~~~~~+\underbrace{\frac{\Ca^2}{\lambda_{\min}^2}\sum_{s=t_1}^{t}\prod_{l=s+1}^{t}  \l(1-\frac{\Ca\pi_l}{l+\Cb d}\r)\l(\frac{1}{s+\Cb d}\r)^2\boldsymbol{\Xi}_3^{(s)}}_{\boldsymbol{\Gamma}_5}.
	\end{align*}
	For $\boldsymbol{\Gamma}_1$, it simply has $\boldsymbol{\Gamma}_1\leq \|\Bbeta_{i}^{(t_1)}-\Bbeta_{i}^*\|^2\leq\frac{h^2}{16}$. As for $\boldsymbol{\Gamma}_2$, we have
	\begin{align*}
		\boldsymbol{\Gamma}_2\leq C\Ca^2d\sigma^2\frac{1}{K}\frac{\lambda_{\max}}{\lambda_{\min}^2}\sum_{s=t_1}^{t}\frac{1}{(s+\Cb d)^2}\leq C\frac{\Ca^2}{K}\frac{\lambda_{\max}}{\lambda_{\min}^2}\frac{ d\sigma^2}{t_1+\Cb d}.
	\end{align*}
	It is worth noting that under event $\{\max_{j}\|\Bbeta_{j}^{(s)}-\Bbeta_{j}^*\|\leq \frac{h}{2}\}$, $a_t=i$ only when $\alpha_t=1$ and $\II\{a_t=i\}$ is independent of $\X_t$, $\Bbeta_i^{(t)}$, following $\II\{a_t=i\}\sim \operatorname*{Bernoulli}\l(\frac{\pi_t}{K}\r)$. Then a single component of $\boldsymbol{\Gamma}_3$ has
	\begin{align*}
		\l\|\prod_{l=s+1}^{t}  \l(1-\frac{\Ca\pi_l}{l+\Cb d}\r)\frac{1}{s+\Cb d}\boldsymbol{\Xi}_1^{(s)}\bigg|\calF_s\r\|_{\Psi_1}\leq \II\l\{\pi_s\neq0\r\}\cdot\lambda_{\max}\frac{h^2}{s+\Cb d}.
	\end{align*}
	By Lemma~\ref{teclem:azuma}, we have
	\begin{multline*}
		\PP\l(\l|\sum_{s=t_1}^{t}\prod_{l=s+1}^{t}  \l(1-\frac{\Ca\pi_l}{l+\Cb d}\r)\frac{1}{s+\Cb d}\boldsymbol{\Xi}_1^{(s)}\r|> c_1 \II\l\{\pi_s\neq0\r\} h^2 \frac{\lambda_{\min}}{\Ca}\r)\\
		\leq\II\l\{\pi_s\neq0\r\}\exp\l(-c_2 d\min\l\{ \frac{\Cb}{\Ca^2}\frac{\lambda_{\min}^2}{\lambda_{\max}^2}, \frac{\Cb}{\Ca}\frac{\lambda_{\min}}{\lambda_{\max}}\r\}\r),
	\end{multline*}
	under which it has
	\begin{align*}
		\big|\boldsymbol{\Gamma}_3\big|\leq c_1\II\l\{\pi_s\neq0\r\}\cdot h^2.
	\end{align*}
	Similarly, it has
	\begin{align*}
		\l\|\prod_{l=s+1}^{t}  \l(1-\frac{\Ca\pi_l}{l+\Cb d}\r)\frac{1}{s+\Cb d}\boldsymbol{\Xi}_2^{(s)}\bigg| \calF_s\r\|_{\Psi_1}\leq \II\l\{\pi_s\neq0\r\}\sqrt{\lambda_{\max}}\sigma\frac{h}{s+\Cb d}.
	\end{align*}
	Then Lemma~\ref{teclem:azuma} guarantees
	\begin{multline*}
		\PP\l(\l|\sum_{s=t_1}^{t}\prod_{l=s+1}^{t}  \l(1-\frac{\Ca\pi_l}{l+\Cb d}\r)\frac{1}{s+\Cb d}\boldsymbol{\Xi}_2^{(s)}\r|> c_1\II\l\{\pi_s\neq0\r\}h^2\frac{\lambda_{\min}}{\Ca}\r)\\
		\leq\II\l\{\pi_s\neq0\r\}\exp\l(-cd\min\l\{\frac{C^*}{\Ca^2}\frac{\lambda_{\min}}{\lambda_{\max}},\frac{\sqrt{\lambda_{\min}}}{\sqrt{\lambda_{\max}}}\frac{\sqrt{C^*\Cb}}{\Ca}\r\}\r),
	\end{multline*}
	which implies $\big|\boldsymbol{\Gamma}_4\big|\leq c_1h^2$. Then consider the term $\boldsymbol{\Gamma}_5$,
	\begin{align*}
		\l\|\prod_{l=s+1}^{t}  \l(1-\frac{\Ca\pi_l}{l+\Cb d}\r)\l(\frac{1}{s+\Cb d}\r)^2\boldsymbol{\Xi}_3^{(s)}\bigg|\calF_s\r\|_{\Psi_1}\leq \II\l\{\pi_s\neq0\r\} \l(\frac{1}{s+\Cb d}\r)^2d\lambda_{\max}\sigma^2.
	\end{align*}
	Follow Lemma~\ref{teclem:azuma},
	\begin{multline*}
		\PP\l(\l|\sum_{s=t_1}^{t}\prod_{l=s+1}^{t}  \l(1-\frac{\Ca\pi_l}{l+\Cb d}\r)\l(\frac{1}{s+\Cb d}\r)^2\boldsymbol{\Xi}_3^{(s)}\r| > c_1\II\l\{\pi_s\neq0\r\}\frac{\lambda_{\min}^2}{\Ca^2}h^2\r)\\
		\leq\II\l\{\pi_s\neq0\r\}\exp\l(-cd\min\l\{\frac{\lambda_{\min}^2}{\lambda_{\max}^2}\frac{C^{*2}\Cb}{\Ca^4},\frac{\lambda_{\min}}{\lambda_{\max}}\frac{C^*\Cb}{\Ca^2}\r\}\r),
	\end{multline*}
	by which we have $\big|\boldsymbol{\Gamma}_5\big|\leq c_1h^2$. Thus altogether, given $\Cb\geq C\Ca^2\lambda_{\max}^2/\lambda_{\min}^2$, the following holds for all $i\in\calA^c$ with probability over $1-\II\l\{\pi_s\neq0\r\}|\calA^c|\exp(-cd)$
	\begin{align*}
		\|\Bbeta_{i}^{(t+1)}-\Bbeta_{i}^*\|^2 \leq \frac{h^2}{4}.
	\end{align*}
\end{proof}
\subsection{Proof of Theorem~\ref{thm:regret}}
\begin{proof} First, consider the regret at time $t$, which can be decomposed into terms,
	\begin{multline*}
		\EE\l\{\max_{i\in[K]} \X_t^{\top}\Bbeta_{i}^* - \X_t^{\top}\Bbeta_{a_t}^*\r\}\\
		= \underbrace{\EE\l\{\II\{\alpha_t=1\}\cdot \l(\max_{i\in[K]} \X_t^{\top}\Bbeta_{i}^* - \X_t^{\top}\Bbeta_{a_t}^*\r)\r\}}_{\R_1} + \underbrace{\EE\l\{\II\{\alpha_t=0\}\cdot \l(\max_{i\in[K]} \X_t^{\top}\Bbeta_{i}^* - \X_t^{\top}\Bbeta_{a_t}^*\r)\r\}}_{\R_2}.
	\end{multline*}
	We shall analyze when $t\leq t_1$ and $t>t_1$ separately. When $t<t_1$,
	\begin{align*}
		\R_1&= \frac{\pi_t}{K} \cdot\sum_{j=1}^{K}\EE\l\{ \max_{i}\X_t^{\top}\Bbeta_{i}^*-\X_t^{\top}\Bbeta_{j}^*\r\}\\
		&\leq\frac{\pi_t}{K}\cdot\min\l\{\sum_{j=1}^{K}\sum_{i=1}^{K}\EE\l\{ \l|\X_t^{\top}\Bbeta_{i}^*-\X_t^{\top}\Bbeta_{j}^*\r|\r\}, \sum_{j=1}^{K}\EE\l\{ \|\X_t\|\cdot \max_{i}\|\Bbeta_{i}^*-\Bbeta_{j}^*\|\r\} \r\}\\
		&\leq \sqrt{\lambda_{\max}}\frac{\pi_t}{K} \cdot\min\l\{\sum_{j=1}^{K}\sum_{i=1}^{K} \|\Bbeta_{i}^*-\Bbeta_{j}^*\|, \sqrt{d}\sum_{j=1}^K \max_{i}\|\Bbeta_{i}^*-\Bbeta_{j}^*\| \r\}.
	\end{align*}
	Denote event $\calE_{t_1}:=\l\{ \|\Bbeta_{i}^{(t)}-\Bbeta_{i}^*\|^2\leq C^*\frac{1}{\lambda_{\min}}\frac{d+\delta_t}{t+\Cb d}\sigma^2,\quad i\in[K],\text{ for all } t\in[t_1]\r\}$. Theorem~\ref{thm:LBD-estimation} proves $$\PP(\calE_{t_1})\geq 1-c'K\Cb\exp(-cd)-K\exp(-cd)\sum_{s=0}^{t_1}\exp(-c\delta_{s}).$$
	Denote event $\calE^{(t)}:=\l\{\|\Bbeta_{i}^{(t)}-\Bbeta_{i}^*\|^2\leq C^*\frac{1}{\lambda_{\min}}\frac{d+\delta_t}{t+\Cb d}\sigma^2,\quad i\in[K]\r\}$. Then 
	\begin{multline*}
		\R_2= \underbrace{(1-\pi_t) \EE\l\{ \l(\max_{i\in[K]} \X_t^{\top}\Bbeta_{i}^* - \X_t^{\top}\Bbeta_{a_t}^*\r)\cdot \II\l\{ \calE^{(t)} \r\}\r\}}_{\R_{21}}\\
		+\underbrace{(1-\pi_t)\EE\l\{ \l(\max_{i\in[K]} \X_t^{\top}\Bbeta_{i}^* - \X_t^{\top}\Bbeta_{a_t}^*\r)\cdot \II\{\calE^{(t)C}\}\r\}}_{\R_{22}}.
	\end{multline*}
	Denote $a_t^*\in \arg\max_{i} \X^{\top}\Bbeta_{i}^*$ for simplicity. Observe that $\X_t^\top\Bbeta_{a_t}^{(t)}\geq \max_{i} \X_t^\top\Bbeta_{i}^{(t)}$ and then $\R_2$ has, 
	\begin{align*}
		\R_{21}&\leq (1-\pi_t)\cdot\EE\l\{ \l\{\X_t^{\top}\Bbeta_{a_t^*}^* - \X_t^{\top}\Bbeta_{a_t^*}^{(t)}+ \X_t^{\top}\Bbeta_{a_t}^{(t)}- \X_t^{\top}\Bbeta_{a_t}^* \r\} \cdot \II\{\calE^{(t)}\}\r\}\\
		&\leq (1-\pi_t)\cdot\EE\l\{ \l|\X_t^{\top}\Bbeta_{a_t^*}^* - \X_t^{\top}\Bbeta_{a_t^*}^{(t)} \r|\cdot \II\{\calE^{(t)}\} + \l| \X_t^{\top}\Bbeta_{a_t}^{(t)}- \X_t^{\top}\Bbeta_{a_t}^* \r|\cdot \II\{\calE^{(t)}\}\r\}\\
		&\leq 2(1-\pi_t)\cdot\min\l\{\sum_{i=1}^{K} \EE\l|\X_t^{\top}\l(\Bbeta_{i}^*-\Bbeta_{i}^{(t)}\r)\r|\cdot \II\{\calE^{(t)}\}, \EE\l|\|\X\|\cdot \max_{i}\|\Bbeta_{i}^{(t)}-\Bbeta_{i}^*\|\cdot \II\{\calE^{(t)}\}\r|\r\}.
	\end{align*}
	Thus taking event $\calE^{(t)}$ into consideration, we have
	\begin{align*}
		\R_{21}\leq2(1-\pi_t)\sqrt{C^*}\sqrt{\frac{\lambda_{\max}}{\lambda_{\min}}} \sqrt{\frac{d+\delta_{t}}{t+\Cb d}}\, \sigma\, \min\l\{K,\; \sqrt{d}\r\}.
	\end{align*}
	Similarly, we have
	\begin{align*}
		\R_{22}\leq(1-\pi_t) \min\l\{\sum_{j=1}^{K}\sum_{i=1}^{K}\EE\l| \X_t^{\top}\l(\Bbeta_{i}^*-\Bbeta_{j}^*\r)\r|, \EE\|\X_t\|\cdot \max_{i,j} \|\Bbeta_{i}^*-\Bbeta_{j}^*\|\r\}\cdot \PP(\calE^{(t)C}).
	\end{align*}
	Also note that $\sum_{t=0}^{t_1} \PP(\calE^{(t)C})=1-\PP(\calE_{t_1})\leq c'K\Cb\exp(-cd)+K\exp(-cd)\sum_{s=0}^{t_1}\exp(-c\delta_{s})$. Thus, we have
	\begin{multline*}
		\textsf{Regret}(t_1)\leq \sqrt{\lambda_{\max}}\frac{\sum_{s=0}^{t_1}\pi_s}{K} \cdot\min\l\{\sum_{j=1}^{K}\sum_{i=1}^{K} \|\Bbeta_{i}^*-\Bbeta_{j}^*\|, \sqrt{d}\sum_{j=1}^K \max_{i}\|\Bbeta_{i}^*-\Bbeta_{j}^*\| \r\}\\
		+C\sqrt{C^*}\sqrt{\frac{\lambda_{\max}}{\lambda_{\min}}} \sigma \min\l\{ K, \sqrt{d}\r\}\sum_{s=0}^{t_1}\sqrt{\frac{d+\delta_{s}}{s+\Cb d}}\\
		+K\sqrt{\lambda_{\max}}\exp(-cd)\min\l\{\sum_{j=1}^{K}\sum_{i=1}^{K}\EE\| \Bbeta_{i}^*-\Bbeta_{j}^*\|, \sqrt{d} \max_{i,j} \|\Bbeta_{i}^*-\Bbeta_{j}^*\|\r\}\cdot\l(c'\Cb+\sum_{s=0}^{t_1}\exp(-c\delta_{s}) \r)
	\end{multline*}
	Then consider the regret for $t>t_1$. For $t>t_1$, the bound of $\R_1$ is same as the case when $t\leq t_1$. For the analysis of $\R_2$, we denote 
	$$\calE^{(t)}:=\l\{\|\Bbeta_{i}^{(t)}-\Bbeta_{i}^*\|^2\leq C^*\frac{1}{\lambda_{\min}}\frac{d+\delta_t}{t+\Cb d}\sigma^2,\ i\in\calA;\, \|\Bbeta_{i}^{(t)}-\Bbeta_{i}^*\|^2\leq\frac{h^2}{4},\ i\in\calA^c\r\}.$$
	Then according to Lemma~\ref{lem:regularity}, we know under the above event, $a_t\in\calA$. Thus, we have
	\begin{align*}
		&\EE\l\{ \l(\max_{i\in[K]} \X_t^{\top}\Bbeta_{i}^* - \X_t^{\top}\Bbeta_{a_t}^*\r)\cdot \II\l\{ \calE^{(t)} \r\}\r\}\\
		&~~~~~\leq \min\l\{\sum_{i\in\calA} \EE\l|\X_t^{\top}\l(\Bbeta_{i}^*-\Bbeta_{i}^{(t)}\r)\r|\cdot \II\{\calE^{(t)}\}, \EE\l|\|\X\|\cdot \max_{i\in\calA}\|\Bbeta_{i}^{(t)}-\Bbeta_{i}^*\|\cdot \II\{\calE^{(t)}\}\r|\r\}\\
		&~~~~~\leq \sqrt{C^*}\sqrt{\frac{\lambda_{\max}}{\lambda_{\min}}} \sqrt{\frac{d+\delta_{t}}{t+\Cb d}}\, \sigma\, \min\l\{ K,\; \sqrt{d}\r\}.
	\end{align*}
	The analysis of $\EE\l\{ \l(\max_{i\in[K]} \X_t^{\top}\Bbeta_{i}^* - \X_t^{\top}\Bbeta_{a_t}^*\r)\cdot \II\l\{ \calE^{(t)C} \r\}\r\} $ is same as the $t<t_1$ case. Thus in all, we have
	\begin{multline*}
		\textsf{Regret}(T) \leq \sqrt{\lambda_{\max}}\frac{\sum_{s=0}^{T}\pi_s}{K} \cdot\min\l\{\sum_{j=1}^{K}\sum_{i=1}^{K} \|\Bbeta_{i}^*-\Bbeta_{j}^*\|, \sqrt{d}\sum_{j=1}^K \max_{i}\|\Bbeta_{i}^*-\Bbeta_{j}^*\| \r\}\\
		+C\sqrt{C^*}\sqrt{\frac{\lambda_{\max}}{\lambda_{\min}}} \sigma \min\l\{ K, \sqrt{d}\r\}\sum_{s=0}^{T}\sqrt{\frac{d+\delta_{s}}{s+\Cb d}}\\
		+\sqrt{\lambda_{\max}}\min\l\{\sum_{j=1}^{K}\sum_{i=1}^{K}\EE\| \Bbeta_{i}^*-\Bbeta_{j}^*\|, \sqrt{d} \max_{i,j} \|\Bbeta_{i}^*-\Bbeta_{j}^*\|\r\}\\ \times\l(c'\Cb\exp(-cd)+K\sum_{s=0}^{t_1}\exp(-c(\delta_{s}+d))+|\calA|\sum_{s=t_1+1}^{T}\exp(-c(\delta_{s}+d))+ |\calA^c|\sum_{s=t_1+1}^{T}\II\{\pi_s\neq 0\}\r),
	\end{multline*}
which completes the proof.
\end{proof}
\subsection{Proof of Proposition~\ref{prop:Bahadur}}
For brevity of writing, we introduce some notations. We denote
\begin{align*}
	\bSigma_{i,t}&:=\EE\l\{\II\{a_t=i\}\cdot\X_t\X_t^{\top}\bigg|\calF_t\r\}\\
	&=\frac{\pi_t}{K}\EE\l\{\X_t\X_t^{\top}\big|\calF_t\r\}+(1-\pi_t)\cdot\EE\l\{\X_t\X_t^{\top}\cdot\II\l\{\X_t\in\calX\l(i,\l\{\Bbeta_{j}^{(t)}\r\}_{i\in[K]}\r) \r\}\bigg|\calF_t\r\},
\end{align*}
and
\begin{align*}
	\bSigma_{i,t}^*:=\frac{\pi_t}{K}\EE\l\{\X_t\X_t^{\top}\big|\calF_t\r\}+(1-\pi_t)\cdot\EE\l\{\X_t\X_t^{\top}\cdot\II\l\{\X_t\in\calU_i^* \r\}\big|\calF_t\r\}.
\end{align*}
The matrix $\bSigma_{i,t}^*$ satisfies the following equation
\begin{align}
	\l(\frac{\pi_t}{K}+(1-\pi_t)\r)\cdot\lambda_{\min}\cdot\I_{d}\preceq\bSigma_{i,t}^*\preceq \lambda_{\max}\cdot\I_{d},
	\label{eq1:bahadur}
\end{align}
and we have the convergence in mean that
\begin{align}
	\lim_{t\to+\infty}\EE\l\|\bSigma_{i,t}^*-\bSigma_{i}(\pi^*)\r\|=0.
	\label{eq4:bahadur}
\end{align}
Additionally, $\bSigma_{i,t}-\bSigma_{i,t}^*$ can be written into,
\begin{multline*}
	\l\|\bSigma_{i,t}- \bSigma_{i,t}^*\r\|
	=(1-\pi_t)\l\|\EE\l\{\X_t\X_t^\top\cdot\l(\II\l\{\X\in\calX_t\l(i,\l\{\Bbeta_{j}^{(t)}\r\}_{i\in[K]}\r) \r\}-\II\l\{\X_t\in\calU_i^*\r\} \r)\bigg| \calF_t\r\} \r\|
\end{multline*}
Under Assumption~\ref{assm:density asymp} and Lemma~\ref{lem:decision region}, $ \l\|\bSigma_{i,t}- \bSigma_{i,t}^*\r\|$ can be upper bounded with,
\begin{align}
	\l\|\bSigma_{i,t}- \bSigma_{i,t}^*\r\|\leq\left\{
	\begin{array}{ll}
		(1-\pi_t)\cdot \kappa_0\cdot\max_{i\in\calA}\|\Bbeta_{i}^{(t)}-\Bbeta_{i}^*\|, &\quad\max_{i\in[K]}\|\Bbeta_{i}^{(t)}-\Bbeta_{i}^*\|\leq h_0\\
		(1-\pi_t)\cdot\lambda_{\max},& \quad\text{for all }\{\Bbeta_i\}_{i\in[K]}
	\end{array}.
	\right.
	\label{eq3:bahadur}
\end{align}
We first prove the following proposition.
\begin{proposition}
    Under the same assumptions and conditions as Proposition~\ref{prop:Bahadur}, we have
    \begin{enumerate}
        \item[(1)] if $\pi^*=0$, then under the event $\cup_{t=t_1}\{\max\|\Bbeta_i^{(t)}-\Bbeta_i^*\|\leq \frac{h}{2}\}$, the following holds for $i\in\calA$,
         \begin{align*}
        \EE\|\Bbeta_i^{(t)}-\Bbeta_i^*\|^2\leq C\frac{d}{t+\Cb d}\frac{\sigma^2}{\lambda_{\min}},
    \end{align*}
        \item[(2)] if $\pi^*>0$ and $\Ca\geq 4K/\pi^*$, the following holds for all $i\in[K]$,
        \begin{align*}
        \EE\|\Bbeta_{i}^{(t+1)} - \Bbeta_{i}^*\|^2 \leq \l(\frac{N+1+\Cb d}{t+1+\Cb d}\r)^{\Ca\pi^*/K}\|\Bbeta_i^{(N)}-\Bbeta_i^*\|^2+\frac{Cd}{t+1+\Cb d}\frac{\sigma^2}{\lambda_{\min}},
    \end{align*}
    where $N$ is some integer such that for all $t\geq N$, $\pi_t\geq \pi^*/2$.
    \end{enumerate}
    \label{prop:LBD_expectation_square}
\end{proposition}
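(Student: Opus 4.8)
The plan is to prove both bounds as in-expectation ($L^2$) analogues of Theorem~\ref{thm:LBD-estimation}: rather than applying the martingale concentration inequality at each step, I would simply take expectations in the one-step bound of Proposition~\ref{prop: conditional expectation} and iterate the resulting scalar recursion. That proposition already expresses $\EE\{\|\Bbeta_i^{(t+1)}-\Bbeta_i^*\|^2\mid\calF_t\}$ as a contraction of $\|\Bbeta_i^{(t)}-\Bbeta_i^*\|^2$ plus an additive noise term of size $2\eta_t^2 d\lambda_{\max}\sigma^2$ and a positive quadratic term with coefficient $2\eta_t^2 d\lambda_{\max}^2=O(d/(t+\Cb d)^2)$. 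Taking full expectations gives a recursion of the form $a_{t+1}\le(1-\gamma_t)a_t+\tfrac{Cd\sigma^2}{(t+\Cb d)^2}$ with $\gamma_t=\Theta(1/(t+\Cb d))$, which I would solve exactly as the $\boldsymbol{\Theta}_1,\boldsymbol{\Theta}_2$ terms in the proof of Theorem~\ref{thm:LBD-estimation}, via the telescoping product bound (Lemma~\ref{teclem:prod upper bound}) and the weighted summation bound (Lemma~\ref{teclem:sum upper bound}). A useful observation is that the positive quadratic term need not be absorbed into the contraction: since $\sum_t 2\eta_t^2 d\lambda_{\max}^2=O(1/\Cb)$, it only multiplies the telescoping product by a bounded factor $\exp(O(1/\Cb))$, leaving the decay exponent unchanged and affecting only the constant.

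For part (1), with $\pi^*=0$ the contraction is driven by the exploitation term, which is active only on the set where $\II\{\max_j\|\Bbeta_j^{(t)}-\Bbeta_j^*\|\le h/2\}=1$. The difficulty is that the event in the statement, $\mathcal{G}:=\cap_{t\ge t_1}\{\max_j\|\Bbeta_j^{(t)}-\Bbeta_j^*\|\le h/2\}$, refers to the entire future trajectory and is not $\calF_t$-measurable. I would resolve this by introducing the truncated events $\mathcal{G}_t:=\{\max_j\|\Bbeta_j^{(s)}-\Bbeta_j^*\|\le h/2\ \text{for}\ t_1\le s\le t\}$, which are $\calF_t$-measurable and nested, $\mathcal{G}_{t+1}\subseteq\mathcal{G}_t$. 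On $\mathcal{G}_t$ the indicator equals one, so for $i\in\calA$ the one-step contraction coefficient is $2\eta_t(\tfrac{\pi_t}{K}+1-\pi_t)\lambda_{\min}\ge 2\eta_t\tfrac1K\lambda_{\min}$. Multiplying the conditional bound by $\II_{\mathcal{G}_t}$ and using $\II_{\mathcal{G}_{t+1}}\le\II_{\mathcal{G}_t}$ together with the tower property yields a closed recursion for $b_t:=\EE[\|\Bbeta_i^{(t)}-\Bbeta_i^*\|^2\II_{\mathcal{G}_t}]$; its solution is $b_t\le C\tfrac{d}{t+\Cb d}\tfrac{\sigma^2}{\lambda_{\min}}$, the initial contribution being negligible because $t_1=O(d)$ and $\|\Bbeta_i^{(t_1)}-\Bbeta_i^*\|^2=O(\sigma^2/\lambda_{\max})$. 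Since $\mathcal{G}\subseteq\mathcal{G}_t$, the same bound restricts to $\mathcal{G}$, which is the claim.

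For part (2) with $\pi^*>0$, I would fix $N$ with $\pi_t\ge\pi^*/2$ for all $t\ge N$. The binding arms are those in $\calA^c$, whose contraction is purely exploration-driven: Proposition~\ref{prop: conditional expectation} gives coefficient $2\eta_t\tfrac{\pi_t}{K}\lambda_{\min}\ge\tfrac{\Ca\pi^*/K}{t+\Cb d}$, where the leading factor $2$ cancels the $1/2$ in $\pi^*/2$; arms in $\calA$ contract at least this fast. Iterating the expected recursion from $N$ to $t$, Lemma~\ref{teclem:prod upper bound} produces the decaying initial term $(\tfrac{N+1+\Cb d}{t+1+\Cb d})^{\Ca\pi^*/K}\|\Bbeta_i^{(N)}-\Bbeta_i^*\|^2$, while Lemma~\ref{teclem:sum upper bound} converts the accumulated noise terms into $\tfrac{Cd\sigma^2}{\lambda_{\min}(t+1+\Cb d)}$; the hypothesis $\Ca\ge 4K/\pi^*$ ensures the exponent $\Ca\pi^*/K\ge 4>1$ required for this summation bound to yield the $1/(t+\Cb d)$ rate.

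The step I expect to be most delicate is the event handling in part (1): because the good event is defined through the whole future, one cannot condition on it directly, and the correctness of the recursion hinges on carrying the $\calF_t$-measurable indicator $\II_{\mathcal{G}_t}$ through each step and invoking $\mathcal{G}\subseteq\mathcal{G}_t$ only at the end. A secondary, more routine difficulty is keeping the stated decay exponents ($\Ca/K$ in part (1) and $\Ca\pi^*/K$ in part (2)) intact while controlling the positive quadratic term; the summability $\sum_t 2\eta_t^2 d\lambda_{\max}^2=O(1/\Cb)$, guaranteed by the lower bound on $\Cb$, is precisely what prevents this term from eroding the exponent.
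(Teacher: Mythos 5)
Your proposal is correct and takes essentially the same route as the paper's own proof: take full expectations in the one-step bound of Proposition~\ref{prop: conditional expectation}, telescope the resulting scalar recursion, and invoke Lemma~\ref{teclem:prod upper bound} and Lemma~\ref{teclem:sum upper bound}, splitting part (1) at $t_1$ and part (2) at $N$. Your two refinements are improvements in rigor rather than departures: the nested $\calF_t$-measurable truncations $\mathcal{G}_t$ make precise the event-conditioning that the paper handles informally (it simply sets the indicator to one "under the conditions"), and bounding the quadratic term $2\eta_t^2 d\lambda_{\max}^2$ multiplicatively via $\sum_t 2\eta_t^2 d\lambda_{\max}^2 = O(1/\Cb)$ preserves the stated exponent $\Ca\pi^*/K$ exactly, whereas the paper's absorption into the contraction coefficient slightly erodes it.
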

\begin{proof}
    We first prove the $\pi^*=0$ case and then we discuss the $\pi^*>0$ scheme. 
    
    \noindent \underline{\underline{When $\pi^*=0$.}} For $i\in\calA$, Proposition~\ref{prop: conditional expectation} proves that
    \begin{multline*}
        \EE\l\{\|\Bbeta_{i}^{(t+1)} - \Bbeta_{i}^*\|^2 \big| \calF_t\r\}\leq \|\Bbeta_{i}^{(t)}-\Bbeta_{i}^*\|^2-2\eta_{t}\frac{\pi_t}{K}\lambda_{\min}\|\Bbeta_{i}^{(t)}-\Bbeta_{i}^*\|^2+2\eta_{t}^2d\lambda_{\max}^2\|\Bbeta_{i}^{(t)}-\Bbeta_{i}^*\|^2\\-2\eta_{t}(1-\pi_t)\lambda_{\min}\|\Bbeta_{i}^{(t)}-\Bbeta_{i}^*\|^2\cdot\II\l\{\max_{j}\|\Bbeta_{j}^{(t)}-\Bbeta_{i}^*\|\leq\frac{h}{2}\r\}+2\eta_{t}^2d\lambda_{\max}\sigma^2.
    \end{multline*}
    Take expectation on each side of the above equation and it leads to
    \begin{multline*}
        \EE\l\{\|\Bbeta_{i}^{(t+1)} - \Bbeta_{i}^*\|^2 \r\}\leq \l(1-2\eta_t\frac{\pi_t}{K}\lambda_{\min}+2\eta_t^2d\lambda_{\max}^2\r)\EE\l\{\|\Bbeta_i^{(t)}-\Bbeta_i^*\|^2\r\}\\
        -2\eta_{t}(1-\pi_t)\lambda_{\min}\EE\l\{\|\Bbeta_{i}^{(t)}-\Bbeta_{i}^*\|^2\cdot\II\l\{\max_{j}\|\Bbeta_{j}^{(t)}-\Bbeta_{i}^*\|\leq \frac{h}{2}\r\}\r\}+2\eta_{t}^2d\lambda_{\max}\sigma^2.
    \end{multline*}
    For $t\leq t_1$, according to the exploration rate scheme we have $\pi_t\geq \pi$.
    Hence, we have
    \begin{multline*}
        \EE\l\{\|\Bbeta_{i}^{(t+1)} - \Bbeta_{i}^*\|^2 \r\}\leq \l(1-2\frac{\Ca}{t+\Cb d}\frac{\pi}{K}+2\frac{\Ca^2d}{(t+\Cb d)^2}\frac{\lambda_{\max}^2}{\lambda_{\min}^2}\r)\EE\l\{\|\Bbeta_i^{(t)}-\Bbeta_i^*\|^2\r\}\\
        +2\frac{\Ca^2d}{(t+\Cb d)^2}\frac{\lambda_{\max}}{\lambda_{\min}^2}\sigma^2.
    \end{multline*}
    According to the choice of $\Cb$, we have
    \begin{align*}
        2\frac{\Ca^2d}{(t+\Cb d)^2}\frac{\lambda_{\max}^2}{\lambda_{\min}^2}\leq \frac{\Ca}{t+\Cb d}\frac{\pi}{K}.
    \end{align*}
    Hence, we have
    \begin{align*}
        &\EE\l\{\|\Bbeta_{i}^{(t+1)} - \Bbeta_{i}^*\|^2 \r\}\leq \l(1-2\frac{\Ca}{t+\Cb d}\frac{\pi}{K}\r)\EE\l\{\|\Bbeta_i^{(t)}-\Bbeta_i^*\|^2\r\}
        +2\frac{\Ca^2d}{(t+\Cb d)^2}\frac{\lambda_{\max}}{\lambda_{\min}^2}\sigma^2\\
        &~~~\leq \prod_{l=0}^{t} \l(1-2\frac{\Ca}{l+\Cb d}\frac{\pi}{K}\r)\EE\l\{\|\Bbeta_i^{(0)}-\Bbeta_i^*\|^2\r\}
        +2\sum_{k=0}^t\prod_{l=k}^t\l(1-2\frac{\Ca}{l+\Cb d}\frac{\pi}{K}\r)\frac{\Ca^2d}{(k+\Cb d)^2}\frac{\lambda_{\max}}{\lambda_{\min}^2}\sigma^2.
    \end{align*}
    By Lemma~\ref{teclem:prod upper bound} and Lemma~\ref{teclem:sum upper bound}, we have
    \begin{align*}
        \EE\l\{\|\Bbeta_{i}^{(t+1)} - \Bbeta_{i}^*\|^2 \r\}\leq\frac{Cd}{t+1+\Cb d}\frac{\sigma^2}{\lambda_{\min}}.
    \end{align*}
    On the other hand, for $t\geq t_1$, under the conditions $\cup_{t=t_1}^{+\infty}\{\max_{j\in[k]} \|\Bbeta_j^{(t)}-\Bbeta_j^*\|\leq\frac{h}{2}\}$, we have
    \begin{multline*}
        \EE\l\{\|\Bbeta_{i}^{(t+1)} - \Bbeta_{i}^*\|^2 \r\}\leq \l(1-2\frac{\Ca}{t+\Cb d}\frac{1}{K}+2\frac{\Ca^2d}{(t+\Cb d)^2}\frac{\lambda_{\max}^2}{\lambda_{\min}^2}\r)\EE\l\{\|\Bbeta_i^{(t)}-\Bbeta_i^*\|^2\r\}\\
        +2\frac{\Ca^2d}{(t+\Cb d)^2}\frac{\lambda_{\max}}{\lambda_{\min}^2}\sigma^2.
    \end{multline*}
    In a similar way, we have $\EE\l\{\|\Bbeta_{i}^{(t+1)} - \Bbeta_{i}^*\|^2 \r\}\leq\frac{Cd}{t+1+\Cb d}\frac{\sigma^2}{\lambda_{\min}}$.

    \noindent \underline{\underline{When $\pi^*>0$.}} For all $i\in[K]$, we have
    \begin{multline*}
        \EE\l\{\|\Bbeta_{i}^{(t+1)} - \Bbeta_{i}^*\|^2 \r\}\leq \l(1-2\frac{\Ca}{t+\Cb d}\frac{\pi_t}{K}+2\frac{\Ca^2d}{(t+\Cb d)^2}\frac{\lambda_{\max}^2}{\lambda_{\min}^2}\r)\EE\l\{\|\Bbeta_i^{(t)}-\Bbeta_i^*\|^2\r\}\\
        +2\frac{\Ca^2d}{(t+\Cb d)^2}\frac{\lambda_{\max}}{\lambda_{\min}^2}\sigma^2.
    \end{multline*}
    In this case, $\lim_{t\to+\infty} \pi_t=\pi^*$ and there exists $N>0$ such that for all $t\geq N$, we have $\pi_t>\frac{1}{2}\pi^*$. Then by accumulating the above inequality, we have
    \begin{align*}
        \EE\l\{\|\Bbeta_{i}^{(t+1)} - \Bbeta_{i}^*\|^2 \r\}&\leq \prod_{l=N}^{t} \l(1-\frac{\Ca}{l+\Cb d}\frac{\pi^*}{K}\r)\EE\l\{\|\Bbeta_i^{(N)}-\Bbeta_i^*\|^2\r\}\\
        &~~~~~~~~+2\sum_{k=N}^t\prod_{l=k}^t\l(1-\frac{\Ca}{l+\Cb d}\frac{\pi^*}{K}\r)\frac{\Ca^2d}{(k+\Cb d)^2}\frac{\lambda_{\max}}{\lambda_{\min}^2}\sigma^2\\
        &\leq \l(\frac{N+1+\Cb d}{t+1+\Cb d}\r)^{\Ca\pi^*/K}\|\Bbeta_i^{(N)}-\Bbeta_i^*\|^2+\frac{Cd}{t+1+\Cb d}\frac{\sigma^2}{\lambda_{\min}},
    \end{align*}
   where the last line uses Lemma~\ref{teclem:sum upper bound} and \ref{teclem:prod upper bound}.
    
\end{proof}

\begin{proposition}
    Under the same assumptions and conditions as Proposition~\ref{prop:Bahadur}, we have
    \begin{enumerate}
        \item[(1)] if $\pi^*=0$, then under the event $\cup_{t=t_1}\{\max\|\Bbeta_i^{(t)}-\Bbeta_i^*\|\leq \frac{h}{2}\}$, the following holds for $i\in\calA$,
         \begin{align*}
        \EE\|\Bbeta_i^{(t)}-\Bbeta_i^*\|^4\leq C\l(\frac{d}{t+\Cb d}\r)^2\frac{\sigma^4}{\lambda_{\min}^2},
    \end{align*}
        \item[(2)] if $\pi^*>0$ and $\Ca\geq 4K/\pi^*$, the following holds for all $i\in[K]$,
        \begin{align*}
        \EE\|\Bbeta_{i}^{(t+1)} - \Bbeta_{i}^*\|^4 \leq \l(\frac{N+1+\Cb d}{t+1+\Cb d}\r)^{2\Ca\pi^*/K}\|\Bbeta_N-\Bbeta^*\|^4+C\l(\frac{d}{t+1+\Cb d}\r)^2\frac{\sigma^4}{\lambda_{\min}^2},
    \end{align*}
    where $N$ is some integer such that for all $t\geq N$, $\pi_t\geq \pi^*/2$.
    \end{enumerate}
    \label{prop:LBD_expectation_four}
\end{proposition}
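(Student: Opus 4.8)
The plan is to mirror the second-moment argument of Proposition~\ref{prop:LBD_expectation_square}, but now propagate a one-step recursion for the conditional \emph{fourth} moment. Writing $\boldsymbol{\Delta}_t := \Bbeta_i^{(t)}-\Bbeta_i^*$ and $\g_t = \II\{a_t=i\}(\X_t^\top\boldsymbol{\Delta}_t-\xi_t)\X_t$, the update gives the scalar identity $\|\boldsymbol{\Delta}_{t+1}\|^2 = \|\boldsymbol{\Delta}_t\|^2 - 2\eta_t\boldsymbol{\Delta}_t^\top\g_t + \eta_t^2\|\g_t\|^2$. Squaring this and taking $\EE\{\cdot\,|\,\calF_t\}$ produces a leading drift term $-4\eta_t\|\boldsymbol{\Delta}_t\|^2\,\EE\{\boldsymbol{\Delta}_t^\top\g_t\,|\,\calF_t\}$. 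Since the noise is conditionally mean-zero and $\xi_t\perp\X_t\,|\,\calF_t$, the same computation as in Proposition~\ref{prop: conditional expectation} (together with Lemma~\ref{lem:regularity} to extract the exploitation contribution) shows $\EE\{\boldsymbol{\Delta}_t^\top\g_t\,|\,\calF_t\}\geq \tfrac{\pi_t}{K}\lambda_{\min}\|\boldsymbol{\Delta}_t\|^2$, yielding a negative drift $-4\eta_t\tfrac{\pi_t}{K}\lambda_{\min}\|\boldsymbol{\Delta}_t\|^4$. This is the origin of the $2\Ca\pi^*/K$ exponent in the statement: it is exactly twice the drift exponent of the second-moment recursion.

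First I would bound the remaining terms of the expansion, namely the cross terms $\eta_t^2\|\boldsymbol{\Delta}_t\|^2\|\g_t\|^2$, $(\boldsymbol{\Delta}_t^\top\g_t)^2$, and the higher-order terms $\eta_t^3\|\boldsymbol{\Delta}_t\|\,\|\g_t\|^3$ and $\eta_t^4\|\g_t\|^4$. Using $\xi_t\perp\X_t\,|\,\calF_t$ together with the fourth-moment bounds $\sup_{\V}\EE\{(\X_t^\top\V)^4|\calF_t\}\leq\lambda_{\max}^2$ and $\EE\{\xi_t^4|\calF_t^+\}\leq\sigma^4$ from Assumption~\ref{assm:cov asymp} (supplemented by the conditional sub-Gaussian control of Assumptions~\ref{assm:cov}--\ref{assm:noise} for the few moments beyond fourth order, or reduced to fourth order via Young's inequality), each such conditional expectation is dominated by a combination of $d^2\lambda_{\max}^2\|\boldsymbol{\Delta}_t\|^4$, $d\lambda_{\max}\sigma^2\|\boldsymbol{\Delta}_t\|^2$, and $(d\lambda_{\max}\sigma^2)^2$, up to the exploration factor $\tfrac{\pi_t}{K}+(1-\pi_t)$. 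The net one-step recursion then takes the form
\begin{align*}
\EE\{\|\boldsymbol{\Delta}_{t+1}\|^4\,|\,\calF_t\}\leq \Big(1-4\eta_t\tfrac{\pi_t}{K}\lambda_{\min}+C\eta_t^2 d\lambda_{\max}^2\Big)\|\boldsymbol{\Delta}_t\|^4+C\eta_t^2 d\lambda_{\max}\sigma^2\|\boldsymbol{\Delta}_t\|^2+C\eta_t^4 d^2\lambda_{\max}^2\sigma^4 .
\end{align*}
The choice $\Cb\geq 3\Ca^2(\lambda_{\max}^2/\lambda_{\min}^2)$ ensures $C\eta_t^2 d\lambda_{\max}^2$ is absorbed into the drift, so the effective contraction factor is $1-c\,\eta_t\tfrac{\pi_t}{K}\lambda_{\min}$.

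The key step is then to take full expectations and substitute the second-moment bound already established in Proposition~\ref{prop:LBD_expectation_square}: replacing $\EE\|\boldsymbol{\Delta}_t\|^2$ by $C\frac{d}{t+\Cb d}\frac{\sigma^2}{\lambda_{\min}}$ turns the cross term into $C\eta_t^2\frac{d^2}{t+\Cb d}\frac{\lambda_{\max}}{\lambda_{\min}}\sigma^4\asymp\frac{d^2\sigma^4}{(t+\Cb d)^3}$, which, after accumulation against a drift exponent at least $2$, contributes at the target order $\big(\tfrac{d}{t+\Cb d}\big)^2\tfrac{\sigma^4}{\lambda_{\min}^2}$; the pure-noise term $\eta_t^4 d^2\lambda_{\max}^2\sigma^4$ is of strictly lower order. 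I would then unroll the scalar recursion $a_{t+1}\leq(1-\tfrac{A}{t+\Cb d})a_t+b_t$ exactly as in the second-moment proof, using Lemma~\ref{teclem:prod upper bound} for the products $\prod_l(1-\tfrac{A}{l+\Cb d})$ and Lemma~\ref{teclem:sum upper bound} for the weighted sums $\sum_k b_k\prod_{l>k}(1-\tfrac{A}{l+\Cb d})$. The two cases are handled separately: when $\pi^*=0$ and $i\in\calA$, I use $\pi_t\geq\pi$ for $t\leq t_1$ and, for $t>t_1$, the exploitation drift of rate $\tfrac1K$ guaranteed on the event $\cup_{t=t_1}^{+\infty}\{\max_{j}\|\Bbeta_j^{(t)}-\Bbeta_j^*\|\leq h/2\}$ via Lemma~\ref{lem:regularity}; when $\pi^*>0$, I pick $N$ with $\pi_t\geq\pi^*/2$ for $t\geq N$, giving the contraction exponent $2\Ca\pi^*/K$ on the initial error $\|\Bbeta_i^{(N)}-\Bbeta_i^*\|^4$ together with the stated residual.

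The main obstacle is the bookkeeping of the fourth-power expansion: the cross terms couple the current error $\|\boldsymbol{\Delta}_t\|^2$ with fourth (and, without extra care, higher) conditional moments of $\X_t$ and $\xi_t$, and a naive self-bounding of $\EE\|\boldsymbol{\Delta}_t\|^2$ by $\sqrt{\EE\|\boldsymbol{\Delta}_t\|^4}$ would destroy the rate. The crucial device is that the second-moment estimate from Proposition~\ref{prop:LBD_expectation_square} is available as an external input, so the cross term is controlled at order $d^2\sigma^4/(t+\Cb d)^3$ rather than being recycled into the fourth-moment recursion; verifying that this input, combined with the drift exponent $\geq 2$, produces precisely the $(d/(t+\Cb d))^2$ decay — and not a slower one — is the delicate quantitative point.
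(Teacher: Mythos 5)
Your proposal is correct and is essentially the proof the paper intends: the paper omits the argument entirely, saying only that one should ``expand the fourth moment based on the update'' and proceed as in Proposition~\ref{prop:LBD_expectation_square}, which is exactly your plan — square the one-step identity, extract the drift $-4\eta_t\tfrac{\pi_t}{K}\lambda_{\min}\|\boldsymbol{\Delta}_t\|^4$ via Proposition~\ref{prop: conditional expectation}/Lemma~\ref{lem:regularity}, absorb the $\eta_t^2 d\lambda_{\max}^2$ perturbation using the size of $\Cb$, feed in the second-moment bound for the cross term, and unroll with Lemmas~\ref{teclem:prod upper bound} and \ref{teclem:sum upper bound}, yielding the doubled exponent $2\Ca\pi^*/K$. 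One caveat: your fallback ``reduced to fourth order via Young's inequality'' cannot handle the pure quartic term $\eta_t^4\|\g_t\|^4$, since $\g_t$ is quadratic in $\X_t$ and its fourth conditional moment therefore involves eighth moments of $\X_t$, which Assumption~\ref{assm:cov asymp} alone does not supply; only your primary route — invoking the conditional sub-Gaussianity of Assumptions~\ref{assm:cov}--\ref{assm:noise}, inherited through the chain Proposition~\ref{prop:Bahadur} $\to$ Theorem~\ref{thm:regret} $\to$ Theorem~\ref{thm:LBD-estimation} — closes that step, a point the paper itself glosses over by omitting the proof.
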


To prove Proposition~\ref{prop:LBD_expectation_four}, we need to expand the fourth moment based on the update of $\Bbeta_i^{(t+1)}$. The proof of Proposition~\ref{prop:LBD_expectation_four} is similar to Proposition~\ref{prop:LBD_expectation_square}, hence omitted.
\begin{proof}[Proof of Proposition~\ref{prop:Bahadur}]
	Under the conditions of Proposition~\ref{prop:Bahadur}, Proposition~\ref{prop:LBD_expectation_square} proves $$\|\Bbeta_i^{(t)}-\Bbeta_i^*\|^2=O_{\PP}\l(\frac{1}{t}\r).$$ Recall that the update is,
	\begin{align*}
		\Bbeta_{i}^{(t+1)}-\Bbeta_{i}^*&= \Bbeta_{i}^{(t)}-\Bbeta_{i}^* - \eta_{t}\cdot\II\{a_t=i\}\cdot(\X_t^{\top}\Bbeta_{i}^{(t)}-Y_t)\X_t\\
		&=\l(\I-\eta_{t}\cdot\II\{a_t=i\}\cdot\X_t\X_t^{\top}\r)\l(\Bbeta_{i}^{(t)}-\Bbeta_{i}^*\r)+\eta_{t}\cdot\II\{a_t=i\}\cdot\xi_t\cdot\X_t,
	\end{align*}
	which can be decomposed into,
	\begin{align*}
		\Bbeta_{i}^{(t+1)}-\Bbeta_{i}^*&=\l(\I -\eta_{t}\cdot\EE\l\{\II\{a_t=i\}\cdot\X_t\X_t^{\top}\bigg|\calF_t\r\}\r)\l(\Bbeta_{i}^{(t)}-\Bbeta_{i}^* \r)\\
		&-\eta_{t}\cdot\l(\II\{a_t=i\}\cdot\X_t\X_t^{\top} -\EE\l\{\II\{a_t=i\}\cdot\X_t\X_t^{\top}\big|\calF_t\r\} \r)\l(\Bbeta_{i}^{(t)}-\Bbeta_{i}^*\r)\\
		&+\eta_{t}\cdot \X_t\cdot\II\{a_t=i\}\cdot\xi_t.
	\end{align*}
	 With the notations defined above, the update can be further written as,
	\begin{multline*}
		\Bbeta_{i}^{(t+1)}-\Bbeta_{i}^*=\l(\I-\eta_{t}\cdot\bSigma_{i,t}^*\r)\l(\Bbeta_{i}^{(t)}-\Bbeta_{i}^*\r)+\eta_{t}\cdot\II\{a_t=i\}\cdot\xi_t\cdot\X_{t}\\
		-\eta_{t}\l(\bSigma_{i,t}-\bSigma_{i,t}^*\r) \l(\Bbeta_{i}^{(t)}-\Bbeta_{i}^*\r)-\eta_{t}\cdot\l(\II\{a_t=i\}\cdot\X_t\X_t^{\top}-\bSigma_{i,t} \r)\l(\Bbeta_{i}^{(t)}-\Bbeta_{i}^*\r).
	\end{multline*}
	We accumulate the above update starting from $\Bbeta_{i}^{(0)}$ and it arrives at
	\begin{multline*}
		\Bbeta_{i}^{(t+1)}-\Bbeta_{i}^*=\underbrace{\prod_{s=1}^{t} \l(\I-\eta_{s}\cdot\bSigma_{i,s}^*\r)\l(\Bbeta_{i}^{(0)}-\Bbeta_{i}^*\r)}_{\B_1}+\underbrace{\sum_{s=0}^{t}\prod_{l=s+1}^{t}\l(\I-\eta_{l}\cdot\bSigma_{i,l}^* \r)\eta_{s}\cdot\xi_s\cdot\X_{s}\cdot\II\{a_s=i\}}_{\B_2} \\
		-\underbrace{\sum_{s=0}^{t} \prod_{l=s+1}^{t}\l(\I-\eta_{l}\cdot\bSigma_{i,l}^* \r)\eta_{s}\cdot\l(\bSigma_{i,s}-\bSigma_{i,s}^*\r)\l(\Bbeta_{i}^{(s)}-\Bbeta_{i}^*\r)}_{\B_3}\\
		-\underbrace{\sum_{s=0}^{t} \prod_{l=s+1}^{t}\l(\I-\eta_{l}\cdot\bSigma_{i,l}^* \r)\eta_{s}\cdot\l(\II\{a_s=i\}\cdot\X_s\X_s^{\top}-\bSigma_{i,s} \r)\l(\Bbeta_{i}^{(s)}-\Bbeta_{i}^*\r)}_{\B_4}
	\end{multline*}
It is worth noting that for $s=0,1,2,\ldots,t$, the matrices $\bSigma_{i,s}^*$ may not have the common eigenvectors. In what follows, we analyze each of the term $\B_1,\ldots,\B_4$ for $i\in\calA$, respectively. 

\noindent Bound of $\B_1$. First consider the products of the matrices,
\begin{align*}
	0\preceq\prod_{s=1}^{t} \l(\I-\eta_{s}\cdot\bSigma_{i,s}^*\r),
\end{align*}
and under the event $\{\calE_t\}$, we have
\begin{align*}
	\l\|\prod_{s=1}^{t} \l(\I-\eta_{s}\cdot\bSigma_{i,s}^*\r)\r\|&\leq\exp\l(\sum_{s=1}^{t}\log\l(\l\|\I-\eta_{s}\cdot\bSigma_{i,s}^* \r\|\r)\r)\leq\exp\l(-\sum_{s=1}^{t}\eta_{s}\cdot\lambda_{\min}(\bSigma_{i,s}^*) \r)\\
	&\leq \exp\l(-\sum_{s=1}^{t_1}\frac{\Ca}{s+\Cb d}\cdot\frac{\pi_s}{2K} - \sum_{s=t_1+1}^{t}\frac{\Ca}{s+\Cb d}\frac{1}{K}\r)\\
	&\leq\exp\l(-\frac{\Ca}{4K}\ln\l(\frac{t_1+\Cb d}{\Cb d}\r)-\frac{\Ca}{K}\ln\l(\frac{t+\Cb d}{t_1+\Cb d}\r)\r),
\end{align*}
where the first line uses Equation~\ref{eq1:bahadur} and the last line is the integral bound for the sequence. The above equation implies an upper bound for $\|\B_1\|$,
\begin{align*}
	\l\|\B_1\r\|\leq \l(\frac{\Cb d}{t+\Cb d}\r)^{\Ca/4K}\|\Bbeta_{i}^{(0)}-\Bbeta_{i}^*\|,
\end{align*}
which suggests $\B_1=O\l(\frac{1}{t}\r)$ due to $\Ca\geq 4K$.

\medspace

\noindent Analysis of $\B_2$. Notice that $\B_2$ is the sum of $t+1$ martingale difference. We are going to prove $\sqrt{t}\B_2$ converges to a multivariate Gaussian in distribution. To do so, we use Cramer-Wold Theorem \citep{cramer1936some} and martingale central limit theorem \citep{brown1971martingale,durrett2019probability}. For any $d$ dimensional vector $\v=(v_1,\ldots,v_d)$, we are going to prove $\sqrt{t}\v^{\top}\B_2$ is asymptotically Gaussian distributed. Proposition~\ref{prop:LBD_expectation_four} verifies that $\v^{\top}\B_2$ satisfies the Lindeberg's condition. We only need to prove the convergence of conditional variance. Specifically, due to $\xi_s\perp\X_s|\calF_s$, we have
\begin{multline*}
	\EE\l\{\l(\v^{\top}\prod_{l=s+1}^{t}\l(\I-\eta_{l}\cdot\bSigma_{i,l}^* \r)\eta_{s}\cdot\xi_s\cdot\X_{s}\cdot\II\{a_s=i\}\r)^2\bigg|\calF_s \r\}\\
	=\eta_s^2\sigma_s^2\v^{\top}\prod_{l=s+1}^{t}\l(\I-\eta_{l}\cdot\bSigma_{i,l}^* \r)\bSigma_{i,s}\prod_{l=s+1}^{t}\l(\I-\eta_{l}\cdot\bSigma_{i,l}^* \r)\v,
\end{multline*}
where $\sigma_s^2:=\EE\{\xi_s^2|\calF_s\}. $ We then consider the sum of the conditional variance and we are going to find its convergence in probability
\begin{multline*}
	V_{\B2}:=\sum_{s=0}^t \EE\l\{\l(\v^{\top}\prod_{l=s+1}^{t}\l(\I-\eta_{l}\cdot\bSigma_{i,l}^* \r)\eta_{s}\cdot\xi_s\cdot\X_{s}\cdot\II\{a_s=i\}\r)^2\bigg|\calF_s \r\}\\
	=	\sum_{s=0}^t \eta_{s}^2\sigma_s^2\v^{\top}\prod_{l=s+1}^{t}\l(\I-\eta_{l}\cdot\bSigma_{i,l}^* \r)\bSigma_{i,s}\prod_{l=s+1}^{t}\l(\I-\eta_{l}\cdot\bSigma_{i,l}^* \r)\v.
\end{multline*}
We decompose the variance $V_{\B_2}$ into two terms,
\begin{multline*}
	V_{\B2}=	\sum_{s=0}^t \eta_{s}^2\sigma_s^2\v^{\top}\prod_{l=s+1}^{t}\l(\I-\eta_{l}\cdot\bSigma_{i,l}^* \r)\bSigma_{i,s}^*\prod_{l=s+1}^{t}\l(\I-\eta_{l}\cdot\bSigma_{i,l}^* \r)\v\\
	+	\sum_{s=0}^t \eta_{s}^2\sigma_s^2\v^{\top}\prod_{l=s+1}^{t}\l(\I-\eta_{l}\cdot\bSigma_{i,l}^* \r)\l(\bSigma_{i,s}-\bSigma_{i,s}^*\r)\prod_{l=s+1}^{t}\l(\I-\eta_{l}\cdot\bSigma_{i,l}^* \r)\v.
\end{multline*}
We first prove that the second term of the above equation goes to $o(1/t)$. Specifically, we have the following upper bound,
\begin{multline}
	\l| \sum_{s=0}^t \eta_{s}^2\sigma_s^2\v^{\top}\prod_{l=s+1}^{t}\l(\I-\eta_{l}\cdot\bSigma_{i,l}^* \r)\l(\bSigma_{i,s}-\bSigma_{i,s}^*\r)\prod_{l=s+1}^{t}\l(\I-\eta_{l}\cdot\bSigma_{i,l}^* \r)\v\r|\\
	\leq\sigma^2\|\v\|^2\sum_{s=0}^{t} \l(\frac{\Ca}{s+\Cb d}\r)^2\l(\frac{s+\Cb d}{t+\Cb d}\r)^{2\Ca/3K}\l\|\bSigma_{i,s}-\bSigma_{i,s}^* \r\|.
    \label{eq:asymp10}
\end{multline}
We first bound $\EE\l\|\bSigma_{i,s}-\bSigma_{i,s}^* \r\|$ using Proposition~\ref{prop:LBD_expectation_square}, namely
\begin{align*}
    &~~~~\EE\l\|\bSigma_{i,s}-\bSigma_{i,s}^* \r\|\\
    &\leq \EE\l\|\bSigma_{i,s}-\bSigma_{i,s}^* \r\|\cdot\II\l\{\max_j\|\Bbeta_j^{(s)}-\Bbeta_j^*\|\leq h\r\}+\EE\l\|\bSigma_{i,s}-\bSigma_{i,s}^* \r\|\cdot\II\l\{\max_j\|\Bbeta_j^{(s)}-\Bbeta_j^*\|\geq h\r\}\\
    &\leq \kappa_0\EE\max_{j\in\calA}\|\Bbeta_j^{(s)}-\Bbeta_i^*\|+\lambda_{\max}\frac{\EE\max_{j\in[K]}\|\Bbeta_j^{(s)}-\Bbeta_j^*\|}{h}\leq \frac{C}{\sqrt{s}}
\end{align*}
 Hence, as $t\to+\infty$, the expectation of right hand side for Equation~\ref{eq:asymp10} is $O(t^{-3/2})$. Thus we finish showing the second term of $V_{\B_2}$ is $O(t^{-3/2})$. We then analyze the first term of $V_{\B_2}$ and prove its convergence under $\lim_{t\to+\infty}\pi_t=\pi^*\in[0,1]$ and under in probability convergence of $\lim_{t\to+\infty}\sigma_t^2=\sigma_*^2$. 
\begin{multline*}
	\l|\eta_{s}^2\sigma_s^2\v^{\top}\prod_{l=s+1}^{t}\l(\I-\eta_{l}\cdot\bSigma_{i,l}^* \r)\bSigma_{i,s}^*\prod_{l=s+1}^{t}\l(\I-\eta_{l}\cdot\bSigma_{i,l}^* \r)\v\r.\\
	\l.-\eta_{s}^2\sigma_*^2\v^{\top}\prod_{l=s+1}^{t}\l(\I-\eta_{l}\cdot\bSigma_{i}(\pi^*) \r)\bSigma_{i}(\pi^*)\prod_{l=s+1}^{t}\l(\I-\eta_{l}\cdot\bSigma_{i}(\pi^*) \r)\v\r|\\
	\leq \eta_{s}^2\|\v\|^2|\sigma_s^2-\sigma_*^2|\lambda_{\max}\l(\frac{s+\Cb d}{t+\Cb d}\r)^{2\Ca/3K}+2(t-s)\eta_{s}^3\|\v\|^2\lambda_{\max}\l(\frac{s+\Cb d}{t+\Cb d}\r)^{2\Ca/3K}\l\|\bSigma_i(\pi^*)-\bSigma_{i,s}^*\r\|\\
    +\eta_{s}^2\sigma^2\|\v\|^2\lambda_{\max}\l(\frac{s+\Cb d}{t+\Cb d}\r)^{2\Ca/3K}\l\|\bSigma_i(\pi^*)-\bSigma_{i,s}^*\r\|.
\end{multline*}
For any $\eps>0$, Assumption~\ref{assm:cov asymp} guarantees there exists $t_\eps$ such that for all $s\geq t_{\eps}$,
\begin{align*}
    \EE|\sigma_s^2-\sigma_*^2|\leq \eps,\quad   \EE\l\|\bSigma_i(\pi^*)-\bSigma_{i,s}^*\r\|\leq\eps.
\end{align*}
Thus, summing up the above equations and take expectation outsize, for all sufficiently large $t$, we have
\begin{multline*}
	\EE\l|	\sum_{s=0}^t \eta_{s}^2\sigma_s^2\v^{\top}\prod_{l=s+1}^{t}\l(\I-\eta_{l}\cdot\bSigma_{i,l}^* \r)\bSigma_{i,s}^*\prod_{l=s+1}^{t}\l(\I-\eta_{l}\cdot\bSigma_{i,l}^* \r)\v\r. \\
	\l.-	\sum_{s=0}^t \eta_{s}^2\sigma_*^2\v^{\top}\prod_{l=s+1}^{t}\l(\I-\eta_{l}\cdot\bSigma_{i}(\pi^*) \r)\bSigma_{i}(\pi^*)\prod_{l=s+1}^{t}\l(\I-\eta_{l}\cdot\bSigma_{i}(\pi^*) \r)\v\r|\leq C\frac{\eps}{t},
\end{multline*}
which implies
\begin{multline*}
	\lim_{t\to+\infty}t\EE\l|	\sum_{s=0}^t \eta_{s}^2\sigma_s^2\v^{\top}\prod_{l=s+1}^{t}\l(\I-\eta_{l}\cdot\bSigma_{i,l}^* \r)\bSigma_{i,s}^*\prod_{l=s+1}^{t}\l(\I-\eta_{l}\cdot\bSigma_{i,l}^* \r)\v\r. \\
	\l.-	\sum_{s=0}^t \eta_{s}^2\sigma_*^2\v^{\top}\prod_{l=s+1}^{t}\l(\I-\eta_{l}\cdot\bSigma_{i}(\pi^*) \r)\bSigma_{i}(\pi^*)\prod_{l=s+1}^{t}\l(\I-\eta_{l}\cdot\bSigma_{i}(\pi^*) \r)\v\r|=0.
\end{multline*}
Thus, we have convergence in probability,
\begin{multline}
	\lim_{t\to+\infty} t	\sum_{s=0}^t \eta_{s}^2\sigma_s^2\v^{\top}\prod_{l=s+1}^{t}\l(\I-\eta_{l}\cdot\bSigma_{i,l}^* \r)\bSigma_{i,s}^*\prod_{l=s+1}^{t}\l(\I-\eta_{l}\cdot\bSigma_{i,l}^* \r)\v\\
	=\lim_{t\to+\infty}t	\sum_{s=0}^t \eta_{s}^2\sigma_*^2\v^{\top}\prod_{l=s+1}^{t}\l(\I-\eta_{l}\cdot\bSigma_{i}(\pi^*) \r)\bSigma_{i}(\pi^*)\prod_{l=s+1}^{t}\l(\I-\eta_{l}\cdot\bSigma_{i}(\pi^*) \r)\v.
	\label{eq2:bahadur}
\end{multline}
Hence, in order to have $\lim_{t\to+\infty} tV_{\B_2}$, we only need to study the right hand side of Equation~\ref{eq2:bahadur}. Notice that $ \I-\eta_{l}\cdot\bSigma_{i}(\pi^*)$ and $\bSigma_{i}(\pi^*)$ have the common eigenvectors, thus they can be diagonalized simultaneously. Recall that $\bSigma_{i}(\pi^*)$ has the singular value decomposition $\bSigma_{i}(\pi^*)=\U_i(\pi^*)\boldsymbol{\Lambda}_i(\pi^*)\U_i(\pi^*)^{\top}$, where $\U_i(\pi^*)$ is an orthogonal matrix and $\boldsymbol{\Lambda}_i(\pi^*)=\operatorname{diag}(\lambda_1(\pi^*),\ldots,\lambda_d(\pi^*))$ is a diagonal matrix. Then the matrix part for  Equation~\ref{eq2:bahadur} RHS can be rewritten as
\begin{multline*}
	\U(\pi^*)^{\top}\sum_{s=0}^t \eta_{s}^2\prod_{l=s+1}^{t}\l(\I-\eta_{l}\cdot\bSigma_{i}(\pi^*) \r)\bSigma_{i}(\pi^*)\prod_{l=s+1}^{t}\l(\I-\eta_{l}\cdot\bSigma_{i}(\pi^*) \r)\U(\pi^*)\\
	=\l( \begin{matrix}
		\displaystyle \sum_{s=0}^t \eta_{s}^2\prod_{l=s+1}^{t}(1-\eta_{l}\lambda_{1}(\pi^*))^2\lambda_{1}(\pi^*)& &  \\
		&\ddots&\\
		&&\sum_{s=0}^t \eta_{s}^2\prod_{l=s+1}^{t}(1-\eta_{l}\lambda_{d}(\pi^*))^2\lambda_{d}(\pi^*)
	\end{matrix}\r).
\end{multline*}
Equivalently, we only need to analyze the convergence of the diagonal entries. For the simplicity of presentation, we write $\lambda_{j}$ representing $\lambda_{j}(\pi^*)$. To start with, we employ the Taylor's expansion of function $\log(1-x)$ and it arrives at
\begin{align*}
	\sum_{s=0}^t \eta_{s}^2\prod_{l=s+1}^{t}(1-\eta_{l}\lambda_{j})^2\lambda_{j}&=\sum_{s=0}^t \eta_{s}^2\exp\l(2\sum_{l=s+1}^{t}\log\l(1-\eta_{l}\lambda_j\r)\r)\lambda_{j}\\
	&=\sum_{s=0}^t \eta_{s}^2\exp\l(-2\sum_{l=s+1}^{t}\eta_{l}\lambda_j+O\l(\sum_{l=s+1}^{t}\eta_{l}^2\lambda_j^2 \r)\r)\lambda_{j}.
\end{align*}
We then insert the stepsize values into the equation. Specifically, the summation is
\begin{align*}
	\sum_{l=s+1}^{t}\eta_{l}\lambda_j&=\frac{\Ca\lambda_{j}}{\lambda_{\min}}\sum_{l=s+1}^{t}\frac{1}{l+\Cb d}=\frac{\Ca\lambda_{j}}{\lambda_{\min}}\sum_{l=s+1}^{t}\frac{1}{l+\Cb d}\\
	&=\frac{\Ca\lambda_{j}}{\lambda_{\min}}\int_{s+\Cb d+1}^{t+\Cb d+1}\frac{1}{x}  {\rm d}x+\frac{\Ca\lambda_{j}}{\lambda_{\min}}O\l(\sum_{l=s+1}^{t}\l(\frac{1}{l+\Cb d}\r)^2 \r)\\
	&=\frac{\Ca\lambda_{j}}{\lambda_{\min}}\log\l(\frac{t+\Cb d+1}{s+\Cb d+1}\r)+\frac{\Ca\lambda_{j}}{\lambda_{\min}}O\l(\frac{1}{s} \r),
\end{align*}
where the second line uses the Taylor's expansion of $f(\tau)=\int_{a}^{\tau} x^{-1} {\rm d}x$. Moreover, similarly, the square term can be bounded with,
\begin{align*}
	O\l(\sum_{l=s+1}^{t}\eta_{l}^2\lambda_j^2 \r)=O\l(\frac{1}{s}\r).
\end{align*}
Combining the above two equations, it arrives at
\begin{align*}
	\sum_{s=0}^t \eta_{s}^2\prod_{l=s+1}^{t}(1-\eta_{l}\lambda_{j})^2\lambda_{j}&=\sum_{s=0}^t \eta_{s}^2\exp\l( -2\frac{\Ca\lambda_{j}}{\lambda_{\min}} \log\l(\frac{t+\Cb d+1}{s+\Cb d+1}\r)+ O\l(\frac{1}{s}\r) \r)\\
	&=\sum_{s=0}^t \eta_{s}^2\l(\frac{s+\Cb d+1}{t+\Cb d+1}\r)^{2\Ca \lambda_{j}/\lambda_{\min}} \exp\l( O\l(\frac{1}{s}\r) \r)\\
	&=\sum_{s=0}^t \eta_{s}^2\l(\frac{s+\Cb d+1}{t+\Cb d+1}\r)^{2\Ca \lambda_{j}/\lambda_{\min}} \l(1+O\l(\frac{1}{s}\r)\r),
\end{align*}
where the last equation is due to the Taylor's expansion of $\exp(x)$ at $x=0$. We then bound each of the two terms respectively. The first term can be analyzed through Taylor's expansion of function $\int_{a}^{\tau} x^{\alpha}  {\rm d}x$,
\begin{align*}
	&\sum_{s=0}^t \eta_{s}^2\l(\frac{s+\Cb d+1}{t+\Cb d+1}\r)^{2\Ca \lambda_{j}/\lambda_{\min}}=\frac{\Ca^2}{\lambda_{\min}^2}\l(\frac{1}{t+\Cb d+1 }\r)^{2\Ca \lambda_{j}/\lambda_{\min} }\sum_{s=0}^{t}(s+\Cb d+1)^{2\Ca\lambda_{j}/\lambda_{\min}-2}\\
	&~~~~~~~~~~~~~~=\frac{\Ca^2}{\lambda_{\min}^2}\l(\frac{1}{t+\Cb d+1 }\r)^{2\Ca \lambda_{j}/\lambda_{\min} }\l(\int_{\Cb d+1}^{t+\Cb d+1}x^{2\Ca \lambda_{j}/\lambda_{\min}-2}  {\rm d}x +O\l(t^{2\Ca \lambda_{j}/\lambda_{\min}-2}\r)\r)\\
	&~~~~~~~~~~~~~~=\frac{\Ca^2}{\lambda_{\min}^2}\frac{1}{2\Ca\lambda_{j}/\lambda_{\min}-1}\frac{1}{t+\Cb d+1 }+O\l(\l(\frac{1}{t+\Cb d+1 }\r)^2 \r). 
\end{align*}
In a similar fashion, we can bound the $O(1/s)$ term and it's at scale of $O(1/t^2)$. Thus all over, we have the convergence in probability for Equation~\ref{eq2:bahadur} and $tV_{\B_2}$,
\begin{multline*}
		\lim_{t\to+\infty}tV_{\B_2}=\lim_{t\to+\infty}t\sigma_*^2	\sum_{s=0}^t \eta_{s}^2\v^{\top}\prod_{l=s+1}^{t}\l(\I-\eta_{l}\cdot\bSigma_{i}(\pi^*) \r)\bSigma_{i}(\pi^*)\prod_{l=s+1}^{t}\l(\I-\eta_{l}\cdot\bSigma_{i}(\pi^*) \r)\v\\
	=v^{\top}\U \l( \begin{matrix} \frac{\Ca\sigma_*^2}{\lambda_{\min}}\frac{\Ca\lambda_{1}/\lambda_{\min}}{2\Ca\lambda_{1}/\lambda_{\min}-1} & & \\
		&\ddots&\\
		&&\frac{\Ca\sigma_*^2}{\lambda_{\min}}\frac{\Ca\lambda_{d}/\lambda_{\min}}{2\Ca\lambda_{d}/\lambda_{\min}-1}
	\end{matrix}\r) \U^{\top}\v.
\end{multline*}
Thus, by martingale CLT \citep{brown1971martingale} and the Cramer-Wold Theorem \citep{cramer1936some}, we conclude that the term $\sqrt{t}\B_2$ converges in distribution to multidimensional Gaussian with mean zero and covariance defined in Proposition~\ref{prop:Bahadur}.

\medspace

\noindent Bound of $\B_3$. We still first consider the product of the matrices. For $t>s>t_1$, with similar tricks in Section~\ref{sec:proof-bandit}, we have
\begin{align*}
	\l\|\prod_{l=s+1}^{t}\l(\I-\eta_{l}\cdot\bSigma_{i,l}^* \r)\r\|\leq \exp\l(-\Ca\ln\l(\frac{t+\Cb d}{s+\Cb d}\r)\r)=\l(\frac{s+\Cb d}{t+\Cb d}\r)^{\Ca}.
\end{align*}
On the other hand, tricks for $t>t_1>s$, the matrix product has
\begin{align*}
	\l\|\prod_{l=s+1}^{t}\l(\I-\eta_{l}\cdot\bSigma_{i,l}^* \r)\r\|&\leq\exp\l(-\frac{\Ca}{K}\ln\l(\frac{t_1+\Cb d}{s+\Cb d}\r)-\Ca\ln\l(\frac{t+\Cb d}{t_1+\Cb d}\r)\r)\\
	&\leq\l(\frac{s+\Cb d}{t_1+\Cb d}\r)^{\Ca/K}\cdot\l(\frac{t_1+\Cb d}{t+\Cb d}\r)^{\Ca}\cdot\I_{d}.
\end{align*}
Then we can bound the norm of $\|\B_3\|$,
\begin{align*}
	\|\B_3\|&\leq \sum_{s=0}^{t}\frac{\l( s+\Cb d\r)^{\Ca/K-1}}{(t+\Cb d)^{\Ca/K}}\|\bSigma_{i,s}-\bSigma_{i,s}^*\|\cdot\|\Bbeta_i^{(s)}-\Bbeta_i^*\|.
\end{align*}
Moreover, Equation~\ref{eq3:bahadur} provides the bound for $\|\bSigma_{i,s}-\bSigma_{i,s}^*\|$, and under assumptions of Proposition~\ref{prop:LBD_expectation_square}, we further have
\begin{multline*}
	\EE\l\{\|\B_3\|\r\}
	\leq \sum_{s=0}^{t_1}\frac{\l( s+\Cb d\r)^{\Ca/K-1}}{(t+\Cb d)^{\Ca/K}}\cdot \lambda_{\max}\cdot\EE\l\{\|\Bbeta_i^{(t)}-\Bbeta_i^*\|\r\}\\+\sum_{s=t_1+1}^{t}\frac{\l( s+\Cb d\r)^{\Ca/K-1}}{(t+\Cb d)^{\Ca/K}}\cdot\kappa_0\cdot\EE\l\{\|\Bbeta_i^{(s)}-\Bbeta_i^*\|^2\cdot \II\l\{\|\Bbeta_i^{(s)}-\Bbeta_i^*\|\leq h\r\}\r\}\\
    +\sum_{s=t_1+1}^{t}\frac{\l( s+\Cb d\r)^{\Ca/K-1}}{(t+\Cb d)^{\Ca/K}}\cdot\lambda_{\max}\cdot\EE\l\{\|\Bbeta_i^{(s)}-\Bbeta_i^*\|\cdot \II\l\{\|\Bbeta_i^{(s)}-\Bbeta_i^*\|\geq h\r\}\r\}
\end{multline*}
which implies $\|\B_3\|\leq O_{\PP}(1/t)$.

\medspace

\noindent Bound of $\B_4$. It's worth noting that $\B_4$ is the sum of martingale differences. We consider $\EE\|\B_4\|^2$,
\begin{multline*}
    \EE\l\{\|\B_4\|^2\r\}=\sum_{s=0}^t \prod_{l=s+1}^{t}\l\|\I-\eta_l\cdot\bSigma_{i,l}^*\r\|^2\cdot\eta_s^2\cdot\EE\l\|\l( \II\{a_s=i\}\cdot\X_s\X_s^{\top}-\bSigma_{i,s}\r)\l(\Bbeta_i^{(s)}-\Bbeta_i^*\r)\r\|^2\\
    \leq \sum_{s=0}^t \prod_{l=s+1}^{t}\l\|\I-\eta_l\cdot\bSigma_{i,l}^*\r\|^2\cdot\eta_s^2\cdot\lambda_{\max}^2\EE\l\|\Bbeta_i^{(s)}-\Bbeta_i^* \r\|^2.
\end{multline*}
Hence, by Proposition~\ref{prop:LBD_expectation_square}, Lemma~\ref{teclem:sum upper bound} and Lemma~\ref{teclem:prod upper bound}, we have
\begin{align*}
	\|\B_4\|=O_{\PP}\l(\frac{1}{t}\r).
\end{align*}
Thus, all over, under the conditions of Proposition~\ref{prop:Bahadur}, $\|\B_i\|=O_{\PP}(1/t)$ for $i=1,3,4$ and $\sqrt{t}\B_2$ converges to the multivariate Gaussian in distribution.

\end{proof}

\subsection{Proof of Lemma~\ref{lem:asymp estimate cov}}
First of all, we have
\begin{align*}
    \PP\l(\l\|\frac{1}{t}\sum_{l=1}^t \l\{\X_l\X_l^\top-\EE \l\{\X_l\X_l^\top|\calF_l \r\}\r\}\r\|_F^2\geq \eps\r)\leq \frac{1}{t^2\eps}\sum_{l=1}^t \EE\l\|\X_l\X_l^\top-\EE \l\{\X_l\X_l^\top|\calF_l \r\} \r\|_F^2,
\end{align*}
implying that $\frac{1}{t}\sum_{l=1}^t\X_l\X_l^\top -\frac{1}{t}\sum_{l=1}^t\EE \l\{\X_l\X_l^\top|\calF_l \r\}$ converges to zero in probability. On the other hand, by Assumption~\ref{assm:cov asymp}, we have $\frac{1}{t}\sum_{l=1}^t\EE \l\{\X_l\X_l^\top|\calF_l \r\}$ converges in probability to $\bSigma^*$. The other two arguments can be proved similarly.

\section{Technical Lemmas}
\begin{lemma}[Bernstein's Inequality for Sub-Exponential Martingales]
	Let $Z_1,Z_2,\ldots,Z_T$ be a real valued sequence of martingale difference with respect to $\calF_0,\ldots,\calF_{T}$. Suppose $Z_t|\calF_{t-1}$ is sub-exponential with Orlicz norm bounded by $\| Z_t|\calF_{t-1}\|_{\Psi_1}\leq K_t$, a.s. , where $K_t$ is a constant, then
	\begin{align*}
		\PP\l( \sum_{t=1}^T Z_t\geq s\r)\leq\exp\l(-\min \l\{\frac{s^2}{C\sum_{t=1}^T K_t^2},\frac{s}{2\max_t K_t}\r\}\r).
	\end{align*}
	\label{teclem:azuma}
\end{lemma}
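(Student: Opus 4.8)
The plan is to prove Lemma~\ref{teclem:azuma} by the exponential moment (Chernoff) method combined with the tower property of conditional expectation, which is the standard route to Bernstein-type tail bounds for martingales. The heart of the argument is a one-step conditional moment generating function (MGF) bound; once this is in place, the martingale structure lets the per-step estimates multiply rather than interact.

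First I would establish the conditional MGF estimate: since $\EE\{Z_t \mid \calF_{t-1}\} = 0$ and $\|Z_t \mid \calF_{t-1}\|_{\Psi_1} \leq K_t$ almost surely, there exist absolute constants $c_0, C_0 > 0$ such that for every $0 \leq \lambda \leq c_0 / K_t$,
\begin{align*}
\EE\{\exp(\lambda Z_t) \mid \calF_{t-1}\} \leq \exp(C_0 \lambda^2 K_t^2) \quad \text{a.s.}
\end{align*}
This follows the usual sub-exponential computation: expand $\exp(\lambda Z_t)$ in its power series, note that the mean-zero property annihilates the linear term, and control $\EE\{|Z_t|^k \mid \calF_{t-1}\}$ by a universal multiple of $k!\,K_t^k$ using the defining inequality $\EE\{\exp(|Z_t/K_t|) \mid \calF_{t-1}\} \leq 2$ of the conditional Orlicz norm. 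Summing the resulting series over $k \geq 2$ converges precisely when $\lambda K_t$ sits below an absolute threshold. Because $K_t$ is a constant here, no measurability subtleties arise and this is a genuine a.s.\ inequality between $\calF_{t-1}$-measurable quantities.

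Next I would telescope. Writing $S_T = \sum_{t=1}^T Z_t$ and restricting to $0 \leq \lambda \leq c_0 / \max_t K_t$ so that the one-step bound is valid at every stage, the tower property gives
\begin{align*}
\EE\exp(\lambda S_T) = \EE\left[\exp(\lambda S_{T-1}) \, \EE\{\exp(\lambda Z_T) \mid \calF_{T-1}\}\right] \leq \exp(C_0 \lambda^2 K_T^2) \, \EE\exp(\lambda S_{T-1}),
\end{align*}
and iterating down to $t=1$ yields $\EE\exp(\lambda S_T) \leq \exp\!\big(C_0 \lambda^2 \sum_{t=1}^T K_t^2\big)$.

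Finally I would apply the Chernoff bound $\PP(S_T \geq s) \leq \exp(-\lambda s)\,\EE\exp(\lambda S_T) \leq \exp\!\big(-\lambda s + C_0 \lambda^2 \sum_t K_t^2\big)$ and optimize over the admissible range of $\lambda$. The unconstrained optimum $\lambda^\star = s / (2 C_0 \sum_t K_t^2)$ is feasible exactly when $s$ is small relative to $(\sum_t K_t^2)/\max_t K_t$, and substituting it produces the sub-Gaussian term $\exp\!\big(-s^2/(4 C_0 \sum_t K_t^2)\big)$; otherwise I would clamp $\lambda$ to its boundary value $c_0/\max_t K_t$, where the linear term $-\lambda s$ dominates and yields the sub-exponential tail $\exp\!\big(-c_0 s/(2\max_t K_t)\big)$. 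Taking the worse of the two exponents reproduces the claimed minimum of the quadratic and linear terms (after absorbing constants into $C$). The main obstacle is the one-step conditional MGF bound: care is needed to verify that the moment-to-Orlicz comparison holds conditionally and to track the admissible range of $\lambda$, since it is precisely this range restriction that creates the two-regime (Bernstein) structure of the final bound rather than a purely sub-Gaussian tail.
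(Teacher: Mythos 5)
Your proposal is correct and follows essentially the same route as the paper's proof: Markov/Chernoff with the tower property to peel off one conditional MGF factor at a time, the one-step bound $\EE\{\exp(\lambda Z_t)\mid\calF_{t-1}\}\leq\exp(C\lambda^2 K_t^2)$ valid for $\lambda$ below an absolute multiple of $1/K_t$, and a final choice of $\lambda$ as the minimum of the unconstrained optimizer and the boundary value, which produces the two-regime Bernstein exponent. The only difference is that you sketch the derivation of the one-step conditional MGF bound from the Orlicz definition via the power series, whereas the paper simply invokes it; both are fine.
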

\begin{proof}
	By Markov's inequality, it has
	\begin{align*}
		\PP\l( \sum_{t=1}^T Z_t\geq s\r)&\leq \exp(-\lambda s)\EE\l\{\exp\l(\lambda \sum_{t=1}^T Z_t\r)\r\}\\
		&=\exp(-\lambda s)\EE\l\{ \EE \l\{ \exp\l( \lambda\sum_{i=1}^{T} Z_t\r)\bigg|\calF_{T-1}\r\}\r\}\\
		&=\exp(-\lambda s)\EE\l\{ \exp\l(\lambda\sum_{t=1}^{T-1} Z_t\r)\EE \l\{ \exp\l( \lambda Z_T\r)\bigg| \calF_{T-1}\r\}\r\}.
	\end{align*}
	For $\lambda\leq \frac{1}{\max_t \|Z_t| \calF_{t-1}\|_{\Psi_1}}$, the conditional expectation has
	$$\EE\l\{\exp\l(\lambda Z_T\r)| \calF_{T-1}\r\}\leq\exp\l(C\lambda^2\|Z_T| \calF_{T-1}\|_{\Psi_1}^2\r).$$
	Apply the procedure to $X_{T-1},\ldots,X_1$ respectively and then we have
	\begin{align*}
		\PP\l(\sum_{t=1}^T Z_t\geq s\r)\leq \exp\l( -\lambda s+ C\lambda^2 \sum_{t=1}^{T} K_t^2\r).
	\end{align*}
We then insert the value choice for $\lambda$ $$\lambda=\min\l\{\frac{s}{2C\sum_{t=1}^T K_t^2},\frac{c}{\max_tK_t}\r\}$$ into the tail bound and then we have
	\begin{align*}
		\PP\l( \sum_{t=1}^T Z_t\geq s\r)\leq\exp\l(-\min \l\{\frac{s^2}{C\sum_{t=1}^T K_t^2},\frac{s}{2\max_t K_t}\r\}\r).
	\end{align*}
\end{proof}

\begin{lemma}[Bernstein Inequality based on Orlicz Norm of Random Vectors]
	Suppose $\Z_1,\Z_2,\ldots,\Z_T\in\RR^{d}$ are mean zero vectors, namely, $\EE[\Z_t]=0$ and $\|\Z_t\|_{\Psi_1}$ exists. Then we have the following holds for any $v>0$,
	\begin{align*}
		\PP\l(\l\|\sum_{t=1}^{T}\Z_t \r\| \geq 2v\r)\leq \exp\l(Cd-\min\l\{\frac{v^2}{\sum_{t=1}^T \|\Z_t\|_{\Psi_1}^2},\frac{v}{\max_{t}\|\Z_t\|_{\Psi_1}} \r\}\r),
	\end{align*}
	where $C$ is some constant.
	\label{teclem:Bernstein Orlicz Norm}
\end{lemma}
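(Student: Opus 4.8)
The plan is to reduce the vector tail bound to the scalar martingale Bernstein inequality of Lemma~\ref{teclem:azuma} through a covering (net) argument on the unit sphere. First I would write the Euclidean norm variationally as $\|\sum_{t=1}^T \Z_t\| = \sup_{\u\in\SS^{d-1}} \u^\top \sum_{t=1}^T \Z_t$, which reduces bounding the random norm to bounding a supremum of scalar linear statistics, and then I would replace the supremum over the whole sphere by a maximum over a fixed finite net. The key structural fact, consistent with the way this lemma is invoked elsewhere (e.g. Lemma~\ref{teclem:vec max norm} and Lemma~\ref{teclem:Bernstein partial length}), is that $\{\Z_t\}$ forms a martingale difference sequence with respect to some filtration $\{\calF_t\}$, so each scalar projection is itself a scalar martingale difference.

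Concretely, fix a $1/2$-net $\calN$ of $\SS^{d-1}$, i.e. a set such that every $\u^*\in\SS^{d-1}$ has some $\u\in\calN$ with $\|\u-\u^*\|\le 1/2$; a standard volumetric estimate gives $|\calN|\le 5^d$. For any fixed vector $\w$, choosing $\u^*=\w/\|\w\|$ and its nearest net point $\u$ yields $\u^\top\w = \u^{*\top}\w-(\u^*-\u)^\top\w \ge \|\w\|-\tfrac12\|\w\|$, so that $\|\w\|\le 2\max_{\u\in\calN}\u^\top\w$. Applying this with $\w=\sum_t \Z_t$ produces the containment $\{\|\sum_t\Z_t\|\ge 2v\}\subseteq\{\max_{\u\in\calN}\u^\top\sum_t\Z_t\ge v\}$, and a union bound reduces the problem to controlling $\PP(\sum_t \u^\top\Z_t\ge v)$ for each of the at most $5^d$ fixed directions $\u\in\calN$.

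For each such $\u$, the crucial observation is that $\u$ is deterministic (the net is selected before observing the data), so the scalars $\u^\top\Z_t$ inherit the martingale-difference property of $\{\Z_t\}$ and satisfy $\|\u^\top\Z_t\mid\calF_{t-1}\|_{\Psi_1}\le \|\Z_t\|_{\Psi_1}$, by the definition of the vector Orlicz norm as a supremum over unit directions. Lemma~\ref{teclem:azuma} then gives $\PP(\sum_t\u^\top\Z_t\ge v)\le \exp(-\min\{v^2/(C\sum_t\|\Z_t\|_{\Psi_1}^2),\,v/(2\max_t\|\Z_t\|_{\Psi_1})\})$. Combining this with the union bound over $\calN$ contributes a multiplicative factor $5^d=\exp(d\log 5)$; absorbing $\log 5$ into the constant $C$ yields precisely the claimed bound.

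I expect the only delicate point to be the interaction between the discretization and the dependence structure: because Lemma~\ref{teclem:azuma} is stated for martingale differences rather than independent summands, one must verify that projecting onto a \emph{fixed} direction preserves that property and that the conditional $\Psi_1$-norm of the projection is dominated by $\|\Z_t\|_{\Psi_1}$ uniformly over the net. Once the net is fixed in advance, both facts are immediate, so the remaining work — the covering-number estimate and the bookkeeping of the absolute constants in the exponent — is routine.
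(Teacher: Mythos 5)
Your covering argument coincides with the paper's own proof: the variational identity $\|\sum_{t}\Z_t\|=\sup_{\u\in\SS^{d-1}}\u^\top\sum_t\Z_t$, the $1/2$-net of cardinality at most $5^d$, the approximation $\|\w\|\le 2\max_{\u\in\calN}\u^\top\w$, and the union bound that absorbs $5^d$ into $\exp(Cd)$ are exactly the paper's steps, and that part of your write-up is correct.

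The gap is in the scalar step, and it is not cosmetic. Lemma~\ref{teclem:azuma} requires an almost-sure bound on the \emph{conditional} Orlicz norm $\|\u^\top\Z_t\,|\,\calF_{t-1}\|_{\Psi_1}$ by a constant, whereas the hypothesis of the present lemma controls only the \emph{unconditional} norm $\|\Z_t\|_{\Psi_1}$. Your claim that $\|\u^\top\Z_t\mid\calF_{t-1}\|_{\Psi_1}\le\|\Z_t\|_{\Psi_1}$ is ``immediate from the definition of the vector Orlicz norm'' conflates two different objects: the supremum-over-directions definition gives $\|\u^\top\Z_t\|_{\Psi_1}\le\|\Z_t\|_{\Psi_1}$, an inequality between two \emph{unconditional} norms, and says nothing about the conditional norm. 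Conditioning can inflate the norm by an unbounded factor: if $Z=YW$ with $Y\in\calF$, $\PP(Y=M)=1/M$, $Y=1$ otherwise, and $W$ an independent Rademacher variable, then $\|Z\|_{\Psi_1}\asymp M/\log M$ while $\|Z\,|\,\calF\|_{\Psi_1}=M/\log 2$ on an event of positive probability. Worse, the statement you would actually be proving (Bernstein for martingale differences with only unconditional $\Psi_1$ control) is false: let a rare event $A$ with $\PP(A)=p$ be generated by the first $k=\log_2(1/p)$ Rademacher differences, and for the next $T'=(\log(1/p))^2$ steps set $Z_t=S\,\eps_t\,\II\{A\}$ with fresh Rademacher signs $\eps_t$. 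Each $Z_t$ is a martingale difference with $\|Z_t\|_{\Psi_1}\asymp S/\log(1/p)$, yet on $A$ the sum exceeds $cS\sqrt{T'}$ with conditional probability at least $1/2$, so the true tail at that level is at least $p/2$, while the claimed bound evaluates to $\exp(C-c'(\log(1/p))^2)\ll p$. This failure is exactly why the paper states the martingale version separately, as Lemma~\ref{teclem:Bernstein Martingale}, with conditional norm hypotheses. The repair for the present lemma is to read its hypothesis the way the paper implicitly does: the $\Z_t$ are independent. Then $\Z_t$ is independent of $\calF_{t-1}=\sigma(\Z_1,\ldots,\Z_{t-1})$, the conditional and unconditional norms coincide by the paper's preliminary lemma, and your application of Lemma~\ref{teclem:azuma} to each fixed direction in the net goes through verbatim.
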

\begin{proof} We notice that
	\begin{align*}
		\l\|\sum_{t=1}^{T}\Z_t \r\|=\sup_{\u\in\SS^{d-1}} \langle\sum_{t=1}^{T}\Z_t,\u \rangle.
	\end{align*}
	We shall bound the Euclidean norm using $\eps$-net. Additionally, for a fixed $\u$ and for any $v>0$, we have
	\begin{align}
		\PP\l(\l|\l\langle\sum_{t=1}^{T}\Z_t,\u \r\rangle\r|\geq v \r)&\leq 2\exp\l(-\min\l\{\frac{v^2}{\sum_{t=1}^T \|\langle\Z_t,\u\rangle\|_{\Psi_1}^2},\frac{v}{\max_{t}\|\langle\Z_t,\u\rangle\|_{\Psi_1}} \r\} \r)\\
		&\leq2\exp\l(-\min\l\{\frac{v^2}{\sum_{t=1}^T \|\Z_t\|_{\Psi_1}^2},\frac{v}{\max_{t}\|\Z_t\|_{\Psi_1}} \r\} \r),
		\label{eq1}
	\end{align}
	which uses the definition of Orlicz norm for random vectors. Suppose $\calN$ is a $1/2$-net of $\SS^{d-1}$ with cardinality $|\calN|\leq 5^d$ and we take a union of Equation~\ref{eq1} on $\calN$,
	\begin{align*}
		\PP\l(\max_{\u\in\calN}\l|\l\langle\sum_{t=1}^{T}\Z_t,\u \r\rangle\r|\geq v \r)\leq 2|\calN|\exp\l(-\min\l\{\frac{v^2}{\sum_{t=1}^T \|\Z_t\|_{\Psi_1}^2},\frac{v}{\max_{t}\|\Z_t\|_{\Psi_1}} \r\} \r).
	\end{align*}
	For any $\u\in\SS^{d-1}$, there exists $\x\in\calN$, such that $\|\u-\x\|\leq1/2$. Thus, we have
	\begin{multline*}
		\l\|\sum_{t=1}^{T}\Z_t \r\|=\sup_{\u\in\SS^{d-1}} \l|\l\langle\sum_{t=1}^{T}\Z_t,\u \r\rangle\r|\leq \max_{\u\in\calN}\l|\l\langle\sum_{t=1}^{T}\Z_t,\u \r\rangle\r|+\sup_{\u:\; \|\u\|\leq\eps} \l|\l\langle\sum_{t=1}^{T}\Z_t,\u \r\rangle\r|\\
		=\max_{\u\in\calN}\l|\l\langle\sum_{t=1}^{T}\Z_t,\u \r\rangle\r|+\frac{1}{2}\l\|\sum_{t=1}^{T}\Z_t \r\|,
	\end{multline*}
	which shows that
	\begin{align*}
		\l\|\sum_{t=1}^{T}\Z_t \r\|=\sup_{\u\in\SS^{d-1}} \l|\l\langle\sum_{t=1}^{T}\Z_t,\u \r\rangle\r|\leq2\max_{\u\in\calN}\l|\l\langle\sum_{t=1}^{T}\Z_t,\u \r\rangle\r|.
	\end{align*}
	Thus we have
	\begin{align*}
		\PP\l( \l\|\sum_{t=1}^{T}\Z_t \r\|\geq 2v\r)\leq 2\times 5^{d} \exp\l(-\min\l\{\frac{v^2}{\sum_{t=1}^T \|\Z_t\|_{\Psi_1}^2},\frac{v}{\max_{t}\|\Z_t\|_{\Psi_1}} \r\} \r),
	\end{align*}
	which completes the proof.
\end{proof}

\begin{lemma}[Martingale Bernstein Inequality for Vectors]
	Suppose $\Z_1,\Z_2,\ldots,\Z_T$ are $d$-dimensional martingale difference with respect to $\calF_0,\calF_{1},\ldots,\calF_{T}$, namely, $\EE[\Z_t|\calF_{t-1}]=0$ a.s. and we suppose $\|\Z_t|\calF_{t-1}\|_{\Psi_1}\leq K_t$ a.s. for some contant $K_t$. Then for any $v>0$, we have
	\begin{align*}
		\PP\l(\l\|\sum_{t=1}^{T}\Z_t \r\| \geq 2v\r)\leq \exp\l(Cd-\min\l\{\frac{v^2}{\sum_{t=1}^T K_t^2},\frac{v}{\max_{t} K_t } \r\}\r),
	\end{align*}
	where $C$ is some constant.
	\label{teclem:Bernstein Martingale}
\end{lemma}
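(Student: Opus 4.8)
The plan is to reduce the vector tail bound to the scalar martingale inequality of Lemma~\ref{teclem:azuma} via a covering argument, exactly mirroring the net reduction used in the proof of Lemma~\ref{teclem:Bernstein Orlicz Norm}, but with the non-martingale scalar Bernstein bound appearing there replaced by its martingale counterpart. First I would write $\l\|\sum_{t=1}^T \Z_t\r\| = \sup_{\u\in\SS^{d-1}}\langle \sum_{t=1}^T \Z_t,\u\rangle$ and fix a deterministic direction $\u\in\SS^{d-1}$. For such a fixed $\u$, the scalars $Z_t^{(\u)} := \langle \Z_t,\u\rangle$ form a real-valued martingale difference sequence with respect to $\{\calF_t\}$, since $\EE[\langle\Z_t,\u\rangle|\calF_{t-1}] = \langle\EE[\Z_t|\calF_{t-1}],\u\rangle = 0$ almost surely. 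Because $\u$ is (trivially) $\calF_{t-1}$-measurable, the $\sup$-characterization in the definition of the conditional Orlicz norm of a random vector gives $\|\langle\Z_t,\u\rangle|\calF_{t-1}\|_{\Psi_1}\le \|\Z_t|\calF_{t-1}\|_{\Psi_1}\le K_t$ almost surely.

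Next I would apply Lemma~\ref{teclem:azuma} to $\{Z_t^{(\u)}\}$, yielding the one-sided bound $\PP(\sum_t\langle\Z_t,\u\rangle\ge v)\le \exp(-\min\{v^2/(C\sum_t K_t^2),\, v/(2\max_t K_t)\})$; applying the same bound to $\{-Z_t^{(\u)}\}$, which is again a martingale difference sequence with identical conditional Orlicz norms, and taking a union over the two signs controls $|\sum_t\langle\Z_t,\u\rangle|$ up to an extra factor of $2$. I would then fix a $1/2$-net $\calN$ of $\SS^{d-1}$ with $|\calN|\le 5^d$ and union-bound the resulting estimate over $\u\in\calN$. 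The crucial point is that the net is deterministic and chosen independently of the sample, so the union bound remains valid despite the filtration structure.

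Finally, I would invoke the net-approximation inequality $\l\|\sum_t\Z_t\r\| \le 2\max_{\u\in\calN}|\langle\sum_t\Z_t,\u\rangle|$, proved verbatim as in Lemma~\ref{teclem:Bernstein Orlicz Norm}: every unit vector lies within $1/2$ of some net point, and the contribution of the residual is absorbed into half of the supremum. Combining this with the union bound gives $\PP(\|\sum_t\Z_t\|\ge 2v)\le 2\cdot 5^d\exp(-\min\{v^2/(C\sum_t K_t^2),\, v/(2\max_t K_t)\})$, and absorbing $2\cdot 5^d = \exp(d\log 5 + \log 2)$ into an $\exp(Cd)$ prefactor, while rescaling the constants hidden in $C$ and in the $\min$, yields the claimed bound.

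The individual steps are routine; the only points requiring genuine care are the transfer of the conditional Orlicz norm from the vector $\Z_t$ to its scalar projection $\langle\Z_t,\u\rangle$ (which relies on $\u$ being measurable, hence on choosing a \emph{deterministic} net rather than a data-dependent one), and the bookkeeping needed to collapse the $5^d$ covering factor and the symmetrization factors into the constants appearing in the statement. Since all three ingredients---the scalar martingale Bernstein bound, the sup-characterization of the conditional Orlicz norm, and the covering argument---are already available in the excerpt, I do not anticipate any substantial obstacle beyond this constant-tracking.
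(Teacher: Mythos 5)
Your proposal is correct and follows essentially the same route as the paper's own proof: project onto a fixed direction, apply the scalar martingale Bernstein bound (Lemma~\ref{teclem:azuma}), and then run the $1/2$-net covering argument of Lemma~\ref{teclem:Bernstein Orlicz Norm} to absorb the $2\cdot 5^d$ factor into $\exp(Cd)$. Your explicit attention to the measurability of the (deterministic) net directions in the conditional Orlicz norm transfer, and to the two-sided symmetrization, makes precise two points the paper's terse proof leaves implicit.
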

\begin{proof}
	The proof procedure would be similar to Lemma~\ref{teclem:Bernstein Orlicz Norm} and additionally, Lemma~\ref{teclem:azuma} would be used. Notice that for any fixed $\u$ and for any $v>0$, Lemma~\ref{teclem:azuma} indicates that
	\begin{align*}
		\PP\l(\l|\l\langle\sum_{t=1}^{T}\Z_t,\u \r\rangle\r|\geq v \r)
		&\leq2\exp\l(-\min\l\{\frac{v^2}{\sum_{t=1}^T K_t^2},\frac{v}{\max_{t}K_t} \r\} \r).
	\end{align*}
	Then with a similar routine to Lemma~\ref{teclem:Bernstein Orlicz Norm}, we have
	\begin{align*}
		\PP\l( \l\|\sum_{t=1}^{T}\Z_t \r\|\geq 2v\r)\leq 2\times 5^d\exp\l(-\min\l\{\frac{v^2}{\sum_{t=1}^T K_t^2},\frac{v}{\max_{t} K_t} \r\} \r),
	\end{align*}
which completes the proof.
\end{proof}
\begin{lemma}
	Suppose $\Z_1,\Z_2,\ldots,\Z_T$ are $d$-dimensional martingale difference with respect to $\calF_0,\calF_{1},\\\ldots,\calF_{T}$, namely, $\EE[\Z_t|\calF_{t-1}]=0$ a.s. and we suppose $\sup_{|\calS|\leq s,\, \calS\subseteq[d]}\|\calH_{\calS}(\Z_t)|\calF_{t-1}\|_{\Psi_1}\leq K_t$ a.s. for some contant $K_t$. Then for any $v>0$, we have
	\begin{align*}
		\PP\l(\sup_{|\calS|\leq s,\; \calS\subseteq[d]}\l\|\calH_{\calS}\l(\sum_{t=1}^{T}\Z_t\r) \r\| \geq 2v\r)\leq \exp\l(Cs\log(ed/s)-\min\l\{\frac{v^2}{\sum_{t=1}^T K_t^2},\frac{v}{\max_{t}K_t} \r\}\r),
	\end{align*}
	where $C$ is some constant.
	\label{teclem:Bernstein partial length}
\end{lemma}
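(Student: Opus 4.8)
The plan is to mirror the proof of Lemma~\ref{teclem:Bernstein Martingale}, replacing the $1/2$-net of the full sphere $\SS^{d-1}$ by a net of the set of $s$-sparse unit vectors. The starting point is the variational identity
$$\sup_{|\calS|\leq s,\,\calS\subseteq[d]}\l\|\calH_{\calS}\l(\textstyle\sum_{t=1}^T\Z_t\r)\r\| = \sup_{\u\in\calV_s}\l\langle \textstyle\sum_{t=1}^T\Z_t,\,\u\r\rangle,\qquad \calV_s:=\{\u\in\SS^{d-1}:\|\u\|_0\leq s\},$$
since for a fixed support $\calS$ the maximizing unit vector is $\calH_\calS(\sum_t\Z_t)/\|\calH_\calS(\sum_t\Z_t)\|$ and the maximizing $\calS$ selects the $s$ largest-magnitude coordinates. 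This recasts the sparse hard-thresholded Euclidean norm as the support function of $\calV_s$, which is exactly the form handled by the net argument.

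Next I would fix a single $\u\in\calV_s$ with support $\calS=\operatorname{supp}(\u)$. Then $\langle\Z_t,\u\rangle=\langle\calH_\calS(\Z_t),\u\rangle$ is a scalar martingale difference whose conditional Orlicz norm is at most $\|\calH_\calS(\Z_t)|\calF_{t-1}\|_{\Psi_1}\cdot\|\u\|\leq K_t$ by the hypothesis. Applying the scalar Bernstein inequality Lemma~\ref{teclem:azuma}, symmetrized to a two-sided tail, gives for each fixed $\u$,
$$\PP\l(\l|\textstyle\sum_{t=1}^T\langle\Z_t,\u\rangle\r|\geq v\r)\leq 2\exp\l(-\min\l\{\tfrac{v^2}{C\sum_t K_t^2},\tfrac{v}{2\max_t K_t}\r\}\r).$$

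The combinatorial step is to construct an appropriate net $\calN$ of $\calV_s$. Taking the union over all supports $\calS$ of size at most $s$ of a fixed $1/2$-net of the unit sphere of the coordinate subspace $\RR^{\calS}$ yields a set $\calN\subseteq\calV_s$ with $|\calN|\leq\l(\sum_{k=0}^{s}\binom{d}{k}\r)5^s\leq(ed/s)^s5^s=\exp(Cs\log(ed/s))$. The crucial feature is that this net respects the sparsity pattern: for any $\u\in\calV_s$ there is $\x\in\calN$ supported on the same set with $\|\u-\x\|\leq1/2$, so the difference $\u-\x$ is again $s$-sparse and $(\u-\x)/\|\u-\x\|\in\calV_s$. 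The standard net-unwinding argument, identical to the one in Lemma~\ref{teclem:Bernstein Orlicz Norm}, then gives $\sup_{\u\in\calV_s}\langle\sum_t\Z_t,\u\rangle\leq2\max_{\x\in\calN}\langle\sum_t\Z_t,\x\rangle$, and a union bound of the per-point tail over $\calN$ produces the claimed inequality, with the $2\cdot5^s$ factor absorbed into the exponent $Cs\log(ed/s)$.

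The only genuinely new ingredient relative to Lemma~\ref{teclem:Bernstein Martingale} is ensuring that the net difference vectors stay $s$-sparse, so that the conditional Orlicz hypothesis, which is imposed only on the sparse projections $\calH_\calS(\Z_t)$ rather than on $\Z_t$ itself, continues to apply after taking differences; choosing $\calN$ as a union of subspace nets achieves this automatically. I do not anticipate a serious obstacle: once the variational reformulation and the sparse covering number $\exp(Cs\log(ed/s))$ are in hand, the remainder is a routine adaptation of the dense case.
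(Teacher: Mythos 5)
Your proof is correct and follows essentially the same route as the paper: the paper applies Lemma~\ref{teclem:Bernstein Martingale} to the restriction of $\sum_{t=1}^T\Z_t$ on each fixed support $\calS$ (which internally is exactly your scalar-Bernstein-plus-$5^s$-net argument, via Lemma~\ref{teclem:azuma}, on the $s$-dimensional sphere) and then union-bounds over the $\binom{d}{s}\leq(ed/s)^s$ supports. Your only organizational difference --- merging the per-support nets into a single net of the sparse unit vectors $\calV_s$ and performing the unwinding globally, which works because same-support differences remain $s$-sparse --- produces the identical covering count $\binom{d}{s}\cdot 5^s$ and the identical tail bound.
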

\begin{proof}
	Lemma~\ref{teclem:Bernstein Martingale} indicates that for any fixed $\calS$,
	\begin{align*}
			\PP\l( \l\|\calH_{\calS}\l(\sum_{t=1}^{T}\Z_t \r)\r\|\geq 2v\r)\leq 2\times 5^s\exp\l(-\min\l\{\frac{v^2}{\sum_{t=1}^T K_t^2},\frac{v}{\max_{t} K_t} \r\} \r).
	\end{align*}
Then take the union over all $\calS$, and it leads to
\begin{align*}
	\PP\l( \l\|\calH_{\calS}\l(\sum_{t=1}^{T}\Z_t \r)\r\|\geq 2v\r)\leq \binom{d}{s}\times2\times 5^s\exp\l(-\min\l\{\frac{v^2}{\sum_{t=1}^T K_t^2},\frac{v}{\max_{t} K_t} \r\} \r),
\end{align*}
with $\binom{d}{s}\leq (ed/s)^s$ completing the proof.
\end{proof}
\begin{lemma}
	Suppose $g_1, \ldots, g_n$ are zero-mean sub-Gaussian random variables and the length $n$ vector $\g=(g_1,\ldots,g_n)$ $s-\Psi_2$ Orlicz-norm at most $\sigma$, namely $\sup_{\calS\subseteq[n],\;|\calS|\leq s}\|[\g]_{\calS}\|_{\Psi_2}\leq\sigma$. Denote by $\left(g_{(1)}, \ldots, g_{(n)}\right)$ be a non-increasing rearrangement of $\left(\left|g_1\right|, \ldots,\left|g_n\right|\right)$. Then
	\begin{align*}
		\mathbb{P}\left(\sqrt{\sum_{j=1}^s\left(g_{(j)}\right)^2}\geq C\sqrt{s\log(en/s)}\sigma+u\sigma\right) \leq \exp(-cu^2)
	\end{align*}
	for all $t>0$ and $s \in\{1, \ldots, p\}$.
	\label{teclem:maxsum tail}
\end{lemma}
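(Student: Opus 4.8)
The plan is to recognize the quantity $\sqrt{\sum_{j=1}^s (g_{(j)})^2}$ as the supremum of a sub-Gaussian process over the set of $s$-sparse unit vectors, and then to control it by a union bound over supports combined with an $\epsilon$-net argument carried out \emph{within} each fixed support. First I would record the identity
\[
\sqrt{\sum_{j=1}^s (g_{(j)})^2} \;=\; \max_{\calS\subseteq[n],\,|\calS|=s}\|[\g]_{\calS}\| \;=\; \sup_{\v\in\calV}\langle \g,\v\rangle, \qquad \calV:=\{\v\in\RR^n:\|\v\|\le 1,\ |\mathrm{supp}(\v)|\le s\},
\]
which holds because the $s$ coordinates of largest magnitude maximize the restricted $\ell_2$ norm. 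The hypothesis $\sup_{|\calS|\le s}\|[\g]_{\calS}\|_{\Psi_2}\le\sigma$ then says precisely that for every $\v\in\calV$ the scalar $\langle\g,\v\rangle=\langle[\g]_{\mathrm{supp}(\v)},[\v]_{\mathrm{supp}(\v)}\rangle$ is sub-Gaussian with $\|\langle\g,\v\rangle\|_{\Psi_2}\le \|[\g]_{\mathrm{supp}(\v)}\|_{\Psi_2}\,\|\v\|\le\sigma$, directly from the definition of the vector Orlicz norm.

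Next, for a fixed support $\calS$ of size $s$, I would bound $\|[\g]_{\calS}\|$ by the net argument already used in the proof of Lemma~\ref{teclem:Bernstein Orlicz Norm}, but with sub-Gaussian ($\Psi_2$) tails in place of the sub-exponential ones: take a $1/2$-net $\calN_{\calS}$ of the sphere $\SS^{s-1}$ with $|\calN_{\calS}|\le 5^s$, so that $\|[\g]_{\calS}\|\le 2\max_{\v\in\calN_{\calS}}\langle[\g]_{\calS},\v\rangle$; apply the sub-Gaussian tail $\PP(\langle[\g]_{\calS},\v\rangle\ge\tau')\le 2\exp(-c\tau'^2/\sigma^2)$ to each net point and union over $\calN_{\calS}$. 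Choosing a deterministic offset of order $\sqrt{s}\,\sigma$ to absorb the entropy $s\log 5$ yields, for every $\tau\ge 0$,
\[
\PP\big(\|[\g]_{\calS}\|\ge C_1\sqrt{s}\,\sigma+\tau\big)\le \exp\big(-c_1\tau^2/\sigma^2\big).
\]
The point that keeps this step clean is that differences of two vectors on $\SS^{s-1}$ remain supported on $\calS$, so no enlargement of the sparsity level (to $2s$) occurs.

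Then I would take a union bound over all $\binom{n}{s}\le(en/s)^s$ supports to obtain, writing $M_s:=\sqrt{\sum_{j=1}^s(g_{(j)})^2}$,
\[
\PP\big(M_s\ge C_1\sqrt{s}\,\sigma+\tau\big)\le \exp\big(s\log(en/s)-c_1\tau^2/\sigma^2\big).
\]
Finally, substituting $\tau=C_2\sqrt{s\log(en/s)}\,\sigma+u\sigma$ and using $(a+b)^2\ge a^2+b^2$ lets the entropy term $s\log(en/s)$ cancel against $c_1C_2^2\,s\log(en/s)$ as soon as $c_1C_2^2\ge 1$, leaving $\exp(-c_1u^2)$; collecting the deterministic terms via $\log(en/s)\ge 1$ then gives the stated bound with $C=C_1+C_2$ and $c=c_1$.

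This is essentially a routine chaining/$\epsilon$-net argument, so I do not expect a genuine obstacle; the two points that require care are the bookkeeping of constants in the entropy-absorption step and the structural observation that the net-to-supremum comparison must be performed separately within each fixed support rather than on the whole sparse set $\calV$ at once, which is exactly what prevents the bound from degrading into a statement about the $2s$-sparse norm.
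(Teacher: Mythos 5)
Your proof is correct and follows essentially the same route as the paper's: a concentration bound for $\|[\g]_{\calS}\|$ on each fixed support of size $s$, a union bound over the $\binom{n}{s}\le(en/s)^s$ supports, and absorption of the entropy term into the deviation via the choice $\tau \asymp \sqrt{s\log(en/s)}\,\sigma + u\sigma$. The only difference is one of detail: the paper simply asserts the fixed-support bound $\PP\bigl(\|[\g]_{\calS}\|\ge C\sqrt{s}\sigma+u\bigr)\le\exp(-cu^2/\sigma^2)$ as a consequence of the assumption, whereas you justify it explicitly with the $1/2$-net argument (correctly noting that working within a fixed support avoids any degradation to $2s$-sparsity).
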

\begin{proof}
	Based on the assumption, for any fixed set $\calS\subseteq[n]$ and $|\calS|=s$, we have
	\begin{align*}
		\PP\l( \sqrt{\sum_{i\in \calS}(g_i)^2} \geq C\sqrt{s}\sigma+u\r)\leq \exp\l(-\frac{cu^2}{\sigma^2}\r).
	\end{align*}
	We then take the uniform for all the $s$ subset of $[n]$. It's worth noting that
	\begin{align*}
		\#\{\calS:\; |\calS|=s,\; \calS\subseteq [n]\}=\binom{n}{s}\leq\l(\frac{en}{s}\r)^s.
	\end{align*}
	Then we have
	\begin{align*}
		\PP\l(\max_{|\calS|=s,\; \calS\subseteq [n] } \sqrt{\sum_{i\in \calS}(g_i)^2} \geq C\sqrt{s}\sigma+u\r)\leq \l(\frac{en}{s}\r)^s \exp\l(-\frac{cu^2}{\sigma^2}\r).
	\end{align*}
	It implies
	\begin{align*}
		\PP\l(\max_{|\calS|=s,\; \calS\subseteq [n] } \sqrt{\sum_{i\in \calS}(g_i)^2} \geq C\sqrt{s\log(en/s)}\sigma+u\sigma\r)\leq \exp\l(-cu^2\r),
	\end{align*}
	which finishes the proof.
\end{proof}
\begin{lemma}
	Suppose $\bSigma_1,\ldots,\bSigma_n$ are symmetric positive definite matrices with minimal eigenvalues greater than $\lambda_{\min}>0$, namely, $\lambda_{\min}(\bSigma_{i})\geq\lambda_{\min}$. Then we have $\lambda_{\min}(\sum_{i=1}^n \bSigma_{i})\geq n\lambda_{\min}$.
	\label{teclem:min eigenvalue}
\end{lemma}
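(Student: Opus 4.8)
The plan is to prove the statement through the variational (Courant--Fischer) characterization of the smallest eigenvalue of a symmetric matrix. Recall that for any symmetric matrix $\A\in\RR^{d\times d}$ one has
\begin{align*}
\lambda_{\min}(\A)=\min_{\x\in\SS^{d-1}}\x^{\top}\A\x .
\end{align*}
Since each $\bSigma_i$ is symmetric with $\lambda_{\min}(\bSigma_i)\geq\lambda_{\min}$, this characterization gives $\x^{\top}\bSigma_i\x\geq\lambda_{\min}$ for every unit vector $\x$. The key observation is that the quadratic form is additive: for the sum $\sum_{i=1}^n\bSigma_i$ and any fixed $\x\in\SS^{d-1}$,
\begin{align*}
\x^{\top}\Big(\sum_{i=1}^n\bSigma_i\Big)\x=\sum_{i=1}^n\x^{\top}\bSigma_i\x\geq\sum_{i=1}^n\lambda_{\min}=n\lambda_{\min}.
\end{align*}

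The second step is to take the minimum over $\x\in\SS^{d-1}$ on the left-hand side. Because $\sum_{i=1}^n\bSigma_i$ is again symmetric (a sum of symmetric matrices), the variational characterization applies to it as well, and the displayed lower bound holds uniformly in $\x$, so
\begin{align*}
\lambda_{\min}\Big(\sum_{i=1}^n\bSigma_i\Big)=\min_{\x\in\SS^{d-1}}\x^{\top}\Big(\sum_{i=1}^n\bSigma_i\Big)\x\geq n\lambda_{\min},
\end{align*}
which is the claim. Equivalently, one may phrase the same argument in the Loewner order: the hypothesis is exactly $\bSigma_i\succeq\lambda_{\min}\I_d$, summation preserves the ordering $\A\succeq\B$, hence $\sum_{i=1}^n\bSigma_i\succeq n\lambda_{\min}\I_d$, and reading off the smallest eigenvalue yields the bound.

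This result is elementary and there is no genuine obstacle; the only point that requires a moment's care is that the variational formula $\lambda_{\min}(\A)=\min_{\|\x\|=1}\x^{\top}\A\x$ is valid precisely because every matrix in sight is symmetric, so that each admits a real spectrum and an orthonormal eigenbasis. Positive definiteness itself is not needed for the monotonicity step beyond guaranteeing $\lambda_{\min}>0$; the argument goes through verbatim whenever the matrices are symmetric with uniformly lower-bounded smallest eigenvalue.
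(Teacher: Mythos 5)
Your proof is correct and follows essentially the same route as the paper: both use the variational characterization $\lambda_{\min}(\A)=\min_{\x\in\SS^{d-1}}\x^{\top}\A\x$ together with additivity of the quadratic form to get $\x^{\top}(\sum_i\bSigma_i)\x\geq n\lambda_{\min}$ uniformly over the unit sphere. The remark on the Loewner-order phrasing is a harmless equivalent restatement and does not change the argument.
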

\begin{proof}
	It's worth noting that $\lambda_{\min}(\sum_{i=1}^n \bSigma_{i})=\inf_{\u\in\SS^{d-1}}\u^{\top} (\sum_{i=1}^n \bSigma_{i}) \u$. On the other hand, for any $\u\in\SS^{d-1}$, we have $\u^{\top}  \bSigma_{i} \u\geq\lambda_{\min}$. Thus, we have $\inf_{\u\in\SS^{d-1}}\u^{\top} (\sum_{i=1}^n \bSigma_{i}) \u\geq \sum_{i=1}^{n} \inf_{\u\in\SS^{d-1}} \u^{\top}\bSigma_{i}\u\geq n\lambda_{\min}$, which completes the proof.
\end{proof}
\begin{lemma}
	Suppose $\Z_1,\Z_2,\ldots,\Z_T$ are $d$-dimensional martingale difference with respect to $\calF_0,\calF_{1},\ldots,\calF_{T}$, namely, $\EE[\Z_t|\calF_{t-1}]=0$ a.s. and we suppose $\|\Z_t|\calF_{t-1}\|_{\Psi_1}\leq K_t$ a.s. for some constant $K_t$. Then for any $v>0$, we have
	\begin{align*}
		\PP\l(\l\|\sum_{t=1}^{T}\Z_t \r\|_\infty \geq 2v\r)\leq d\exp\l(-\min\l\{\frac{v^2}{C\sum_{t=1}^T K_t^2},\frac{v}{\max_{t} K_t} \r\} \r),
	\end{align*}
	where $C$ is some constant.
	\label{teclem:vec max norm}
\end{lemma}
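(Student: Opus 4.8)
The plan is to exploit the fact that the $\ell_\infty$ norm is a maximum over the $d$ coordinates, so that---unlike the Euclidean-norm bound of Lemma~\ref{teclem:Bernstein Martingale}, which requires an $\eps$-net over $\SS^{d-1}$ and hence pays an $e^{Cd}$ factor---here a plain union bound over coordinates suffices and yields only the linear prefactor $d$. Concretely, writing $[\,\cdot\,]_i$ for the $i$-th coordinate, I would start from the identity
\begin{align*}
	\l\|\sum_{t=1}^{T}\Z_t \r\|_\infty = \max_{i\in[d]}\l|\sum_{t=1}^{T} [\Z_t]_i\r| ,
\end{align*}
so that it suffices to control each scalar sum $\sum_{t=1}^{T} [\Z_t]_i$ and then union over $i\in[d]$.

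For each fixed $i\in[d]$, the scalar sequence $\{[\Z_t]_i\}_{t=1}^{T}$ is a martingale difference sequence with respect to the same filtration $\calF_0,\ldots,\calF_T$, since $\EE\{[\Z_t]_i|\calF_{t-1}\}=[\EE\{\Z_t|\calF_{t-1}\}]_i=0$ a.s. Taking the test vector $\u=\e_i$ in the definition of the conditional Orlicz norm of a random vector gives
\begin{align*}
	\big\|[\Z_t]_i\,\big|\,\calF_{t-1}\big\|_{\Psi_1}=\big\|\e_i^{\top}\Z_t\,\big|\,\calF_{t-1}\big\|_{\Psi_1}\leq \big\|\Z_t\,\big|\,\calF_{t-1}\big\|_{\Psi_1}\leq K_t \quad \text{a.s.}
\end{align*}
Hence Lemma~\ref{teclem:azuma} applies directly to $\{[\Z_t]_i\}$ and, by the same argument, to $\{-[\Z_t]_i\}$; combining the two one-sided tails through a union over signs yields the two-sided bound
\begin{align*}
	\PP\l(\l|\sum_{t=1}^{T} [\Z_t]_i\r|\geq v\r)\leq 2\exp\l(-\min\l\{\frac{v^2}{C\sum_{t=1}^{T} K_t^2},\frac{v}{2\max_t K_t}\r\}\r).
\end{align*}

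A final union bound over the $d$ coordinates then produces $\PP(\|\sum_t\Z_t\|_\infty\geq v)\leq 2d\exp(\,\cdots)$, and absorbing the harmless factor of $2$ into the constant $C$ (equivalently, into the threshold $2v$ as stated) gives the claimed inequality. I do not expect any substantive obstacle here: the only points requiring care are (i) extracting the coordinatewise conditional sub-exponential bound from the vector Orlicz-norm definition via the test vector $\e_i$, and (ii) converting Lemma~\ref{teclem:azuma}'s one-sided tail into a two-sided statement for the absolute value. Everything else is a routine union bound, and the gain over Lemma~\ref{teclem:Bernstein Martingale} lies precisely in replacing the $5^d$ net cardinality with the factor $d$, which is what makes this $\ell_\infty$ bound useful for the entrywise support-recovery arguments of Section~\ref{sec:sparse}.
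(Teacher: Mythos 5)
Your proposal is correct and follows essentially the same route as the paper's own proof: apply Lemma~\ref{teclem:azuma} coordinatewise to the scalar martingale differences $[\Z_t]_i$ and then take a union bound over the $d$ coordinates. If anything, you are slightly more careful than the paper, which glosses over the conversion of the one-sided tail of Lemma~\ref{teclem:azuma} into a two-sided bound and the resulting factor of $2$, both of which you handle explicitly.
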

\begin{proof}
	For any $i=1,\ldots, d$, the term $\sum_{t=1}^{T}[\Z_t]_i$ is the sum of martingale differences, with $\|[\Z_t]_i\|_{\Psi_1}\leq K_t$. By Lemma~\ref{teclem:azuma}, for any $v$, we have,
	\begin{align*}
		\PP\l( \l|\sum_{t=1}^{T}[\Z_t]_i \r|\geq 2v\r)\leq \exp\l(-\min\l\{\frac{v^2}{C\sum_{t=1}^{T} K_t^2},\frac{v}{\max_t K_t} \r\}\r).
	\end{align*}
Take the uniform for all $i=1,\ldots,d$ and it leads to
\begin{align*}
	\PP\l( \max_{i=1,\ldots,d} \l|\sum_{t=1}^{T}[\Z_t]_i \r|\geq 2v\r)\leq d\exp\l(-\min\l\{\frac{v^2}{C\sum_{t=1}^{T} K_t^2},\frac{v}{\max_t K_t} \r\}\r),
\end{align*}
which completes the proof.
\end{proof}

\begin{lemma}
	For any positive values $\alpha,\beta>0$ and any positive integers $t\geq s$, we have
	\begin{align*}
		\prod_{l=s}^{t}\l(1-\frac{\alpha}{l+\beta}\r)\leq\l(\frac{s+\beta}{t+1+\beta}\r)^\alpha.
	\end{align*}
	\label{teclem:prod upper bound}
\end{lemma}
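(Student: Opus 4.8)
The plan is to pass to logarithms and reduce the product bound to a one-line comparison between a harmonic-type sum and a logarithm. I would first record the only hypothesis the argument really needs: that every factor is positive, i.e. $1-\frac{\alpha}{l+\beta}>0$ for all $l\in\{s,\dots,t\}$. Since $1-\frac{\alpha}{l+\beta}$ is increasing in $l$, it suffices to have positivity at the smallest index $l=s$, i.e. $\alpha<s+\beta$; in every application of this lemma $\beta$ is a large multiple of the dimension (e.g. $\Cb d$ or $\Cb s\log(2d/s)$) while $\alpha$ is a bounded constant, so this holds automatically. Under positivity I take $\log$ of the product, obtaining $\log\prod_{l=s}^{t}\l(1-\frac{\alpha}{l+\beta}\r)=\sum_{l=s}^{t}\log\l(1-\frac{\alpha}{l+\beta}\r)$.

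The next step is to bound each summand by the elementary inequality $\log(1+u)\le u$ applied with $u=-\alpha/(l+\beta)$, which gives $\sum_{l=s}^{t}\log\l(1-\frac{\alpha}{l+\beta}\r)\le-\alpha\sum_{l=s}^{t}\frac{1}{l+\beta}$. It then remains to bound the sum from below by a logarithm: because $x\mapsto\frac{1}{x+\beta}$ is decreasing, $\frac{1}{l+\beta}\ge\int_{l}^{l+1}\frac{dx}{x+\beta}$, and summing over $l=s,\dots,t$ yields $\sum_{l=s}^{t}\frac{1}{l+\beta}\ge\int_{s}^{t+1}\frac{dx}{x+\beta}=\log\frac{t+1+\beta}{s+\beta}$. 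Combining the two bounds gives $\log\prod_{l=s}^{t}\l(1-\frac{\alpha}{l+\beta}\r)\le-\alpha\log\frac{t+1+\beta}{s+\beta}=\alpha\log\frac{s+\beta}{t+1+\beta}$, and exponentiating produces the claimed inequality.

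As a cross-check, I would also note a purely multiplicative route that avoids logarithms. Telescoping rewrites the target right-hand side as $\l(\frac{s+\beta}{t+1+\beta}\r)^{\alpha}=\prod_{l=s}^{t}\l(\frac{l+\beta}{l+1+\beta}\r)^{\alpha}$, reducing the lemma to the termwise inequality $1-\frac{\alpha}{l+\beta}\le\l(\frac{l+\beta}{l+1+\beta}\r)^{\alpha}$. Setting $x=\frac{1}{l+\beta}$, this is exactly $(1-\alpha x)(1+x)^{\alpha}\le1$, which follows because $g(x):=(1-\alpha x)(1+x)^{\alpha}$ satisfies $g(0)=1$ and $g'(x)=-\alpha(1+\alpha)x(1+x)^{\alpha-1}\le0$ for $x\ge0$, so $g$ is nonincreasing on $[0,\infty)$. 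One then multiplies these termwise inequalities over $l=s,\dots,t$.

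The main obstacle is precisely the sign of the factors, which is why I flag positivity at the outset. If some factor $1-\frac{\alpha}{l+\beta}$ is negative, the logarithm in the first route is undefined, and in the second route multiplying a termwise inequality across a negative factor can reverse its direction, so the step "$a_l\le b_l$ with $b_l>0$ implies $\prod a_l\le\prod b_l$" is no longer valid. Consequently the clean termwise argument only delivers the bound in the regime $\alpha<s+\beta$ (all factors positive), which is the regime in which the lemma is invoked throughout; I would state and apply it there, after verifying $\alpha<s+\beta$ from the chosen values of $\Ca$ and $\Cb$ in each use.
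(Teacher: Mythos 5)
Your main argument is exactly the paper's proof: take logarithms, apply $\log(1-x)\le -x$ termwise, and lower-bound $\sum_{l=s}^{t}\frac{1}{l+\beta}$ by the integral $\int_{s}^{t+1}\frac{dx}{x+\beta}=\log\frac{t+1+\beta}{s+\beta}$, then exponentiate (your telescoping route is a nice self-contained cross-check but plays no essential role). Your explicit positivity caveat $\alpha<s+\beta$ is a genuine improvement rather than pedantry: the lemma as literally stated (arbitrary $\alpha,\beta>0$) is false --- e.g. $\alpha=10$, $\beta=1/2$, $s=1$, $t=2$ gives $\big(1-\tfrac{20}{3}\big)(1-4)=17$, which exceeds $(3/7)^{10}$ --- and the paper's proof silently assumes all factors are positive (it also misstates the elementary inequality as $\log(1-x)\le x$ when what it actually uses is $\log(1-x)\le -x$), so your observation that the bound is only valid, and only ever invoked, in the regime where every factor is positive is worth recording.
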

\begin{proof} Since $\log(1-x)\leq x$, we have
	\begin{align*}
		\prod_{l=s}^{t}\l(1-\frac{\alpha}{l+\beta}\r)=\exp\l(\sum_{l=s}^{t} \log\l(1-\frac{\alpha}{l+\beta} \r)\r)\leq\exp\l(-\sum_{l=s}^{t}\frac{\alpha}{l+\beta} \r).
	\end{align*}
	Moreover, the decreasing function has
	\begin{align*}
		\sum_{l=s}^{t}\frac{\alpha}{l+\beta}\geq\int_{s}^{t+1}\frac{\alpha}{x+\beta}  {\rm d}x =\alpha\log\l(\frac{t+1+\beta}{s+\beta}\r).
	\end{align*}
	Thus, it arrives at
	\begin{align*}
		\prod_{l=s}^{t}\l(1-\frac{\alpha}{l+\beta}\r)\leq \l(\frac{s+\beta}{t+1+\beta}\r)^\alpha.
	\end{align*}
\end{proof}
\begin{lemma}
	For any positive values $\alpha,\beta>0$ and any positive integers $t\geq s$, we have
	\begin{align*}
		\sum_{l=s}^{t}(l+\beta)^\alpha\leq \frac{1}{\alpha +1} (t+1+\beta)^{\alpha+1}.
	\end{align*}
	\label{teclem:sum upper bound}
\end{lemma}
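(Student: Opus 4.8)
The plan is to bound the sum by an integral, exploiting the monotonicity of the summand. Since $\alpha,\beta>0$ and every index $l$ in the range is a positive integer, we have $l+\beta>0$, so the function $f(x):=(x+\beta)^{\alpha}$ is strictly increasing on $[s,t+1]$. This is the mirror image of the argument used for Lemma~\ref{teclem:prod upper bound}, which handled a decreasing integrand; here I would instead use the lower endpoint comparison appropriate for an increasing function.

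The key step is the elementary observation that for an increasing $f$, each summand is dominated by an integral over the unit interval to its right: for every $l$ and every $x\in[l,l+1]$ we have $f(l)\le f(x)$, hence $f(l)\le\int_{l}^{l+1}f(x)\,{\rm d}x$. Summing this over $l=s,\dots,t$ telescopes the integration intervals into a single one,
\begin{align*}
	\sum_{l=s}^{t}(l+\beta)^{\alpha}\le\sum_{l=s}^{t}\int_{l}^{l+1}(x+\beta)^{\alpha}\,{\rm d}x=\int_{s}^{t+1}(x+\beta)^{\alpha}\,{\rm d}x.
\end{align*}

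The remaining step is to evaluate the integral explicitly, using $\int(x+\beta)^{\alpha}\,{\rm d}x=(x+\beta)^{\alpha+1}/(\alpha+1)$, which gives
\begin{align*}
	\int_{s}^{t+1}(x+\beta)^{\alpha}\,{\rm d}x=\frac{(t+1+\beta)^{\alpha+1}-(s+\beta)^{\alpha+1}}{\alpha+1}\le\frac{(t+1+\beta)^{\alpha+1}}{\alpha+1},
\end{align*}
where the final inequality drops the subtracted term $(s+\beta)^{\alpha+1}/(\alpha+1)$, which is nonnegative because $s+\beta>0$ and $\alpha+1>0$. Combining the two displays yields the claimed bound. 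There is no genuine obstacle in this argument: the only point requiring a moment's care is confirming the direction of the integral comparison (an increasing, rather than decreasing, integrand forces the sum onto the interval $[s,t+1]$ as above), after which the computation is routine.
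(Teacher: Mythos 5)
Your proof is correct. The paper states Lemma~\ref{teclem:sum upper bound} without any proof, so there is no argument to compare against; your integral comparison --- bounding each term $(l+\beta)^{\alpha}$ by $\int_{l}^{l+1}(x+\beta)^{\alpha}\,{\rm d}x$ via monotonicity of the increasing integrand, telescoping to $\int_{s}^{t+1}(x+\beta)^{\alpha}\,{\rm d}x$, evaluating, and discarding the nonnegative term $(s+\beta)^{\alpha+1}/(\alpha+1)$ --- is the standard argument and is exactly the increasing-function counterpart of the paper's own proof of Lemma~\ref{teclem:prod upper bound}, so it fills the omitted proof in the intended way.
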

\end{document}